\declaretheorem[name=Theorem, parent=section]{theorem}
\declaretheorem[name=Corollary, sibling=theorem]{corollary}
\declaretheorem[name=Lemma, sibling=theorem]{lemma}
\theoremstyle{definition}
\declaretheorem[name=Definition, sibling=theorem]{definition}
\declaretheorem[name=Remark, sibling=theorem]{remark}
\declaretheorem[name=Problem, sibling=theorem]{problem}
\declaretheorem[name=Example, sibling=theorem]{example}
\theoremstyle{remark}
\newcommand{\RR}{\mathbb{R}}
\newcommand{\QQ}{\mathbb{Q}}
\newcommand{\NN}{\mathbb{N}}
\newcommand{\ZZ}{\mathbb{Z}}
\newcommand{\X}{\mathcal{X}}
\newcommand{\asympleq}{\lesssim}
\let\eps\varepsilon
\newcommand{\closed}{\mathrm{c}}
\newcommand{\open}{\mathrm{o}}
\DeclareMathOperator{\subrank}{Q}
\DeclareMathOperator{\rank}{R}
\DeclareMathAccent{\wtilde}{\mathord}{largesymbols}{"65}
\DeclareMathOperator{\asympsubrank}{\underaccent{\wtilde}{Q}}
\DeclareMathOperator{\asymprank}{\underaccent{\wtilde}{R}}
\newcommand{\sr}{\mathcal{R}}
\newcommand{\G}{\mathcal{G}}
\title{The asymptotic spectrum distance, graph limits,\\ and the Shannon capacity}
\author{David de Boer}
\author{Pjotr Buys}
\author{Jeroen Zuiddam}
\affil{University of Amsterdam}
\date{}
\begin{document}
\maketitle
\small
\centerline{\textbf{Abstract}}
\vspace{0.4em}
Determining the Shannon capacity of graphs (Shannon 1956) is a long-standing open problem in information theory, graph theory and combinatorial optimization. Over decades, a wide range of upper and lower bound methods (of combinatorial, algebraic, analytic, and computational nature) have been developed to analyze this problem (e.g., Lovász 1979, Alon~1998, Bohman--Holzman 2003, Bukh--Cox 2019, Polak--Schrijver~2019, Guruswami--Riazanov 2021). However, despite tremendous effort, even small instances of the problem have remained open (e.g., the Shannon capacity of any odd cycle of length at least seven), and central computational and structural questions unanswered.

In recent years, a new dual characterization of the Shannon capacity of graphs, asymptotic spectrum duality, has unified and extended known upper bound methods and structural theorems. This duality originated from the work of Strassen in algebraic complexity theory (Strassen, J.~Reine Angew.\ Math.\ 1987, 1988, 1991) and has recently seen much activity (Christandl--Vrana--Zuiddam, J.~Amer.\ Math.\ Soc.~2023, Wigderson--Zuiddam, Bull.~Amer.\ Math.\ Soc.~2026). %

In this paper, building on asymptotic spectrum duality, we develop a new theory of graph distance, that we call \emph{asymptotic spectrum distance}, and corresponding limits (reminiscent of, but different from, the celebrated theory of cut-norm, graphons and flag algebras). We propose a graph limit approach to the Shannon capacity problem: to determine the Shannon capacity of a graph, construct a sequence of easier to analyse graphs converging to it.

(1) We give a very general construction of non-trivial converging sequences of graphs (in the asymptotic spectrum distance). As a consequence, we prove new continuity properties of the Shannon capacity, the Lovász theta function and other graph parameters, answering questions of Schrijver and Polak.

(2) We construct Cauchy sequences of finite graphs that do not converge to any finite graph, but do converge to an infinite graph (in the asymptotic spectrum distance). We establish strong connections between convergence questions of finite graphs and the asymptotic properties of Borsuk-like infinite graphs on the circle, and use ideas from the theory of dynamical systems to study these infinite graphs.

(3) We observe that all best-known lower bound constructions for Shannon capacity of small odd cycles can be obtained from a “finite” version of the graph limit approach, in which we approximate the target graph by another graph with a large independent set that is an “orbit” under a natural group action. We develop computational and theoretical aspects of this approach and use these to obtain a new Shannon capacity lower bound for the fifteen-cycle.

The graph limit point-of-view brings many of the constructions
that have appeared over time in a unified picture, makes new connections to ideas in topology and dynamical systems, and offers new paths forward. The theory of asymptotic spectrum distance applies not only to Shannon capacity of graphs, but indeed we will develop it for a general class of mathematical objects and their asymptotic properties.

\newpage\normalsize
\tableofcontents

\newpage

\section{Introduction}
\label{sec:intro}

Posed by Shannon in 1956 \cite{MR0089131}, determining the amount of information that can be transmitted perfectly over a noisy communication channel, the Shannon zero-error capacity, is a longstanding and central open problem in information theory, graph theory and combinatorial optimization \cite{MR1658803, MR1919567, MR1956924}. 
Mathematically, this problem asks for determining the rate of growth of the independence number of powers of graphs. 

Over decades, a wide range of upper and lower bound methods (of combinatorial, algebraic, analytic, and computational nature) have been developed to analyze this problem, by Shannon \cite{MR0089131}, Lovász \cite{Lovasz79}, Haemers \cite{haemers1979some, MR0642046}, Alon \cite{alon1998shannon}, Alon--Lubetzky \cite{MR2234473}, Polak--Schrijver~\cite{MR3906144, MR3959682}, Google DeepMind \cite{romera-paredes_mathematical_2024} and many others \cite{MR0207590, MR0337668, MR0321810, MR1210400, MR1967195, MR3016977, blasiakThesis, Mathew2017NewLB, MR3876424, Bukh18, MR4079641, fritz2021unified, LinearShannonCapacity, MR4525548, zhu2024improved}.
However, despite tremendous effort, even small instances of the problem have remained open (in particular, determining the Shannon capacity of any odd cycle of length at least seven), and central computational and structural questions unanswered. %

In recent years, a new dual characterization of the Shannon capacity of graphs, called asymptotic spectrum duality \cite{MR4039606}, has unified and extended known upper bound methods and structural theorems \cite{MR4357434,MR4245284,fritz2021unified,MR4525548}. This duality originates from Strassen's work in algebraic complexity theory on fast matrix multiplication algorithms and tensors \cite{strassen1987relative, strassen1988asymptotic, strassen1991degeneration, MR4495838} (and goes back to the real representation theorems of Stone and Kadison--Dubois \cite{MR0707730}) and applies much more generally to asymptotic problems in various fields \cite{MR4053385, DBLP:conf/focs/RobereZ21, MR4609385}; see the survey of Wigderson--Zuiddam~\cite{wigderson2022asymptotic}.  %

In this paper, building on asymptotic spectrum duality, 
we develop a new theory of graph distance and limits---reminiscent of, but different from, the celebrated theory of cut-norm, graphons and flag algebras in the study of homomorphism densities~\cite{MR2274085, MR2371204, MR2455626, MR3012035}. The crucial property of this \emph{asymptotic spectrum distance} is that converging graphs have converging Shannon capacity,
\[
G_i \to H \,\,\Rightarrow\,\, \Theta(G_i) \to \Theta(H),
\]
thus suggesting a graph limit approach to the Shannon capacity problem: to determine (or, say, lower bound) the Shannon capacity of a ``hard'' graph, construct a sequence of easier to analyse (e.g., highly structured) graphs converging to it.

Central results of this paper are (1) how to construct converging sequences of graphs and (2) where to look for graphs that are ``easier to analyse''.
Indeed, a special case of our results is that for any odd cycle graph we can construct many non-trivial converging sequences of very structured graphs converging to it (as illustrated in \autoref{fig:ex-c7}). Using a ``finite'' version of this idea we will obtain a new lower bound on the Shannon capacity of the fifteen-cycle.

\newcommand{\w}{3.2cm}
\newcommand{\s}{1.4cm}
\begin{figure}[H]
\begin{minipage}{\w}
\centering
\begin{tikzpicture}
\def \n {11}
\def \m {2}
  \foreach \i in {0,1,...,\n}
    \coordinate (v\i) at ({360/\n * (\i - 1)}:\s);

  \foreach \j in {1,...,\m} {
      \foreach \i [evaluate=\i as \next using {int(mod(\i+\j,\n))}] in {0,1,...,\n} {
        \draw (v\i) -- (v\next);
    ` }
  }

  \foreach \i in {0,1,...,\n}
    \filldraw[black] (v\i) circle (2pt);
\end{tikzpicture}
\end{minipage}
\begin{minipage}{\w}
\centering
\begin{tikzpicture}
\def \n {18}
\def \m {4}
  \foreach \i in {0,1,...,\n}
    \coordinate (v\i) at ({360/\n * (\i - 1)}:\s);

  \foreach \j in {1,...,\m} {
      \foreach \i [evaluate=\i as \next using {int(mod(\i+\j,\n))}] in {0,1,...,\n} {
        \draw (v\i) -- (v\next);
    ` }
  }

  \foreach \i in {0,1,...,\n}
    \filldraw[black] (v\i) circle (2pt);
\end{tikzpicture}
\end{minipage}
\begin{minipage}{\w}
\centering
\begin{tikzpicture}
\def \n {25}
\def \m {6}
  \foreach \i in {0,1,...,\n}
    \coordinate (v\i) at ({360/\n * (\i - 1)}:\s);

  \foreach \j in {1,...,\m} {
      \foreach \i [evaluate=\i as \next using {int(mod(\i+\j,\n))}] in {0,1,...,\n} {
        \draw (v\i) -- (v\next);
    ` }
  }

  \foreach \i in {0,1,...,\n}
    \filldraw[black] (v\i) circle (2pt);
\end{tikzpicture}
\end{minipage}
$\cdots \to$
\begin{minipage}{\w}
\centering
\begin{tikzpicture}
\def \n {7}
\def \m {1}
  \foreach \i in {0,1,...,\n}
    \coordinate (v\i) at ({360/\n * (\i - 1)}:\s);

  \foreach \j in {1,...,\m} {
      \foreach \i [evaluate=\i as \next using {int(mod(\i+\j,\n))}] in {0,1,...,\n} {
        \draw (v\i) -- (v\next);
    ` }
  }

  \foreach \i in {0,1,...,\n}
    \filldraw[black] (v\i) circle (2pt);
\end{tikzpicture}
\end{minipage}
\caption{Beginning of non-trivial sequence converging to the seven-cycle (\autoref{th:intro:rational-right-cont}).}
\label{fig:ex-c7}
\end{figure}
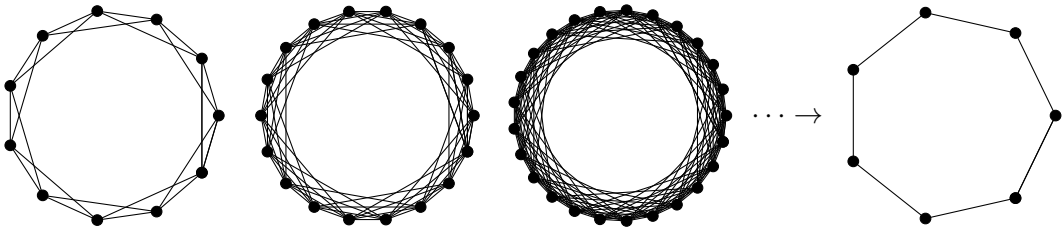

As we will see, the graph limit point-of-view opens the door to analyzing the Shannon capacity problem from many new angles, including: via infinite graphs, topological and analytical methods, and the theory of dynamical systems. It brings many of the best-known Shannon capacity constructions in a unified picture, and offers new paths forward. 

We briefly summarize here our main results and we will expand on these in the rest of the introduction:
\begin{itemize}
    \item \textbf{Convergence and continuity.} We develop asymptotic spectrum distance, including methods for bounding it. These lead to a very general construction of non-trivial converging sequences of graphs (in a family of circulant graphs that we call fraction graphs, and are also called circular graphs). As a consequence, we prove new continuity properties of the Shannon capacity, the Lovász theta function and other graph parameters. (\autoref{subsec:intro:conv})
    
    \item \textbf{Infinite graphs as limit points.} We construct Cauchy sequences of finite graphs that do not converge to any finite graph, but do converge to an infinite graph. More generally we establish strong connections between convergence questions of (finite) graphs and the asymptotic properties of (Borsuk-like) infinite graphs on the circle, and use ideas from the theory of dynamical systems to study these infinite graphs. (\autoref{subsec:intro:infinite})

    \item \textbf{Graph limit approach and orbit constructions.} We observe that all best-known lower bound constructions for Shannon capacity of small odd cycles can be obtained from a ``finite'' version of the graph limit approach, in which we approximate the target graph by another graph with a large independent set that is an ``orbit'' under a natural group action.\footnote{This framework is reminiscent of fast matrix multiplication algorithm constructions based on Coppersmith--Winograd-type~\cite{coppersmith1987matrix} and laser method type \cite{strassen1987relative} reductions.} We develop computational and theoretical aspects of this approach and use it to obtain a new Shannon capacity lower bound for the fifteen-cycle. (\autoref{subsec:intro:orbits}) %

\end{itemize}

The asymptotic spectrum distance theory applies not only to graphs and the Shannon capacity problem, but indeed is much more general and applies to other kinds of mathematical objects and their asymptotic (under large powers) properties. We will thus develop parts of the theory in the generality of preordered semirings. Remarkably, comparing asymptotic spectrum distance for the semiring of graphs and the semiring of tensors (which has a very long history and rich theory \cite{strassen1987relative, strassen1988asymptotic, strassen1991degeneration,wigderson2022asymptotic, MR4495838}), we see a striking difference: while for graphs we have non-trivial converging sequences and non-converging Cauchy sequences, it follows from recent works \cite{bdr, DBLP:conf/innovations/BrietCLSZ24} that tensors show much more discrete behaviour, and in particular that for tensors over finite fields no such sequences exist. (\autoref{subsec:intro:gen}) %

We have formalized the results of this paper (including all prerequisites) in Lean~4, see the repository:
\url{https://github.com/jzuiddam/asymptotic-spectrum-distance}.

In the rest of the introduction we discuss basic notions and main results in more detail.

\subsection{Shannon capacity and asymptotic spectrum distance}
We discuss some basic notions (a full discussion is in \autoref{sec:dist}) and then elaborate on our results.
The Shannon capacity of a graph $G$ is defined %
as the rate of growth of the independence number under taking large powers of the graph (under the ``strong product''), that is, $
\Theta(G) = \lim_{n\to \infty} \alpha(G^{\boxtimes n})^{1/n}$ \cite{MR0089131}.
By Fekete's lemma this equals the supremum $\sup_n \alpha(G^{\boxtimes n})^{1/n}$, so that we may think of Shannon capacity as a maximisation problem. %

Asymptotic spectrum duality \cite{MR4039606} characterizes the Shannon capacity as the minimization 
$\Theta(G) = \min_{F \in \X} F(G),$
where $\X$, called the asymptotic spectrum of graphs, is a natural family of real-valued functions on graphs (namely, multiplicative, additive, cohomomorphism-monotone, normalized functions, which we discuss in detail later, \autoref{subsec:asymp-spec-dual}).
The asymptotic spectrum $\X$ contains the Lovász theta function, fractional Haemers bound, fractional clique covering number, and other well-known upper bounds on Shannon capacity. While we do not have an explicit description of all elements of~$\X$, several structural properties are known \cite{MR4357434}.

We define the asymptotic spectrum distance by
$d(G, H) = \sup_{F\in \X} |F(G) - F(H)|$ for any two graphs~$G,H$.\footnote{Strictly speaking, this is not a distance on graphs, but it is a distance if we identify graphs under ``asymptotic equivalence'', \autoref{sec:dist}.} 
From asymptotic spectrum duality it can be seen that converging graphs have converging Shannon capacity (\autoref{lem:Theta-conv}).
As we will see (\autoref{lem:alt-charac}), asymptotic spectrum distance  can be phrased in an operational form using asymptotic spectrum duality. Namely, $d(G,H)$ is small if and only if a ``polynomial blow up'' of $G$ can be mapped (cohomomorphically) to a slightly larger polynomial blow up of $H$, and similarly for $G$ and $H$ reversed.%
\footnote{Write $G \leq H$ if there is a cohomomorphism $G \to H$ (a map $V(G)\to V(H)$ that maps non-edges to non-edges). Then for any $a,b \in \NN$ we have $d(G,H) \leq a/b$ if and only if $(E_b\boxtimes G)^{\boxtimes n} \leq ((E_b\boxtimes H) \sqcup E_a)^{\boxtimes(n + o(n))}$ and $(E_b \boxtimes H)^{\boxtimes n} \leq ((E_b\boxtimes G) \sqcup E_a)^{\boxtimes(n + o(n))}$ (\autoref{lem:alt-charac}), where $E_n \coloneqq \overline{K}_n$ denotes the graph with $n$ vertices and no edges.}

\subsection{Converging sequences}\label{subsec:intro:conv}

We construct (non-trivial) converging sequences of graphs in the asymptotic spectrum distance. For this we use a natural family of vertex-transitive graphs which we call fraction graphs.\footnote{In the literature these have received much attention (under many names), by Vince \cite{MR0968751}, Schrijver and Polak (as circular graphs)~\cite{SvenThesis, MR3906144}, as cycle-powers~\cite{MR3016977}, and as (the complement of) rational complete graphs~\cite{MR2089014} and circular complete graphs~\cite{MR2249284}.} For any $p,q\in \NN$ with $p/q\geq2$ we let $E_{p/q}$ be the graph with vertex set $V = \{0,\ldots,p-1\}$ and with distinct $u,v \in V$ forming an edge if $u-v$ or $v-u$ is strictly less than $q$ modulo $p$.\footnote{Despite the use of the letter $p$ we do not require $p$ to be prime.} For example, $E_{8/3}$ is the following graph:
\[
\begin{tikzpicture}
  \foreach \i in {0,1,...,7}
    \coordinate (v\i) at ({360/8 * (\i - 1)}:1.5cm);

  \foreach \i [evaluate=\i as \next using {int(mod(\i+1,8))}] in {0,1,...,7}
    \draw (v\i) -- (v\next);
  \foreach \i [evaluate=\i as \next using {int(mod(\i+2,8))}] in {0,1,...,7}
    \draw (v\i) -- (v\next);

  \foreach \i in {0,1,...,7}
    \filldraw[black] (v\i) circle (2pt);
\end{tikzpicture}
\]
The graphs $E_{p/q}$, which we call \emph{fraction graphs}, have many nice properties. We write $G \leq H$ if there is a cohomomorphism $G \to H$ (a map $V(G)\to V(H)$ that maps non-edges to non-edges). Then: $p/q \leq s/t$ (in $\QQ$) if and only if $E_{p/q} \leq E_{s/t}$ in the cohomomorphism preorder. That is, fraction graphs are ordered as the rational numbers, and in particular inherit their denseness. (We note that $E_{p/q}$ and $E_{np/nq}$, for $n \in \NN$, are not isomorphic graphs, but they are equivalent under cohomomorphism.) 
The fraction graphs contain all cycle graphs as $C_n = E_{n/2}$, and the edgeless graphs as $E_m = E_{m/1} = \overline{K}_m$.

We develop techniques to bound the asymptotic spectrum distance of vertex-transitive graphs and their induced subgraphs (\autoref{subsec:dist-vt}), and a vertex removal strategy for fraction graphs (\autoref{subsec:vertex-removal-strategy}). Using these we obtain a general construction of converging sequences:

\begin{theorem}[\autoref{th:rational-right-cont}]\label{th:intro:rational-right-cont}
    For any $a/b \geq 2$, if $p_n/q_n$ converges to $a/b$ from above, then $E_{p_n/q_n}$ converges to $E_{a/b}$.\footnote{We note that, in \autoref{th:intro:rational-right-cont}, if the rational numbers $p_n/q_n$ are distinct, then (using the fractional clique covering number) the graphs $E_{p_n/q_n}$ can be seen to be pairwise non-equivalent and even asymptotically non-equivalent, as we will explain later, so the sequence is indeed non-trivial.}
\end{theorem}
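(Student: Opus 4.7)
The plan is to use the operational characterization of $d$ via cohomomorphisms and edgeless blow-ups (the lemma referenced in the footnote, \autoref{lem:alt-charac}), which reduces convergence of graphs to explicit constructions of cohomomorphisms between certain strong powers. Concretely, it suffices to prove the quantitative bound $d(E_{p/q}, E_{a/b}) \le \phi(p/q - a/b)$ whenever $p/q \ge a/b$, for some function $\phi$ with $\phi(\delta) \to 0$ as $\delta \to 0^+$; applied to each $p_n/q_n$ this immediately yields the theorem.

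One of the two required cohomomorphism conditions is essentially immediate from the order-compatibility of fraction graphs. Since $a/b \le p/q$ we have $E_{a/b} \le E_{p/q}$, hence $E_\beta \boxtimes E_{a/b} \le (E_\beta \boxtimes E_{p/q}) \sqcup E_\alpha$ for any $\alpha$, and taking $\boxtimes N$ preserves this with no loss in the exponent. So the work lies entirely in the other direction.

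The non-trivial direction is $(E_\beta \boxtimes E_{p/q})^{\boxtimes N} \le ((E_\beta \boxtimes E_{a/b}) \sqcup E_\alpha)^{\boxtimes (N+o(N))}$, and here I would combine the vertex-removal strategy for fraction graphs (\autoref{subsec:vertex-removal-strategy}) with the vertex-transitive distance bounds (\autoref{subsec:dist-vt}). Replacing $E_{a/b}$ by an equivalent $E_{ma/mb}$ with $m$ large enough that $ma$ and $mb$ approximate $p$ and $q$ respectively, the fraction graph $E_{p/q}$ differs from $E_{ma/mb}$ by a small ``circular gap''. The fraction-graph structure guarantees that one can locate an independent set of relative size $O(p/q - a/b)$ whose deletion leaves a graph admitting a cohomomorphism into $E_{a/b}$; the deleted vertices are absorbed by the $E_\alpha$ summand. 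The vertex-transitive power-lifting then converts this single-copy reduction into the required power-level cohomomorphism, with the error rational $\alpha/\beta$ controlled by a function of $p/q - a/b$ that tends to $0$, giving the desired $\phi$.

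The main obstacle I anticipate is making the vertex-removal and power-lifting steps compatible so that the error $\alpha/\beta$ scales correctly — linearly, not exponentially, in $N$ — with the gap $p/q - a/b$. A naive pointwise vertex removal inside the product $E_{p/q}^{\boxtimes N}$ would enlarge the independent-set fraction to roughly $1 - (1 - \varepsilon)^N$, which is much too large to be absorbed by an $E_\alpha$ factor with $\alpha/\beta$ small. The role of vertex-transitivity, and the reason for separating out \autoref{subsec:dist-vt}, is to replace naive removal by an averaged or orbit-symmetric construction whose cost scales as $O(N \varepsilon)$; combined with the $o(N)$ slack already built into the operational characterization, this is what will make the cohomomorphism bound — and hence the convergence — go through.
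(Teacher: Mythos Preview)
Your proposal has the right ingredients but combines them in a way that creates the very obstacle you identify at the end, and you do not actually resolve that obstacle. The paper's proof avoids the difficulty entirely by two simplifications you miss.

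First, there is no need to go through the operational characterization (\autoref{lem:alt-charac}) at all. The paper works directly with $F \in \X$: since each $F$ is $\boxtimes$-multiplicative, the ``power-lifting'' you worry about is already absorbed into the definition of $\X$. Concretely, the vertex-transitive bound of \autoref{subsec:dist-vt} states that for vertex-transitive $G$ and $S \subseteq V(G)$ one has $F(G[S]) \le F(G) \le \tfrac{|V(G)|}{|S|} F(G[S])$ for every $F \in \X$; the proof of that lemma already takes powers and $m$-th roots, so the exponential blow-up you fear never materializes.

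Second, and more importantly, the paper does not try to bound $d(E_{p/q}, E_{a/b})$ for \emph{arbitrary} $p/q$ close to $a/b$. Because every $F \in \X$ is monotone on fraction graphs, it suffices to exhibit \emph{one} sequence $p_n/q_n \downarrow a/b$ with $F(E_{p_n/q_n}) \to F(E_{a/b})$; any other $p/q$ in $[a/b, p_n/q_n]$ is then sandwiched automatically. The specific sequence is chosen via B\'ezout: with $a,b$ coprime, pick $c,d$ with $cb - da = 1$ and set $p_n = c + an$, $q_n = d + bn$. Then $p_n b - q_n a = 1$ for all $n$, so by the single-vertex-removal lemma (\autoref{th:remove-point}) removing \emph{one} vertex from $E_{p_n/q_n}$ yields exactly $E_{a/b}$. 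The vertex-transitive bound gives $F(E_{a/b}) \le F(E_{p_n/q_n}) \le \tfrac{p_n}{p_n-1} F(E_{a/b})$, and since $p_n \to \infty$ the right-hand side converges to $F(E_{a/b})$. Your ``independent set of relative size $O(p/q - a/b)$'' step and the attendant scaling worry are simply not needed.
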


\autoref{th:intro:rational-right-cont} solves a problem and extends the work of Schrijver and Polak \cite[Chapter~9]{SvenThesis}. 
They constructed (by means of an independent set construction) for every integer $m \in \NN$ a sequence of fraction graphs~$E_{p_n/q_n}$ that converges to $E_{m} = E_{m/1}$ from below, which implies that for any sequence $p_n/q_n$ converging to $m$ from below we have that $E_{p_n/q_n}$ converges to $E_m$.

We leave it as an open problem whether $p_n/q_n \to a/b$ from below also implies $E_{p_n/q_n} \to E_{a/b}$. In the next section we discuss how this problem is closely related to properties of infinite graphs.

\autoref{th:intro:rational-right-cont} can alternatively be phrased in terms of continuity of the graph parameters in the asymptotic spectrum, $F \in \X$. Namely, it says that $p/q \mapsto F(E_{p/q})$ is right-continuous (and the same for $F = \Theta$).

With similar methods we get: %

\begin{theorem}[\autoref{thm: continuous at irrationals}]\label{th:intro:cont-irr}
    For any irrational $r \in \RR_{\geq2}$, if $p_n/q_n$ converges to $r$, then $E_{p_n/q_n}$ is Cauchy.
\end{theorem}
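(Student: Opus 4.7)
My strategy is to combine the cohomomorphism-monotonicity of the elements $F \in \X$ with the quantitative right-continuity result (\autoref{th:rational-right-cont}). I first record a sandwich lemma: since every $F \in \X$ is cohomomorphism-monotone and $p/q \leq s/t$ is equivalent to $E_{p/q} \leq E_{s/t}$ in the cohomomorphism preorder, the map $p/q \mapsto F(E_{p/q})$ is nondecreasing on $\QQ_{\geq 2}$. Hence for any rationals $a/b \leq p/q \leq s/t \leq c/d$ we have $|F(E_{p/q}) - F(E_{s/t})| \leq F(E_{c/d}) - F(E_{a/b}) \leq d(E_{a/b}, E_{c/d})$. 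Taking the sup over $F \in \X$ yields the sandwich bound $d(E_{p/q}, E_{s/t}) \leq d(E_{a/b}, E_{c/d})$ whenever $p/q, s/t \in [a/b, c/d] \cap \QQ$.

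With this in hand, to show $(E_{p_n/q_n})$ is Cauchy it suffices to exhibit, for every $\epsilon > 0$, rationals $a/b < r < c/d$ with $d(E_{a/b}, E_{c/d}) < \epsilon$. Indeed, since $p_n/q_n \to r$, eventually all terms lie in $(a/b, c/d)$, and the sandwich bound then gives $d(E_{p_n/q_n}, E_{p_m/q_m}) \leq d(E_{a/b}, E_{c/d}) < \epsilon$ for large $n,m$.

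It remains to construct such a sandwich. I apply \autoref{th:rational-right-cont} at rationals approaching $r$ from below. For any rational $a/b \in [2, r)$ and any $\epsilon > 0$, right-continuity yields a threshold $\delta(a/b,\epsilon) > 0$ such that $d(E_{a/b}, E_{c/d}) < \epsilon$ for every rational $c/d \in (a/b,\, a/b + \delta(a/b,\epsilon))$. Choosing $a/b$ close enough to $r$ from below that $r < a/b + \delta(a/b,\epsilon)$, and then any rational $c/d \in (r,\, a/b + \delta(a/b,\epsilon))$, completes the sandwich. The main obstacle is showing that such an $a/b$ actually exists, i.e.\ that $\delta(a/b,\epsilon)$ does not shrink faster than $r - a/b$ as $a/b \to r^-$; absent extra input, a monotone right-continuous function on $\QQ$ can a priori host a genuine jump at an irrational point. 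I expect this to be handled by extracting a locally uniform quantitative bound from the vertex removal construction underlying \autoref{th:rational-right-cont}---an estimate of the form $d(E_{a/b}, E_{c/d}) \leq h(c/d - a/b)$ with $h(t) \to 0$ as $t \to 0$, valid uniformly for $a/b$ in a neighborhood of $r$. Given such a locally uniform bound, simply shrinking the gap $c/d - a/b$ forces $d(E_{a/b}, E_{c/d})$ to be small and closes the argument.
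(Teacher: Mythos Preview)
Your sandwich reduction is correct and matches the paper's implicit structure: it suffices to produce, for each $\eps>0$, rationals $a/b < r < c/d$ with $d(E_{a/b},E_{c/d})<\eps$. But the step you flag as ``the main obstacle'' is not filled, and the fix you propose would actually be too strong. A locally uniform modulus $d(E_{a/b},E_{c/d})\le h(c/d-a/b)$ with $h(t)\to 0$ would force left-continuity of $p/q\mapsto F(E_{p/q})$ at every rational in the neighborhood, which is exactly the open problem the paper leaves unresolved (\autoref{prob:left-cont}). So you cannot hope to extract such a bound from the vertex removal construction without resolving that problem.

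The paper instead bypasses right-continuity entirely and produces the straddling pair directly from the continued fraction expansion of~$r$. Consecutive convergents $p_{n-1}/q_{n-1}$ and $p_n/q_n$ alternate sides of $r$ and satisfy $|p_n q_{n-1}-q_n p_{n-1}|=1$ (\autoref{lem: CFA}), which is precisely the hypothesis of the vertex removal bound \autoref{th:vrm}. Taking the even-order convergent $p'/q'$ below $r$ and the next odd-order convergent $p/q$ above $r$, one gets $F(E_{p/q})-F(E_{p'/q'})\le \tfrac{1}{p-1}F(E_{p'/q'})\le \tfrac{r}{p-1}$ uniformly over $F\in\X$, and $p\to\infty$. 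This gives the required $\eps$-tight sandwich with no uniformity assumption. The essential point you were missing is that irrationality of $r$ guarantees an infinite supply of pairs $(p'/q',p/q)$ straddling $r$ that are already linked by the single-vertex-removal relation; for a rational target this supply dries up, which is why the left-continuity question there remains open.
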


It follows from \autoref{th:intro:cont-irr} with a simple argument (\autoref{th:not-complete2}) that there are Cauchy sequences that do not converge to any finite graph. Indeed, the fractional clique cover number of any sequence in \autoref{th:intro:cont-irr} converges to the irrational number~$r$, which cannot be the fractional clique cover number of a finite graph. This answers a question raised in \cite[Chapter~9]{SvenThesis}.

There is a natural (albeit abstract) way to complete the space of finite graphs with the asymptotic spectrum distance, by identifying every graph $G$ with its evaluation function $\widehat{G} : \X \to \RR : F \mapsto F(G)$. The coarsest topology on $\X$ that makes the $\widehat{G}$ continuous is compact, and thus we can think of graphs as living in the space $C(\X)$ of continuous functions on $\X$ endowed with the sup-norm, which is complete. 

The above (keeping in mind the analogous theory of cut-norm and graphons) raises the question whether there are concrete, graph-like models for limit points; indeed we prove that infinite graphs provide such models. %

\subsection{Infinite graphs as limit points}\label{subsec:intro:infinite}

We prove that certain infinite graphs (on the circle) are limit points for non-converging Cauchy sequences of finite graphs, and investigate the properties of these graphs, and strong connections to the Shannon capacity of finite graphs.

We define two kinds of infinite graphs on the circle (that may be thought of as infinite versions of the fraction graphs). Let $C \subseteq \RR^2$ denote a circle with unit circumference. For $r \in \RR_{\geq2}$ we define $E_r^\open$ to be the infinite graph with vertex set $C$ for which two vertices are adjacent if and only if they  have distance strictly less than $1/r$ on $C$. We define $E_r^\closed$ to be the infinite graph with vertex set $C$ for which two vertices are adjacent if and only if they have distance at most $1/r$ on $C$. The subtle difference between the ``open'' and ``closed'' version of these circle graphs will play an important role.

We naturally extend asymptotic spectrum duality and distance to infinite graphs (with finite clique covering number), and are then able to establish the circle graphs as limit points:

\begin{theorem}[\autoref{th:irr-closed-open-equiv}]\label{th:intro:irr-closed-open-equiv}
    For any irrational $r \in \RR_{\geq2}$, if $p_n/q_n$ converges to $r$, then $E_{p_n/q_n}$ converges to $E_r^\open$.
\end{theorem}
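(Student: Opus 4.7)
The plan is to identify the limit of the Cauchy sequence $(E_{p_n/q_n})_n$ (guaranteed to exist by \autoref{th:intro:cont-irr}) as $E_r^\open$ via a sandwich argument in the cohomomorphism preorder. The key observation, immediate from the cohomomorphism-monotonicity of every $F\in\X$ extended to infinite graphs, is that whenever $A\le B\le D$ and $A\le C\le D$ in the preorder, one has $F(B),F(C)\in[F(A),F(D)]$, and hence $d(B,C)\le d(A,D)$.

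I would then sandwich $E_r^\open$ between fraction graphs from both sides. For the \emph{lower} inclusion: given a rational $s/t$ with $2\le s/t<r$, placing the vertices of $E_{s/t}$ at the evenly spaced positions $i/s\in C$ gives a cohomomorphism $E_{s/t}\to E_r^\open$, since non-edges of $E_{s/t}$ land at circular distances at least $t/s>1/r$ and hence are non-edges of $E_r^\open$. For the \emph{upper} inclusion: given a rational $u/v>r$, parametrize $C$ as $[0,1)$ and consider the floor map $f(x)=\lfloor ux\rfloor$. For a non-edge $\{x,y\}$ of $E_r^\open$ (circular distance at least $1/r$), the standard bounds $u(y-x)-1<\lfloor uy\rfloor-\lfloor ux\rfloor<u(y-x)+1$, together with the consequence of $u>rv$ and the irrationality of $u/r$ that $\lceil u/r\rceil\ge v+1$, allow one to check that both $|f(y)-f(x)|$ and $u-|f(y)-f(x)|$ are at least $v$, giving a non-edge of $E_{u/v}$. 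The hypothesis $r>2$ together with $u/v>r$ forces $u\ge 2v+1$, which is needed to handle the ``wrap-around'' case where the linear distance $y-x$ exceeds $1/2$. This floor-map case analysis is the main technical step.

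To conclude, pick rational sequences $s_n/t_n\to r^-$ and $u_n/v_n\to r^+$ with $s_n/t_n\le\min(p_n/q_n,r)$ and $u_n/v_n\ge\max(p_n/q_n,r)$ for all sufficiently large $n$. Combining the fraction-graph preorder with the two cohomomorphisms above gives $E_{s_n/t_n}\le E_{p_n/q_n},E_r^\open\le E_{u_n/v_n}$, so the monotonicity observation yields $d(E_{p_n/q_n},E_r^\open)\le d(E_{s_n/t_n},E_{u_n/v_n})$. The interleaved sequence $s_1/t_1,u_1/v_1,s_2/t_2,u_2/v_2,\ldots$ still converges to $r$, so by \autoref{th:intro:cont-irr} the corresponding sequence of fraction graphs is Cauchy in asymptotic spectrum distance, forcing $d(E_{s_n/t_n},E_{u_n/v_n})\to 0$ and hence $d(E_{p_n/q_n},E_r^\open)\to 0$ as desired.
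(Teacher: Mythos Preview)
Your proposal is correct and takes essentially the same sandwich approach as the paper: the paper establishes $E_{a/b}\le E_r^\closed\le E_r^\open\le E_{c/d}$ for $a/b<r<c/d$ via \autoref{lem:mon-inf} and \autoref{lem:open-circle-frac-equiv} (whose proofs are exactly your evenly-spaced embedding and floor map), and then invokes \autoref{thm: continuous at irrationals} to collapse the sandwich, just as you invoke \autoref{th:intro:cont-irr}. Your direct reconstruction of the two cohomomorphisms is fine; the remark about the irrationality of $u/r$ is superfluous (just $u/v>r$ already gives $\lceil u/r\rceil\ge v+1$), and the ``wrap-around'' case is not really a separate case once you work with the circular distance $y-x\in[1/r,1-1/r]$, but neither affects correctness.
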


For any two graphs $G,H$ we write $G \asympleq H$ if $G^{\boxtimes n} \leq H^{\boxtimes (n + o(n))}$. We say $G$ and~$H$ are asymptotically equivalent, if $G \asympleq H$ and $H \asympleq G$. It follows from our proof of \autoref{th:intro:irr-closed-open-equiv} that $E_r^\closed$ is asymptotically equivalent to $E_r^\open$ for irrational $r$. For rational $r$ we find:

\begin{theorem}[\autoref{th:op-cl-left}] Let $r = p/q\in \QQ_{\geq2}$. The following are equivalent:
    \begin{enumerate}[\upshape(i)]
        \item $E_r^\closed$ and $E_r^\open$ are asymptotically equivalent.
        \item If $a_n/b_n \to p/q$ from below, then $E_{a_n/b_n} \to E_{p/q}$.
    \end{enumerate}
\end{theorem}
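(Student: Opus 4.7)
The plan is to reduce both implications to comparisons of $F$-values, where $F$ ranges over the asymptotic spectrum $\X$ (extended to infinite graphs), using the duality $d(G,H)=\sup_{F\in\X}|F(G)-F(H)|$ and the characterization that two graphs are asymptotically equivalent if and only if $F(G)=F(H)$ for every $F\in\X$. The technical core is three structural identities that I will first establish independently of (i) and (ii):
\begin{enumerate}[\upshape(A)]
    \item $F(E_r^\open)=F(E_{p/q})$;
    \item $F(E_r^\closed)=\sup_{a/b<r} F(E_{a/b})$;
    \item $F(E_r^\closed)\le F(E_r^\open)$.
\end{enumerate}
Here (C) is immediate: $E_r^\closed$ has strictly more edges than $E_r^\open$, so the identity on vertices is a cohomomorphism $E_r^\closed\to E_r^\open$.

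For (A), the natural embedding $i\mapsto i/p$ realizes $E_{p/q}$ as an induced subgraph of $E_r^\open$, giving $F(E_{p/q})\le F(E_r^\open)$. For the reverse, any finite induced subgraph $H=E_r^\open|_S$ admits, for $N$ sufficiently large, the discretization $\phi(x)=\lfloor Npx\rfloor$ as a cohomomorphism into $E_{Np/(Nq-1)}$: a non-edge of $E_r^\open$ has $d_C(x,y)\ge q/p$, hence $d_{\ZZ/Np\ZZ}(\phi(x),\phi(y))\ge Nq-1$, which is exactly the non-edge condition for $E_{Np/(Nq-1)}$. Since $Np/(Nq-1)\searrow p/q$, \autoref{th:rational-right-cont} yields $F(E_{Np/(Nq-1)})\to F(E_{p/q})$, so $F(H)\le F(E_{p/q})$; extending $F$ to the infinite graph $E_r^\open$ as the supremum over finite induced subgraphs then gives (A). For (B), the bound $\ge$ follows from $E_{a/b}\le E_r^\closed|_S\le E_r^\closed$ whenever $a/b<r$ and $S$ is the set of $a$ evenly spaced points (then $b/a>q/p$). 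The bound $\le$ is the technical heart: the strict inequality in the non-edge condition $d_C(x,y)>1/r$ of $E_r^\closed$ gives, for any finite $S$, a positive separation gap $\delta=\min\{d_C(x,y)-1/r\}$ over non-edge pairs in $S$. Choosing $N$ with $Np\delta>1$, the same discretization now sends non-edges of $H=E_r^\closed|_S$ to pairs at distance $\ge Nq+1$, i.e.\ into $E_{Np/(Nq+1)}$ with $Np/(Nq+1)<r$, so $F(H)\le\sup_{a/b<r}F(E_{a/b})$.

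The two implications now follow. For (ii)$\Rightarrow$(i): (ii) gives $F(E_{a_n/b_n})\to F(E_{p/q})$ pointwise in $F$, and combined with $F(E_{a_n/b_n})\le F(E_r^\closed)\le F(E_r^\open)=F(E_{p/q})$ from (B), (C), (A), a squeeze forces $F(E_r^\closed)=F(E_r^\open)$ for every $F$. For (i)$\Rightarrow$(ii): (i) together with (A) and (B) yields $F(E_{p/q})=\sup_{a/b<r}F(E_{a/b})$. For a monotone sequence $a_n/b_n\nearrow r$, the continuous nonnegative functions $F\mapsto F(E_{p/q})-F(E_{a_n/b_n})$ on the compact space $\X$ decrease pointwise to $0$, so Dini's theorem gives uniform convergence, i.e.\ $d(E_{a_n/b_n},E_{p/q})\to 0$. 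A general sequence approaching $r$ from below is handled by monotonicity of $a/b\mapsto\sup_F(F(E_{p/q})-F(E_{a/b}))$ in $a/b$.

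The main obstacle is the upper bound in (B): it is precisely the strict inequality in the definition of $E_r^\closed$ that produces the positive separation gap allowing the discretization to land inside a fraction graph $E_{a/b}$ with $a/b<r$. This asymmetry is exactly the mechanism by which the difference between $E_r^\closed$ and $E_r^\open$ becomes visible to finite fraction graphs approaching $r$ from below, and is what couples (i) with (ii).
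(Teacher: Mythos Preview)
Your direction (ii)$\Rightarrow$(i) is fine and matches the paper's route (v)$\Rightarrow$(i): you only use the easy inequality $F(E_{a/b})\le F(E_r^\closed)\le F(E_r^\open)=F(E_{p/q})$ and squeeze. The problem is (i)$\Rightarrow$(ii), where your argument rests on the upper bound in (B), namely $F(E_r^\closed)\le\sup_{a/b<r}F(E_{a/b})$, and your justification for this is the step ``extending $F$ to the infinite graph as the supremum over finite induced subgraphs''. That step is not available: elements $F\in\X_\infty$ are defined abstractly as $\leq$-monotone semiring homomorphisms on infinite graphs, and there is no reason an arbitrary such $F$ satisfies $F(G)=\sup_{S\text{ finite}}F(G[S])$. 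Your discretization argument (which is essentially \autoref{lem:fin-cl-frac}) only shows that every \emph{finite} induced subgraph $H$ of $E_r^\closed$ has $F(H)\le\sup_{a/b<r}F(E_{a/b})$; it says nothing about $F(E_r^\closed)$ itself. The same gap appears in your proof of (A), though (A) happens to be true for a simpler reason: $E_{p/q}$ and $E_{p/q}^\open$ are actually equivalent via explicit cohomomorphisms (\autoref{lem:open-circle-frac-equiv}), so no limiting argument is needed.

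The paper's proof of (i)$\Rightarrow$(ii) avoids (B) entirely. Instead of trying to compute $F(E_r^\closed)$, it uses the asymptotic equivalence at the level of the preorder: from $(E_r^\open)^{\boxtimes n}\le(E_r^\closed)^{\boxtimes m_n}$ one composes with $E_{p/q}\le E_r^\open$ to get a cohomomorphism $E_{p/q}^{\boxtimes n}\to(E_r^\closed)^{\boxtimes m_n}$. The key point is that $E_{p/q}^{\boxtimes n}$ is \emph{finite}, so its image is a finite subset of $(E_r^\closed)^{\boxtimes m_n}$, and now \autoref{lem:fin-cl-frac} applied to each factor gives $E_{p/q}^{\boxtimes n}\le E_{a_n/b_n}^{\boxtimes m_n}$ for some $a_n/b_n<r$. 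From here one reads off the pointwise (and then uniform, via Dini) convergence directly. In other words, the finiteness is exploited on the \emph{domain} side of a specific cohomomorphism, not via an unjustified sup-over-finite-subgraphs formula for $F$.
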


Graphs $G$ and $H$ are called equivalent if $G \leq H$ and $H \leq G$. Equivalent graphs are asymptotically equivalent (but asymptotically equivalent graphs may not be equivalent).
As a step towards understanding $E_r^\closed$ and $E_r^\open$ better, we prove that they are not equivalent: %

\begin{theorem}[\autoref{sec:nonequiv}]
    Let $r \in \mathbb{R}_{>2}$. Then $E_{r}^\closed$ and $E_{r}^\open$ are not equivalent.
\end{theorem}

The proof of this result is based on a complete characterization of the self-co\-ho\-mo\-morphisms of the circle graphs $E_r^\closed$ and $E_r^\open$ (\autoref{lem:rotation-reflection}), for which we use ideas from the theory of dynamical systems.

\subsection{Independent sets from orbit constructions and reductions}\label{subsec:intro:orbits}

Having delved into asymptotic spectrum distance and how to construct converging sequences, we focus in this part on the construction of explicit independent sets, their structure and ways of transforming them between graphs (reductions).

Over time, several structured and concise constructions of independent sets in powers of graphs (say, odd cycles) have been found and investigated, starting with the famous independent set $\{t \cdot (1,2) : t \in \ZZ_5\}$ in~$C_5^{\boxtimes 2}$ \cite{MR0089131}, and later \cite{MR0337668, LinearShannonCapacity, MR3906144}, among others. In the recent remarkable work by Google DeepMind \cite{romera-paredes_mathematical_2024}, a large language model (LLM) not only recovered the best-known lower bounds for the Shannon capacity of small odd cycles, but moreover did so in a concise manner (in the sense of Kolmogorov-complexity).\footnote{More precisely, the authors use an LLM to produce short computer programs which produce independent sets of size matching the best-known Shannon capacity lower bounds for small odd cycles.} %

We propose a simple, general framework in which essentially all best-known lower bounds on Shannon capacity of odd cycles fit---offering an explanation for the aforementioned structure and conciseness. This framework, which can be thought of as a finite version of the graph limit approach, consists of (1) relating the target graph $G = C_n = E_{n/2}$ to another fraction graph $H = E_{p/q}$ (and ``intermediate'' or ``auxiliary'' graph), and then (2) constructing a large independent set in a power of $H$ using an ``orbit'' construction. The aforementioned constructions of Shannon \cite{MR0089131} and Polak--Schrijver~\cite{MR3906144} are indeed of this form; other bounds (e.g., by Baumert, McEliece, Rodemich, Rumsey, Stanley and Taylor~\cite{MR0337668}) can be recovered in this way. We discuss this in \autoref{subsec:odd-cycl-overview} (and in particular \autoref{tab:overview}). 

The framework described above is remarkably similar to the state-of-the-art methods used for constructing fast matrix multiplication algorithms. Indeed, the (asymptotically) fastest matrix multiplication algorithms are obtained by a reduction of the ``matrix multiplication tensor'' to an ``intermediate tensor'' for which the relevant properties are easier to analyse \cite{coppersmith1987matrix,blaser2013fast,DBLP:conf/stoc/AmbainisFG15,v017a002,alman2018limits,williams2023new, wigderson2022asymptotic}.

In \autoref{subsec:fifteen} we develop methods for reducing a target graph $G$ to an auxiliary graph and in particular introduce a new ``nondeterministic rounding'' technique. 
Using this technique applied to an orbit construction in a power of a fraction graph close to the fifteen-cycle, we find a new Shannon capacity lower bound:

\begin{theorem}[\autoref{th:C15-4}]
    $\Theta(C_{15}) \geq \alpha(C_{15}^{\boxtimes 4})^{1/4} \geq 2842^{1/4} \approx 7.30139$.
\end{theorem}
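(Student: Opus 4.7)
The statement splits into two inequalities. The first, $\Theta(C_{15}) \geq \alpha(C_{15}^{\boxtimes 4})^{1/4}$, is immediate from Fekete's lemma applied to the definition $\Theta(G) = \sup_n \alpha(G^{\boxtimes n})^{1/n}$. The entire content of the theorem is therefore the lower bound $\alpha(C_{15}^{\boxtimes 4}) \geq 2842$, which improves the trivial bound $\alpha(C_{15})^4 = 7^4 = 2401$ by roughly $18\%$, so a non-trivial construction is required.

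The plan is to follow the finite graph-limit / orbit construction framework of \autoref{subsec:intro:orbits}. I would pick an auxiliary fraction graph $E_{p/q}$ with $p/q$ close to and strictly greater than $15/2$; by the theory of fraction graphs this gives a cohomomorphism $E_{p/q} \leq E_{15/2} = C_{15}$ and a small asymptotic spectrum distance between the two. Exploiting the diagonal $\ZZ_p$-action on $V(E_{p/q})^4 \cong \ZZ_p^4$, I would search for a small family of generators $g_1, \ldots, g_k \in \ZZ_p^4$ such that $\bigsqcup_{i=1}^{k} \ZZ_p \cdot g_i$ is an independent set in $E_{p/q}^{\boxtimes 4}$ of size $p \cdot k$. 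By vertex-transitivity the independence condition reduces to a finite pairwise test on orbit representatives, which makes the search tractable; the goal is to produce generators whose orbits collectively cover at least $2842$ independent tuples.

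The final step is to transfer this independent set from $E_{p/q}^{\boxtimes 4}$ to $C_{15}^{\boxtimes 4}$ via the nondeterministic rounding technique of \autoref{subsec:fifteen}. Because $p/q > 15/2$, a naive deterministic rounding $v \mapsto \lfloor v \cdot 15/p \rfloor$ can turn some non-edges into edges, so instead each vertex of $E_{p/q}$ is assigned a small set of admissible targets in $C_{15}$ and a selection rule, applied coordinatewise and depending on the tuple, ensures that every pair of images remains a non-edge in $C_{15}^{\boxtimes 4}$ and that no two tuples in the orbit family collide. Injectivity of the lift then yields the claimed bound of $2842$.

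The main obstacle is the coupled search across all three steps: identifying a choice of $p/q$, a small set of generators, and a feasible rounding scheme whose combined output simultaneously satisfies independence in the auxiliary graph, feasibility of the rounding selection, and the targeted count. Because the orbit-independence and rounding-feasibility conditions interact tightly, I expect the proof to rely on a computer-assisted search over small $p/q$ and small generator families, guided by the theoretical framework of \autoref{subsec:fifteen}, rather than on a closed-form construction.
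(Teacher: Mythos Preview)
Your overall plan matches the paper's: construct an orbit independent set in an auxiliary fraction graph with parameter slightly above $15/2$, then transfer it to $C_{15}^{\boxtimes 4}$ via nondeterministic rounding. Two points need correction.

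First, a directional slip: when $p/q > 15/2$ the cohomomorphism order gives $E_{15/2} \leq E_{p/q}$, not $E_{p/q} \leq E_{15/2}$ as you write. Your later paragraph (``a naive deterministic rounding \ldots\ can turn some non-edges into edges'') is consistent with the correct direction, so this appears to be a typo rather than a conceptual error.

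Second, and more substantively, your description of the nondeterministic rounding step does not match what the paper does and would be hard to make work as stated. You propose a coordinatewise \emph{selection rule} that picks, for each orbit element, one admissible target per coordinate so that independence and injectivity are preserved, after which ``injectivity of the lift yields the claimed bound of $2842$''. The paper does not attempt such a rule. Instead it \emph{expands} each coordinate $x$ of each orbit element to a small set $F_{\eps}(x)$ of candidate images in $\ZZ_{15}$ (the floor of $15x/p$, together with an adjacent integer when $15x/p$ is within $\eps$ of an integer), takes the Cartesian product over the four coordinates, and collects all resulting tuples into a set $T_{\vec\eps} \subseteq V(C_{15}^{\boxtimes 4})$ that is much \emph{larger} than the orbit (in the paper's instance $|T_{\vec\eps}| = 13718$ from an orbit of size $2873$). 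An ILP solver is then run on the induced subgraph $C_{15}^{\boxtimes 4}[T_{\vec\eps}]$ to extract a large independent set; the value $2842$ is the solver's output, not the consequence of an injectivity argument, and indeed $2842 < 2873$, so vertices are lost in the transfer. The paper also uses a single generator $(1,15,1073,1125) \in \ZZ_{2873}^4$ (not a family $g_1,\ldots,g_k$), and allows slightly different denominators in each factor, working in $E_{2873/383} \boxtimes E_{2873/382} \boxtimes E_{2873/381}^{\boxtimes 2}$ rather than a uniform fourth power.
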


Our bound improves the previous bound $\Theta(C_{15}) \geq 7.25584$ which was obtained by Codenotti, Gerace and Resta \cite{MR1961489} and independently by Polak and Schrijver (personal communication).

In \autoref{subsec:disc} we investigate independent sets in products of fraction graphs further. We focus on describing, for fixed $k$, the function $p/q \mapsto \alpha(E_{p/q}^{\boxtimes k})$, and more generally the multivariate version of this function, $(p_1/q_1, \ldots, p_k/q_k) \mapsto \alpha(E_{p_1/q_1} \boxtimes \cdots \boxtimes E_{p_k/q_k})$. 
For~$k=1$, a simple argument gives $\alpha(E_{p/q}) = \lfloor p/q\rfloor$. For $k=2$, Hales \cite{MR0321810} and Badalyan and Markosyan \cite{MR3016977} proved
\[
\alpha(E_{p_1/q_1} \boxtimes E_{p_2/q_2}) = \min \{ \lfloor\lfloor p_1/q_1 \rfloor p_2/q_2\rfloor, \lfloor\lfloor p_2/q_2 \rfloor p_1/q_1\rfloor\}.
\]
We give a simple proof of this result using orbits (\autoref{th:two-factors}). 
As our main result, we consider $k=3$ and determine $\alpha_3(p_1/q_1,p_2/q_2,p_3/q_3) \coloneqq \alpha(E_{p_1/q_1} \boxtimes E_{p_2/q_2} \boxtimes E_{p_3/q_3})$ for all $p_i/q_i \in [2,3]$. Since $\alpha_3$ is monotone in every coordinate, it suffices to determine the discontinuities to describe the function:

\begin{theorem}[\autoref{th:discont}, \autoref{fig:hasse}]\label{th:intro:discont}
The discontinuities of $\alpha_3$ restricted to the set $(\QQ\cap[2,3])^3$ are, up to permutation,~at
    \begin{align*}
        \alpha_3(2,2,2) &= 8            & \alpha_3(5/2, 5/2, 8/3) &= 11\\
        \alpha_3(2,2,3) &= 12           & \alpha_3(8/3, 8/3, 8/3) &= 12\\
        \alpha_3(2,3,3) &= 18           & \alpha_3(11/5, 11/4, 11/4) &= 11\\
        \alpha_3(2, 5/2, 5/2) &= 10     & \alpha_3(11/4, 11/4, 11/4) &= 13\\
        \alpha_3(5/2, 5/2, 3) &= 15     & \alpha_3(14/5, 14/5, 14/5) &= 14\\        
        \alpha_3(9/4, 7/3, 5/2) &= 9    & \alpha_3(3,3,3) &= 27.
    \end{align*}
\end{theorem}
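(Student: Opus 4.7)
The plan is to exploit that $\alpha_3$ is monotone in each coordinate (since $p/q \leq s/t$ yields a cohomomorphism $E_{p/q} \to E_{s/t}$, which extends to the triple strong product) and integer-valued, so that viewed as a function on $(\mathbb{Q} \cap [2,3])^3$ it is a step function, fully determined by the minimal elements of its level sets $L_v = \{r : \alpha_3(r) \geq v\}$ across $v \in \{9, \ldots, 27\}$. The theorem amounts to the claim that these minimal elements are precisely the listed twelve unordered triples. The proof then splits into (i) lower bounds, exhibiting an independent set of the claimed size at each listed triple, (ii) matching upper bounds, and (iii) completeness, ensuring no other triples contribute minimal elements.

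For (i), all twelve lower bounds can be produced by the orbit-construction framework of \autoref{subsec:intro:orbits}. The boundary cases with integer coordinates ($\alpha_3(2,2,2)=8$, $\alpha_3(2,2,3)=12$, $\alpha_3(2,3,3)=18$, $\alpha_3(3,3,3)=27$) are immediate from $E_m = \overline{K}_m$. Those involving $C_5 = E_{5/2}$ combine Shannon's pentagon orbit $\{(i,2i) : i \in \mathbb{Z}_5\}$ with a trivial third factor (for $\alpha_3(2,5/2,5/2) = 10$ and $\alpha_3(5/2,5/2,3) = 15$), and similarly for $\alpha_3(5/2,5/2,8/3) = 11$. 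The fully three-dimensional entries, in particular $\alpha_3(9/4,7/3,5/2) = 9$, $\alpha_3(8/3,8/3,8/3) = 12$, $\alpha_3(11/4,11/4,11/4) = 13$, $\alpha_3(11/5,11/4,11/4) = 11$, and $\alpha_3(14/5,14/5,14/5) = 14$, require genuine diagonal orbits under a cyclic (or cyclic-plus-reflection) group action, which can be located by a finite search and then verified independent case by case.

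For (ii), the upper bounds follow from asymptotic-spectrum elements combined with the two-factor formula of Hales and Badalyan--Markosyan (\autoref{th:two-factors}). The primary tool is the multiplicativity of the Lovász theta function under the strong product,
\[
\alpha(E_{r_1}\boxtimes E_{r_2}\boxtimes E_{r_3}) \leq \vartheta(E_{r_1})\,\vartheta(E_{r_2})\,\vartheta(E_{r_3}),
\]
together with the explicit cosine expression for $\vartheta(E_{p/q})$ (since these are vertex-transitive circulants). For each listed triple one verifies that the floor of this product, or of a mixed bound such as $\alpha(E_{r_1} \boxtimes E_{r_2}) \cdot \overline{\chi}_f(E_{r_3})$ using the two-factor formula and $\overline{\chi}_f(E_{p/q}) = p/q$, equals the claimed value.

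For (iii), completeness is the main obstacle. By the coordinate-wise right-continuity of $\alpha_3$ (a direct consequence of \autoref{th:intro:rational-right-cont} applied to one factor at a time), any minimal element must consist of fractions with denominators bounded by a function of $v$, yielding a finite though a priori large candidate set. The strategy is to proceed level by level, from $v=27$ downward: use the upper bound of part (ii) to localise candidate discontinuities, and rule out every unlisted triple either by matching an upper bound from a listed triple below it (so it is not minimal) or by a sharp $\vartheta$-based bound specific to the triple. The resulting verification is cleanly visualised as traversing the Hasse diagram of \autoref{fig:hasse}. I expect the bulk of the effort to lie in systematically excluding ``near-miss'' candidates involving $7/3$, $11/5$, $8/3$, and their neighbours in combinations that are not on the list but nevertheless fall just outside the coarse product bound, where a tight choice between Lovász theta, fractional clique cover, and the two-factor formula is needed.
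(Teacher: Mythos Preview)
Your overall decomposition into lower bounds, upper bounds, and completeness is sound, but the completeness step (iii) has a genuine gap, and one of your lower-bound claims is false.

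On completeness: invoking right-continuity of $\alpha_3$ does \emph{not} bound the denominators of a minimal element. Right-continuity tells you that each discontinuity point has a neighbourhood to the right on which $\alpha_3$ is constant, but it gives no a priori control on how complicated the left endpoint of that plateau is. The paper's finiteness argument is different and sharper: it is a \emph{numerator} bound coming from vertex removal (\autoref{th:remove-point}). Concretely, if $(p_1/q_1,p_2/q_2,p_3/q_3)$ is a discontinuity with $p_i > \alpha_3(p_1/q_1,p_2/q_2,p_3/q_3)$, then some vertex of $E_{p_i/q_i}$ is missed by every maximum independent set, so removing it yields a strictly smaller fraction graph with the same $\alpha_3$-value, contradicting minimality. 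Hence $\max_i p_i \leq \alpha_3 \leq 27$ on $[2,3]^3$, giving a finite candidate list. The paper then filters this list further with the nested floor bound and $\vartheta$, and finishes with a computer search (Gurobi) rather than a level-by-level analytical case analysis.

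On lower bounds: your assertion that all twelve values come from orbit constructions fails for $\alpha_3(8/3,8/3,8/3)=12$. The paper proves (in the remark following \autoref{th:discont}) that no subgroup of $H_1\times H_2\times H_3$ of size $12$ can be independent in a product of Cayley graphs each mapping into $E_{8/3}$: vertex-transitivity forces $\overline{\chi}_f \leq 12/5$ on the auxiliary graphs, and then the nested floor bound caps the independence number at $9$. So this particular lower bound must be obtained by a non-orbit independent set, and indeed the paper simply reads it off from the computer output.
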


\subsection{Asymptotic spectrum distance of tensors, matrix multiplication}\label{subsec:intro:gen}

Let us take a step back and note again that the theory of asymptotic spectrum distance applies much more broadly than just to graphs. Namely, it applies to any semiring with a preorder for which the asymptotic spectrum duality holds; the exact conditions for this we discuss in \autoref{sec:gen-asymp-spec}. 

An interesting and much studied setting where asymptotic spectrum duality exists (and thus asymptotic spectrum distance) is the semiring of tensors (under direct sum and tensor product) with the ``restriction preorder'' (\autoref{sec:gen-asymp-spec}). This was in fact the original application of asymptotic spectrum duality in the seminal work \cite{strassen1987relative, strassen1988asymptotic, strassen1991degeneration}, motivated by the problem of constructing fast matrix multiplication algorithms. %

Remarkably, recent works of Draisma, Blatter and Rupniewski~\cite{bdr} and Briët, Christandl, Leigh, Shpilka and Zuiddam \cite{DBLP:conf/innovations/BrietCLSZ24} show that (contrary to the graph setting) many asymptotic tensor parameters are discrete. In particular, it can be shown that over any finite field, with the asymptotic spectrum distance, any Cauchy sequence of tensors must be eventually constant, and thus %
the space of tensors with asymptotic spectrum distance is complete. 

The discreteness in the tensor setting is in stark contrast to what we see in the graph setting of this paper:
as we have discussed (\autoref{subsec:intro:conv}), in the space of graphs with the asymptotic spectrum distance we can construct non-trivial converging sequences of graphs, and also non-converging Cauchy sequences. It is this surprisingly  ``continuous'' behaviour of graphs that we aim to capitalize on in this work.

\subsection{Paper overview}

In \autoref{sec:dist} we review the basic notation and background around Shannon capacity, including asymptotic spectrum duality, and we prove the basic properties and characterizations of asymptotic spectrum distance. (This section deals only with finite graphs for simplicity. The straightforward extension to infinite graph we discuss in \autoref{sec:asymp-spec-infinite}.)
In \autoref{sec:frac} we use fraction graphs to construct non-trivial converging sequences.
In \autoref{sec:completion} we work with infinite graphs and in particular with graphs on the circle in the real plane. We prove that they are limit points for certain sequences of fraction graphs. We prove a strong relation between convergence questions for fraction graphs and asymptotic equivalence of ``open'' and ``closed'' circle graphs.
In \autoref{sec: self-cohomomorphisms} we study self-cohomomorphisms of fraction graphs and circle graphs and use these to prove non-equivalence of the ``open'' and ``closed'' version of circle graphs.
In \autoref{sec:computation} we discuss the structure and computation of independent sets in products of fraction graphs, and prove a new lower bound on the Shannon capacity of the fifteen-cycle.
\autoref{sec:discussion} is a discussion of future directions and problems. 

\section{Shannon capacity and asymptotic spectrum distance}
\label{sec:dist}

The goal of this section is to define a distance function on graphs that is geared towards attacking the Shannon capacity problem---and indeed arises naturally from it---and to discuss its basic properties.

We begin with a brief discussion of Shannon capacity and related basic notions. We then discuss asymptotic spectrum duality, which provides a dual description for the Shannon capacity. 
Asymptotic spectrum duality naturally leads to a distance on graphs, the asymptotic spectrum distance. Finally, we discuss a method for constructing pairs of graphs that have small distance in the asymptotic spectrum distance. 

In this section we only work with finite graphs, for simplicity and since this suffices for \autoref{sec:frac}. Most ideas, in particular asymptotic spectrum duality and distance, extend directly to infinite graphs. Later, in \autoref{sec:completion} and \autoref{sec: self-cohomomorphisms}, we will indeed use infinite graphs, and we will discuss the needed material for infinite graphs there (\autoref{sec:asymp-spec-infinite}). In fact, most material in this section holds much more generally, which we discuss in~\autoref{sec:gen-asymp-spec} (and see the general survey \cite{wigderson2022asymptotic}).

\subsection{Shannon capacity and basic notions}

Let $G$ and $H$ be (finite, simple) graphs. We denote by $G \boxtimes H$ the strong product of~$G$ and $H$.\footnote{The vertex set $V(G\boxtimes H)$ is the cartesian product $V(G) \times V(H)$ and the edge set is given by $E(G\boxtimes H) = \{\{(a,x),(b,y)\} : (a=b \vee \{a,b\} \in E(G)) \wedge (x=y \vee \{x,y\} \in E(H)) \}$. This product is also called the AND product. It can alternatively be described in terms of adjacency matrices: Let $A(G)$ be the adjacency matrix of $G$ in which we put ones on the main diagonal. Then $A(G \boxtimes H) = A(G) \otimes A(H)$, where $\otimes$ is the Kronecker product of matrices.} 
We denote by $\alpha(G)$ the independence number of $G$ (which is defined as the size of the largest independent set in $G$) and by $\Theta(G) = \lim_{n\to\infty} \alpha(G^{\boxtimes n})^{1/n}$ the Shannon capacity of $G$ \cite{MR0089131}. It is known by Fekete's lemma that the limit exists and can be replaced by a supremum. Determining the Shannon capacity of a graph is a hard problem, even for small graphs. Various techniques are known to upper bound the Shannon capacity, including the (fractional) clique covering number \cite{MR0089131}, (fractional) orthogonal rank (projective rank) \cite{MR1417351,MR3649190}, Lovász theta function \cite{Lovasz79}, and (fractional) Haemers bound \cite{haemers1979some, blasiakThesis, Bukh18}. %

We use the following basic terminology throughout the paper.
We denote by $G \sqcup H$ the disjoint union of $G$ and $H$. 
We denote by $\overline{G}$ the complement graph of $G$. %
We denote the complete graph on $n$ vertices by~$K_n$, and its complement by $E_n = \overline{K}_n$, so that $E_n$ is the graph with $n$ vertices and no edges. 
With the addition $\sqcup$ and multiplication $\boxtimes$ the set of (isomorphism classes of) graphs becomes a commutative semiring. In this semiring, the graphs $E_0$ and $E_1$ are the additive and multiplicative unit, respectively.

We say that a map $f : V(G) \to V(H)$ is a cohomomorphism $G \to H$ if for every distinct $u,v \in V(G)$ if $\{u,v\} \not\in E(G)$, then $f(u) \neq f(v)$ and  $\{f(u),f(v)\} \not\in E(H)$. We write~$G \leq H$ if there is a cohomomorphism $G \to H$. We call $\leq$ the cohomomorphism preorder. Cohomomorphisms are precisely the maps that preserve independent sets, and in particular $G \leq H$ implies $\alpha(G) \leq \alpha(H)$.
We say that~$G$ and~$H$ are equivalent if $G \leq H$ and $H \leq G$. 

The independence number can be written in terms of cohomomorphisms and the graphs $E_n$ by $\alpha(G) = \max \{n \in \NN : E_n \leq G\}$. The ``opposite'' graph parameter $\overline{\chi}(G) = \min \{n \in \NN : G \leq E_n\}$ is known as the clique covering number, and equals the smallest number of cliques in the graph $G$ that are needed to cover all its vertices.\footnote{The (standard) notation $\overline{\chi}$ for the clique covering number originates from the relation of this parameter to the chromatic number, which is denoted by $\chi$. Namely, $\overline{\chi}(G) = \chi(\overline{G})$.}

\subsection{Asymptotic spectrum duality}\label{subsec:asymp-spec-dual}

Understanding the Shannon capacity %
naturally asks for understanding how graphs behave when taking large powers (under the strong product~$\boxtimes$). For this we will use ideas developed in the theory of asymptotic spectrum duality \cite{strassen1988asymptotic,Zui18_thesis,MR4039606,MR4609385,wigderson2022asymptotic}, which (for convenience of the reader) we will explain here in the setting of graphs, following \cite{MR4039606}.

For any graphs $G,H$, we write $G \asympleq H$ if there is a function $f: \NN \to\NN$ with $f(n) = o(n)$ such that for all $n \in \NN$ we have $G^{\boxtimes n} \leq H^{\boxtimes (n+f(n))}$. We call ${\asympleq}$ the asymptotic preorder. We say that~$G$ and $H$ are asymptotically equivalent if $G \asympleq H$ and $H \asympleq G$. Note that in particular, Shannon capacity is monotone under $\asympleq$.

The asymptotic spectrum of graphs $\mathcal{X}$ is defined as the set of all maps $F$ from graphs to non-negative reals, such that for any graphs $G,H$ we have: $F(G \sqcup H) = F(G) + F(H)$ (additivity), $F(G \boxtimes H) = F(G) F(H)$ (multiplicativity), $F(E_n) = n$ for every $n \in \NN$ (normalisation), and if $G \leq H$, then $F(G) \leq F(H)$ (monotonicity). 

It is an open problem to fully determine $\X$.
Known elements in $\mathcal{X}$ are the Lovász theta function $\vartheta$ \cite{Lovasz79}, the fractional Haemers bound (over different fields)~\cite{Bukh18}, the fractional orthogonal rank (projective rank), and the fractional clique covering number~$\overline{\chi}_f$ (see \cite{MR4039606}). Continuously many elements in $\X$ can be constructed using the interpolation method of Vrana \cite{MR4357434}. All of these are upper bounds on Shannon capacity, and it is not hard to prove that indeed every element of $\X$ is an upper bound on Shannon capacity. 

The asymptotic spectrum duality theorem says, in particular, that the upper bounds in $\X$ are sufficiently strong to determine the Shannon capacity. In fact, it says that the functions in~$\X$ are strong enough to completely characterize the asymptotic preorder~$\asympleq$.

\begin{theorem}[Asymptotic spectrum duality]\label{th:duality-Theta}\label{th:chi-bar-max} Let $G,H$ be graphs.
\begin{enumerate}[\upshape(a)]
    \item %
    $\Theta(G) = \min_{F \in \X} F(G)$.
    \item %
    $\overline{\chi}_f(G) = \max_{F \in \X} F(G)$.
    \item %
    $G \asympleq H$ if and only if for every $F \in \X$, $F(G) \leq F(H)$.
\end{enumerate}
\end{theorem}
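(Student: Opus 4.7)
The plan is to treat (c) as the central claim and derive (a) and (b) from it. The forward direction of (c) is a short calculation: if $G \asympleq H$ is witnessed by some $f(n) = o(n)$, then for each $F \in \X$ the multiplicativity and monotonicity of $F$ force $F(G)^n \leq F(H)^{n+f(n)}$; since $E_1 \leq H$ whenever $H$ is nonempty, normalization gives $F(H) \geq 1$, so taking $n$-th roots and sending $n \to \infty$ yields $F(G) \leq F(H)$.

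For the reverse direction of (c), I would apply the Strassen/Kadison--Dubois representation theorem for preordered commutative semirings to the semiring of graphs under $(\sqcup, \boxtimes)$ with the preorder $\asympleq$. Several hypotheses need checking. Compatibility of $\asympleq$ with $\sqcup$ and $\boxtimes$ is immediate from the fact that cohomomorphisms combine naturally under disjoint union and strong product, so pairs of witnesses for $G \asympleq G'$ and $H \asympleq H'$ assemble into witnesses for $G \sqcup H \asympleq G' \sqcup H'$ and $G \boxtimes H \asympleq G' \boxtimes H'$. The Archimedean condition on the unit $E_1$ reduces to $G \leq E_{\overline{\chi}(G)}$, which holds because $\overline{\chi}(G)$ is finite. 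The representation theorem then states that $\asympleq$ is the intersection of the preorders induced by all additive, multiplicative, monotone, normalized maps into $\RR_{\geq 0}$, and these are precisely the elements of $\X$ by definition.

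Parts (a) and (b) then follow from (c) by a density argument applied to blow-ups of $G$. Set $c := \min_{F \in \X} F(G)$. By (c), the relation $E_k \asympleq G^{\boxtimes m}$ holds if and only if $k \leq F(G)^m$ for every $F$, equivalently $k \leq c^m$. Unfolding the definition of $\asympleq$ and invoking Fekete's lemma, the same relation is equivalent to $k \leq \Theta(G)^m$ in the relevant regime; concretely, choosing $k = \lfloor c^m \rfloor$, the cohomomorphism $E_k^{\boxtimes n} \leq G^{\boxtimes mn + o(n)}$ furnished by (c) gives $\Theta(G) \geq k^{1/m} \geq (c^m - 1)^{1/m}$, and letting $m \to \infty$ yields $\Theta(G) \geq c$; the reverse inequality $\Theta(G) \leq F(G)$ for every $F$ is the elementary observation that $\alpha(G^{\boxtimes n})^{1/n} \leq F(G)$. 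This proves (a). For (b), the dual argument compares $G^{\boxtimes m} \asympleq E_k$, matching $\overline{\chi}_f(G) \leq k^{1/m}$ (from $\overline{\chi}_f(G) = \inf_m \overline{\chi}(G^{\boxtimes m})^{1/m}$) with $\max_{F \in \X} F(G) \leq k^{1/m}$.

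The main obstacle is the appeal to the Positivstellensatz in the second paragraph. While the theorem itself is standard, checking its hypotheses (in particular the Archimedean condition and any strict-positivity or $\NN$-embedding conditions that some formulations require) must be carried out specifically for the semiring of graphs modulo asymptotic equivalence. Once that is in place, everything else is essentially formal manipulation and a density argument.
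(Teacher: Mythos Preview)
The paper does not give its own proof of this theorem; it is stated as background in Section~2.2, and the abstract version in the appendix (\autoref{th:abstract-duality}) is likewise cited from the survey literature without proof. Your proposal---the forward direction of (c) by a direct calculation, the reverse direction via the Strassen/Kadison--Dubois representation theorem for preordered semirings, then (a) and (b) as corollaries---is precisely the standard argument found in those cited sources.

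Two small remarks. First, the paper's appendix sets up the representation theorem with the cohomomorphism preorder $\leq$ as the primitive datum and derives $\asympleq$ from it, whereas you apply the theorem to $\asympleq$ directly; both routes work, since $\asympleq$ is stable under its own asymptotification and (by your forward-direction calculation) the $\leq$-monotone and $\asympleq$-monotone semiring homomorphisms coincide. Second, for (b) your argument invokes $\overline{\chi}_f(G) = \inf_m \overline{\chi}(G^{\boxtimes m})^{1/m}$, which the paper explicitly flags in the Remark following the theorem as a separate nontrivial fact. The abstract duality only yields $\max_{F \in \X} F(G)$ equal to the asymptotic clique covering number $\lim_n \overline{\chi}(G^{\boxtimes n})^{1/n}$; identifying that with $\overline{\chi}_f$ requires this additional input (or, alternatively, the observation that $\overline{\chi}_f \in \X$ supplies one inequality directly).
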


\begin{remark}
    The Shannon capacity $\Theta$ itself is not in $\X$, which follows from results of Haemers~\cite{haemers1979some} and Alon~\cite{alon1998shannon}. Thus, in \autoref{th:duality-Theta}, the element $F$ minimizing $\min_{F \in \X} F(G)$ depends on $G$. On the other hand, the fractional clique covering number~$\overline{\chi}_f$ is in $\X$, and it is the pointwise largest function in there. This is closely related to the nontrivial fact that $\overline{\chi}_f(G) = \lim_{n\to\infty} \overline{\chi}(G^{\boxtimes n})^{1/n}$ \cite[Theorem 67.17]{MR1956924}.
\end{remark}

\begin{remark}
    \autoref{th:duality-Theta} characterizes $\Theta$ as a minimization, rather than an infimum. This is a consequence of compactness of $\X$, which intuitively follows from the fact that the defining conditions of $\X$ are closed conditions and the fact that for every graph $G$, the values that any $F \in X$ can take on $G$ are bounded (by $\alpha(G)$ and $\overline{\chi}(G)$, say). We discuss compactness  further in \autoref{subsec-graph-cont-func}.
\end{remark}

\subsection{Distance and convergence}\label{sec:metric}

The functions in the asymptotic spectrum $\X$ provide, in the precise sense of \autoref{th:duality-Theta}, all asymptotic information of a graph. They also naturally allow us to measure distance between graphs in a way that is appropriate in the context of Shannon capacity. We discuss this distance and basic properties.

\begin{definition}\label{def:dist}
For any graphs $G,H$, we define the \emph{asymptotic spectrum distance} by
\[
d(G,H) = \sup_{F \in \X} |F(G) - F(H)|.
\]
\end{definition}

\begin{remark}
    The asymptotic spectrum distance is not quite a distance on graphs, because for distinct graphs $G,H$ we may have $d(G,H)=0$. However, if we let $\mathcal{G}$ denote the set of all graphs in which we identify graphs that are asymptotically equivalent (recall that $G,H$ are asymptotically equivalent if $G \asympleq H$ and $H \asympleq G$, \autoref{subsec:asymp-spec-dual}), then the asymptotic spectrum distance is indeed a distance on $\mathcal{G}$ (this follows from \autoref{th:duality-Theta}) and the set $\mathcal{G}$ is a metric space.
\end{remark}

\begin{remark}\label{rem:compact}
The supremum in \autoref{def:dist} is in fact attained. This follows from compactness of $\X$, as we will discuss in \autoref{subsec-graph-cont-func}.
\end{remark}

\begin{lemma}\label{lem:Theta-conv}
    If a sequence of graphs $(G_i)_{i \in \NN}$ converges to a graph $H$ in the asymptotic spectrum distance, then $(\Theta(G_i))_{i\in \NN}$ converges to $\Theta(H)$.
\end{lemma}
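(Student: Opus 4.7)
The plan is to show the much stronger quantitative statement $|\Theta(G_i) - \Theta(H)| \le d(G_i, H)$ for each $i$, from which the lemma follows immediately since $d(G_i, H) \to 0$ by hypothesis. The argument is a direct consequence of asymptotic spectrum duality (\autoref{th:duality-Theta}(a)) together with the fact that the minimum of a family of functions is $1$-Lipschitz in the sup-norm on the family.

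First I would invoke \autoref{th:duality-Theta}(a) to write $\Theta(G_i) = \min_{F \in \X} F(G_i)$ and $\Theta(H) = \min_{F \in \X} F(H)$, where these are minima (not just infima) by compactness of $\X$ (\autoref{rem:compact}). By \autoref{def:dist}, for every $F \in \X$ we have $|F(G_i) - F(H)| \le d(G_i, H)$, and in particular
\[
F(G_i) \le F(H) + d(G_i, H) \quad \text{and} \quad F(H) \le F(G_i) + d(G_i, H).
\]
Next, let $F_i^\star \in \X$ attain $\Theta(G_i) = F_i^\star(G_i)$ and let $F_H^\star \in \X$ attain $\Theta(H) = F_H^\star(H)$. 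Applying the first inequality to $F = F_H^\star$ and then minimising over the choice of $F$ on the right gives
\[
\Theta(G_i) \le F_H^\star(G_i) \le F_H^\star(H) + d(G_i, H) = \Theta(H) + d(G_i, H),
\]
and symmetrically applying the second inequality to $F = F_i^\star$ gives
\[
\Theta(H) \le F_i^\star(H) \le F_i^\star(G_i) + d(G_i, H) = \Theta(G_i) + d(G_i, H).
\]
Combining these two estimates yields $|\Theta(G_i) - \Theta(H)| \le d(G_i, H)$, and letting $i \to \infty$ completes the proof.

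I do not anticipate any real obstacle: the entire content is the duality theorem plus the one-line observation that pointwise-close functions have close minima. If one wished to avoid invoking the attainment of the minima, one could equivalently argue that for every $\varepsilon > 0$ and every $F \in \X$ achieving $\Theta(H) \le F(H) + \varepsilon$, monotonicity of the bound $F(G_i) \le F(H) + d(G_i,H)$ yields $\Theta(G_i) \le \Theta(H) + d(G_i,H) + \varepsilon$, and letting $\varepsilon \to 0$ recovers the same conclusion without relying on \autoref{rem:compact}.
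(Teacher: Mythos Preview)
Your proof is correct and follows essentially the same approach as the paper: both invoke asymptotic spectrum duality to write $\Theta$ as a minimum over $\X$, pick minimizers $F_{G_i}, F_H \in \X$, and use the uniform bound $|F(G_i)-F(H)|\le d(G_i,H)$ (the paper phrases this with an $\eps$) to obtain the two-sided estimate. Your version is slightly sharper in stating the explicit Lipschitz bound $|\Theta(G_i)-\Theta(H)|\le d(G_i,H)$, but the underlying idea is identical.
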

\begin{proof}
    Let $\eps > 0$. There is an $N$ such that for every $i > N$ and every $F \in \X$ we have $|F(G_i) - F(H)| < \eps$.
    Let $F_{G_i}, F_H \in \X$ such that $F_{G_i}(G_i) = \Theta(G_i)$ and $F_H(H) = \Theta(H)$ (\autoref{th:duality-Theta}).
    Then we have
    $\Theta(H) = F_H(H) > F_H(G_i) - \eps \geq \Theta(G_i) - \eps$
    and
    $\Theta(G_i) = F_{G_i}(G_i) > F_{G_i}(H) - \eps \geq \Theta(H) - \eps$.
    Thus $|\Theta(G_i) - \Theta(H)| < \eps$.
\end{proof}

\begin{lemma}\label{lem:alt-charac} Let $G,H$ be graphs and $a,b \in \NN$, $b \geq1$. The following are equivalent.
\begin{enumerate}[\upshape(i)]
    \item $d(G,H) \leq a/b$
    \item $E_b \boxtimes G \asympleq (E_b \boxtimes H) \sqcup E_a$ and $E_b \boxtimes H \asympleq (E_b \boxtimes G) \sqcup E_a$
    \item $(E_b\boxtimes G)^{\boxtimes n} \leq ((E_b\boxtimes H) \sqcup E_a)^{\boxtimes(n + o(n))}$ and $(E_b \boxtimes H)^{\boxtimes n} \leq ((E_b\boxtimes G) \sqcup E_a)^{\boxtimes(n + o(n))}$
\end{enumerate}
\end{lemma}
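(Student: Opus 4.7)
The plan is to prove the cycle (i)$\Rightarrow$(ii)$\Rightarrow$(iii)$\Rightarrow$(i), where the equivalence (ii)$\Leftrightarrow$(iii) is essentially the definition of $\asympleq$ and the meat is the equivalence (i)$\Leftrightarrow$(ii), which follows from asymptotic spectrum duality (\autoref{th:duality-Theta}(c)) combined with the algebraic properties (additivity, multiplicativity, normalisation) of elements of $\X$.

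First I would observe that for any $F \in \X$, using that $F$ is additive, multiplicative, and satisfies $F(E_n)=n$, we have
\[
F(E_b \boxtimes G) = b\,F(G), \qquad F((E_b \boxtimes H) \sqcup E_a) = b\,F(H) + a,
\]
and symmetrically with $G,H$ swapped. By \autoref{th:duality-Theta}(c), the asymptotic comparison $E_b \boxtimes G \asympleq (E_b \boxtimes H) \sqcup E_a$ is equivalent to the inequality $b\,F(G) \leq b\,F(H) + a$ holding for every $F \in \X$, i.e.\ $F(G) - F(H) \leq a/b$ for every $F \in \X$. Similarly the reverse asymptotic comparison in (ii) is equivalent to $F(H) - F(G) \leq a/b$ for every $F \in \X$. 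Conjoining these two statements and taking the supremum over $F \in \X$ yields exactly $d(G,H) \leq a/b$, which is (i). This gives (i)$\Leftrightarrow$(ii).

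Next, (ii)$\Leftrightarrow$(iii) is the unfolding of the definition of $\asympleq$: by definition, $A \asympleq B$ means there exists $f : \NN \to \NN$ with $f(n) = o(n)$ such that $A^{\boxtimes n} \leq B^{\boxtimes(n+f(n))}$ for all $n$, which is precisely the statement $A^{\boxtimes n} \leq B^{\boxtimes(n+o(n))}$ in (iii). Applying this to both pairs $(A,B) = (E_b \boxtimes G,\, (E_b \boxtimes H) \sqcup E_a)$ and the swapped version closes the loop.

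I do not expect any genuine obstacle: the whole argument is essentially bookkeeping with the semiring structure of graphs under $(\sqcup, \boxtimes)$ and an invocation of duality. The only mild subtlety is that the supremum in $d(G,H) \leq a/b$ must be taken over all $F \in \X$ simultaneously for both directions $F(G)-F(H)$ and $F(H)-F(G)$; this is why the statement involves both asymptotic inequalities in (ii) and (iii) rather than just one. One should also note that the case $b \geq 1$ is imposed precisely so that $E_b$ is a multiplicative element (not the additive zero $E_0$), ensuring that $F(E_b \boxtimes G) = b\,F(G)$ carries real information. With these observations in place the proof is essentially a one-line application of \autoref{th:duality-Theta}(c) on each side.
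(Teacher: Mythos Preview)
Your proposal is correct and matches the paper's approach: the paper defers to \autoref{lem:abstract-dist-char} in the appendix, whose proof is exactly the argument you give---unpack $d(G,H)\le a/b$ as $|F(G)-F(H)|\le a/b$ for all $F\in\X$, use additivity, multiplicativity and normalisation of $F$ to rewrite this as $F(E_b\boxtimes G)\le F((E_b\boxtimes H)\sqcup E_a)$ and the symmetric inequality, invoke \autoref{th:duality-Theta}(c) for the equivalence with (ii), and then read off (iii) from the definition of $\asympleq$.
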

\begin{proof}
    We give the proof (which is a simple application of \autoref{th:duality-Theta}) in a more general setting in \autoref{lem:abstract-dist-char}.
\end{proof}

\subsection{Graphs as continuous functions on a compact space}\label{subsec-graph-cont-func}

So far we have treated the asymptotic spectrum $\X$ simply as a set of functions on the semiring of graphs, and we made the semiring of (asymptotic equivalence classes~of) graphs into a metric space using the distance induced by these functions. We will now take a different perspective that will allow us to apply ideas from analysis and that will in particular provide a natural (albeit abstract) embedding into a complete space.

In this different perspective we will turn $\X$ into a topological space and we will turn graphs into continuous functions on this space. This works as follows. 
We associate to every graph $G$ its evaluation function
\[
\widehat{G} : \X \to \RR : F \mapsto F(G).
\]
We may think of $\widehat{G}$ as the (infinitely long) vector of evaluations of the elements in the asymptotic spectrum on $G$.\footnote{The object $\widehat{G}$ acts as an ``asymptotic profile'' of $G$ in a way that is reminiscent of (but different from) the homomorphism profile of Lovász \cite[Section 5.4.1]{MR3012035}. The latter counts the number of homomorphisms to (or from) a given graph, and uniquely determines graphs up to isomorphism.}
The basic properties of these objects are: 
\begin{lemma}\label{lem:G-hat-prop}
Let $G$ and $H$ be graphs.
\begin{enumerate}[\upshape(i)]
    \item $\widehat{G\boxtimes H} = \widehat{G} \widehat{H}$
    \item $\widehat{G \sqcup H} = \widehat{G} + \widehat{H}$
    \item $G \asympleq H$ if and only if $\widehat{G} \leq 
    \widehat{H}$ pointwise
    \item $\min \widehat{G} = \Theta(G)$
    \item $\max \widehat{G} = \overline{\chi}_f(G)$
\end{enumerate}    
\end{lemma}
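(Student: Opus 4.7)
The plan is to observe that each of the five claims is essentially a reformulation of either the defining properties of the asymptotic spectrum $\X$ or of asymptotic spectrum duality (\autoref{th:duality-Theta}), so the proof is really just unpacking the definition of $\widehat{G}$ and citing what has already been established.

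For parts (i) and (ii), I would fix an arbitrary $F \in \X$ and evaluate both sides at $F$. By definition of $\widehat{G}$, the left-hand side of (i) is $F(G \boxtimes H)$, and by multiplicativity of $F$ this equals $F(G) F(H) = \widehat{G}(F)\widehat{H}(F)$, which is the value of the pointwise product $\widehat{G}\widehat{H}$ at $F$. The same argument with additivity in place of multiplicativity gives (ii). Since $F$ was arbitrary, these pointwise identities hold on all of $\X$.

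For part (iii), the statement $\widehat{G} \leq \widehat{H}$ pointwise unfolds, by the definition of $\widehat{\cdot}$, to $F(G) \leq F(H)$ for every $F \in \X$. This is exactly the condition in \autoref{th:duality-Theta}(c), which is equivalent to $G \asympleq H$. For part (iv), $\min \widehat{G}$ is by definition $\min_{F \in \X} F(G)$ (the minimum is attained, hence well-defined, by compactness of $\X$ as noted in \autoref{rem:compact}), which equals $\Theta(G)$ by \autoref{th:duality-Theta}(a). Part (v) is the analogous rewriting using \autoref{th:duality-Theta}(b), giving $\max \widehat{G} = \max_{F \in \X} F(G) = \overline{\chi}_f(G)$.

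There is no real obstacle here; the only subtle point worth flagging is that ``$\min$'' and ``$\max$'' in (iv) and (v) should be attained rather than merely infima/suprema. This relies on compactness of $\X$ in the topology under which all $\widehat{G}$ are continuous, which the excerpt has already pointed to in \autoref{rem:compact} and promises to establish in \autoref{subsec-graph-cont-func}. Once that compactness is granted, all five items follow in a couple of lines each from the definitions and \autoref{th:duality-Theta}.
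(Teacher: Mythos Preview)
Your proposal is correct and matches the paper's own approach exactly: the paper's proof is the single sentence ``These follow directly from \autoref{th:duality-Theta} and the definition of $\X$,'' and you have simply spelled out that sentence. The only extra care you took---flagging that the min and max in (iv) and (v) are attained by compactness---is appropriate and consistent with the paper's later discussion in \autoref{subsec-graph-cont-func}.
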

\begin{proof}
    These follow directly from \autoref{th:duality-Theta} and the definition of $\X$.
\end{proof}

We claim that $\X$ can be given a topology that is compact, Hausdorff and such that every function $\widehat{G}$ is continuous on $\X$.
Indeed, we get this by giving $\X$ the topology generated by the functions $\widehat{G}$, that is, the coarsest (i.e.,~weakest) topology for which every $\widehat{G}$ is continuous.\footnote{This is also called the weak or initial topology on $\X$ induced by the functions $\widehat{G}$. This is the topology on $\X$ that is generated (under unions and finite intersections) by the inverse images $\widehat{G}^{-1}(U)$ where $G$ goes over all graphs and $U$ over all open subsets of the nonnegative reals (with the Euclidean topology).} Then~$\X$ is easily seen to be Hausdorff (but we will not use this). Importantly, it is compact:

\begin{lemma}\label{lem:compact}
    The set $\X$ is compact in the coarsest topology such that for every graph~$G$ the function $\widehat{G}$ is continuous. %
\end{lemma}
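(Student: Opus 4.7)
The plan is to realize $\X$ as a closed subset of a product of compact intervals and invoke Tychonoff. First I would bound the range of any $F \in \X$ on a given graph $G$. By normalization and monotonicity, if $G \leq E_n$ then $F(G) \leq F(E_n) = n$, so in particular $F(G) \leq \overline{\chi}(G)$; together with non-negativity this gives $F(G) \in [0, \overline{\chi}(G)]$ for every $F \in \X$ and every graph $G$. Hence the map
\[
\Phi : \X \to \prod_{G} [0, \overline{\chi}(G)], \qquad F \mapsto (F(G))_{G},
\]
where $G$ ranges over (isomorphism classes of) finite graphs, is well-defined and injective.

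Next I would identify the topology. The product $P \coloneqq \prod_G [0, \overline{\chi}(G)]$ carries the product topology, which is by definition the coarsest topology making every coordinate projection $\pi_G : P \to [0,\overline{\chi}(G)]$ continuous. Since the evaluation $\widehat{G}$ is precisely $\pi_G \circ \Phi$, pulling back this topology along $\Phi$ yields exactly the coarsest topology on $\X$ in which every $\widehat{G}$ is continuous. So $\Phi$ is a homeomorphism onto its image, and it suffices to show that $\Phi(\X)$ is closed in $P$. By Tychonoff, $P$ is compact, and then $\Phi(\X)$, being a closed subset of a compact space, is compact.

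For closedness, I would check that each defining property of $\X$ cuts out a closed subset of $P$. Fix graphs $G,H$ and $n\in\NN$. The sets
\[
\{x \in P : x_{G\sqcup H} = x_G + x_H\}, \quad \{x \in P : x_{G\boxtimes H} = x_G \cdot x_H\}, \quad \{x \in P : x_{E_n} = n\}
\]
are each preimages of closed subsets of $\RR$ (or of a single point) under continuous maps built from the continuous projections $\pi_G, \pi_H, \pi_{G\sqcup H}, \pi_{G\boxtimes H}, \pi_{E_n}$ and the continuous operations $+$ and $\cdot$ on $\RR$; hence they are closed. Likewise, for every pair $G\leq H$, the set $\{x : x_G \leq x_H\}$ is closed. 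Then $\Phi(\X)$ is the intersection over all these (countably many choices of) closed conditions, hence closed.

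There is no genuine obstacle here — the only thing to be slightly careful about is verifying that the a priori bound on $F(G)$ uses only facts already in hand (the existence of a clique covering of $G$ together with monotonicity and normalization), so that the embedding target is a product of compact intervals rather than of unbounded intervals; once that is noted, Tychonoff plus the closed-condition argument finishes the proof. As a bonus, this also justifies \autoref{rem:compact}: on $\X$ compact the continuous function $F \mapsto |F(G) - F(H)| = |\widehat{G}(F) - \widehat{H}(F)|$ attains its supremum, so the sup in \autoref{def:dist} is a max.
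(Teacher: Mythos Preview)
Your proof is correct and follows essentially the same approach as the paper: embed $\X$ into a product of bounded intervals via evaluation, identify the given topology with the subspace topology from the product, invoke Tychonoff, and verify closedness by writing each defining axiom as the preimage of a closed set under a continuous combination of coordinate projections. The paper's argument is identical in structure (it embeds into $\prod_G \RR$ first and then restricts to $\prod_G [0,\overline{\chi}(G)]$, but this is an inessential variation).
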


\begin{proof}
    This is a ``standard'' fact \cite{MR0707730,strassen1988asymptotic,wigderson2022asymptotic}, but we give here a proof for the convenience of the reader.
    We consider $\X$ as a subset of $\prod_{G \in \mathcal{G}} \RR$ with inclusion map $i : F \mapsto (F(G))_{G \in \mathcal{G}}$.
    Endowing $\prod_{G \in \mathcal{G}} \RR$ with the product topology, the map $i$ is continuous if and only if all evaluation functions~$\widehat{G}$ are continuous  \cite[Theorem~19.6]{MR3728284}. The coarsest topology on~$\X$ that makes $i$ continuous is by definition the subspace topology.
    For every graph $G$, the set of values $\{F(G) : F \in \X\}$ is a subset of $[0, \overline{\chi}(G)]$, where~$\overline{\chi}$ denotes the clique covering number. Thus $\X$ is a subset of $\prod_{G \in \mathcal{G}} [0, \overline{\chi}(G)]$.
    Since $\prod_{G \in \mathcal{G}} [0, \overline{\chi}(G)]$ is compact by Tychonoff's theorem \cite[Theorem~37.3]{MR3728284}, it is now sufficient to show that~$\X$ is closed.

    For every graph $H$, denote by $\pi_H : \prod_{G \in \mathcal{G}} \RR \to \RR$ the projection on the coordinate indexed by~$H$. These are continuous by definition of the product topology and so is any polynomial combination of them. We can write $\X= Z_{\text{add}} \cap Z_{\text{mul}} \cap Z_{\text{norm}} \cap Z_{\text{mon}}$, where
    \begin{alignat*}{3}
        Z_{\text{add}} &= \bigcap_{G,H \in \mathcal{G}} (\pi_{G \sqcup H} - (\pi_G + \pi_H))^{-1}(\{0\}), \quad
        &&Z_{\text{mul}} &&= \bigcap_{G,H \in \mathcal{G}} (\pi_{G \boxtimes H} - \pi_G \pi_H)^{-1}(\{0\}), \\
        Z_{\text{norm}} &= \bigcap_{n \in \mathbb{N}} \pi_{E_n}^{-1} (\{n\}),
        &&Z_{\text{mon}} &&= \bigcap_{\substack{G,H \in \mathcal{G} \\ G \leq H}} (\pi_H - \pi_G)^{-1}([0, \infty)). 
    \end{alignat*}
    All sets are written as the intersections of pre-images of closed sets under continuous maps and are thus closed. We conclude that $\X$ is closed.
\end{proof}

\begin{remark}
    Since the image of a compact space under a continuous map is compact, we see that $\{F(G) : F \in \X\} = \widehat{G}(\X)$ is a compact subset of the reals, that is, closed and bounded. In particular, infimum and supremum over this set are attained (cf.~\autoref{th:duality-Theta} (ii) and (iii), and \autoref{lem:G-hat-prop} (iv) and (v)).
\end{remark}

Denote by $C(\X)$ the set of continuous functions $\X \to \RR$. Equip $C(\X)$ with the supremum-norm, which is given by $||f||_\infty = \sup_{F \in \X} |f(F)|$ for $f \in C(\X)$. The distance of any $f,g \in C(\X)$ is given by $||f-g||_\infty$. Then $C(\X)$ is a complete metric space~\cite[Theorem~7.15]{MR0385023}. 

The set of continuous functions $C(\X)$ is a semiring (and in fact ring) under point\-wise addition and pointwise multiplication.
The map $G \mapsto \widehat{G}$ is a semiring homomorphism from the semiring of graphs to $C(\X)$. %

\begin{lemma} For any graphs $G,H$, we have 
    $d(G,H) = ||\widehat{G} - \widehat{H}||_\infty$.
\end{lemma}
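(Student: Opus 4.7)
The plan is to unfold the definitions on both sides and observe that they match term-by-term. On the right-hand side, $\widehat{G} - \widehat{H}$ is the pointwise difference in $C(\X)$ (which is an element of $C(\X)$ because $C(\X)$ is a ring under pointwise operations, as noted in the paragraph just above the statement). So for any $F \in \X$, evaluating gives
\[
(\widehat{G} - \widehat{H})(F) \;=\; \widehat{G}(F) - \widehat{H}(F) \;=\; F(G) - F(H),
\]
where the last equality is just the defining formula $\widehat{G}(F) = F(G)$.

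Taking absolute values and then the supremum over $F \in \X$ therefore yields
\[
\|\widehat{G} - \widehat{H}\|_\infty \;=\; \sup_{F \in \X} \bigl|(\widehat{G} - \widehat{H})(F)\bigr| \;=\; \sup_{F \in \X} |F(G) - F(H)|,
\]
and the right-hand side is exactly $d(G,H)$ by \autoref{def:dist}. So the identity reduces to matching the two supremum expressions, with no inequality to verify in either direction.

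There is essentially no obstacle: the statement is a tautological consequence of the definition of $\widehat{G}$ together with the definition of the sup-norm on $C(\X)$. The only thing worth flagging is a well-definedness check, namely that $\widehat{G} - \widehat{H}$ actually lies in $C(\X)$ so that $\|\cdot\|_\infty$ applies; this is immediate since each $\widehat{G}$ is continuous by construction of the topology on $\X$ (\autoref{lem:compact}) and $C(\X)$ is closed under subtraction. One could also remark, though it is not needed for the identity itself, that by \autoref{rem:compact} (and compactness of $\X$) the supremum is actually attained, so $d(G,H)$ is a maximum rather than a strict supremum.
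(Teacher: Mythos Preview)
Your proof is correct and follows exactly the same approach as the paper: both simply unfold the definitions of $\widehat{G}$, the sup-norm, and $d(G,H)$ to see that the two expressions are identical. Your additional remarks on well-definedness and attainment of the supremum are fine but not needed for the argument.
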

\begin{proof} %
    $||\widehat{G} - \widehat{H}||_\infty = \sup_{F \in \X} |\widehat{G}(F) - \widehat{H}(F)| = \sup_{F \in \X} |F(G) - F(H)| = d(G,H)$. %
\end{proof}

Through the above construction, graphs correspond to elements of~$C(\X)$. Since~$C(\X)$ is complete, any Cauchy sequence of graphs will indeed have a limit point in~$C(\X)$, but as we will see (\autoref{th:not-complete2}), this limit point may not correspond to a graph! (This answers a question from \cite[Section 9.3]{SvenThesis}.)

To finish this part, we mention that the continuous function point of view naturally lets us use Dini's theorem from analysis to prove the following sufficient condition for convergence of graphs. %

\begin{lemma}\label{th:dini}
    Let $(G_i)_{i\in \NN}$ be a sequence of graphs and let $H$ be a graph. Suppose that $F(G_1), F(G_2), F(G_3), \ldots$ converges to $F(H)$ for every $F \in \X$. If %
    $F(G_1), F(G_2)$, $F(G_3), \ldots$ is nondecreasing for every $F \in \X$, or nonincreasing for every $F \in \X$, %
    then~$G_i$ converges to $H$.
\end{lemma}
\begin{proof}
    The sequence of continuous functions $\widehat{G}_i \in C(\X)$ converges pointwise to the continuous function $\widehat{H} \in C(\X)$. Moreover, $\widehat{G}_i(F)$ is monotone in $i$ for every $F \in \X$ (either all nondecreasing or all nonincreasing). Thus $\widehat{G}_i$ converges uniformly to $\widehat{H}$ by Dini's theorem~\cite[Theorem~7.13]{MR0385023}.
\end{proof}

\subsection{Distance of induced subgraphs of vertex-transitive graphs}\label{subsec:dist-vt}
We give a general method for constructing pairs of graphs that are close in the asymptotic spectrum metric (which we defined in \autoref{sec:metric}). This method makes use of induced subgraphs of vertex-transitive graphs.

Let $G$ be a graph, let $S \subseteq V(G)$, and let $G[S]$ denote the induced subgraph of $G$ to~$S$. Then we have the inequality $G[S] \leq G$ in the cohomomorphism preorder.
In this section we will quantify how close~$G[S]$ is to $G$. 

As a starting point, we state a basic relation between the independence numbers $\alpha(G[S])$ and $\alpha(G)$, which follows from the well-known No-Homomorphism Lemma of Albertson and Collins \cite[Theorem 2]{MR0791653} (see also \cite[Corollary 1.23]{MR2089014}).

\begin{lemma}\label{lem:vertex-transitive-alpha}
Let $G$ be a vertex-transitive graph and $S \subseteq V(G)$. Then
\[
\alpha(G[S]) \leq \alpha(G) \leq \frac{|V(G)|}{|S|} \alpha(G[S]).
\]
\end{lemma}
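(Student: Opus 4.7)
The first inequality is immediate: any independent set in $G[S]$ is an independent set in $G$, since $G[S]$ is an induced subgraph (equivalently, the inclusion $S \hookrightarrow V(G)$ is a cohomomorphism $G[S] \to G$).

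For the second (main) inequality, the plan is to exploit vertex-transitivity via a standard double-counting/orbit-averaging argument. Let $\Gamma = \mathrm{Aut}(G)$, which acts transitively on $V(G)$ by assumption. Fix a maximum independent set $I \subseteq V(G)$, so $|I| = \alpha(G)$. Consider the collection of translates $\{\gamma(S) : \gamma \in \Gamma\}$ and count pairs $(v, \gamma) \in I \times \Gamma$ with $v \in \gamma(S)$ in two ways. Summing over $\gamma$ first,
\[
\sum_{\gamma \in \Gamma} |I \cap \gamma(S)| = \sum_{v \in I} |\{\gamma \in \Gamma : v \in \gamma(S)\}|.
\]
By transitivity, for each fixed $v \in V(G)$ the number of $\gamma \in \Gamma$ sending some element of $S$ to $v$ equals $|\Gamma| \cdot |S|/|V(G)|$ (every vertex is hit equally often as the image of a fixed element of $S$, and we sum over $S$). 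Hence the right-hand side equals $|I| \cdot |\Gamma| \cdot |S|/|V(G)|$, so by pigeonhole there exists $\gamma_0 \in \Gamma$ with
\[
|I \cap \gamma_0(S)| \geq \frac{|I|\, |S|}{|V(G)|} = \frac{|S|}{|V(G)|}\,\alpha(G).
\]

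Now $I \cap \gamma_0(S)$ is independent in $G[\gamma_0(S)]$, and the automorphism $\gamma_0^{-1}$ provides an isomorphism $G[\gamma_0(S)] \cong G[S]$. Therefore $\alpha(G[S]) \geq \frac{|S|}{|V(G)|}\,\alpha(G)$, and rearranging gives $\alpha(G) \leq \frac{|V(G)|}{|S|}\,\alpha(G[S])$, as required.

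There is no real obstacle here: the argument is a textbook orbit-counting calculation and is essentially the content of the Albertson--Collins No-Homomorphism Lemma in the special case of the induced-subgraph inclusion into a vertex-transitive graph. The only point that deserves care is checking the counting identity $|\{\gamma : v \in \gamma(S)\}| = |\Gamma||S|/|V(G)|$, which follows cleanly from the orbit-stabilizer theorem applied to the transitive $\Gamma$-action on $V(G)$.
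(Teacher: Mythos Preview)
Your argument is correct. The paper does not supply its own proof but simply cites the No-Homomorphism Lemma of Albertson--Collins; the orbit-averaging/double-counting computation you carry out is exactly the standard proof of that lemma specialized to the inclusion $G[S] \hookrightarrow G$.
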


We will now extend 
\autoref{lem:vertex-transitive-alpha} to elements in the asymptotic spectrum $\X$, that is, we will replace the independence number $\alpha$ by any function $F \in \X$.\footnote{We note that the independence number $\alpha$ is not in $\X$.} The proof is based on an inequality for vertex-transitive graphs $G$ of \cite{MR4357434}, which says $G \leq E_N \boxtimes G[S]$ for $N = \lceil|V(G)| |S|^{-1} \ln |V(G)|\rceil$. (Recall that $E_N = \overline{K}_N$ denotes the graph with $N$ vertices and no edges.) %

\begin{lemma}\label{lem:vertex-transitive-spectrum}
Let $G$ be a vertex-transitive graph and $S \subseteq V(G)$. Let $F \in \X$. Then
\[
F(G[S]) \leq F(G) \leq \frac{|V(G)|}{|S|} F(G[S]).
\]
\end{lemma}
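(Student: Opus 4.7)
The left-hand inequality is immediate: the inclusion $S\hookrightarrow V(G)$ is a cohomomorphism $G[S] \to G$, so $G[S] \leq G$, and by monotonicity every $F \in \X$ satisfies $F(G[S]) \leq F(G)$.

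For the right-hand inequality the plan is to apply the Vrana-type inequality cited in the excerpt, but not to $G$ directly—applied naively it gives a bound $F(G) \leq N F(G[S])$ with $N = \lceil |V(G)||S|^{-1}\ln|V(G)|\rceil$, which is off by the spurious logarithmic factor $\ln |V(G)|$. The standard fix is to tensor up and take an $n$-th root, amortising the logarithm away.

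Concretely, I would proceed as follows. First observe that for any $n \in \NN$ the power $G^{\boxtimes n}$ is again vertex-transitive (the product of the automorphism groups acts transitively on $V(G)^n$), and that the subset $S^n \subseteq V(G)^n$ induces exactly the graph $G[S]^{\boxtimes n}$, since being an edge in $G^{\boxtimes n}$ and being an edge in $G[S]^{\boxtimes n}$ are coordinatewise conditions involving only adjacency in $G$ between vertices of $S$. Now apply Vrana's inequality to the pair $(G^{\boxtimes n}, S^n)$ to obtain
\[
G^{\boxtimes n} \leq E_{N_n} \boxtimes G[S]^{\boxtimes n}, \qquad N_n = \Bigl\lceil \frac{|V(G)|^n}{|S|^n}\, \ln |V(G)|^n \Bigr\rceil.
\]
Applying $F \in \X$, using multiplicativity, additivity (so $F(E_{N_n}) = N_n$), and monotonicity, gives
\[
F(G)^n \;\leq\; N_n \cdot F(G[S])^n.
\]
Taking $n$-th roots yields $F(G) \leq N_n^{1/n} F(G[S])$. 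Since
\[
N_n^{1/n} \;=\; \Bigl(\bigl(|V(G)|/|S|\bigr)^n\, n\ln|V(G)| + O(1)\Bigr)^{1/n} \;\longrightarrow\; \frac{|V(G)|}{|S|}
\]
as $n \to \infty$ (the polynomial factor $n \ln |V(G)|$ contributes a factor tending to $1$), we conclude $F(G) \leq (|V(G)|/|S|)\, F(G[S])$, as required.

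There is no real obstacle here beyond recognising the amortisation trick; the only small thing to verify carefully is that $(G^{\boxtimes n})[S^n] = G[S]^{\boxtimes n}$, which is a straightforward unpacking of the definition of the strong product.
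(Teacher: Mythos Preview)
Your proof is correct and matches the paper's argument essentially line for line: both apply Vrana's inequality to the vertex-transitive power $G^{\boxtimes n}$ with subset $S^n$ (using $(G^{\boxtimes n})[S^n] = G[S]^{\boxtimes n}$), apply $F$, take $n$-th roots, and let $n\to\infty$ to kill the logarithmic factor.
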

\begin{proof}
The first inequality follows directly from $F$ being monotone under cohomomorphism. 
For any vertex-transitive graph $G$ and $S\subseteq V(G)$ we have $G \leq E_N\boxtimes G[S]$
for $N = \lceil|V(G)| |S|^{-1} \ln |V(G)|\rceil$. %
Let $m \in \NN$. Then~$G[S]^{\boxtimes m}$ is an induced subgraph of~$G^{\boxtimes m}$. Note that $G^{\boxtimes m}$ is vertex-transitive. Then %
$G^{\boxtimes m} \leq E_{N_m} \boxtimes G[S]^{\boxtimes m}$ with $
N_m = \lceil |V(G)|^m |S|^{-m} m \ln |V(G)| \rceil$.
Apply $F$ to both sides of the inequality, take the $m$-th root on both sides, and let $m$ go to infinity to get the claim.
\end{proof}

Finally, we mention that, 
from either of \autoref{lem:vertex-transitive-alpha} or \autoref{lem:vertex-transitive-spectrum} follows the analogous inequality for the Shannon capacity:\footnote{The Shannon capacity $\Theta$ is not in $\X$ \cite{haemers1979some,alon1998shannon}, so this requires a short proof.}  %
Let $G$ be a vertex-transitive graph and $S \subseteq V(G)$. Then
$\Theta(G[S]) \leq \Theta(G) \leq |V(G)| \cdot |S|^{-1} \cdot %
\Theta(G[S]).$
Indeed, this follows from \autoref{lem:vertex-transitive-alpha} applied to powers of $G$. Alternatively, there is an $F \in \X$ such that $\Theta(G[S]) = F(G[S])$ by \autoref{th:duality-Theta}, and then applying  \autoref{lem:vertex-transitive-spectrum} gives the result.
For ease of reference, we combine the results of this section %
into the following lemma.

\begin{lemma}\label{lem:vertex-transitive-gen}
Let $G$ be a vertex-transitive graph and $S \subseteq V(G)$. Let $F \in \X \cup \{\alpha, \Theta\}$. Then
\[
F(G[S]) \leq F(G) \leq \frac{|V(G)|}{|S|} F(G[S]).
\]
\end{lemma}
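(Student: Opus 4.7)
The plan is to observe that the combined statement is essentially a repackaging of the two preceding lemmas, supplemented with a short argument covering the case $F = \Theta$. I would split the proof into the three cases $F \in \mathcal{X}$, $F = \alpha$, and $F = \Theta$, and for each one either cite or quickly derive the double inequality.

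For $F \in \mathcal{X}$ the double inequality is exactly the content of \autoref{lem:vertex-transitive-spectrum}, and for $F = \alpha$ it is exactly \autoref{lem:vertex-transitive-alpha}. The only case that has not been packaged as a formal lemma is $F = \Theta$, which is what I would write out. For the left inequality, the inclusion $S \hookrightarrow V(G)$ is a cohomomorphism $G[S] \to G$, so $G[S] \leq G$ and hence $\Theta(G[S]) \leq \Theta(G)$ by monotonicity of $\Theta$ under the cohomomorphism preorder (which is immediate, or follows from \autoref{th:duality-Theta}(a)).

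For the right inequality in the $\Theta$ case, I would use asymptotic spectrum duality in the form \autoref{th:duality-Theta}(a) to choose some $F_1 \in \mathcal{X}$ with $F_1(G[S]) = \Theta(G[S])$ (note: minimising on the \emph{subgraph}, not on $G$, which is the subtle step). Applying \autoref{lem:vertex-transitive-spectrum} to this $F_1$ gives
\[
\Theta(G) \;\leq\; F_1(G) \;\leq\; \frac{|V(G)|}{|S|}\, F_1(G[S]) \;=\; \frac{|V(G)|}{|S|}\, \Theta(G[S]),
\]
where the first inequality uses $\Theta(G) = \min_{F \in \mathcal{X}} F(G) \leq F_1(G)$. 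Alternatively, and perhaps more elementarily, one can apply \autoref{lem:vertex-transitive-alpha} to the $n$-th strong power $G^{\boxtimes n}$ (which is again vertex-transitive) using that $G^{\boxtimes n}[S^n] = G[S]^{\boxtimes n}$, take $n$-th roots, and let $n \to \infty$.

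There is no real obstacle here; the only small subtlety is picking the right element of the asymptotic spectrum when passing from the $\mathcal{X}$-case to the $\Theta$-case (minimising on $G[S]$ rather than on $G$), and noting that strong powers of vertex-transitive graphs are vertex-transitive if one prefers the $\alpha$-based route.
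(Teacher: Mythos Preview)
Your proposal is correct and essentially identical to the paper's approach: the paper likewise dispatches the cases $F \in \X$ and $F = \alpha$ by citing \autoref{lem:vertex-transitive-spectrum} and \autoref{lem:vertex-transitive-alpha}, and for $F = \Theta$ it offers exactly the two routes you describe---choosing $F \in \X$ with $F(G[S]) = \Theta(G[S])$ via \autoref{th:duality-Theta} and applying \autoref{lem:vertex-transitive-spectrum}, or alternatively applying \autoref{lem:vertex-transitive-alpha} to powers of $G$. Your write-up is in fact slightly more explicit than the paper's, which states these two routes in one line each.
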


\autoref{lem:vertex-transitive-spectrum}
lets us construct graphs $G$ and $G[S]$ that are close to each other in the %
asymptotic spectrum distance, as long as we have a handle on the ratio $|V(G)|/|S|$. The next goal will be to obtain Cauchy sequences using this method. For this we need to do more, as we will discuss in the next sections.

\section{Converging sequences of finite graphs}
\label{sec:frac}

In \autoref{sec:dist} we have defined the asymptotic spectrum distance on graphs, and discussed its basic properties, including characterizations of convergence and the motivating connection to the Shannon capacity. A central question is how to construct (non-trivial) converging sequences, or even just Cauchy sequences.

In this section we will give several kinds of  constructions of non-trivial converging sequences of graphs. For this we will use a natural family of vertex-transitive graphs which we call fraction graphs (and which are also called circular graphs).

First we will discuss the definition and basic properties of fraction graphs. An important component is a vertex removal strategy for constructing fraction graphs that are close in the asymptotic spectrum distance. 

Next we use the vertex removal strategy to construct non-trivial converging sequences of graphs. In particular, for any fraction graph we construct a non-trivial sequence of fraction graphs that converges to it. 
As a consequence of these constructions, we obtain a right-continuity property of all elements in the asymptotic spectrum (e.g.,~the Lovász theta function) on fraction graphs. 

Finally, we construct Cauchy sequences of fraction graphs that have no limit in the set of finite graphs. Thus the set of finite graphs is not complete in the asymptotic spectrum distance. In \autoref{sec:completion} we will study infinite graphs on the circle as the missing limit points.

\subsection{Fraction graph definition and basic properties}\label{sec:vertex-removal}\label{subsec:vertex-removal-strategy}

Fraction graphs have been studied since the work of Vince \cite{MR0968751}, who used them to study variations on the chromatic number. These graphs have been studied under many names, including circular graphs~\cite{SvenThesis, MR3906144}, cycle-powers~\cite{MR3016977}, and (the complement of) rational complete graphs~\cite{MR2089014} and circular complete graphs~\cite{MR2249284}.

\begin{definition}
For any $p,q \in \NN$ with $p/q \geq 2$ we let $E_{p/q}$ be the graph with vertex set $V = \{0,\dots,p-1\}$ and with distinct $u,v \in V$ forming an edge if $u-v \pmod p < q$ or $v-u \pmod p < q$, that is, if their distance modulo $p$ is strictly less than $q$. The graphs $E_{p/q}$ we call \emph{fraction graphs}.
It will sometimes be convenient to allow also $p,q \in \NN$ with $p/q < 2$, in which case we will use the (natural) convention that $E_{p/q}$ is the complete graph on $p$ vertices.
Also, it will sometimes be convenient to identify the vertex set $V = \{0,\dots,p-1\}$ with $\ZZ_p = \ZZ/p\ZZ$, the integers modulo $p$.\footnote{The graph $E_{p/q}$ is the Cayley graph of the group $\ZZ_p$ with generator set $\{\pm 1, \dots, \pm (q-1)\}$. We recall that we do not require $p$ to be prime.}
\end{definition}
For example, $E_{n/1} = E_n$ and $E_{n/2} = C_n$, where $E_n = \overline{K}_n$ denotes the ``empty graph'' on $n$ vertices and $C_n$ denotes the cycle-graph on~$n$ vertices. For illustration, we draw the fraction graphs $E_{3/1}$, $E_{5/2}$ and~$E_{8/3}$ in \autoref{fig:frac}. %
\begin{figure}[H]
\begin{minipage}{4.5cm}
\centering
\begin{tikzpicture}
  \foreach \i in {0,1,2}
    \coordinate (v\i) at ({360/3 * (\i - 1)}:1.0cm);
  
  \foreach \i in {0,1,2}
    \filldraw[black] (v\i) circle (2pt);
\end{tikzpicture}
\end{minipage}
\begin{minipage}{4.5cm}
\centering
\begin{tikzpicture}
  \foreach \i in {1,2,...,5}
    \coordinate (v\i) at ({360/5 * (\i - 1)}:1.5cm);

  \foreach \i [evaluate=\i as \next using {int(mod(\i,5)+1)}] in {1,2,...,5}
    \draw (v\i) -- (v\next);

  \foreach \i in {1,2,...,5}
    \filldraw[black] (v\i) circle (2pt);
\end{tikzpicture}
\end{minipage}
\begin{minipage}{4.5cm}
\centering
\begin{tikzpicture}
  \foreach \i in {0,1,...,7}
    \coordinate (v\i) at ({360/8 * (\i - 1)}:1.5cm);

  \foreach \i [evaluate=\i as \next using {int(mod(\i+1,8))}] in {0,1,...,7}
    \draw (v\i) -- (v\next);
  \foreach \i [evaluate=\i as \next using {int(mod(\i+2,8))}] in {0,1,...,7}
    \draw (v\i) -- (v\next);

  \foreach \i in {0,1,...,7}
    \filldraw[black] (v\i) circle (2pt);
\end{tikzpicture}
\end{minipage}
\caption{From left to right, these are the fraction graphs $E_{3/1}$, $E_{5/2}$ and~$E_{8/3}$.}
\label{fig:frac}
\end{figure}
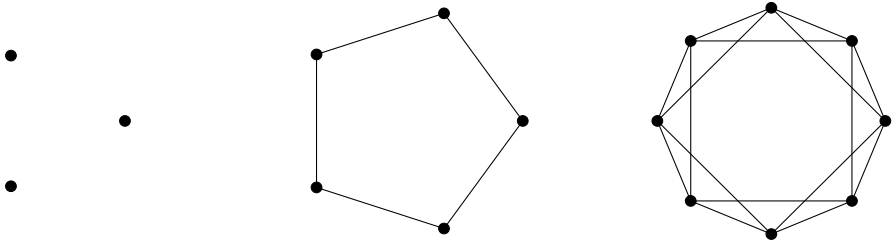

A fundamental property of the fraction graphs is that they are ordered under cohomomorphism as the rational numbers. %

\begin{lemma}[{\cite[Theorem 6.3]{MR2089014}}]\label{th:ordering}
For $p,q,p',q' \in  \NN$ with $p/q,p'/q'\geq2$, we have $p/q \leq p'/q'$ if and only if $E_{p/q} \leq E_{p'/q'}$.
\end{lemma}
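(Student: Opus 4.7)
The plan is to prove the two implications separately, using an explicit rounding map for the forward direction and asymptotic spectrum duality for the reverse.

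For the forward direction ($p/q \leq p'/q'$ implies $E_{p/q} \leq E_{p'/q'}$), I would exhibit the cohomomorphism $f \colon \ZZ_p \to \ZZ_{p'}$ defined by $f(u) = \lfloor u p'/p \rfloor$. Writing $u p'/p = f(u) + g(u)$ with $g(u) \in [0,1)$ immediately yields the strict estimate $|(f(v) - f(u)) - (v-u)p'/p| < 1$ for all $u, v \in \ZZ_p$. A non-edge $\{u,v\}$ of $E_{p/q}$ corresponds (taking WLOG $0 \leq u < v \leq p-1$) to $v - u \in [q, p-q]$, so
\[
(v-u)p'/p \in [qp'/p,\, p' - qp'/p] \subseteq [q',\, p' - q'],
\]
where the last inclusion uses $p/q \leq p'/q' \Leftrightarrow qp'/p \geq q'$. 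Combined with the strict $<1$ bound and the integrality of $f(v) - f(u)$, this forces $f(v) - f(u) \in \{q', q'+1, \ldots, p'-q'\}$, confirming that $\{f(u), f(v)\}$ is a non-edge of $E_{p'/q'}$.

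For the reverse direction ($E_{p/q} \leq E_{p'/q'}$ implies $p/q \leq p'/q'$), I would invoke the fractional clique covering number $\overline{\chi}_f$, which by \autoref{th:chi-bar-max} belongs to the asymptotic spectrum $\X$ and is therefore monotone under the cohomomorphism preorder $\leq$. Combined with the classical identity $\overline{\chi}_f(E_{p/q}) = p/q$ (which follows from Vince's computation that the fractional chromatic number of the complementary circular clique $\overline{E_{p/q}}$ equals $p/q$, together with $\overline{\chi}_f$ being the fractional chromatic number of the complement), monotonicity gives $p/q = \overline{\chi}_f(E_{p/q}) \leq \overline{\chi}_f(E_{p'/q'}) = p'/q'$.

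The main obstacle will be the edge-case analysis in the forward direction: the strict inequality $< 1$ (rather than the loose $\leq 1$) is exactly what is needed so that $f(v) - f(u)$ lands inside the closed interval $[q', p' - q']$ in the boundary situation where $(v-u)p'/p$ equals an endpoint, which occurs precisely in the equality case $p/q = p'/q'$. Once phrased in the fractional-part form above the verification is routine, and the reverse direction is a one-line consequence of duality together with the standard identity $\overline{\chi}_f(E_{p/q}) = p/q$.
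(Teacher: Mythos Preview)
Your argument is correct and matches the paper's treatment: the paper cites this lemma from \cite{MR2089014} rather than proving it, but explicitly records the same cohomomorphism $x \mapsto \lfloor x p'/p\rfloor$ for the forward direction, and your reverse direction via monotonicity of $\overline{\chi}_f$ together with $\overline{\chi}_f(E_{p/q})=p/q$ is exactly how the paper separates non-equivalent fraction graphs (see the sentence following \autoref{th:fraction-graph-chi-bar}). One small nitpick: \autoref{th:chi-bar-max} asserts $\overline{\chi}_f(G)=\max_{F\in\X}F(G)$, not literally $\overline{\chi}_f\in\X$; for the reverse implication you only need that $\overline{\chi}_f$ is $\leq$-monotone, which is immediate from its definition and avoids both the appeal to $\X$ and the forward reference to \autoref{th:fraction-graph-chi-bar} appearing after the lemma you are proving.
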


In fact, a cohomomorphism $E_{p/q} \to E_{p'/q'}$ is simply $x \mapsto \lfloor x p'/p\rfloor$ (assuming that $p/q\leq p'/q'$).

In particular, it follows from \autoref{th:ordering} that if $p/q = p'/q'$, then $E_{p/q}$ and $E_{p'/q'}$ are equivalent under cohomomorphism, that is, $E_{p/q} \geq E_{p'/q'}$ and $E_{p/q} \leq E_{p'/q'}$. Also, we see that for any two inequivalent fraction graphs, there exists a fraction graph that is strictly in between them in the cohomomorphism order, making the fraction graphs a natural family for constructing interesting converging sequences of graphs. 

The ``label'' of a fraction graph can be computed using the fractional clique covering number $\overline{\chi}_f$.

\begin{lemma}[{\cite[Cor.~6.20,~6.24]{MR2089014}}]\label{th:fraction-graph-chi-bar}
For $p,q\in \NN$, $p/q \geq2$, we have $\overline{\chi}_f(E_{p/q}) = p/q$.
\end{lemma}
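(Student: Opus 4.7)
The plan is to sandwich $\overline{\chi}_f(E_{p/q})$ between $p/q$ and $p/q$ by a two-sided argument: an explicit fractional clique cover for the upper bound, and the vertex-transitive identity $\chi_f(H) = |V(H)|/\alpha(H)$ combined with a computation of the clique number for the lower bound. The key structural observation driving both sides is that $E_{p/q}$ is a Cayley graph of $\ZZ_p$ with connection set $\{\pm 1, \ldots, \pm(q-1)\}$, so both $E_{p/q}$ and its complement are vertex-transitive, and the cliques of $E_{p/q}$ are precisely its ``arcs''.

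For the upper bound, I would use the family of $p$ \emph{arc-cliques} $K_i = \{i, i+1, \ldots, i+q-1\}$, with indices taken modulo $p$, for $i \in \ZZ_p$. Each $K_i$ is a clique in $E_{p/q}$ because any two of its vertices have cyclic distance at most $q-1 < q$. Every vertex $v$ is contained in exactly $q$ of these arcs (namely those with starting index $i \in \{v-q+1, \ldots, v\}$ modulo $p$). Assigning weight $1/q$ to each $K_i$ therefore yields a fractional clique cover of total weight $p/q$, giving $\overline{\chi}_f(E_{p/q}) \leq p/q$.

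For the lower bound, my plan is to invoke the classical identity $\chi_f(H) = |V(H)|/\alpha(H)$, valid for any vertex-transitive $H$ (proved for instance by averaging a minimum fractional coloring over the automorphism group). Applied to $\overline{E_{p/q}}$ this gives
\[
\overline{\chi}_f(E_{p/q}) = \chi_f(\overline{E_{p/q}}) = \frac{p}{\alpha(\overline{E_{p/q}})} = \frac{p}{\omega(E_{p/q})},
\]
so the lemma reduces to showing $\omega(E_{p/q}) = q$. The bound $\omega(E_{p/q}) \geq q$ is witnessed by the arc clique $\{0,1,\ldots,q-1\}$. For $\omega(E_{p/q}) \leq q$, by translation I may assume a clique $C$ contains $0$, so $C$ lies in the closed neighborhood $\{-(q-1), \ldots, -1, 0, 1, \ldots, q-1\} \pmod p$ (these $2q-1$ residues being distinct thanks to $p \geq 2q$). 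If $C$ contained both some $a \in \{1, \ldots, q-1\}$ and some $-b \in \{p-q+1, \ldots, p-1\}$, then $a$ and $-b$ would also have to be adjacent, which forces the cyclic distance $a+b$ to satisfy $a+b < q$ or $a+b > p-q$; a short case analysis using $p \geq 2q$ excludes enough configurations to show that $C$ is contained in a single arc of length $q-1$, hence $|C| \leq q$.

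The main (and essentially only) obstacle is the upper bound $\omega(E_{p/q}) \leq q$, which is well-known in the circular-graph literature \cite{MR2089014} but requires the small cyclic case analysis above; it is essentially immediate when $p$ is significantly larger than $2q$, and needs mild care in the boundary cases $p \in \{2q, 2q+1\}$. The explicit fractional cover and the vertex-transitive formula $\chi_f(H)=|V(H)|/\alpha(H)$ are standard, so once $\omega = q$ is in hand, both bounds close and the equality $\overline{\chi}_f(E_{p/q}) = p/q$ follows.
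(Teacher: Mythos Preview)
The paper does not give its own proof of this lemma; it is stated with a citation to \cite{MR2089014} and used as a black box. Your two-sided strategy---the explicit fractional cover by the arc cliques $K_i=\{i,\ldots,i+q-1\}$ for $\overline{\chi}_f(E_{p/q})\le p/q$, and the vertex-transitive identity $\overline{\chi}_f(G)=|V(G)|/\omega(G)$ combined with $\omega(E_{p/q})=q$ for the reverse inequality---is the standard argument and is correct in outline.

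There is, however, a genuine gap in your sketch of $\omega(E_{p/q})\le q$: the claim that every clique lies in a single arc of $q$ consecutive vertices is false. At $p=2q$ the graph $E_{2q/q}$ is the complement of the perfect matching $\{\{i,i+q\}:0\le i<q\}$, and $\{0,2,4,\ldots,2q-2\}$ is a clique of size $q$ contained in no such arc; in $E_{9/4}$ the set $\{0,3,6\}$ is a maximal clique contained in no arc of four consecutive vertices. Your dichotomy ``$a+b<q$ or $a+b>p-q$'' does force arc-containment when $p\ge 3q-2$ (since then $a+b\le 2q-2\le p-q$ kills the second option), but in the range $2q\le p\le 3q-3$ it does not, and the cliques above live precisely there. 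The conclusion $\omega(E_{p/q})=q$ is still correct, but the boundary cases require a genuinely different argument (for $p=2q$ one can simply note that a clique meets each antipodal pair $\{i,i+q\}$ at most once), not just ``mild care'' in the same one. Since the paper itself just cites the result, citing \cite{MR2089014} for $\omega(E_{p/q})=q$ would also be entirely adequate here.
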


Since $\overline{\chi}_f$ is an element of the asymptotic spectrum $\X$, it follows from \autoref{th:fraction-graph-chi-bar} that in fact $E_{p/q} \leq E_{p'/q'}$ if and only if $E_{p/q} \asympleq E_{p'/q'}$, that is, cohomomorphism and asymptotic cohomomorphism coincide on fraction graphs.

\begin{remark}
We warn the reader that, while the fraction graphs are ordered (under cohomomorphism) as the rational numbers (\autoref{th:ordering}), in many other ways they do not behave as the rational numbers, in particular when it comes to how they behave under the strong product or the disjoint union (the arithmetic operations of our semiring). Indeed, it is not true in general that $E_{p/q} \boxtimes E_{s/t}$ is equivalent to $E_{(ps)/(qt)}$. For example, suppose $E_{5/2}^{\boxtimes 2}$ is equivalent to $E_{p/q}$, then $p/q = \overline{\chi}_f(E_{p/q}) = \overline{\chi}_f(E_{5/2})^2 = 25/4 \geq 6$, so $E_{5/2}^{\boxtimes 2} \geq E_6$. However, this contradicts the well-known fact that $\alpha(E_{5/2}^{\boxtimes 2}) = 5 < 6$, which follows, for instance, from $\vartheta(E_{5/2}^{\boxtimes 2}) = 5$ \cite{Lovasz79}, where $\vartheta$ is the Lovász theta function. %
\end{remark}

An important property of the fraction graphs (which we will use many times in our constructions) is that removing any vertex results in a graph that is again equivalent to a fraction graph, and moreover, there is a simple formula to determine which fraction graph.

\begin{lemma}[{\cite[Lemma~6.6]{MR2089014}}]\label{th:remove-point}
Let $p,q \in \NN$ be relatively prime and $p/q \geq2$. Let $p'$ and $q'$ be the unique integers such that $0 < p' < p$, $0 < q' < q$ and $pq' - qp' = 1$.
Then the graph obtained by removing any vertex from $E_{p/q}$ is equivalent to $E_{p'/q'}$.
\end{lemma}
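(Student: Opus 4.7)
By the vertex-transitivity of $E_{p/q}$ (realized by cyclic shifts on $\ZZ_p$), we may assume without loss of generality that the removed vertex is $0$, so the resulting graph is $H = E_{p/q}[\{1, \ldots, p-1\}]$. The plan is to exhibit cohomomorphisms in both directions between $H$ and $E_{p'/q'}$.

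For $E_{p'/q'} \leq H$: Since $pq' - qp' = 1 > 0$ gives $p'/q' < p/q$, by \autoref{th:ordering} there is a cohomomorphism $E_{p'/q'} \to E_{p/q}$, explicitly $x \mapsto \lfloor xp/p'\rfloor$. Its image has $p' < p$ elements, so by post-composing with a suitable cyclic shift (an automorphism of $E_{p/q}$) one obtains a cohomomorphism whose image avoids $0$, yielding the desired cohomomorphism into $H$.

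For $H \leq E_{p'/q'}$: I propose to verify that the map $\phi \colon \{1, \ldots, p-1\} \to \{0, \ldots, p'-1\}$ defined by $\phi(x) = \lfloor xp'/p\rfloor$ is itself a cohomomorphism. Given distinct non-adjacent $u,v \in V(H)$ with (WLOG) $d = u - v \in [q, p-q]$, one has $\phi(u) - \phi(v) \in \{\lfloor dp'/p\rfloor, \lceil dp'/p\rceil\}$, and the Bezout identity rewritten as $qp'/p = q' - 1/p$ and $(p-q)p'/p = p' - q' + 1/p$ confines $dp'/p$ to the interval $[q'-1/p,\, p'-q'+1/p]$. For interior values $q < d < p - q$, both rounded quantities lie safely inside $[q', p'-q']$, giving circular distance at least $q'$ in $E_{p'/q'}$ as required for non-adjacency (or equality).

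The main obstacle is the two boundary cases $d = q$ and $d = p - q$, where $dp'/p$ falls within $1/p$ of $q'$ (respectively $p' - q'$) and could round the ``wrong way,'' producing an edge in $E_{p'/q'}$. Here the exclusion of vertex $0$ is crucial: writing $a = up' \bmod p$ and $b = vp' \bmod p$, a direct calculation using $qp' \equiv -1 \pmod p$ shows that the problematic rounding in these two cases forces $a = p-1, b = 0$ or $a = 0, b = p-1$ respectively. Since $\gcd(p, p') = 1$ (itself a consequence of the Bezout identity), this in turn forces $u = 0$ or $v = 0$, contradicting $u, v \in V(H)$. Combining the two directions gives the claimed equivalence.
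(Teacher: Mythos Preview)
Your argument is correct. The paper does not supply its own proof of this lemma; it simply cites \cite[Lemma~6.6]{MR2089014}, so there is no in-paper argument to compare against. Your direct construction via the scaling map $\phi(x)=\lfloor xp'/p\rfloor$, together with the Bezout-based boundary analysis showing that the only way the rounding can fail forces $u\equiv 0$ or $v\equiv 0 \pmod p$, is a clean self-contained proof. Two small remarks: the parenthetical ``(or equality)'' in the interior case is superfluous, since $\phi(u)-\phi(v)\geq q'\geq 1$ already rules out $\phi(u)=\phi(v)$; and for the first direction you do not actually need injectivity of $x\mapsto\lfloor xp/p'\rfloor$, only that its image has at most $p'<p$ elements, which is immediate from the size of the domain.
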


For example, \autoref{th:remove-point} says that the induced subgraph of $E_{8/3}$ obtained by removing any vertex is equivalent to $E_{5/2}$ (which we recall is $C_5$), since $8\cdot 2 - 3 \cdot 5 = 1$.

We note that \autoref{th:remove-point} generalizes considerably: every induced subgraph of a fraction graph is equivalent to a fraction graph \cite[Theorem 3.1]{ZhuCirculantPerfect}, \cite[Corollary~3.1]{MR1905134}, \cite[Page 518]{MR2249284}.\footnote{There is a $\mathcal{O}(|V|^2)$ time algorithm for determining which fraction graph this is \cite[Theorem~3.1]{MR1905134}.} A more general version of this will play a role later in the paper (\autoref{lem: finite_induce}).

We now use \autoref{lem:vertex-transitive-gen} and \autoref{th:remove-point} to get our vertex removal strategy. %

\begin{theorem}\label{th:vrm}
    Let $F \in \X\cup\{\alpha, \Theta\}$. Let $0< p' < p$ and $0 < q' < q$ be integers such that $p/q, p'/q' \geq 2$ and $pq' - qp' = 1$. Then 
    \[
    F(E_{p'/q'}) \leq F(E_{p/q}) \leq \frac{p}{p-1} F(E_{p'/q'}).
    \]
\end{theorem}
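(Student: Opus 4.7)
The plan is to obtain this theorem as a direct combination of the two main tools assembled in the preceding subsections: the vertex-transitive sandwich inequality (\autoref{lem:vertex-transitive-gen}) and the vertex-removal formula for fraction graphs (\autoref{th:remove-point}). Since $E_{p/q}$ is a Cayley graph of $\ZZ_p$, it is vertex-transitive, so \autoref{lem:vertex-transitive-gen} applies with $G = E_{p/q}$ and any subset $S \subseteq V(G)$, giving
\[
F(G[S]) \leq F(E_{p/q}) \leq \frac{p}{|S|}\, F(G[S])
\]
for every $F \in \X\cup\{\alpha,\Theta\}$.

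The natural choice is to take $S$ to be $V(G)$ minus a single vertex, so that $|S| = p-1$ and the factor $p/|S|$ becomes exactly the prefactor $p/(p-1)$ demanded by the statement. At this point I only need to know what $G[S]$ is. Here \autoref{th:remove-point} kicks in: the hypotheses $0 < p' < p$, $0 < q' < q$, and $pq' - qp' = 1$ are precisely the ones under which removing any single vertex from $E_{p/q}$ leaves a graph equivalent (in the cohomomorphism preorder) to $E_{p'/q'}$. Note that $\gcd(p,q) = 1$ follows from $pq'-qp' = 1$, so the integrality/coprimality condition of \autoref{th:remove-point} is met without extra work.

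To finish I need that $F(G[S]) = F(E_{p'/q'})$ for equivalent graphs. For $F\in\X$ this is immediate from the monotonicity axiom applied in both directions. For $F = \alpha$, cohomomorphisms preserve independent sets, so equivalent graphs have equal independence number. For $F=\Theta$, equivalent graphs are asymptotically equivalent and Shannon capacity is monotone under $\asympleq$, so the values coincide. Plugging $F(G[S]) = F(E_{p'/q'})$ and $|S| = p-1$ into the displayed sandwich inequality yields exactly the claimed bounds.

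There is no real obstacle here; the only sanity check is to verify that \autoref{lem:vertex-transitive-gen} and \autoref{th:remove-point} can be chained together for the entire class $\X\cup\{\alpha,\Theta\}$, which reduces to the observation that all three kinds of parameters are invariant on cohomomorphism-equivalence classes. Everything else is bookkeeping, so the proof should be only a few lines.
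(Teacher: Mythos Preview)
Your proposal is correct and follows essentially the same approach as the paper: apply \autoref{lem:vertex-transitive-gen} to the vertex-transitive graph $E_{p/q}$ with $S$ equal to the vertex set minus one point, then invoke \autoref{th:remove-point} to identify $G[S]$ with $E_{p'/q'}$. Your added remarks on coprimality and invariance of $F$ under cohomomorphism equivalence are useful sanity checks but are implicit in the paper's version.
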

\begin{proof}
    Let $G = E_{p/q}$. 
    Let $S$ be $V(E_{p/q})$ minus any vertex, so that $|V(G)| = p$ and $|S| = p-1$. 
    The numbers $p,q,p',q'$ are chosen such that $G[S]$ is equivalent to $E_{p'/q'}$ by \autoref{th:remove-point}. By \autoref{lem:vertex-transitive-gen} we have 
    $
    F(G[S]) \leq F(G) \leq |V(G)| \cdot {|S|}^{-1} \cdot F(G[S]),
    $
    which gives the claim.
\end{proof}
\autoref{th:vrm} lets us construct pairs of graphs with an upper bound on their asymptotic spectrum distance. For example, using $8\cdot 2 - 3 \cdot 5 = 1$ as before, we can prove that $d(E_{8/3}, E_{5/2}) \leq \tfrac5{14}$. Indeed, for any $F \in \X$ we have that 
\[
0 \leq F(E_{8/3}) - F(E_{5/2}) \leq \tfrac87 F(E_{5/2}) - F(E_{5/2}) = \tfrac17 F(E_{5/2})
\]
by \autoref{th:vrm}, and $F(E_{5/2}) \leq \overline{\chi}_f(E_{5/2}) = 5/2$ by \autoref{th:chi-bar-max} and \autoref{th:fraction-graph-chi-bar}. Therefore we find that $d(E_{8/3}, E_{5/2}) = \sup_{F \in \X} |F(E_{8/3}) - F(E_{5/2})| \leq \tfrac17 \cdot \tfrac 52 = \tfrac5{14}$.

Our next goal is to not just construct pairs of graphs with an upper bound on their distance, but to construct an infinite sequence of graphs that converges, which we will do in the next section.

\subsection{Convergence to any fraction graph from above}\label{sec:right-continuity}

In this section we will use 
the vertex removal strategy of \autoref{subsec:vertex-removal-strategy} to construct non-trivial converging sequences of fraction graphs.
More precisely, for any fraction graph~$E_{a/b}$ we will construct a non-trivial sequence of fraction graphs $E_{p_n/q_n}$ converging to~$E_{a/b}$. The convergence will be ``from above'' in the sense that the numbers $p_n/q_n$ will satisfy~$a/b \leq p_n/q_n$ and will be decreasing in $n$, and thus $E_{p_n/q_n}$ approaches $E_{a/b}$ from above in the cohomomorphism order. It will naturally follow that for any sequence~$p_n/q_n$ converging to $a/b$ from above, we have that $E_{p_n/q_n}$ converges to $E_{a/b}$. 

In terms of the asymptotic spectrum $\X$, our result says that for any $F \in \X$ (and in fact for any $F \in \X\cup \{\alpha, \Theta\}$), the function $\QQ_{\geq2} \to \RR : p/q \mapsto F(E_{p/q})$ is right-continuous. So for instance, the Lovász theta function is right-continuous on fraction graphs.

Our result solves a problem of Schrijver and Polak \cite[Chapter~9]{SvenThesis}. 
They constructed for every integer $m \in \NN$ a sequence of fraction graphs~$E_{p_n/q_n}$ that converges to $E_{m}$ from below, which implies that for any sequence $p_n/q_n$ converging to $m$ from below we have that $E_{p_n/q_n}$ converges to $E_m$. In terms of the asymptotic spectrum $\X$, their result says that for any $F \in \X$ the function $p/q \mapsto F(E_{p/q})$ is left-continuous at every integer.

\begin{theorem}\label{th:rational-right-cont}\hfil
\begin{enumerate}[\upshape(a)]
    \item\label{item:rational-right-cont:a} Let $F \in \X \cup \{\alpha,\Theta\}$.
The function $\mathbb{Q}_{\geq2} \to \mathbb{R}: p/q \mapsto F(E_{p/q})$ is right-continuous. 
    \item 
    The right-continuity in \upshape\eqref{item:rational-right-cont:a} is uniform in $\X$: for each $p'/q' \in \QQ_{\geq 2}$ and each $\eps > 0$ there is a $\delta > 0$ such that
    \[
        F(E_{p'/q'}) \le F(E_{p/q}) < F(E_{p'/q'}) + \eps
    \]
    for all $F \in \X$ and all $p/q \in \QQ_{\geq 2}$ with $p'/q' \le p/q < p'/q' + \delta$.
    \item If $p_n/q_n$ converges to $a/b$ from above, then $E_{p_n/q_n}$ converges to $E_{a/b}$.
\end{enumerate}
\end{theorem}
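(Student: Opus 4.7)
The plan is to prove (c) with an error bound uniform in $F \in \X$; then (a) and (b) follow. The tool is the vertex-removal estimate of \autoref{th:vrm} applied to Farey neighbors of $a/b$ constructed via B\'ezout.

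First I would reduce to the case $\gcd(a,b) = 1$: equal fractions give cohomomorphism-equivalent graphs by \autoref{th:ordering}, so $F(E_{a/b})$ depends only on the value $a/b$. By B\'ezout, pick integers $(P_0, Q_0)$ with $P_0 b - Q_0 a = 1$ and, shifting by a multiple of $(a,b)$ if needed, $P_0 > a$ and $Q_0 > b$. Then the family $(P_k, Q_k) := (P_0 + ka,\, Q_0 + kb)$ still satisfies $P_k b - Q_k a = 1$, so each pair $(P_k, Q_k)$ is coprime (the gcd divides $1$), each $P_k/Q_k$ exceeds $a/b$ strictly, and $P_k/Q_k - a/b = 1/(Q_k b) \to 0$. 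Applying \autoref{th:remove-point} with $(p, q, p', q') = (P_k, Q_k, a, b)$, any one-vertex deletion of $E_{P_k/Q_k}$ is cohomomorphism-equivalent to $E_{a/b}$, so \autoref{th:vrm} yields, for every $F \in \X \cup \{\alpha, \Theta\}$,
\[
F(E_{a/b}) \;\leq\; F(E_{P_k/Q_k}) \;\leq\; \frac{P_k}{P_k - 1}\, F(E_{a/b}) \;\leq\; F(E_{a/b}) + \frac{a/b}{P_k - 1},
\]
where the final step uses $F(E_{a/b}) \leq \overline{\chi}_f(E_{a/b}) = a/b$ from \autoref{th:chi-bar-max} and \autoref{th:fraction-graph-chi-bar}. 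Since $P_k \to \infty$, the error tends to zero and is uniform in $F$.

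For (c): given $\varepsilon > 0$, fix $k$ with $(a/b)/(P_k-1) < \varepsilon$. Because $P_k/Q_k > a/b$ strictly, for all sufficiently large $n$ we have $a/b \leq p_n/q_n \leq P_k/Q_k$, and monotonicity under cohomomorphism (\autoref{th:ordering}) sandwiches $F(E_{a/b}) \leq F(E_{p_n/q_n}) \leq F(E_{P_k/Q_k}) \leq F(E_{a/b}) + \varepsilon$ for every $F \in \X$, which is $d(E_{p_n/q_n}, E_{a/b}) \leq \varepsilon$. Part (a) for $F \in \X$ is the specialization of this to a fixed $F$; for $F = \Theta$ it follows from (c) via \autoref{lem:Theta-conv}; and for $F = \alpha$ it is immediate from $\alpha(E_{p/q}) = \lfloor p/q \rfloor$ together with right-continuity of the floor.

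For (b), which is uniform in both $F$ and the base point $r' = p'/q' = a/b$ (in lowest terms), I would sharpen the estimate in terms of $\Delta := r - r'$ and the denominator $b$. The case $k = 0$ already gives $F(E_r) - F(E_{r'}) \leq r'/(P_0 - 1) \leq 1/b$ whenever $r \leq P_0/Q_0$, i.e.\ when $\Delta$ is below a threshold of order $1/b^2$; for smaller $\Delta$ one can take $k$ maximal with $P_k/Q_k \geq r$ and get a refined bound of order $b\Delta$. Combining the two regimes and optimizing over $b$ produces a uniform H\"older-type bound of the form $F(E_r) - F(E_{r'}) \leq C\sqrt{\Delta}$ valid for all $r \geq r'$ in $\QQ_{\geq 2}$ and all $F \in \X$, so $\delta = (\varepsilon/C)^2$ proves (b). The main obstacle throughout is arranging the Farey construction so that \autoref{th:remove-point} applies with the target fraction in the role of $(p',q')$ rather than $(p,q)$; once that is set up, only monotonicity and the choice of $k$ remain.
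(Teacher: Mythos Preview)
Your argument for (a) and (c) is correct and is essentially the paper's own proof: construct Farey neighbours $(P_k,Q_k)$ of $a/b$ via B\'ezout, apply \autoref{th:vrm} to get $F(E_{a/b})\le F(E_{P_k/Q_k})\le\frac{P_k}{P_k-1}F(E_{a/b})$, and then sandwich any $p_n/q_n$ between $a/b$ and some $P_k/Q_k$ by monotonicity. One minor point: \autoref{th:vrm} already applies to $F=\Theta$, so the detour through \autoref{lem:Theta-conv} is unnecessary (though not wrong).

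For (b) you have over-read the statement. The paper only asserts uniformity over $F\in\X$ at a \emph{fixed} base point $p'/q'$---this is what the phrase ``uniform over all $F\in\X$'' says, and what the paper's proof establishes. That much already follows from your argument for (c), since your error bound $(a/b)/(P_k-1)$ is independent of $F$; no further work is needed. Your H\"older sketch aims instead at the stronger claim that $\delta$ can be chosen uniformly over all base points, and it has a genuine gap: both of your estimates (the $1/b$ bound from $k=0$ and the $O(b\Delta)$ bound from the maximal $k$) are only available when $r\le P_0/Q_0$, i.e.\ when $\Delta\lesssim 1/b^2$. For a base point $r'=a/b$ with large denominator and $\Delta$ of order, say, $1/b$, your construction produces no $P_k/Q_k\ge r$, so neither bound applies and ``combining the two regimes'' does not cover this case. (The phrase ``optimizing over $b$'' is also misleading, since $b$ is determined by $r'$; presumably you mean the inequality $\min(1/b,Cb\Delta)\le\sqrt{C\Delta}$, which is correct but still only valid under the premise $\Delta\lesssim 1/b^2$.)
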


\begin{proof}
    (a) Choose a fixed coprime pair $p',q'$, we will show that $F$ is right-continuous at $p'/q'$.
    The function $p/q \to F(E_{p/q})$ is monotone and thus it is enough to find an explicit sequence $(p_n/q_n)_{n \geq 1}$ approaching $p'/q'$ from above for which $F(E_{p_n/q_n})$ converges to $F(E_{p'/q'})$.
    
    Because $p'$ and $q'$ are coprime, there exist positive integers $a, b$ such that 
    $aq' - bp' = 1$. Define the sequences $p_n = a + p'\cdot n$ and $q_n = b + q' \cdot n$. For every $n \geq 1$ we find that $0 < p' < p_n$, $0 < q' < q_n$ and $p_n q' - q_n p' = 1$, therefore we can apply 
    \autoref{lem:vertex-transitive-gen}
    to obtain
    \[
    F(E_{p'/q'}) \leq F(E_{p_n/q_n}) \leq \frac{p_n}{p_n-1} F(E_{p'/q'}).
    \]
    Since $p_n/(p_n-1)$ converges to $1$ when $n$ goes to infinity, we find that $F(E_{p_n/q_n})$ converges to $F(E_{p'/q'})$. 

    (b) The uniform right-continuity follows from applying Dini's theorem, \autoref{th:dini}. We give a direct proof for convenience.  %
    Let $\eps > 0$. Choose an $n$ such that $\tfrac{p'}{p_n - 1} < \eps$. Now let $\delta = p_n / q_n - p'/q'$. 
    If $0 \leq p/q - p'/q' < \delta$, then $p'/q' \leq p/q \leq p_n/q_n$, and thus $F(E_{p/q}) - F(E_{p'/q'}) \leq \tfrac{1}{p_n-1} F(E_{p'/q'}) \leq \tfrac{1}{p_n-1} \overline{\chi}(E_{p'/q'}) = \tfrac{p'}{p_n - 1} < \eps$.

    (c) This follows directly from (b).
\end{proof}

\begin{remark}
It follows from \autoref{th:rational-right-cont} that for every graph $G$, we have that 
the function $\mathbb{Q}_{\geq2} \to \mathbb{R}: p/q \mapsto F(E_{p/q} \boxtimes G)$ is right-continuous. For every choice of $G$ this right-continuity is uniform over all $F \in \X$.    
\end{remark}

\begin{remark}
\autoref{th:rational-right-cont} in particular implies that $p/q \mapsto F(E_{p/q})$ is right-continuous at integral $p/q$, and accordingly that if $p_n/q_n$ converges from above to an integer $m$, then $E_{p_n/q_n}$ converges to $E_m$. This special case has a much simpler proof. Indeed, %
if $p_n/q_n$ converges to $m$ from above, then $\max_{F \in \X} F(E_{p_n/q_n}) \leq \overline{\chi}_f(E_{p_n/q_n}) = p_n/q_n \to m$. On the other hand, for every $F \in \X$ we have $F(E_m) = m$. Thus $d(E_{p_n/q_n}, E_m) \to 0$.
\end{remark}

\begin{remark}
 In the context of \autoref{th:rational-right-cont}, we note that if $p_n/q_n$ is a strictly decreasing sequence converging to~$a/b$, then $E_{p_n/q_n}$ will also be strictly decreasing in the cohomomorphism order, and in fact in the asymptotic cohomomorphism order, since the asymptotic clique covering number is strictly decreasing. However, we do not know whether every $F \in \X$ (or the Shannon capacity $\Theta$) is strictly decreasing on the~$(E_{p_n/q_n})_{n \in \NN}$. By a result of Bohman and Holzman \cite{MR1967195} (recently improved by Zhu \cite{zhu2024improved}), we do know that if $a/b=2$, then there is a sequence $p_n/q_n$ converging to $a/b$ from above for which~$\Theta(E_{p_n/q_n})$ is strictly decreasing. %
\end{remark}

\begin{remark}
\autoref{th:rational-right-cont} leaves open whether $\QQ_{\geq2} \to \RR : p/q \mapsto F(E_{p/q})$ is left-continuous at the non-integer points of $\QQ_{\geq2}$ (left-continuity at integer points being proven in \cite[Chapter~9]{SvenThesis}, as mentioned earlier). In other words, if $p_n/q_n$ converges to a non-integer $a/b$ from below, then we do not know whether $E_{p_n/q_n}$ converges to~$E_{a/b}$ (and we even do not know convergence for concrete elements in the asymptotic spectrum like the Lovász theta function). This question turns out to be equivalent to a question about infinite graphs on the circle, which we will discuss in \autoref{sec:completion}.   
\end{remark}

\subsection{Intermezzo: Continued fraction expansion of irrational numbers}\label{subsec:cont-frac}

To prepare for our  next construction of converging sequences of fraction graphs in \autoref{sec:non-complete}, and for the convenience of the reader, we recall here basic facts about the continued fraction expansion of irrational numbers. We will use these facts again later in \autoref{sec: self-cohomomorphisms}.

Given an irrational number $r$ define the sequence $(r_n)_{n \geq 0}$ recursively by $r_0 = r$ and $r_{n+1} = (r_{n}-\lfloor r_{n}\rfloor)^{-1}$. The sequence $(a_n)_{n \geq 0}$ defined by $a_n = \lfloor r_n\rfloor$ describes the coefficients of the continued fraction expansion of $r$. The sequence of \emph{convergents} $(p_n/q_n)_{n \geq 0}$ is defined by $(p_0,q_0) = (a_0,1)$, $(p_1,q_1) = (a_0a_1 +1,a_1)$, and 
\[
    (p_n,q_n) = (a_n p_{n-1} + p_{n-2}, a_nq_{n-1} + q_{n-2})
\]
for $n \geq 2$. We call $p_n/q_n$ the \emph{$n$th-order convergent} of $r$. We recall the following facts concerning convergents.

\begin{lemma}  
\label{lem: CFA}
Let $r \in \mathbb{R}$ be irrational and let $(p_n/q_n)_{n \geq 0}$ be its sequence of convergents. %
\begin{enumerate}[\upshape(a)]
\item 
\label{it: CFA1}
We have 
\[
\lim_{n \to \infty} \frac{p_n}{q_n} = r.
\]
    \item
    \label{it: CFA2}
    For all $n \geq 1$,
    \[
        q_n p_{n-1} - p_n q_{n-1} = (-1)^n.
    \]
    \item 
    \label{it: CFA3}
    The even-order convergents form an increasing sequence, while the odd-order convergents form a decreasing sequence, that is,
    \[
        \frac{p_0}{q_0} < \frac{p_2}{q_2} < \frac{p_4}{q_4} < \cdots < r < \cdots < \frac{p_5}{q_5} < \frac{p_3}{q_3} < \frac{p_1}{q_1}.
    \]
    \item 
    \label{it: CFA4}
    The convergents are best possible approximations to $r$ in the following sense: for every $n\geq 0$ and integers $a,b$ with $|a| \leq |p_n|$,
    \[
        \left|r - \frac{p_n}{q_n}\right| \leq \left|r - \frac{a}{b}\right|.
    \]
\end{enumerate}
\end{lemma}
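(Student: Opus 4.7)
The plan is to treat the four claims in dependency order: prove \eqref{it: CFA2} first by induction on $n$, then derive \eqref{it: CFA3} and \eqref{it: CFA1} from it together with a closed-form expression for $r$ in terms of the tail $r_n$, and finally establish \eqref{it: CFA4} by a lattice argument using \eqref{it: CFA2}.

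For \eqref{it: CFA2}, I would induct on $n$. The base case $n=1$ is a direct computation: $q_1 p_0 - p_1 q_0 = a_1 a_0 - (a_0 a_1 + 1) = -1$. For the inductive step, I would substitute the recursion $(p_{n+1},q_{n+1}) = (a_{n+1}p_n + p_{n-1},\, a_{n+1}q_n + q_{n-1})$ into $q_{n+1}p_n - p_{n+1}q_n$; the $a_{n+1}$-terms cancel and what remains is $-(q_n p_{n-1} - p_n q_{n-1}) = (-1)^{n+1}$.

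For \eqref{it: CFA1} and \eqref{it: CFA3}, the key observation is the closed form $r = (r_n p_{n-1} + p_{n-2})/(r_n q_{n-1} + q_{n-2})$, which follows by induction on $n$ from the recursive definition of $r_n$ together with the recursion for $(p_n, q_n)$. Combining this with \eqref{it: CFA2} yields
\[
    r - \frac{p_{n-1}}{q_{n-1}} = \frac{(-1)^{n-1}}{q_{n-1}\bigl(r_n q_{n-1} + q_{n-2}\bigr)}.
\]
The sign alternates with $n$, giving the interlacing in \eqref{it: CFA3}, and the magnitude tends to $0$ because $q_n$ is strictly increasing for $n \geq 2$ (indeed $q_n \geq F_n$), which gives \eqref{it: CFA1}.

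For \eqref{it: CFA4}, the plan is a standard basis argument. By \eqref{it: CFA2}, the vectors $(p_{n-1}, q_{n-1})$ and $(p_n, q_n)$ form a $\mathbb{Z}$-basis of $\mathbb{Z}^2$. Given any integer pair $(a,b)$ satisfying the hypothesis, I would write $(a,b) = u(p_{n-1},q_{n-1}) + v(p_n, q_n)$ for unique $u,v \in \mathbb{Z}$ and expand
\[
    a - rb = u(p_{n-1} - rq_{n-1}) + v(p_n - rq_n).
\]
By \eqref{it: CFA3} the two quantities $p_{n-1} - rq_{n-1}$ and $p_n - rq_n$ have opposite signs, so if both $u$ and $v$ are nonzero the two terms on the right cannot cancel but rather add in absolute value, forcing $|a - rb| \geq |p_n - rq_n|$. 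The remaining cases ($u=0$ or $v=0$) and the boundary subcases are then eliminated by the size hypothesis on $a$, after which dividing through by $b$ (respectively $q_n$) delivers the stated inequality.

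The routine part is \eqref{it: CFA2}, which is a one-line induction. The main obstacle is \eqref{it: CFA4}: organizing the sign-and-size case analysis cleanly, and in particular checking that the stated hypothesis $|a| \leq |p_n|$ is strong enough to force the basis coefficients $(u,v)$ into the range where the triangle-inequality argument applies (in many references this lemma is phrased with $|b| \leq q_n$ instead, so I would begin by verifying that in our regime $p_n/q_n > 2$ the two formulations are compatible).
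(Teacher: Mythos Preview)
The paper does not prove this lemma at all; it simply cites a standard reference on continued fractions (Khinchin) for each of the four items. Your proposal therefore already goes well beyond what the paper provides. Your treatments of \eqref{it: CFA2}, \eqref{it: CFA3}, and \eqref{it: CFA1} are the standard arguments and are correct. You are also right to flag the hypothesis $|a|\leq |p_n|$ in \eqref{it: CFA4} as nonstandard---most references phrase the best-approximation property with a bound on the denominator rather than the numerator---and the paper only ever invokes this item in the regime $r>2$ (in the corollary immediately following and in the proof of continuity of self-cohomomorphisms), so your proposed compatibility check is exactly the residual work needed to close the argument.
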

\begin{proof}
Proofs for these statements can be found in any elementary text on continued fractions. For example, these four items are respectively proved in Section~5, Theorem~2, Theorem~4, and Theorem~{17} of \cite{ContFrac}.
\end{proof}

\begin{corollary}
\label{cor: frac_approx}
Let $r \in \mathbb{R}_{> 1}$ be irrational and let $(p_n/q_n)_{n \geq 0}$ be its sequence of convergents. Then for all $n\geq 0$ we have
\(
    \lceil p_{2n}/r\rceil = q_{2n}
    \text{ and }
    \lfloor p_{2n+1}/r\rfloor = q_{2n+1}.
\)
\end{corollary}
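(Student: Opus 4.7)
The plan is to combine the ordering of convergents in \autoref{lem: CFA}\ref{it: CFA3} with the best-approximation property \autoref{lem: CFA}\ref{it: CFA4}; each provides one side of the claimed floor/ceiling equalities.

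First I would extract the easy bounds from \autoref{lem: CFA}\ref{it: CFA3}. For even $n$, the inequality $p_n/q_n < r$ (with $q_n\geq 1$) gives $p_n/r < q_n$, hence $\lceil p_n/r\rceil \leq q_n$. For odd $n$, the inequality $p_n/q_n > r$ gives $p_n/r > q_n$, hence $\lfloor p_n/r\rfloor \geq q_n$. It remains to rule out strict inequality in both cases.

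For the matching bounds I would argue by contradiction using \autoref{lem: CFA}\ref{it: CFA4} applied with $a = p_n$ (so $|a|\leq|p_n|$ is trivially satisfied). For even $n\geq 1$, assume $\lceil p_n/r\rceil \leq q_n-1$, i.e., $p_n/(q_n-1)\leq r$; since $q_n\geq 2$ here, $b=q_n-1$ is a valid positive integer, and together with $p_n/q_n<r$ one obtains $p_n/q_n < p_n/(q_n-1) \leq r$, giving
\[
|r - p_n/(q_n-1)| = r - p_n/(q_n-1) < r - p_n/q_n = |r - p_n/q_n|,
\]
which contradicts best approximation. For odd $n$, assume $\lfloor p_n/r\rfloor \geq q_n+1$, i.e., $p_n/(q_n+1)\geq r$; combined with $p_n/q_n > r$ this yields $r \leq p_n/(q_n+1) < p_n/q_n$, so
\[
|r - p_n/(q_n+1)| = p_n/(q_n+1) - r < p_n/q_n - r = |r - p_n/q_n|,
\]
again contradicting \autoref{lem: CFA}\ref{it: CFA4}.

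Finally I would dispatch the edge case $n=0$ of the even statement, where $q_0=1$ makes $b=q_0-1=0$ inadmissible. Here $p_0 = \lfloor r\rfloor$ and $q_0=1$, and since $r>1$ is irrational one has $1 \leq \lfloor r\rfloor < r$, so $0 < \lfloor r\rfloor/r < 1$ and $\lceil p_0/r\rceil = 1 = q_0$ directly. The main obstacle in this proof is only bookkeeping: one must keep the direction of each inequality straight and notice that the small case $q_n=1$ for even $n$ needs separate treatment; no new ideas beyond \autoref{lem: CFA} are required.
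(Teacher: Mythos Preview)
Your proof is correct and follows essentially the same route as the paper: both arguments use the best-approximation property \autoref{lem: CFA}\ref{it: CFA4} with $a=p_n$ and a shifted denominator to pin down $q_n$, and you are in fact more careful than the paper in isolating the edge case $n=0$ (where $q_0-1=0$) and in justifying $q_n\geq 2$ for even $n\geq 2$.
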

\begin{proof}
    Because $r > 1$ we have that $p_m$ and $q_m$ are strictly positive for all $m$.
    Because $p_{2n}/q_{2n}$ is the best approximation to $r$ among fractions with equal or smaller numerator we see that $q_{2n}$ is the unique positive integer $q$ for which $(q-1) \cdot p_{2n} < r < q \cdot p_{2n}$. This equation is satisfied for $q = \lceil p_{2n}/r\rceil$ and thus $\lceil p_{2n}/r\rceil=q_{2n}$. The proof of the other equality goes analogously. 
\end{proof}

\subsection{Cauchy sequences with no finite limit graph}\label{sec:non-complete}

In \autoref{sec:right-continuity} we used the vertex-removal strategy to prove that if $p_n/q_n$ converges to~$a/b$ from above, then $E_{p_n/q_n}$ converges to $E_{a/b}$.
In this section we give another construction of interesting converging sequences, or rather of Cauchy sequences.

We will prove that if $p_n/q_n$ converges to any irrational number $r \in \RR_{\geq2}$ (without requiring this convergence to be ``from above'' or ``from below''), then $E_{p_n/q_n}$ is a Cauchy sequence.
For this result we will make use of similar ideas as in \autoref{sec:right-continuity} and of the continued fraction expansion of $r$ (\autoref{subsec:cont-frac}).
It will follow from a simple argument (rationality of the fractional clique covering number of finite graphs) that this sequence cannot have a finite graph as a limit point. Thus we find that the set of finite graphs is not complete.
In \autoref{sec:completion} we will discuss how the limit points of these Cauchy sequences of fraction graphs can be described by infinite graphs on the circle.

In terms of the asymptotic spectrum $\X$, our result says that for any element $F \in \X$ (and in fact any $F \in \X\cup\{\alpha, \Theta\}$), the map $p/q \mapsto F(E_{p/q})$ has an extension to $\RR_{\geq2}$ that is continuous at every irrational number. %

\begin{theorem}
\label{thm: continuous at irrationals}
Let $r \in \RR_{\geq2}$ be irrational. 
\begin{enumerate}[\upshape(a)]
    \item\label{item:irr:a} For every $F \in \X \cup \{\alpha,\Theta\}$,
    \begin{equation}
        \label{eq: desired equality}
        \sup_{a/b < r} F(E_{a/b}) = \inf_{a/b > r} F(E_{a/b}).
    \end{equation}
    In particular, $\QQ_{\geq2} \to \RR : p/q \mapsto F(E_{p/q})$ has an extension $\tilde{F}$ to $\RR_{\geq2}$ that is 
    continuous at every irrational number, namely by letting $\tilde{F}(r)= \sup_{a/b < r} F(E_{a/b})$.
    \item \label{item:irr:b} The continuity in {\upshape\eqref{item:irr:a}} is uniform over all $F \in \X$ in the following sense. Given any $\eps > 0$, there is a $\delta > 0$ such that if $|r - p/q| < \delta$, then $|\sup_{a/b<r}F(E_{a/b}) - F(E_{p/q})| < \eps$ for every $F\in \X$.
    \item \label{item:irr:c} If $a_n/b_n$ converges to $r$, then $E_{a_n/b_n}$ is Cauchy.
\end{enumerate}
\end{theorem}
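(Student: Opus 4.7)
The plan is to pin down the values of $F(E_{p/q})$ near an irrational $r$ by squeezing them between $F$ evaluated at successive continued fraction convergents of $r$, for which the vertex-removal bound (\autoref{th:vrm}) applies exactly. Let $(p_n/q_n)_{n\geq 0}$ be the convergents of $r$. By \autoref{lem: CFA}\eqref{it: CFA3} the even-order convergents increase to $r$ while the odd-order convergents decrease to $r$, and by \autoref{lem: CFA}\eqref{it: CFA2} for odd $n\geq 1$ one has $p_n q_{n-1}-q_n p_{n-1}=1$. Since $r>2$ (as $r$ is irrational), for all sufficiently large $n$ one has $p_{n-1}/q_{n-1},p_n/q_n\geq 2$ and $0<p_{n-1}<p_n$, $0<q_{n-1}<q_n$, so \autoref{th:vrm} yields, for every $F\in\X\cup\{\alpha,\Theta\}$,
\[
F(E_{p_{n-1}/q_{n-1}})\ \leq\ F(E_{p_n/q_n})\ \leq\ \frac{p_n}{p_n-1}\,F(E_{p_{n-1}/q_{n-1}}).
\]
Combined with the bound $F(E_{p_{n-1}/q_{n-1}})\leq \overline{\chi}_f(E_{p_{n-1}/q_{n-1}})= p_{n-1}/q_{n-1}< r$ from \autoref{th:chi-bar-max} and \autoref{th:fraction-graph-chi-bar}, this gives the key estimate
\[
0\ \leq\ F(E_{p_n/q_n})-F(E_{p_{n-1}/q_{n-1}})\ \leq\ \frac{r}{p_n-1},
\]
which holds \emph{uniformly in $F\in\X$}, and tends to $0$ as $n\to\infty$ since $p_n\to\infty$.

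For part \eqref{item:irr:a}, set $L(F)=\sup_{a/b<r}F(E_{a/b})$ and $U(F)=\inf_{a/b>r}F(E_{a/b})$. Monotonicity of $F$ under cohomomorphism (\autoref{th:ordering}) gives $L(F)\leq U(F)$ immediately. For odd $n$, because $p_{n-1}/q_{n-1}<r<p_n/q_n$, we have $F(E_{p_{n-1}/q_{n-1}})\leq L(F)\leq U(F)\leq F(E_{p_n/q_n})$, so the key estimate sandwiches $U(F)-L(F)\leq r/(p_n-1)\to 0$, forcing $L(F)=U(F)$. The continuous extension $\tilde F$ is then simply $\tilde F(r):=L(F)$, and continuity at $r$ is immediate from the same squeeze applied to any sequence approaching $r$.

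For part \eqref{item:irr:b}, given $\eps>0$, choose an odd $n$ with $r/(p_n-1)<\eps$ and set $\delta=\min\{r-p_{n-1}/q_{n-1},\ p_n/q_n-r\}>0$, which depends only on $r$ and $\eps$. For any $p/q\in\QQ_{\geq 2}$ with $|p/q-r|<\delta$, monotonicity gives $F(E_{p_{n-1}/q_{n-1}})\leq F(E_{p/q})\leq F(E_{p_n/q_n})$ and the same inequalities sandwich $\tilde F(r)$, so $|F(E_{p/q})-\tilde F(r)|\leq F(E_{p_n/q_n})-F(E_{p_{n-1}/q_{n-1}})\leq r/(p_n-1)<\eps$ for every $F\in\X$ simultaneously. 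Part \eqref{item:irr:c} then follows by a straightforward triangle-inequality argument: if $a_n/b_n\to r$, then for $n,m$ large, $d(E_{a_n/b_n},E_{a_m/b_m})=\sup_{F\in\X}|F(E_{a_n/b_n})-F(E_{a_m/b_m})|\leq 2\eps$ by \eqref{item:irr:b}.

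The only mildly delicate point is the combinatorial bookkeeping around the convergents: checking that for large enough $n$ one has $p_{n-1}/q_{n-1},p_n/q_n\geq 2$ and the positivity/inequality conditions needed to invoke \autoref{th:vrm}; these all follow routinely from the recursion for the $p_n,q_n$ and from $r>2$. Everything else is a monotonicity-plus-squeeze argument in which uniformity in $F\in\X$ comes for free because the gap bound $r/(p_n-1)$ is $F$-independent.
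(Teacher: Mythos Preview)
Your proof is correct and follows essentially the same approach as the paper's: both squeeze the sup and inf between consecutive continued-fraction convergents of $r$ using the vertex-removal bound \autoref{th:vrm}, obtain the $F$-independent gap estimate $r/(p_n-1)$, and then read off parts \eqref{item:irr:a}--\eqref{item:irr:c} by monotonicity and the triangle inequality. The only cosmetic difference is notation: the paper separates odd- and even-order convergents into two sequences, whereas you index them as $p_{n-1}/q_{n-1}$ and $p_n/q_n$ with $n$ odd.
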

\begin{proof}
    \eqref{item:irr:a} Let $(p_n/q_n)_{n \geq 1}$ and $(p_n'/q_n')_{n \geq 1}$ be the odd-order and even-order convergents of $r$ respectively. It follows from \autoref{lem: CFA} that (i) $p_n/q_n$ and $p_n'/q_n'$ converge to $r$ from above and below, respectively, (ii) for all $n$ we have $p_n q_n' - p_n' q_n  = 1$, and (iii) $p_n \to \infty$ as $n \to \infty$. By \autoref{lem:vertex-transitive-gen} and the fact that $a/b \mapsto F(E_{a/b})$ is nondecreasing we have 
    \[
        F(E_{p_n'/q_n'}) \leq \sup_{a/b < r} F(E_{a/b}) \leq \inf_{a/b > r} F(E_{a/b}) \leq F(E_{p_n/q_n}) \leq \frac{p_n}{p_n - 1}F(E_{p_n'/q_n'}).
    \]
    Thus both the left-hand side and the right-hand side of \eqref{eq: desired equality} are contained in an interval of length $\frac{1}{p_n-1} F(E_{p_n'/q_n'})$ which is upper bounded by $\frac{1}{p_n - 1} \overline{\chi}_f(E_{p_n'/q_n'}) < \frac{r}{p_n-1}$. This bound does not depend on~$F$ and goes to $0$ as $n$ goes to infinity. This proves 
    \autoref{eq: desired equality} and thus $\tilde{F}$ as defined in \eqref{item:irr:a} is continuous at $r$.
    
    \eqref{item:irr:b} To prove uniform continuity, let $\eps >0$. Choose an $n$ such that $\frac{r}{p_n-1} < \eps$. Now take any $\delta > 0$ such that
    \[
    \delta < \min\{p_n/q_n-r,r-p_n'/q_n'\}.
    \]
    Then if $|r-x| < \delta$ we have $x \in [p_n'/q_n',p_n/q_n]$ and thus $|\tilde{F}(r) - \tilde{F}(x)| \leq \frac{r}{p_n-1} < \eps$.

    \eqref{item:irr:c} Let $a_n/b_n$ be a sequence of rational numbers converging to~$r$. Then we have 
    \begin{align*}
        d(E_{a_n/b_n},E_{a_m/b_m}) &= \sup_{F \in \X} |F(E_{a_n/b_n}) - F(E_{a_m/b_m})|\\ 
        &\leq \sup_{F \in \X} |\tilde{F}(a_n/b_n) - \tilde{F}(r)| + \sup_{F \in \X} |\tilde{F}(a_m/b_m) - \tilde{F}(r)|,
    \end{align*}
    where $\tilde{F}$ is defined as above. Since both summands converge to $0$ we see that $E_{a_n/b_n}$ is indeed Cauchy.
\end{proof}

\begin{remark}
    Unlike for \autoref{th:rational-right-cont} (b), 
    the uniform continuity of \autoref{thm: continuous at irrationals} (b) does not directly follow using Dini's theorem (\autoref{th:dini}). However, if we first extend the semiring of graphs to \emph{infinite} graphs (as we will do in the next section), then there is such a natural proof.
\end{remark}

As a consequence of \autoref{thm: continuous at irrationals} we find that the set of finite graphs with the asymptotic spectrum distance is not complete.

\begin{corollary}\label{th:not-complete2}
There is a Cauchy sequence of fraction graphs $E_{a_n/b_n}$ that does not converge to any finite graph.
\end{corollary}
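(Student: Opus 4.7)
The plan is to exhibit such a Cauchy sequence by approximating an irrational number from below (or in any fashion) by rationals, and then to rule out every finite limit by using a single rational-valued invariant in the asymptotic spectrum, namely the fractional clique covering number $\overline{\chi}_f$.

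Concretely, I would fix any irrational $r \in \RR_{\geq 2}$ (for instance $r = \sqrt{2}+2$) and let $(a_n/b_n)_{n \in \NN}$ be any sequence of rationals in $\QQ_{\geq 2}$ converging to $r$. Part (c) of \autoref{thm: continuous at irrationals} immediately gives that $E_{a_n/b_n}$ is Cauchy with respect to the asymptotic spectrum distance. The only remaining task is to show that no finite graph can serve as a limit point.

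To rule out a finite limit, suppose for contradiction that $E_{a_n/b_n} \to H$ for some finite graph $H$. Since $\overline{\chi}_f \in \X$, evaluating the distance against this particular element gives $|\overline{\chi}_f(E_{a_n/b_n}) - \overline{\chi}_f(H)| \leq d(E_{a_n/b_n}, H) \to 0$, hence $\overline{\chi}_f(E_{a_n/b_n}) \to \overline{\chi}_f(H)$. By \autoref{th:fraction-graph-chi-bar} the left-hand side equals $a_n/b_n$, which converges to the irrational number $r$. Therefore $\overline{\chi}_f(H) = r$. This is impossible because $\overline{\chi}_f(H)$ of any finite graph $H$ is rational: it is the optimum value of the linear program defining the fractional clique covering number (equivalently, the fractional chromatic number of $\overline{H}$), which has integer data and thus a rational optimum.

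I do not anticipate a real obstacle here; the argument is short and the only place one might stumble is in the final rationality remark, where one should cite (or briefly recall) the LP-definition of $\overline{\chi}_f$ on finite graphs to justify that $\overline{\chi}_f(H) \in \QQ$. Everything else is a direct application of results already established in the excerpt.
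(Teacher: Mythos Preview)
Your proof is correct and follows essentially the same approach as the paper: pick an irrational $r$, use \autoref{thm: continuous at irrationals}(c) to get a Cauchy sequence, and use that $\overline{\chi}_f(E_{a_n/b_n}) = a_n/b_n \to r$ is irrational while $\overline{\chi}_f$ is rational on finite graphs. Your write-up is slightly more explicit about why convergence in $d$ forces convergence of $\overline{\chi}_f$ and about the LP rationality of $\overline{\chi}_f$, but the argument is the same.
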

\begin{proof}
    Let $r> 2$ be an irrational number and let $a_n/b_n$ be a sequence of rational numbers converging to~$r$. 
    Then $E_{a_n/b_n}$ is a Cauchy sequence (\autoref{thm: continuous at irrationals} (c)). 
    Note that the fractional clique covering number of $E_{a_n/b_n}$ is given by $\overline{\chi}_f(E_{a_n/b_n}) = a_n/b_n$. Thus any limit graph $G$ of the sequence $E_{a_n/b_n}$ should satisfy $\overline{\chi}_f(G) = r$. This contradicts the fact that $\overline{\chi}_f(G)$ is rational for any finite graph $G$. 
\end{proof}

\section{Infinite graphs as limit points}\label{sec:completion}

In \autoref{sec:frac} we constructed Cauchy sequences of finite graphs in two ways: non-trivial converging sequences of fraction graphs to any fraction graph, and Cauchy sequences of fraction graphs with no finite graph as limit point. What are these limit points? In an abstract sense we know that we can describe them as elements of the space $C(\X)$ of continuous functions on the asymptotic spectrum $\X$, since this space is complete (\autoref{sec:metric}). In this section we will see how infinite graphs give a concrete model for these limit points.

The protagonists of this section are a family of infinite graphs on the circle, which we may think of as ``infinite versions'' of the fraction graphs of \autoref{sec:frac}. These infinite graphs are labeled by real numbers~$r \in \RR_{\geq2}$ instead of rational numbers~$p/q \in \QQ_{\geq2}$.
We present two versions of these graphs: an ``open'' version and a ``closed'' version, depending on whether a strict or non-strict inequality is used in their definition (which we will state in a moment). Understanding this seemingly minor difference will turn out to play an important role in the theory (in this section, and in \autoref{sec: self-cohomomorphisms}). %

We will first define the two versions of circle graphs and prove their basic properties. %
Next, we prove that for any irrational number $r \in \RR_{\geq2}$, if $p_n/q_n$ converges to~$r$, then the Cauchy sequence $E_{p_n/q_n}$ converges to a circle graph. Finally, we prove that the problem of convergence of fraction graphs ``from below'', or left-continuity of the asymptotic spectrum, is equivalent to the asymptotic equivalence of open and closed circle graphs.

In \autoref{sec: self-cohomomorphisms} we will study the circle graphs further to prove that the open and closed version are not equivalent under cohomomorphism, characterizing their self-cohomomorphisms in the process.

\subsection{Circle graph definition and basic properties}

Circle graphs are a natural family of infinite graphs. They have been studied in the context of circular colorings by Zhu \cite{MR1189048} (implicitly) and Bauslaugh--Zhu \cite{MR1642525}. See also, e.g., \cite[Section~17.2.3]{MR2361455}. They are related to Borsuk graphs \cite{MR0227050,MR0514625}. The definition and notation we use is as follows.

\begin{definition}
    Let $C \subseteq \RR^2$ denote a circle with unit circumference.
    Let $r \in \RR_{\geq2}$. Let~$E_r^{\open}$ be the infinite graph with vertex set $C$ for which two vertices are adjacent if and only if they have distance strictly less than $1/r$ on~$C$.
    Let $E_r^{\closed}$ be the infinite graph with vertex set $C$ for which two vertices are adjacent if and only if they have distance at most~$1/r$ on~$C$. 
\end{definition}

For illustration, we draw in \autoref{fig:circ} the neighbourhood of a vertex in the circle graphs $E^\open_3$, $E^\closed_3$ and $E^\closed_{5/2}$. The neighbourhood in $E^\open_3$ is open, while the neighbourhood in $E^\closed_3$ and $E^\closed_{5/2}$ is closed.
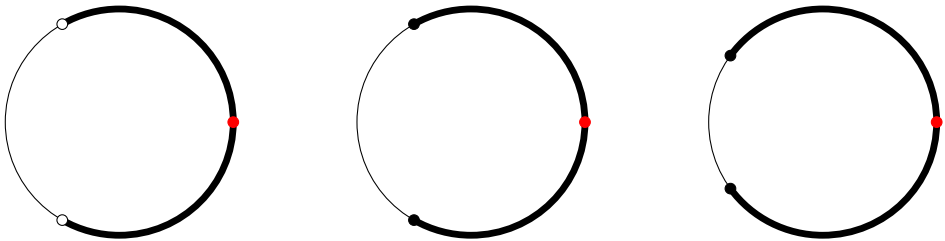
\begin{figure}[H]
\begin{minipage}{4.5cm}
\centering
\begin{tikzpicture}
  \def\r{1.5cm}
  \draw (0,0) circle (\r);

  \draw[line width=0.9mm] (0:\r) arc (0:120:\r);
  \draw[line width=0.9mm] (0:\r) arc (0:-120:\r);

  \filldraw[red] (0:\r) circle (2pt);
  \draw[fill=white] (120:\r) circle (2pt);
  \draw[fill=white] (240:\r) circle (2pt);

\end{tikzpicture}
\end{minipage}
\begin{minipage}{4.5cm}
\centering
\begin{tikzpicture}
  \def\r{1.5cm}
  \draw (0,0) circle (\r);

  \draw[line width=0.9mm] (0:\r) arc (0:120:\r);
  \draw[line width=0.9mm] (0:\r) arc (0:-120:\r);

  \filldraw[red] (0:\r) circle (2pt);
  \filldraw (120:\r) circle (2pt);
  \filldraw (240:\r) circle (2pt);
\end{tikzpicture}
\end{minipage}
\begin{minipage}{4.5cm}
\centering
\begin{tikzpicture}
  \def\r{1.5cm}
  \draw (0,0) circle (\r);

  \draw[line width=0.9mm] (0:\r) arc (0:144:\r);
  \draw[line width=0.9mm] (0:\r) arc (0:-144:\r);

  \filldraw[red] (0:\r) circle (2pt);
  \filldraw (144:\r) circle (2pt);
  \filldraw (-144:\r) circle (2pt);
\end{tikzpicture}
\end{minipage}
\caption{The neighbourhood of a vertex (in red) in the circle graphs $E^\open_3$, $E^\closed_3$ and $E^\closed_{5/2}$ (from left to right).  We take the circles to have unit circumference.}
\label{fig:circ}
\end{figure}

It is not hard to see that, just like the fraction graphs, the graphs $E_r^\open$ are co\-homo\-mor\-phism-monotone with respect to $r$, and the same for the $E_r^\closed$. Moreover, for every fixed~$r$ the closed version $E_r^\closed$ is always at most the open version $E_r^\open$. Conversely, if~$r < s$, then the open version $E^\open_r$ is at most the closed version $E^\closed_s$.
We summarize these properties in the following lemma.

\begin{lemma}\label{lem:mon-inf}\label{lem:cl-lt-op}\label{lem:op-lt-cl}
    If $r,s \in \RR_{\geq2}$ with $r<s$ we have $E_r^\closed \leq E_r^{\open} \leq E_s^\closed \leq E_s^\open$.
\end{lemma}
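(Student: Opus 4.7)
The plan is to exhibit, for each of the three inequalities in the chain, an explicit cohomomorphism $C \to C$, and in fact the identity map on the underlying circle will serve in all three cases. The verification amounts to a direct comparison of the definitions of adjacency.

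First I would restate the non-adjacency conditions: in $E_t^\open$ two distinct points $u,v \in C$ are non-adjacent if and only if their circular distance $\mathrm{dist}(u,v)$ satisfies $\mathrm{dist}(u,v) \geq 1/t$, while in $E_t^\closed$ they are non-adjacent if and only if $\mathrm{dist}(u,v) > 1/t$. Since a cohomomorphism need only send non-edges on distinct vertices to non-edges, and the identity map obviously sends distinct points to distinct points, checking the three implications reduces to three short comparisons of real numbers.

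For $E_r^\closed \leq E_r^\open$ (and symmetrically $E_s^\closed \leq E_s^\open$): if $u \neq v$ are non-adjacent in $E_r^\closed$, then $\mathrm{dist}(u,v) > 1/r$, which in particular gives $\mathrm{dist}(u,v) \geq 1/r$, so $u,v$ are non-adjacent in $E_r^\open$. For the middle inequality $E_r^\open \leq E_s^\closed$ with $r < s$: non-adjacency in $E_r^\open$ gives $\mathrm{dist}(u,v) \geq 1/r$, and since $r < s$ implies $1/r > 1/s$, this yields $\mathrm{dist}(u,v) > 1/s$, hence non-adjacency in $E_s^\closed$.

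I do not expect any real obstacle: the lemma just records that as $t$ increases adjacency becomes more restrictive, and that at a fixed $t$ the closed graph has strictly more adjacency than the open one. The only subtlety worth mentioning is the standard convention that a cohomomorphism only constrains images of non-edges between distinct vertices, so the identity map is automatically well-defined as a cohomomorphism without any injectivity worry being needed.
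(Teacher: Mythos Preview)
Your proof is correct and follows essentially the same approach as the paper: the paper observes that each graph in the chain is a spanning subgraph of the previous one, so the identity map on $C$ is a cohomomorphism, which is exactly what your explicit distance comparisons verify.
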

\begin{proof}
    Each graph in this sequence of inequalities is a spanning subgraph of the previous graph and thus the identity map $f: C \to C$ is a cohomomorphism.
\end{proof}

The next basic property, which we will use several times, is about finite induced subgraphs of the closed circle graphs. It is a priori clear that for any subset $S \subseteq V(E_r^\closed)$ we have $E_r^\closed[S] \leq E_r^\closed$. For finite $S$, the following lemma bounds $E_r^\closed[S]$ further away from~$E_r^\closed$ by means of an open circle graph that is strictly below~$E_r^\closed$.

\begin{lemma}\label{lem:fin-cl-frac}
    Let $r \in \RR_{\geq2}$ and let $S \subseteq V(E_r^{\closed})$ be finite. Then there is an~$a/b < r$ such that $E_r^{\closed}[S] \leq E^\open_{a/b}$.
\end{lemma}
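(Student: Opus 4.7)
The plan is to take the identity inclusion $S \hookrightarrow C$ as the candidate cohomomorphism $E_r^{\closed}[S] \to E^{\open}_{a/b}$, for a suitable rational $a/b < r$. Since both graphs have their vertex set sitting inside the circle $C$, this natural choice reduces the entire lemma to a single distance inequality, which can be arranged thanks to the finiteness of $S$.

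First I would identify the non-edges of $E_r^{\closed}[S]$: these are exactly the pairs $\{u,v\} \subseteq S$ with circular distance $d(u,v) > 1/r$. Because $S$ is finite, there are only finitely many such pairs, so
\[
\delta \coloneqq \min\bigl\{d(u,v) : u,v \in S,\ d(u,v) > 1/r\bigr\}
\]
is a positive real strictly larger than $1/r$ (if this set is empty then $E_r^{\closed}[S]$ is already complete and the conclusion is immediate for any valid target). Hence $1/\delta < r$, and density of $\QQ$ in $\RR$ yields a rational $a/b$ with $\max(2, 1/\delta) \leq a/b < r$.

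Next I would verify the cohomomorphism condition for the identity map. For any non-edge $\{u,v\}$ of $E_r^{\closed}[S]$ one has $d(u,v) \geq \delta \geq b/a$, so $\{u,v\}$ is a non-edge of $E^{\open}_{a/b}$, whose edges require strict distance $< b/a$. The only delicate point I foresee is the boundary case $r = 2$, where the half-open interval $[2, r)$ is empty; however on a unit-circumference circle every pair of distinct points lies within $1/2$, so $E_r^{\closed}[S]$ is complete and the statement is vacuous, once one extends the natural convention (already used for fraction graphs in the paper) of interpreting $E^{\open}_{a/b}$ with $a/b < 2$ as a complete graph. Overall I do not expect any genuine obstacle: the argument is a direct consequence of the finiteness of $S$ combined with density of $\QQ$ in $\RR$.
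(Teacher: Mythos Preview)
Your proposal is correct and follows essentially the same approach as the paper. The paper also takes the minimum distance over non-adjacent pairs in $S$ (writing it as $1/s$ rather than $\delta$), observes this exceeds $1/r$, and then picks a rational $a/b$ strictly between $s$ and $r$; the only cosmetic difference is that the paper factors the verification through the intermediate inequality $E_r^{\closed}[S] \leq E_s^{\open}$ and then invokes monotonicity in the parameter, whereas you verify the cohomomorphism into $E^{\open}_{a/b}$ directly.
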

\begin{proof}
    There is an $s \in \RR_{\geq2}$ such that the smallest distance over all distinct and non-adjacent $\alpha,\beta \in S$ equals $1/s$. Then $E^\closed_r[S] \leq E^\open_s$. Since $s < r$, any $a/b$ such that $s < a/b < r$ satisfies the claim by \autoref{lem:mon-inf}.
\end{proof}

We call two (possibly infinite) graphs $G$ and $H$ 
\emph{equivalent} if $G \leq H$ and $H \leq G$.

\begin{lemma}\label{lem:open-circle-frac-equiv}
    Let $p/q \in \QQ_{\geq2}$. Then $E^\open_{p/q}$ and $E_{p/q}$ are equivalent.
\end{lemma}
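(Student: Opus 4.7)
The plan is to exhibit cohomomorphisms in both directions by the two obvious maps: embed the finite vertex set of $E_{p/q}$ as the $p$ equally-spaced points $\{i/p : 0 \leq i < p\}$ on the circle $C$, and in the reverse direction project $C \cong [0,1)$ to $\{0,\dots,p-1\}$ via $x \mapsto \lfloor px \rfloor$.

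For $E_{p/q} \leq E^{\open}_{p/q}$, I would take $f \colon i \mapsto i/p$. Two distinct vertices $i,j$ that are non-adjacent in $E_{p/q}$ have cyclic distance at least $q$ in $\ZZ_p$; scaling by $1/p$, their images have circle distance at least $q/p = 1/(p/q)$, hence are non-adjacent in $E^{\open}_{p/q}$ (which uses a strict inequality $<1/(p/q)$ for adjacency). This direction is immediate.

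For $E^{\open}_{p/q} \leq E_{p/q}$, I would take $g \colon x \mapsto \lfloor px \rfloor$. If $x,y$ are distinct and non-adjacent in $E^{\open}_{p/q}$, their circle distance is at least $q/p$, so writing WLOG $x,y \in [0,1)$ we have $q/p \leq |x-y| \leq 1 - q/p$. The integer $g(x)-g(y)$ lies within distance strictly less than $1$ of $p(x-y)$, so $q-1 < |g(x)-g(y)| < p - q + 1$, and integrality promotes this to $q \leq |g(x)-g(y)| \leq p-q$. Hence the cyclic distance of $g(x)$ and $g(y)$ in $\ZZ_p$ is at least $q$, so they are non-adjacent in $E_{p/q}$ (and in particular distinct, using $q \geq 1$, so the distinctness hypothesis in the definition of cohomomorphism is not an issue).

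There is no real obstacle here; the only thing to be careful about is the rounding step on the projection side, where the strict inequality in the definition of $E^{\open}_{p/q}$ (circle distance strictly less than $1/r$ means \emph{adjacent}, so non-adjacent gives the non-strict bound $\geq q/p$) combines with integrality of $g(x)-g(y)$ to yield exactly the cyclic bound $\geq q$ that $E_{p/q}$ needs. The use of the open version (rather than the closed version $E^{\closed}_{p/q}$) is essential in this borderline calculation, and it is precisely this asymmetry between open and closed circle graphs that later sections exploit.
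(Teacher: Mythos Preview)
Your proof is correct and is essentially identical to the paper's: both directions use exactly the maps $i \mapsto i/p$ and $x \mapsto \lfloor px \rfloor$. The paper leaves the rounding verification as ``one verifies that\ldots'', whereas you spell out the inequality $q-1 < |g(x)-g(y)| < p-q+1$ and promote it by integrality; this is a welcome addition, and your closing remark on why the \emph{open} circle graph is needed here is apt.
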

\begin{proof}
    To prove $E_{p/q} \leq E_{p/q}^\open$, consider $C$ as $[0,1]$ where $0$ and $1$ are identified, and let $f : [p] \to C$ be defined by $v \mapsto v/p$. If $v,w \in [p]$ are not adjacent, then they have distance at least $q$ modulo $p$. Then $v/p$ and $w/p$ have distance at least $q/p$ on~$C$. Thus~$f$ is a cohomomorphism $E_{p/q} \to E_{p/q}^\open$.

    To prove $E_{p/q}^\open \leq E_{p/q}$, let $f : C \to [p]$ be defined by $\alpha \mapsto \lfloor p \alpha \rfloor$. One verifies that if~$\alpha$ and $\beta$ have distance at least $q/p$ on $C$, then $\lfloor p\alpha \rfloor$ and $\lfloor p \beta \rfloor$ have distance at least $q$ modulo $p$. Thus $f$ is a cohomomorphism $E^\open_{p/q} \to E_{p/q}$.
\end{proof}

We will see a generalization of \autoref{lem:open-circle-frac-equiv} in \autoref{lem: induce_equidistant}.

\subsection{Asymptotic spectrum distance of infinite graphs}\label{sec:asymp-spec-infinite}

In \autoref{sec:dist} we discussed asymptotic spectrum duality and distance for finite graphs. The theory extends naturally to infinite graphs, as was first noted by Fritz \cite{fritz2021unified}, and which we will need in this section. We discuss the extension to infinite graphs briefly here, and refer to the appendix for a more detailed discussion of the general theory of which finite and infinite graphs are a special case (\autoref{sec:gen-asymp-spec}).

Let $\mathcal{G}_\infty$ be the set of all (possibly) infinite graphs with finite clique covering number.\footnote{To avoid any issues with $\mathcal{G}_\infty$ not being a set, we will make the technical assumption that all our graphs have bounded cardinality. For concreteness, we may take this to be the cardinality of $\RR$, since this suffices for our examples.} %
Strong product $\boxtimes$, Shannon capacity $\Theta$, disjoint union $\sqcup$, cohomomorphism~$\rightarrow$, cohomomorphism preorder~$\leq$, asymptotic preorder~$\asympleq$, and asymptotic spectrum are defined for infinite graphs in the same way as for finite graphs. We will denote the asymptotic spectrum of infinite graphs by $\X_\infty$ and keep using $\X$ for the asymptotic spectrum of finite graphs. (This will allow us to compare the two, in a moment.) In~$\mathcal{G}_\infty$ we identify any two graphs that are asymptotically equivalent.

The asymptotic spectrum duality for infinite graphs is as follows:

\begin{theorem}[Asymptotic spectrum duality]
Let $G,H \in \G_\infty$.
\begin{enumerate}[\upshape(a)]
    \item $G \asympleq H$ if and only if for every $F \in \X_\infty$, $F(G) \leq F(H)$.
    \item $\Theta(G) = \min_{F \in \X_\infty} F(G)$.
    \item $\X_\infty$ is compact in the topology induced by the evaluation maps $\widehat{G} : F \mapsto F(G)$.
\end{enumerate}
\end{theorem}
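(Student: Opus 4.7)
The overall plan is to deduce the theorem from the general Strassen-style asymptotic spectrum duality for preordered semirings (the framework pointed to in \autoref{sec:gen-asymp-spec}), just as was done for finite graphs in \autoref{sec:dist}. First I would verify the hypotheses: under $\sqcup$ and $\boxtimes$, the set $\G_\infty$ (modulo asymptotic equivalence, with the cardinality bound fixed in the footnote to avoid set-theoretic issues) is a commutative semiring with unit $E_1$; cohomomorphism is reflexive, transitive, and compatible with both $\sqcup$ and $\boxtimes$; the subsemiring $\{E_n : n \in \NN\}$ is isomorphic to $\NN$. The crucial hypothesis to check is an Archimedean/cofinality condition on $\{E_n\}$, which is exactly where the assumption that every $G\in \G_\infty$ has finite clique covering number is used: it guarantees $G \leq E_{\overline{\chi}(G)}$, so every element of $\G_\infty$ is dominated by some $E_n$.

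For part (a), once the hypotheses are in place, the forward direction is immediate from monotonicity and multiplicativity (if $G^{\boxtimes n} \leq H^{\boxtimes(n+f(n))}$ for some $f(n)=o(n)$, then $F(G)^n \leq F(H)^{n+f(n)}$ and take $n\to\infty$ after normalising by $F(H)$, using that $F(H)$ is bounded above by $\overline\chi(H)$). The reverse direction is the content of the general duality: any element $F_0$ of the spectrum of the localised semiring extends to an $F \in \X_\infty$, and the existence of a separating $F$ follows from the Kadison--Dubois/Hahn--Banach style arguments of the abstract framework. For part (b), I would combine part (a) with the identities $\Theta(G) = \sup_k \alpha(G^{\boxtimes k})^{1/k}$ and $\alpha(G^{\boxtimes k}) = \max\{n : E_n \leq G^{\boxtimes k}\}$: every $F \in \X_\infty$ satisfies $n \leq F(G)^k$ whenever $E_n \leq G^{\boxtimes k}$, so $\Theta(G) \leq F(G)$; conversely, applying (a) with $H = E_{\lceil \Theta(G)\rceil+\eps}$ (in an appropriate rational-exponent sense) produces an $F$ attaining $\Theta(G)$, and the $\min$ rather than $\inf$ follows from part (c).

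For part (c), I would mirror the compactness proof of \autoref{lem:compact}: embed $\X_\infty$ into $\prod_{G \in \G_\infty} [0, \overline{\chi}(G)]$ via $F \mapsto (F(G))_G$, which is compact by Tychonoff's theorem, and observe that the initial topology generated by the evaluation maps $\widehat{G}$ coincides with the subspace topology. Closedness is obtained by writing
\[
\X_\infty = Z_{\text{add}} \cap Z_{\text{mul}} \cap Z_{\text{norm}} \cap Z_{\text{mon}}
\]
exactly as in the finite case, each factor being an intersection of preimages of closed singletons or half-lines under the continuous coordinate projections $\pi_G$ and polynomial combinations thereof.

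The only real obstacle, beyond bookkeeping, is the Archimedean hypothesis needed to feed into the abstract duality machine; this is precisely what the finite clique covering number assumption buys. If one dropped that hypothesis, elements of $\X_\infty$ could fail to take finite values and the whole structure would collapse. A secondary (and purely technical) point is the cardinality bound on $\G_\infty$, needed so that $\prod_{G\in\G_\infty}\RR$ is a legitimate set to which Tychonoff applies; the footnote's choice of bounding cardinalities by $|\RR|$ suffices for all examples in the paper, in particular the circle graphs $E_r^\open$ and $E_r^\closed$.
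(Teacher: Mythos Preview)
Your proposal is correct and matches the paper's approach: the paper itself does not prove this theorem directly but defers it to the general Strassen-type duality for preordered semirings in \autoref{sec:gen-asymp-spec} (specifically \autoref{th:abstract-duality}), with the finite clique-cover assumption supplying precisely the Archimedean bound $G \leq E_{\overline{\chi}(G)}$ that you single out. The only point you gloss over is that part~(b) in the abstract framework requires the element to be \emph{gapped}; for graphs this is automatic (either $\alpha(G)\geq 2$ so $G\geq E_2$, or $G$ is a clique and hence equivalent to $E_1$), but it is worth recording.
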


\begin{remark}
    Note that, unlike for finite graphs (\autoref{th:duality-Theta}), for infinite graphs we do not know whether $\max_{F \in \X_\infty} F(G)$ is an element of $\X_\infty$. One can prove, with general arguments (see \autoref{th:abstract-duality}), that this maximization equals the asymptotic clique covering number $\lim_{n\to\infty} \overline{\chi}(G^{\boxtimes n})^{1/n}$.
\end{remark}

The other theory we discussed for finite graphs remains true for infinite graphs as well.  We define asymptotic spectrum distance on $\mathcal{G}_\infty$ by $d(G, H) = \sup_{F \in \X_\infty}|F(G) - F(H)|$, and the supremum is attained. We may equivalently think of elements of $\mathcal{G}_\infty$ as elements~$\widehat{G}$ of $C(\X_\infty)$, real-valued continuous functions on $\X_\infty$, equipped with the sup-norm. Shannon capacity is continuous. And finally, Dini's theorem implies that if $(G_n)_{n\in \NN}$ is a sequence in $\mathcal{G}_\infty$ and $H \in \mathcal{G}_\infty$, and if for every $F \in \X_\infty$ we have that $(F(G_n))_{n \in \NN}$ is monotone in $i \in \NN$ and converges to $F(H)$, then $G_n$ converges to $H$. (See \autoref{sec:gen-asymp-spec}.)

It is natural to ask how $\X$ and $\X_\infty$ are related. Indeed, there is a strong connection; to prove it we use the following basic lemma.

\begin{lemma}\label{lem:order-embedding}
    For any two finite graphs $G,H$, we have $G \leq H$ in the cohomomorphism preorder on finite graphs if and only if $G \leq H$ in the cohomomorphism preorder on infinite graphs.
\end{lemma}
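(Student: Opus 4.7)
The plan is to observe that this lemma is essentially a matter of unpacking definitions: the existence of a cohomomorphism $G \to H$ is a statement purely about the vertex sets and edge sets of $G$ and $H$, and makes no reference to the ambient category of graphs in which we are working.

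First I would write out the definition on both sides. In the finite setting, $G \leq H$ means there exists a map $f : V(G) \to V(H)$ such that for all distinct $u,v \in V(G)$ with $\{u,v\} \notin E(G)$, we have $f(u) \neq f(v)$ and $\{f(u), f(v)\} \notin E(H)$. In the infinite setting (as introduced in \autoref{sec:asymp-spec-infinite}), the definition is literally identical; the only change is that $V(G)$ and $V(H)$ are allowed to be infinite, but this is vacuous when both are finite.

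For the forward direction, a cohomomorphism $f : V(G) \to V(H)$ between finite graphs witnesses $G \leq H$ in the infinite cohomomorphism preorder, because the condition checked on each pair of distinct non-adjacent vertices is the same. For the backward direction, any cohomomorphism witnessing $G \leq H$ in the infinite preorder is in particular a map $V(G) \to V(H)$ preserving non-edges, hence a cohomomorphism in the finite preorder. So the two preorders agree on finite graphs.

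There is no real obstacle here; the content of the lemma is that the embedding of the category of finite graphs into $\mathcal{G}_\infty$ is faithful with respect to the cohomomorphism preorder, which justifies using the same symbol $\leq$ in both contexts and ensures that all subsequent results stated for $\mathcal{G}_\infty$ specialize consistently to the finite graph theory developed in \autoref{sec:dist}.
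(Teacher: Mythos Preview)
Your proposal is correct and takes essentially the same approach as the paper: both simply observe that the definition of a cohomomorphism $G \to H$ is identical in the finite and infinite settings, so the two preorders coincide on finite graphs. The paper's proof is a single sentence to this effect.
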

\begin{proof}
 This follows directly from the definition of a cohomomorphism $G \to H$ (of graphs or infinite graphs) as a map from $V(G)$ to $V(H)$ that maps any pair of distinct non-adjacent vertices of $G$ to distinct non-adjacent vertices of $H$.
\end{proof}

Thus, \autoref{lem:order-embedding} says that the natural embedding from finite graphs to infinite graphs, $i : \mathcal{G} \to \mathcal{G}_\infty$ is, first of all, well-defined (under cohomomorphism equivalence), and second, is an order-embedding: for every $G,H \in \mathcal{G}$ we have $G \leq H$ if and only if $i(G) \leq i(H)$.
In other words, infinite graphs do not introduce any new inequalities between finite graphs in the cohomomorphism preorder.
This simple observation implies (as a special case of a general result, \autoref{th:abstract-surj}) the following relation between the asymptotic spectra $\X$ and $\X_\infty$. We consider the restriction map $i^* : \X_\infty \to \X : F \mapsto F \circ i$, which simply restricts the domain of any element $F \in \mathcal{X}_\infty$ to finite graphs.

\begin{theorem}\label{th:restr}
    The restriction map $i^* : \X_\infty \to \X$ is surjective.
\end{theorem}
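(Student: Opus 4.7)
The plan is to deduce this as an instance of the abstract surjectivity result \autoref{th:abstract-surj} for restrictions of asymptotic spectra under order-embeddings of preordered commutative semirings.

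First I would verify the hypotheses of \autoref{th:abstract-surj}. The inclusion $i : \mathcal{G} \to \mathcal{G}_\infty$ is a homomorphism of commutative semirings: it preserves $\sqcup$ and $\boxtimes$, as well as the additive and multiplicative units $E_0$ and $E_1$. By \autoref{lem:order-embedding} it is moreover an order-embedding: no new inequalities are introduced between finite graphs by allowing infinite targets or sources for cohomomorphisms. The Archimedean condition needed to feed the abstract theorem follows from the defining property of $\mathcal{G}_\infty$ that every graph there has finite clique covering number: for every $H \in \mathcal{G}_\infty$ there is $k \in \NN$ with $E_0 \leq H \leq E_k$, and both bounds are in the image of $i$. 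Hence every infinite graph is sandwiched in the cohomomorphism preorder between finite graphs, so $i(\mathcal{G})$ is Archimedean cofinal in $\mathcal{G}_\infty$.

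Next I would invoke \autoref{th:abstract-surj}: given any $F \in \X$, it produces an extension $\tilde F \in \X_\infty$ with $\tilde F \circ i = F$, which is exactly the surjectivity claim for $i^*$. For intuition, the extension is characterized from above by
\[
\tilde F(H) \;=\; \inf\bigl\{F(G)^{1/n} : n \in \NN,\; G \in \mathcal{G},\; H^{\boxtimes n} \leq G\bigr\},
\]
and dually from below by the supremum over finite graphs lying below powers of $H$. Both quantities are finite thanks to the Archimedean sandwich $E_0 \leq H \leq E_k$. Additivity, multiplicativity, monotonicity, and the normalisation $\tilde F(E_n) = n$ then follow from the corresponding properties of $F$, provided one knows these upper and lower approximations agree.

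The main obstacle is precisely this coincidence: showing that the candidate extension is a genuine spectral point on $\mathcal{G}_\infty$ rather than merely a sub- or super-additive functional. This is exactly what \autoref{th:abstract-surj} settles, using a Hahn--Banach / Kadison--Dubois / Strassen-style argument (in the spirit of \cite{MR0707730,strassen1988asymptotic,wigderson2022asymptotic,fritz2021unified}) that exploits precisely the order-embedding and Archimedean cofinality hypotheses already verified. Once \autoref{th:abstract-surj} is available, no further graph-specific work is required, and the entire content of \autoref{th:restr} reduces to checking that the triple $(\mathcal{G}, \mathcal{G}_\infty, i)$ fits the abstract framework, which we did above.
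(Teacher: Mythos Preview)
Your proposal is correct and follows essentially the same route as the paper: the paper simply notes that \autoref{lem:order-embedding} makes $i : \mathcal{G} \to \mathcal{G}_\infty$ an order-embedding of preordered semirings and then invokes the abstract surjectivity result \autoref{th:abstract-surj}. Your additional discussion of the Archimedean condition (which in the paper is part of the standing hypotheses on both semirings, guaranteed for $\mathcal{G}_\infty$ by the finite clique-covering-number assumption) and of the extension formula is accurate elaboration but not needed beyond what the paper already records.
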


\begin{remark}
    In other words, \autoref{th:restr} says that any element $F \in \X$ can be extended to an element in $\X_\infty$. So, for instance, the Lovász theta function (on finite graphs) can be extended to infinite graphs (perhaps in multiple ways) in such a way that it remains $\boxtimes$-multiplicative, $\sqcup$-additive, $\leq$-monotone and $E_n$-normalised. 
\end{remark}

\begin{remark}
    As a consequence of \autoref{th:restr},  $\sup_{F \in \X_\infty}|F(G) - F(H)|$ and $\sup_{F \in \X}|F(G) - F(H)|$ coincide on any finite graphs $G,H$.
\end{remark}

\autoref{th:restr} will play an important role in several arguments that involve both infinite and finite graphs.

\subsection{Circle graphs as limit points of fraction graphs} %

In \autoref{sec:non-complete} we proved that if $p_n/q_n$ converges to an irrational $r \in \RR_{\geq2}$, then $E_{p_n/q_n}$ is Cauchy. 
We noted that no finite graph can be the limit point of these Cauchy sequences (\autoref{th:not-complete2}). We will prove here that the limit points are circle graphs.

\begin{theorem}\label{th:irr-closed-open-equiv}
Let $r \in \RR_{\geq2}$ be irrational.
\begin{enumerate}[\upshape(a)]
    \item For every $F \in \X_\infty$,
    \[
        F(E_r^\closed) = F(E_r^\open) = \sup_{a/b < r} F(E_{a/b}) = \inf_{r < c/d} F(E_{c/d}).
    \]
    In particular, $E_r^\closed$ and $E_r^\open$ are asymptotically equivalent.
    \item If $p_n/q_n$ converges to $r$, then $E_{p_n/q_n}$ converges to $E^\open_r$.
    \end{enumerate}
\end{theorem}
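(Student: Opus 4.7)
The plan is to first establish the chain of inequalities in part (a) using the basic monotonicity properties of the circle graphs together with the equivalence of finite fraction graphs with their open circle counterparts, and then bootstrap using the uniform continuity result for irrational points (Theorem~\ref{thm: continuous at irrationals}) transferred to $\X_\infty$ via the restriction map of Theorem~\ref{th:restr}.

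For part (a), let $F \in \X_\infty$ and fix rationals $a/b < r < c/d$. By Lemma~\ref{lem:mon-inf} we have $E^\open_{a/b} \leq E^\closed_r \leq E^\open_r \leq E^\closed_{c/d} \leq E^\open_{c/d}$, and by Lemma~\ref{lem:open-circle-frac-equiv} we have $E^\open_{a/b} \equiv E_{a/b}$ and $E^\open_{c/d} \equiv E_{c/d}$. Applying the monotone function $F$ gives
\[
F(E_{a/b}) \leq F(E^\closed_r) \leq F(E^\open_r) \leq F(E_{c/d}).
\]
Taking the supremum over $a/b < r$ and infimum over $c/d > r$ yields
\[
\sup_{a/b<r} F(E_{a/b}) \leq F(E^\closed_r) \leq F(E^\open_r) \leq \inf_{r<c/d} F(E_{c/d}).
\]
Now I would apply Theorem~\ref{th:restr}: the restriction $i^*(F) \in \X$ agrees with $F$ on every finite fraction graph, so Theorem~\ref{thm: continuous at irrationals}(a) (applied to $i^*(F)$) forces the outer sup and inf to coincide. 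Hence all four quantities are equal. The asymptotic equivalence of $E^\closed_r$ and $E^\open_r$ then follows immediately from asymptotic spectrum duality on $\G_\infty$, since $F(E^\closed_r) = F(E^\open_r)$ for every $F \in \X_\infty$.

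For part (b), I would combine part (a) with the uniform continuity statement of Theorem~\ref{thm: continuous at irrationals}(b). Given $\eps > 0$, let $\delta > 0$ be such that for every $F' \in \X$, if $|r - p/q| < \delta$ then $|\sup_{a/b < r} F'(E_{a/b}) - F'(E_{p/q})| < \eps$. For any $F \in \X_\infty$ with restriction $F' = i^*(F) \in \X$, part (a) gives
\[
F(E^\open_r) = \sup_{a/b<r} F(E_{a/b}) = \sup_{a/b<r} F'(E_{a/b}),
\]
and since $E_{p_n/q_n}$ is finite we also have $F(E_{p_n/q_n}) = F'(E_{p_n/q_n})$. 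Choosing $N$ so large that $|r - p_n/q_n| < \delta$ for all $n \geq N$, the uniform continuity bound then yields $|F(E_{p_n/q_n}) - F(E^\open_r)| < \eps$ for every $F \in \X_\infty$, i.e.\ $d(E_{p_n/q_n}, E^\open_r) \leq \eps$.

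The main obstacle is recognizing that the bridge between finite and infinite graphs here is essentially free: all the analytic work has already been done in Theorem~\ref{thm: continuous at irrationals}, and the only nontrivial ingredient needed beyond pure monotonicity is the surjectivity of $i^* : \X_\infty \to \X$ (Theorem~\ref{th:restr}), which lets us pull back the uniform continuity statement from $\X$ to $\X_\infty$ without losing uniformity. Once that transfer is in place, both parts reduce to chaining the already-established sandwich $E^\open_{a/b} \leq E^\closed_r \leq E^\open_r \leq E^\closed_{c/d}$ with the known equality of left and right limits at irrationals.
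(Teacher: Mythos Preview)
Your proof is correct and follows essentially the same approach as the paper: sandwich $E^\closed_r$ and $E^\open_r$ between fraction graphs via Lemma~\ref{lem:mon-inf} and Lemma~\ref{lem:open-circle-frac-equiv}, then invoke Theorem~\ref{thm: continuous at irrationals} to collapse the outer bounds. One small remark: in your closing paragraph you describe the bridge as ``surjectivity of $i^*$'', but what you actually use (correctly) in the body is only that $i^*(F) \in \X$ for every $F \in \X_\infty$, which is immediate from the definitions and does not require Theorem~\ref{th:restr}; surjectivity would only be needed if you were trying to go the other way.
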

\begin{proof}
For any $a/b, c/d \in \QQ_{\geq2}$ with $a/b < r < c/d$ we have
\begin{equation}\label{cl-op-sandw}
E_{a/b} \leq E^\closed_r \leq E^\open_r \leq E_{c/d}.
\end{equation}
From \autoref{thm: continuous at irrationals} we know that for every $F \in \X$,
$\sup_{a/b < r} F(E_{a/b}) = \inf_{r < c/d} F(E_{c/d})$.
Thus applying any $F \in \X_\infty$ to \autoref{cl-op-sandw}, and taking the supremum over $a/b<r$ and the infimum over $r < c/d$, gives $F(E^\closed_r) = F(E^\open_r)$. Thus $E^\closed_r$ and $E^\open_r$ are asymptotically equivalent.
\end{proof}

\subsection{Characterizing convergence to fraction graphs from below}

In \autoref{sec:right-continuity} we proved that if $p_n/q_n$ converges to $a/b$ from above, then $E_{p_n/q_n}$ converges to $E_{a/b}$. We left open whether the same is true if $p_n/q_n$ converges to $a/b$ from below. Here we will prove that this question is equivalent to the question whether the ``closed'' and ``open'' version of the circle graphs are asymptotically equivalent.

\begin{theorem}\label{th:op-cl-left}
    Let $r = p/q \in \QQ_{>2}$ be rational. The following are equivalent:
    \begin{enumerate}[\upshape(i)]
        \item $E_r^\closed$ and $E_r^\open$ are asymptotically equivalent 
        \item There is a sequence $m_n = n + o(n)$ such that for every $n \in \NN$ there is a rational number $a_n/b_n < p/q$ such that $E_{p/q}^{\boxtimes n} \leq E_{a_n/b_n}^{\boxtimes m_n}$
        \item For every $\eps > 0$, there is a $\delta > 0$, such that for any $a/b\in \QQ_{\geq2}$, if $0 < p/q - a/b < \delta$, then for every $F \in \X$, $F(E_{p/q}) - F(E_{a/b}) < \eps$
        \item If $a_n/b_n \to p/q$ from below, then $E_{a_n/b_n} \to E_{p/q}$ %
        \item For every $F \in \X$, $\sup_{a/b < p/q} F(E_{a/b}) = F(E_{p/q})$
    \end{enumerate}
\end{theorem}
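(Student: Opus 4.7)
I would establish the chain (i)$\,\Leftrightarrow\,$(ii)$\,\Rightarrow\,$(v)$\,\Leftrightarrow\,$(iii)$\,\Leftrightarrow\,$(iv) and then close the cycle with (v)$\,\Rightarrow\,$(i). The direction (ii)$\,\Rightarrow\,$(i) is direct: since $E_{a_n/b_n} \leq E_r^\closed$ whenever $a_n/b_n < r$ (by \autoref{lem:mon-inf} combined with \autoref{lem:open-circle-frac-equiv}), we obtain $E_{p/q}^{\boxtimes n} \leq (E_r^\closed)^{\boxtimes m_n}$, and together with $E_r^\open \equiv E_{p/q}$ and the trivial $E_r^\closed \leq E_r^\open$ this is precisely the asymptotic equivalence in (i). The converse (i)$\,\Rightarrow\,$(ii) is the crux: from $(E_r^\open)^{\boxtimes n} \leq (E_r^\closed)^{\boxtimes m_n}$ with $m_n = n + o(n)$ and $E_r^\open \equiv E_{p/q}$, I get a cohomomorphism $E_{p/q}^{\boxtimes n} \to (E_r^\closed)^{\boxtimes m_n}$ whose image $T$ is finite; projecting $T$ to each of the $m_n$ coordinates gives finite sets $T_i \subset V(E_r^\closed)$, and by \autoref{lem:fin-cl-frac} each $E_r^\closed[T_i] \leq E_{c_i/d_i}$ for some $c_i/d_i < r$. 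Setting $a_n/b_n = \max_i c_i/d_i$ (which remains strictly below $r$ since the maximum is over finitely many rationals) yields the Cartesian-product bound $E_{p/q}^{\boxtimes n} \leq (E_r^\closed)^{\boxtimes m_n}[T] \leq \prod_i E_r^\closed[T_i] \leq E_{a_n/b_n}^{\boxtimes m_n}$, proving (ii).

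Next I would close the chain through (v). For (ii)$\,\Rightarrow\,$(v), applying any $F \in \X$ to $E_{p/q}^{\boxtimes n} \leq E_{a_n/b_n}^{\boxtimes m_n}$ gives $F(E_{p/q})^{n/m_n} \leq F(E_{a_n/b_n}) \leq F(E_{p/q})$; since $n/m_n \to 1$ and $F(E_{p/q}) \geq \alpha(E_{p/q}) \geq 2$, this sandwich forces $F(E_{a_n/b_n}) \to F(E_{p/q})$, so $\sup_{a/b < r} F(E_{a/b}) = F(E_{p/q})$. For (v)$\,\Rightarrow\,$(iii) I would fix any increasing sequence $p_n/q_n \uparrow p/q$: the evaluation functions $\widehat{E_{p_n/q_n}} \in C(\X)$ form a monotone sequence of continuous functions converging pointwise to the continuous function $\widehat{E_{p/q}}$ by (v), so \autoref{th:dini} (Dini's theorem) upgrades this to uniform convergence, and monotonicity in $a/b$ immediately converts this to the $(\eps,\delta)$-statement in (iii). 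The remaining implications (iii)$\,\Rightarrow\,$(iv) and (iv)$\,\Rightarrow\,$(v) are immediate from the definition of the asymptotic spectrum distance and from the monotonicity of $n \mapsto F(E_{p_n/q_n})$ along any increasing rational sequence converging to $p/q$.

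Finally, for (v)$\,\Rightarrow\,$(i), I would appeal to asymptotic spectrum duality on $\G_\infty$. For any $F \in \X_\infty$, the restriction $F|_\mathcal{G}$ lies in $\X$ (well-defined, and in fact \autoref{th:restr} asserts that this restriction map is surjective), so the hypothesis (v) applies. For any $a/b < r$, the monotonicity chain $F(E_{a/b}) \leq F(E_r^\closed) \leq F(E_r^\open) = F(E_{p/q})$ combined with $\sup_{a/b<r} F(E_{a/b}) = F(E_{p/q})$ from (v) squeezes $F(E_r^\closed) = F(E_r^\open)$ for every $F \in \X_\infty$. Asymptotic spectrum duality on $\G_\infty$ then yields $E_r^\open \asympleq E_r^\closed$, and combined with the trivial $E_r^\closed \leq E_r^\open$ this is (i). The main obstacle I anticipate is (i)$\,\Rightarrow\,$(ii): converting an asymptotic inequality between the infinite graphs $E_r^\open$ and $E_r^\closed$ into a concrete cohomomorphism targeting a \emph{single} finite fraction graph requires the finite-image projection argument above and the observation that a maximum of finitely many rationals strictly below $r$ remains strictly below $r$; the other implications reduce to sandwich arguments, definitional unfolding, or Dini's theorem.
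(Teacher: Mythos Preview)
Your proposal is correct and follows essentially the same route as the paper: the key step (i)$\Rightarrow$(ii) via the finite image of a cohomomorphism into $(E_r^\closed)^{\boxtimes m_n}$ together with \autoref{lem:fin-cl-frac} applied factor-by-factor, and the step (v)$\Rightarrow$(i) via $E_{a/b}^\open \leq E_r^\closed \leq E_r^\open$ and duality on $\G_\infty$, are exactly what the paper does. The only organizational differences are that the paper closes a single cycle (i)$\Rightarrow$(ii)$\Rightarrow$(iii)$\Rightarrow$(iv)$\Rightarrow$(v)$\Rightarrow$(i), proving (ii)$\Rightarrow$(iii) directly with an explicit $r(r^{f(n)}-1)$ bound rather than routing through (v) and Dini's theorem---but the paper itself remarks that the Dini argument also works, so your variant is equally valid.
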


\begin{proof}%
    (i) $\Rightarrow$ (ii). Suppose $E_r^\closed$ and $E_r^\open$ are asymptotically equivalent. Then there is a sequence $m_n = n + o(n)$ such that $(E_r^\open)^{\boxtimes n} \leq (E_r^\closed)^{\boxtimes m_n}$. Then we have
    \[
    E_{p/q}^{\boxtimes n} \leq (E_r^\open)^{\boxtimes n} \leq (E_r^\closed)^{\boxtimes m_n}.
    \]
    Let $f$ be any cohomomorphism from $E_{p/q}^{\boxtimes n}$ to $(E_r^\closed)^{\boxtimes m_n}$. Then $f(E_{p/q}^{\boxtimes n})$ is a finite subgraph of $(E_r^\closed)^{\boxtimes m_n}$, so by \autoref{lem:fin-cl-frac} (applied to every factor) %
    there are some $a_n/b_n < r$ such that $E_{p/q}^{\boxtimes n} \leq E_{a_n/b_n}^{\boxtimes m_n}$. 
    
    (ii) $\Rightarrow$ (iii).    
    Write $m_n$ as $m_n = n(1+f(n))$ with $f(n) \in o(1)$.
    Applying any $F \in \X$ and taking $n$th roots we find that
    \[
    F(E_{a_n/b_n}) \leq F(E_{p/q}) \leq F(E_{a_n/b_n})^{1 + f(n)} 
    \]
    Letting $L_n = F(E_{a_n/b_n})$ and $R_n = F(E_{a_n/b_n})^{1 + f(n)} $ we have 
    \begin{align*}
    |L_n - R_n| &\leq F(E_{a_n/b_n})^{1 + f(n)} - F(E_{a_n/b_n})\\ 
    &= F(E_{a_n/b_n}) \bigl( F(E_{a_n/b_n})^{f(n)} - 1\bigr)\\
    &\leq (a_n/b_n)((a_n/b_n)^{f(n)} - 1)\\
    &\leq r(r^{f(n)} - 1).
    \end{align*}
    Let $\eps > 0$. Let $n$ be large enough such that $r(r^{f(n)} - 1) < \eps$. Then if $a_n/b_n < a/b < r$, then for every $F \in \X$, $F(E_{p/q}) - F(E_{a/b}) < \eps$.

    (iii) $\Rightarrow$ (iv). These two statements are equivalent by definition.
    
    (iv) $\Rightarrow$ (v). This is an immediate consequence.

    (v) $\Rightarrow$ (i).
    Suppose that $\sup_{a/b < r} F(E_{a/b}) = F(E_{p/q})$. For any $a/b<r$ we have by \autoref{lem:op-lt-cl} that 
    \[
    E^\open_{a/b} \leq E^\closed_r \leq E^\open_r.
    \]
    Applying any $F \in \X_\infty$ and taking the supremum over $a/b < r$, we find 
    \[
    F(E^\closed_r) = F(E^\open_r).
    \]
    This implies by the duality theorem that $E^{\closed}_r$ and $E^\open_r$ are asymptotically equivalent.
\end{proof}

\begin{remark}
    In particular, it follows from \autoref{th:op-cl-left} and the left-continuity of every $F \in \X$ on fraction graphs $E_{p/q}$ for integral $p/q \geq 3$ \cite{SvenThesis}, that for every integral $r \in \mathbb{Z}_{\geq3}$ the graphs $E^\closed_r$ and $E^\open_r$ are asymptotically equivalent.
\end{remark}

\begin{remark}
    Note that \autoref{th:op-cl-left} in particular says that convergence of $F(E_{a/b})$ to $F(E_{p/q})$ for all $F \in \X$ when $a/b \to p/q$ from below, implies uniform convergence over all $F \in \X$. This should not come as a surprise as this is implied by Dini's theorem (\autoref{th:dini}).
\end{remark}

It is natural to conjecture that the set of ``open'' circle graphs $\{E^\open_r: r\in \RR_{\geq2}\}$ is the closure (and thus the completion) of the set of fraction graphs $\{E_{p/q} : p/q \in \QQ_{\geq2}\}$. We cannot quite prove this, but we do know the following partial results from the previous sections.
\begin{itemize}
\item \autoref{th:rational-right-cont} implies  that if $r_i\in \RR_{\geq2}$ converges to $p/q$ from above, then~$E^\open_{r_i}$ converges to $E_{p/q}$. 
\item \autoref{th:irr-closed-open-equiv} implies that if $r_i\in \RR_{\geq2}$ converges to an irrational number~$r \in \RR_{\geq2}$, then $E^\open_{r_i}$ converges to $E^\open_r$ (which is asymptotically equivalent to $E^\closed_r$). %
\item \autoref{th:op-cl-left} implies that, assuming $E^\open_{p/q}$ and $E^\closed_{p/q}$ are asymptotically equivalent, if $r_i \in \RR_{\geq2}$ converges to $p/q$ from below, then $E^\open_{r_i}$ converges to $E_{p/q}$.
\end{itemize}
Combining these statements, we obtain the following implication.

\begin{theorem}
Suppose that for every $r \in \QQ_{>2}$ the graphs $E^\closed_r$ and $E^\open_r$ are asymptotically equivalent.     
Then $\{E_r^\open : r\in \RR_{\geq2}\}$ is the closure of $\{E_{p/q} : p/q \in \QQ_{\geq2}\}$. %
\end{theorem}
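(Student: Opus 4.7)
My plan is to prove the equality as a double inclusion. The inclusion $\{E_r^\open : r \in \RR_{\geq2}\} \subseteq \overline{\{E_{p/q} : p/q \in \QQ_{\geq2}\}}$ is essentially the content of the three bullet points immediately preceding the theorem: if $r = p/q$ is rational, then $E_r^\open \equiv E_{p/q}$ by \autoref{lem:open-circle-frac-equiv}, so $E_r^\open$ is already (asymptotically) a fraction graph; if $r$ is irrational, any rational sequence $p_n/q_n \to r$ yields $E_{p_n/q_n} \to E_r^\open$ by \autoref{th:irr-closed-open-equiv}. The interesting direction is the reverse inclusion.

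To prove $\overline{\{E_{p/q}\}} \subseteq \{E_r^\open\}$, I would take any convergent sequence of fraction graphs $E_{p_n/q_n} \to H$ in the asymptotic spectrum distance. Since $\overline{\chi}_f \in \X$ and $|F(G)-F(G')| \leq d(G,G')$ for every $F \in \X$, the real sequence $p_n/q_n = \overline{\chi}_f(E_{p_n/q_n})$ is Cauchy and converges to some $s \in \RR_{\geq 2}$. This $s$ will turn out to be the label of the limit. The remaining task is to identify $H$ with $E_s^\open$ by a case analysis on the nature of $s$ and how the approximants approach it.

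If $s$ is irrational, \autoref{th:irr-closed-open-equiv} directly gives $E_{p_n/q_n} \to E_s^\open$, and uniqueness of limits forces $H \equiv E_s^\open$. If $s = a/b$ is rational, I would split $(p_n/q_n)$ into the three subsequences of terms strictly greater than, equal to, and strictly less than $s$; at least one is infinite. \autoref{th:rational-right-cont} handles the ``from above'' subsequence; the ``equal'' subsequence is constant up to equivalence (\autoref{th:ordering}); and the ``from below'' subsequence, which can only occur when $s > 2$ (since fraction graph labels are $\geq 2$), is handled via the hypothesis together with \autoref{th:op-cl-left} (i\,$\Rightarrow$\,iv). Each infinite subsequence thus converges to $E_s \equiv E_s^\open$; since each also converges to $H$ as a subsequence of a convergent sequence, uniqueness of limits gives $H \equiv E_s^\open$.

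The main obstacle is concentrated in the rational-from-below case: this is precisely the sole place where the hypothesis of the theorem is used, and the equivalences in \autoref{th:op-cl-left} were designed exactly to bridge convergence-from-below of fraction graphs with asymptotic equivalence of open and closed circle graphs. The rest of the argument is really an exercise in assembling the three available ``convergence modes''---right-continuity at rationals, convergence at irrationals, and (conditional) left-continuity at rationals---into a single picture, with $\overline{\chi}_f$ playing the role of a coordinate that reads off the real-number label of the limit.
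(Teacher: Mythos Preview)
Your proposal is correct and follows essentially the same approach as the paper. Both proofs extract the real label of the limit via the fractional clique covering number and then perform the same three-way case analysis (irrational limit via \autoref{th:irr-closed-open-equiv}, rational-from-above via \autoref{th:rational-right-cont}, rational-from-below via the hypothesis and \autoref{th:op-cl-left}). The only cosmetic differences are that the paper phrases the reverse inclusion as ``$\{E_r^\open\}$ is closed'' (starting from a convergent sequence $E^\open_{r_i}$ rather than $E_{p_n/q_n}$) and passes to a monotone subsequence instead of your explicit above/equal/below split.
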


\begin{proof}
Let $E^\open_{r_i}$ be a converging sequence. Then in particular, letting $F \in \X_\infty$ be any extension of the fractional clique covering number, we have that $F(E^\open_{r_i}) = r_i$ converges. Let $r \in \RR_{\geq2}$ denote the limit. We claim that $E^\open_{r_i}$ converges to $E^\open_r$.

Suppose $r$ is irrational. Then the claim follows from \autoref{th:irr-closed-open-equiv}.
Suppose $r$ is rational. 
We may assume that the sequence $(r_i)_i$ is monotone.
Suppose our sequence is converging from above. Then the claim follows from \autoref{th:rational-right-cont}.
Suppose our sequence is converging from below. Then the claim follows from \autoref{th:op-cl-left}.
\end{proof}

\section{Non-equivalence of open and closed circle graphs}
\label{sec: self-cohomomorphisms}
In \autoref{sec:completion} we proved that for irrational $r$ the circle graphs $E_r^\open$ and $E_{r}^\closed$ are asymptotically equivalent (\autoref{th:irr-closed-open-equiv}). We also proved that for rational $r$ the circle graphs $E_r^\open$ and~$E_r^\closed$ are asymptotically equivalent if and only if the fraction graphs are ``left-continuous'' at $r$ %
(\autoref{th:op-cl-left}). Whether $E_r^\open$ and~$E_r^\closed$ are indeed asymptotically equivalent for rational~$r$ remains an open problem.

The above raises the natural question whether the open and closed circle graphs are equivalent. Equivalence, in particular, would imply asymptotic equivalence.
We will prove that the circle graphs $E_r^\open$ and $E_{r}^\closed$ are not equivalent for any $r$ (rational or irrational). Thus, any resolution of the asymptotic equivalence problem of rational open and closed circle graphs must be asymptotic in nature (or at least make use of powers of the graphs). %

Our proof of non-equivalence relies on characterizing all cohomomorphisms $E_{r}^\closed\to E_{r}^\closed$ and $E_r^\open\to E_r^\open$. We prove that these self-cohomomorphisms are precisely rotations composed with reflections. Before we develop these ideas for the circle graphs, we first prove the analogous statement for the (finite) fraction graphs $E_{p/q}$, which we will then use as a proof ingredient to deal with the circle graphs.

\subsection{Self-cohomomorphisms of fraction graphs}
We call any cohomomorphism $G \to G$ a \emph{self-cohomomorphism}.
We will prove that for the fraction graphs, these are precisely rotations composed with reflections. We begin by showing that the self-cohomomorphisms of fraction graphs, for reduced fractions, are isomorphisms. (In the language of ``cores'' \cite[Section 1.6]{MR2089014}, this says that the complement of a fraction graph, for reduced fractions, is a core.)
\begin{lemma}[{\cite[Lemma 2.5]{MR1905134}}]\label{rem: core}
   Let $p,q\in \NN, p/q\geq2$ such that $\gcd(p,q)=1$. Then any cohomomorphism from $E_{p/q}$ to itself is an isomorphism.
\end{lemma}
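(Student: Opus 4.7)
The plan is to prove the lemma in two steps: first reduce to showing bijectivity, then establish bijectivity by factoring $f$ through its image. For the reduction, a bijective cohomomorphism $f\colon E_{p/q}\to E_{p/q}$ is automatically a graph automorphism: it injects the set of non-edges of $E_{p/q}$ into the set of non-edges of $E_{p/q}$ (because distinct non-adjacent vertices go to distinct non-adjacent vertices), and since these two finite sets coincide, the injection must be a bijection. Consequently $f$ also bijects edges with edges, so it is an automorphism of $E_{p/q}$.

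To prove bijectivity, I would factor $f$ through its image. Let $G = f(E_{p/q})$ be the induced subgraph of $E_{p/q}$ on the image of $f$, and write $f = \iota \circ \bar f$, where $\bar f\colon E_{p/q}\twoheadrightarrow G$ is the surjective corestriction of $f$ and $\iota\colon G\hookrightarrow E_{p/q}$ is the inclusion; both maps are cohomomorphisms. By the generalization of \autoref{th:remove-point} flagged in the excerpt, namely that every induced subgraph of a fraction graph is equivalent in the cohomomorphism preorder to some fraction graph (see \cite[Theorem 3.1]{ZhuCirculantPerfect}), the graph $G$ is equivalent to some $E_{p'/q'}$ with $p'=|V(G)|\leq p$. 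Applying $\overline{\chi}_f$ (monotone under $\leq$) to the inclusion $G\leq E_{p/q}$ and invoking \autoref{th:fraction-graph-chi-bar} yields $p'/q'\leq p/q$, while applying \autoref{th:ordering} to the cohomomorphism $E_{p/q}\to G\simeq E_{p'/q'}$ yields $p/q\leq p'/q'$. Hence $p'/q'=p/q$, and the hypothesis $\gcd(p,q)=1$ forces $p'$ to be a positive integer multiple of $p$. Combined with $p'\leq p$, this gives $p'=p$, so $f$ is surjective onto a vertex set of cardinality $p$, and therefore bijective.

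The main obstacle is the black-box use of the structural fact that every induced subgraph of a fraction graph is equivalent to a fraction graph; this is a nontrivial result from the circulant graph literature, but it is precisely the result flagged in the discussion immediately after \autoref{th:remove-point}, so I would simply quote it. The rest is short bookkeeping with the ordering and the fractional clique cover number already developed in the excerpt. An alternative, more self-contained approach via a winding-number argument on the circular vertex set (using that consecutive vertices must map to vertices of circular distance at most $q-1$, and non-adjacent pairs at distance $\geq q$ must remain non-adjacent) would also work, but it requires a more delicate case analysis and does not fit as cleanly into the cohomomorphism-preorder framework of the paper.
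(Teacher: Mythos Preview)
Your argument has a genuine gap at the step where you assert that the image $G=f(E_{p/q})$ is equivalent to some $E_{p'/q'}$ with $p'=|V(G)|$. The structural result you cite says only that $G$ is equivalent to \emph{some} fraction graph $E_{p'/q'}$; it does not pin down $p'$ as $|V(G)|$, and in fact this is false in general: by \autoref{th:remove-point}, removing one vertex from $E_{8/3}$ gives a graph on $7$ vertices that is equivalent to $E_{5/2}$, so there $p'=5\neq 7$. Without $p'=|V(G)|$, your chain of inequalities only yields $p'/q'=p/q$, i.e.\ that $G$ is equivalent to $E_{p/q}$. Deducing $|V(G)|\geq p$ from this equivalence would require knowing that $E_{p/q}$ admits no cohomomorphism to a graph on fewer than $p$ vertices---which is exactly the statement you are trying to prove.

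The paper's proof avoids this circularity by using the single-vertex-removal lemma directly rather than the general induced-subgraph result. If $f$ is not surjective, its image lies inside $E_{p/q}$ with one vertex deleted, which by \autoref{th:remove-point} is equivalent to a specific $E_{p'/q'}$ with $pq'-qp'=1$, hence $p'/q'<p/q$ strictly. The resulting cohomomorphism $E_{p/q}\to E_{p'/q'}$ then contradicts \autoref{th:ordering} (or equivalently \autoref{th:fraction-graph-chi-bar}). This route uses only the explicit, already-proved vertex-removal statement and delivers the strict inequality needed for the contradiction, whereas your black box, even granting it, does not.
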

\begin{proof}
    We give a brief proof for the convenience of the reader.
    This is obvious when $p/q = 2$. Suppose $p/q>2$. Then there are (unique) integers $1\leq p'\leq p,1\leq q'\leq q$ with $p'/q' \geq 2$ satisfying $pq'-qp'=1$. A cohomomorphism from $E_{p/q}$ to a proper induced subgraph would induce a cohomomorphism from $E_{p/q}$ to $E_{p'/q'}$ by \autoref{th:remove-point} which is precluded by comparing $\overline{\chi}_f$ (\autoref{th:ordering}, \autoref{th:fraction-graph-chi-bar}). Alternatively, the result can be proven using the No-Homomorphism Lemma; see \cite[Lemma 2.5]{MR1905134}.
\end{proof}

Having \autoref{rem: core}, it remains to show that these isomorphisms are rotations composed with reflections.

\begin{lemma}
    \label{lem: self_co_imp_auto_finite}
    Let $p/q \in \QQ_{\geq2}$, $q \geq 2$ such that $\gcd(p,q)=1$. Let $f: \mathbb{Z}_{p} \to \mathbb{Z}_{p}$ be a cohomomorphism from $E_{p/q}$ to itself. Then $f$ is an isomorphism of the form $f(x) = a - x$ or $f(x) = a + x$ for some $a \in \mathbb{Z}_p$.
\end{lemma}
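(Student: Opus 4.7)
By Lemma \ref{rem: core}, any self-cohomomorphism $f$ of $E_{p/q}$ is in fact an isomorphism. The rotations $x \mapsto x + c$ and the reflection $x \mapsto -x$ are all automorphisms of $E_{p/q}$, so my plan is to pre-compose $f$ with a suitable rotation (and later a reflection, if necessary) to reduce to the case $f(0) = 0$ and $f(1) = 1$, and then show that such an $f$ must be the identity.

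The main technical step is to show that $f(1) \in \{-1, 1\}$ whenever $f(0) = 0$. I will do this by a ``sphere argument'': since $f$ fixes $0$, it restricts to an automorphism of the induced subgraph of $E_{p/q}$ on the neighborhood $N(0) = \{\pm 1, \dots, \pm(q-1)\}$. A direct degree computation in this induced subgraph shows that the maximum degree $2q - 4$ is attained precisely at the vertices $\pm 1$, which forces $f$ to map $\{-1, 1\}$ to itself. The computation requires care when $p < 3q$ because of ``wrap-around'' edges in $E_{p/q}$ between the two ends of $N(0)$; here the coprimality assumption $\gcd(p, q) = 1$ with $q \geq 2$ rules out the degenerate equality $p = 2q$ and forces $p > 2q$, which is exactly what is needed to keep the wrap-around contribution $4q - p - 4$ strictly below $2q - 4$, so that $\pm 1$ remain the only max-degree vertices. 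I expect this degree analysis (with its wrap-around case) to be the main hurdle.

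Once reduced to $f(0) = 0$ and $f(1) = 1$, I plan to conclude $f = \mathrm{id}$ by induction on $k \geq 1$, showing that $f(j) = j$ for all $|j| \leq k$. The inductive step uses the sphere argument again, now applied to $N(k)$: by vertex-transitivity this induced subgraph is isomorphic (via translation by $k$) to $E_{p/q}[N(0)]$, so its unique max-degree vertices are $\{k-1, k+1\}$; since $f$ fixes both $k$ and $k-1$ (the latter by the inductive hypothesis), it must send $k+1$ to $k+1$. A symmetric argument at $N(-k)$ handles the negative direction. Undoing the rotation and possible reflection then yields $f$ of the stated form $a + x$ or $a - x$.
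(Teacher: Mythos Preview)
Your argument is correct and takes a genuinely different route from the paper's. The paper works not with neighborhoods but with the maximum cliques $C_x = \{x, x+1, \dots, x+q-1\}$: since $f$ is an automorphism it induces a bijection $g$ on the set of maximum cliques, and $g$ must be an automorphism of the auxiliary graph on $\mathbb{Z}_p$ in which $x \sim y$ iff $|C_x \triangle C_y| = 2$. Because $q \geq 2$, this auxiliary graph is a $p$-cycle, so $g(x) = g(0) \pm x$, and $f$ is then read off from $g$. That clique-based argument is short and avoids any case analysis. Your neighborhood-degree approach is more hands-on --- the wrap-around computation is real work --- but it is entirely local (it never leaves the vertex picture) and makes the role of the strict inequality $p > 2q$ (forced by $\gcd(p,q)=1$, $q \geq 2$) very explicit: it is exactly what keeps the wrap-around degree $4q - p - 4$ below $2q - 4$ so that $\pm 1$ remain the unique max-degree vertices in $E_{p/q}[N(0)]$. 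Both approaches ultimately exploit the same underlying rigidity, just read off from different combinatorial invariants (max cliques versus local degree profile).
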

\begin{proof}
    By \autoref{rem: core}, $f$ is an isomorphism.
    The maximum cliques of $E_{p/q}$ are of the form $C_x = \{x,x+1,\dots, x+(q-1)\}$ for $x \in \mathbb{Z}_p$ and thus there is a bijection $g: \mathbb{Z}_p \to \mathbb{Z}_p$ satisfying $f(C_x) = C_{g(x)}$ for all~$x$. The map $g$ must be an automorphism of the auxiliary graph $H$ on vertex set $\mathbb{Z}_{p}$ connecting $x$ and $y$ if and only if the symmetric difference of~$C_x$ and $C_y$ has size two. Observe that the fact that $q \geq 2$ implies that the graph $H$ is isomorphic to a $p$-cycle. It follows that $g(x+1)-g(x) \in \{\pm 1\}$ for all $x$. Moreover, because $g$ is a bijection, $g(x+1)-g(x)$ must actually constantly be either $1$ or $-1$. In the first case $g(x) = g(0) + x$ for all $x$ and in the latter case $g(x) = g(0)-x$ for all $x$. These cases correspond to $f(x) = g(0) + x$ and $f(x) = g(0) + q - 1 -x$ respectively.
\end{proof}

\subsection{Self-cohomomorphisms of circle graphs}

In this section we characterise the self-cohomomorphisms of irrational circle graphs. As an ingredient we use the following basic property about finite induced subgraphs of circle graphs. %

\begin{lemma}
\label{lem: finite_induce}
Let $r\in \mathbb{R}_{\geq 2}$. Every finite induced subgraph of either $E_r^\closed$ or $E_r^\open$ is equivalent to a fraction graph.
\end{lemma}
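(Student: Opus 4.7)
The plan is to reduce to the finite case: I will show that any finite induced subgraph of $E_r^\open$ or $E_r^\closed$ is isomorphic to an induced subgraph of some finite fraction graph $E_{N/q}$. The lemma then follows from the fact (cited after \autoref{th:remove-point}) that every induced subgraph of a finite fraction graph is equivalent to a fraction graph.

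Let $S = \{x_1,\ldots,x_n\} \subseteq C$ be finite, and write $d$ for the circle distance. Define the safety margin
\[
    \delta = \min\bigl(\{|d(x_i,x_j) - 1/r| : i\neq j,\ d(x_i,x_j)\neq 1/r\} \cup \{1\}\bigr) > 0.
\]
I would take $N$ large enough that $1/N$ is well below $\delta$ and below the smallest nonzero cyclic gap between points of $S$, and approximate each $x_i$ by the nearest $\tilde{x}_i \in \{0, 1/N, \ldots, (N-1)/N\}$; then the cyclic order of the $\tilde{x}_i$ matches that of the $x_i$ and each pairwise circle distance changes by at most $1/N$. Identifying these fractions with vertices $\{0,1,\ldots,N-1\}$ of a fraction graph $E_{N/q}$ yields a candidate bijection $\phi : S \to \tilde{S}$.

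It remains to choose the threshold $q$ so that $\phi$ is an isomorphism of induced subgraphs. For the open case I would pick $q/N$ strictly below $1/r$ by more than $1/N$, for instance $q = \lfloor N/r \rfloor - 1$; for the closed case I would pick $q/N$ strictly above $1/r$ by more than $1/N$, for instance $q = \lfloor N/r \rfloor + 2$. A short case check using $|d(\tilde{x}_i,\tilde{x}_j) - d(x_i,x_j)| \leq 1/N$ then verifies that $\tilde{x}_i, \tilde{x}_j$ have cyclic integer distance less than $q$ in $E_{N/q}$ if and only if $x_i, x_j$ are adjacent in $E_r^\open[S]$ (respectively in $E_r^\closed[S]$). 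Since $r \geq 2$ forces $q/N$ to be at most about $1/2$, the graphs $E_{N/q}$ are bona fide fraction graphs (or, in the degenerate regime $N/q < 2$, complete graphs, which is consistent with the paper's convention). Applying the cited result to $E_{N/q}[\tilde{S}]$ completes the proof.

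The main delicate point, and the only place where the open and closed cases genuinely differ, is the treatment of the ``boundary'' pairs with $d(x_i, x_j) = 1/r$ exactly: these are non-edges in $E_r^\open$ but edges in $E_r^\closed$. Pushing the rational threshold $q/N$ strictly to the correct side of $1/r$ by more than the approximation error $1/N$ is what resolves the ambiguity, and is the essential reason the two cases require different choices of~$q$.
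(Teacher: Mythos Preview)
Your argument is correct: the rounding scheme, with $N$ taken large enough (say $N > 3/\delta$ and $N$ large enough that $N/q > 2$), really does produce an isomorphism between $E_r^\open[S]$ (resp.\ $E_r^\closed[S]$) and an induced subgraph of the finite fraction graph $E_{N/q}$, and the cited fact that every induced subgraph of a fraction graph is equivalent to a fraction graph then finishes the proof.

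The paper takes a shorter route. Rather than rounding, it simply relabels the points of $S$ by their cyclic order as $0,1,\ldots,n-1$ and observes that in the complement graph each neighbourhood is a cyclic interval; this is exactly the definition of a \emph{convex-round} graph. It then cites \cite[Theorem~3.1]{MR1905134}, which says that every convex-round graph is homomorphically equivalent to the complement of a fraction graph. So the paper's ``Step~1'' is a one-line definitional check, whereas yours is a constructive embedding; and the two external facts invoked (Theorem~3.1 versus Corollary~3.1 of the same paper) are essentially the same result in two guises. Your approach has the mild advantage of being explicit about \emph{which} fraction graph the subgraph sits inside, while the paper's avoids the $\varepsilon$--$\delta$ bookkeeping entirely.
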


\begin{proof}
    In \cite[Theorem 3.1]{MR1905134} it is shown that convex-round graphs are homomorphically equivalent to the complement of a fraction graph. A convex-round graph on~$n$ vertices is a graph with vertex set $\mathbb{Z}_n$, where the neighborhood of any vertex is an interval, i.e. of the form $\{u, u+1, \dots, u_{d_v-1}\}$. Every finite induced subgraph of either~$E_r^\closed$ or~$E_r^\open$ is the complement of a convex-round graph and thus cohomomorphically equivalent to a fraction graph.
\end{proof}

When the finite set of vertices that we induce to is \emph{equidistant} on the circle $C$ (the vertex set of the circle graphs), we can say more. Namely, we can say precisely which fraction graph we are equivalent to, and the equivalence becomes an isomorphism. Moreover, we describe how the natural ``rounding map'' on a circle graph to finitely many equidistant points gives a cohomomorphism to a fraction graph.

\begin{lemma}
    \label{lem: induce_equidistant}
    Let $r\in \mathbb{R}_{\geq 2}$, $N \in \mathbb{Z}_{\geq 2}$, and let $S$ be a set of $N$ equidistant points on $C$. 
    \begin{enumerate}[\upshape(a)]
        \item\label{item: induce_equidistant} 
        The induced subgraphs $E_r^{\open}[S]$ and $E_r^{\closed}[S]$ are isomorphic to $E_{N/\lceil N/r\rceil}$ and  $E_{N/(\lfloor N/r\rfloor+1)}$, respectively.
        \item\label{item:rounding-clockwise} 
        Identify $S$ with $\mathbb{Z}_N$ (in an order-preserving way) and define the map $f: C \to \mathbb{Z}_N$ by rounding any $x\in C$ to the clockwise next element of $S$. Then $f$ is a cohomomorphism from both $E_r^{\open}$ and $E_r^\closed$ to $E_{N/\lfloor N/r\rfloor}$.
    \end{enumerate}
\end{lemma}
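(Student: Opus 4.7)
The plan is to choose coordinates and reduce both parts to arithmetic on $\mathbb{Z}_N$. Parametrize $C$ by $[0,1)$ (endpoints identified), place $s_i$ at position $i/N$ for $i \in \mathbb{Z}_N$, and use the natural identification $S \cong \mathbb{Z}_N$.

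For part (a), the circle distance between $s_i$ and $s_j$ equals $d_N(i,j)/N$, where $d_N(i,j) := \min(|i-j|,\, N-|i-j|)$ is the distance in $\mathbb{Z}_N$. Thus $s_i \sim s_j$ in $E_r^{\open}[S]$ iff $d_N(i,j) < N/r$, which---since $d_N(i,j)$ is an integer---is equivalent to $d_N(i,j) < \lceil N/r \rceil$ (treating $N/r \in \mathbb{Z}$ and $N/r \notin \mathbb{Z}$ uniformly); this is the edge relation of $E_{N/\lceil N/r\rceil}$. Analogously, $s_i \sim s_j$ in $E_r^{\closed}[S]$ iff $d_N(i,j) \leq N/r$, equivalent to $d_N(i,j) < \lfloor N/r \rfloor + 1$, matching $E_{N/(\lfloor N/r\rfloor+1)}$. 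The obvious bijection $s_i \mapsto i$ is the required isomorphism.

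For part (b), define $f: C \to \mathbb{Z}_N$ by $f(x) = \lceil tN \rceil \bmod N$ where $t \in [0,1)$ is the position of $x$, so that $\delta(x)$, the clockwise distance from $x$ to $s_{f(x)}$, lies in $[0, 1/N)$. For distinct $x, y$, letting $D(x,y) \in (0,1)$ denote the clockwise distance from $x$ to $y$, the clockwise distance from $s_{f(x)}$ to $s_{f(y)}$ equals $\tilde D \equiv D(x,y) + \delta(y) - \delta(x) \pmod{1}$. Suppose $\{x,y\}$ is a non-edge in $E_r^{\open}$, so $D(x,y) \in [1/r,\, 1-1/r]$; the standing condition $r \leq N$ (needed for $\lfloor N/r\rfloor \geq 1$, hence for $E_{N/\lfloor N/r\rfloor}$ to be defined) then ensures $D(x,y) + \delta(y) - \delta(x) \in (1/r - 1/N,\, 1 - 1/r + 1/N) \subseteq (0,1)$, so no wrap-around occurs and $|\tilde D - D(x,y)| < 1/N$ strictly.

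Writing $\tilde D = k/N$ with $k = (f(y) - f(x)) \bmod N$, the strict inequality $\tilde D > D(x,y) - 1/N \geq 1/r - 1/N$ yields $k > N/r - 1$; since $k$ is an integer, $k \geq \lfloor N/r \rfloor$ (verified by splitting into $N/r \in \mathbb{Z}$ and $N/r \notin \mathbb{Z}$). The symmetric bound $N - k \geq \lfloor N/r \rfloor$ follows from $\tilde D < D(x,y) + 1/N$. Hence $d_N(f(x), f(y)) = \min(k, N-k) \geq \lfloor N/r \rfloor$, exactly the non-edge condition in $E_{N/\lfloor N/r\rfloor}$, and in particular $f(x) \neq f(y)$. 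The argument treats $E_r^{\closed}$ as well, since a non-edge there only strengthens $D(x,y) > 1/r$. The main subtlety is careful bookkeeping of strict versus non-strict inequalities near the boundary cases $N/r \in \mathbb{Z}$ and $r = N$; the half-open interval $[0, 1/N)$ for $\delta$ is precisely what makes the argument work uniformly.
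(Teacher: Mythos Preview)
Your proof is correct and follows essentially the same coordinate-based approach as the paper: part (a) is identical in substance, and for part (b) you argue the direct implication (non-edge $\Rightarrow$ non-edge) while the paper argues the contrapositive (adjacent-or-equal $\Rightarrow$ adjacent), which yields a slightly shorter chain of inequalities but is the same arithmetic. Your explicit observation that $r\le N$ is needed for $E_{N/\lfloor N/r\rfloor}$ to be defined is a useful point the paper leaves implicit.
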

\begin{proof}
    \eqref{item: induce_equidistant} Both $E_r^{\open}[S]$ and $E_r^{\closed}[S]$ are vertex-transitive. Let $q_{\open}$ be the unique positive integer for which $(q_{\open}-1)/N < 1/r \leq q_{\open}/N$, that is, $q_{\open}  = \lceil N/r\rceil$ and let $q_{\closed}$ be the unique positive integer for which $(q_{\closed}-1)/N \leq 1/r < q_{\closed}/N$, that is, $q_{\closed} = \lfloor N/r\rfloor+1$.  We will prove that $E_r^{\open}[S]$ is isomorphic to $E_{N/q_{\open}}$. The fact that $E^{\closed}[S]$ is isomorphic to $E_{N/q_\closed}$ can be proved completely analogously.

    Any vertex in $E_r^{\open}[S]$ is adjacent to its $q_{\open}-1$ nearest clockwise $C$-neighbors and $q_{\open}-1$ nearest anti-clockwise $C$-neighbors. If this describes all vertices in $S$ then $E_r^{\open}[S]$ is a complete graph on $N$ vertices where $N \leq 2(q_\open-1) - 1 = 2q_\open - 1$. In this case $N/q_{\open} < 2$ and thus $E_r^{\open}[S]$ is indeed isomorphic to $E_{N/q_{\open}}$ because by definition $E_{p/q}$ denotes a complete graph on $p$ vertices whenever $p/q< 2$. Otherwise we observe that in both the clockwise and anti-clockwise direction any vertex in $E_r^\open$ is adjacent to its $q_\open-1$ nearest $C$-neighbors but not to its $q_\open$ nearest $C$-neighbor, that is, $E_r^{\open}[S]$ is isomorphic to $E_{N/q_\open}$.

    \eqref{item:rounding-clockwise} Take distinct $x,y \in C$ and suppose that $f(x)$ is adjacent or equal to $f(y)$. We may assume that $f(y) = f(x) + m$ for some $0 \leq m < \lfloor N/r\rfloor$. Let $d_x$ and~$d_y$ denote the distance from $x$ and $y$ to the nearest clockwise element of $S$ respectively. If $f(x) = f(y)$ we assume that $d_x \geq d_y$. The clockwise distance from $x$ to $y$ is 
    \[
        m/N + d_x - d_y < (m+1)/N \leq \lfloor N/r\rfloor/N \leq 1/r.
    \]
    Therefore, $x$ and $y$ are adjacent in both $E_r^\open$ and $E_r^\closed$ and thus $f$ is indeed a cohomomorphism.
\end{proof}

For context, recall that for $p/q \in \QQ_{\geq2}$ the open circle graph $E^\open_{p/q}$ and the fraction graph $E_{p/q}$ are equivalent (\autoref{lem:open-circle-frac-equiv}). We note that this is a special case of the above \autoref{lem: induce_equidistant}. Indeed, let $S$ be a set of $p$ equidistant points on $C$. By \autoref{lem: induce_equidistant} \eqref{item: induce_equidistant} the induced subgraph $E^\open_{p/q}[S]$ is isomorphic to $E_{p/q}$ and thus $E_{p/q} \leq E^\open_{p/q}$. Moreover, rounding to $S$, as described in \autoref{lem: induce_equidistant} \eqref{item:rounding-clockwise}, is a cohomomorphism from $E^\open_{p/q}$ to $E_{p/q}$.

Now we will work towards the characterization of self-cohomomorphisms of irrational circle graphs. We first show that these are continuous.

\begin{lemma}\label{lem:cohom-cont}
    Let $r \in \mathbb{R}_{\geq 2}$ be irrational and let $f$ be a cohomomorphism from either $E_{r}^{\open}$ or $E_{r}^{\closed}$ to itself. Then $f$ is continuous. %
\end{lemma}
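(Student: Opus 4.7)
My plan is to prove continuity via the closed graph theorem: since $C$ is compact Hausdorff, $f$ is continuous if and only if its graph $\{(x, f(x)) : x \in C\}$ is closed, which (since $C$ is metrizable) amounts to showing that whenever $x_n \to x_0$ and $f(x_n) \to y_0$ in $C$, we have $y_0 = f(x_0)$.

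I would argue by contradiction, supposing $y_0 \neq f(x_0)$ with $\eta := d(y_0, f(x_0)) > 0$. The first step extracts constraints from the cohomomorphism property. For any $w \in C$ with $d(w, x_0) > 1/r$ strictly, continuity of the metric gives $d(w, x_n) > 1/r$ for all sufficiently large $n$, so $w$ and $x_n$ are non-adjacent in both $E_r^\open$ and $E_r^\closed$. The cohomomorphism property then yields $f(w) = f(x_n)$ or $d(f(w), f(x_n)) \geq 1/r$; passing to the limit gives $f(w) \in \{y_0\} \cup \{z : d(z, y_0) \geq 1/r\}$, and directly $f(w) \in \{f(x_0)\} \cup \{z : d(z, f(x_0)) \geq 1/r\}$. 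Thus $f$ sends the open arc $A_0 := \{w : d(w, x_0) > 1/r\}$ (of length $1 - 2/r$) into the intersection of two closed ``non-neighborhood'' arcs, which is strictly shorter than $1 - 2/r$.

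To convert this restriction into an outright contradiction, I would work with a finite equidistant subset $S \subseteq C$ of size $N$, chosen to contain $x_0$ together with a suitable tail of the sequence $x_n$. By \autoref{lem: induce_equidistant}, $E_r^\open[S] \cong E_{N/\lceil N/r\rceil}$; by \autoref{lem: finite_induce} the image $f(S)$ induces a graph equivalent to a fraction graph $E_{p'/q'}$ with $N/\lceil N/r\rceil \leq p'/q' \leq r$ via \autoref{th:ordering} and \autoref{th:fraction-graph-chi-bar}. Letting $N \to \infty$ along a sequence for which $N$ and $\lceil N/r\rceil$ are coprime (such sequences are dense since $r$ is irrational, so $N/\lceil N/r\rceil$ is already an increasingly sharp approximation to $r$ from below), a rigidity argument extending \autoref{lem: self_co_imp_auto_finite} and \autoref{rem: core} would force $f|_S$ to be a rotation or reflection of the equidistant set $S$. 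Refining $N$, this forces $f(x_n) \to f(x_0)$ for $x_n \in S$ approaching $x_0$, contradicting $f(x_n) \to y_0 \neq f(x_0)$.

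The main obstacle I expect is the final rigidity step: the image $f(S)$ need not itself be equidistant on $C$, so \autoref{lem: self_co_imp_auto_finite} does not apply directly, and one must argue separately that $f|_S$ is injective and that its image is equidistant. Here the irrationality of $r$ is crucial, since for rational $r = p/q$ the ``rounding'' cohomomorphism $C \to \mathbb{Z}_p$ of \autoref{lem: induce_equidistant}\eqref{item:rounding-clockwise} composed with an embedding gives a genuinely discontinuous self-cohomomorphism of $E_r^\open$, so the argument must use that no such finite rational commensurability is available when $r$ is irrational. A secondary obstacle is the closed case $E_r^\closed$: the cohomomorphism preserves the strict inequality $d(x,y) > 1/r$, but in the limit this degrades to $d(f(w), y_0) \geq 1/r$, giving strictly weaker control and forcing a more delicate analysis near the boundary configurations $d = 1/r$, likely by exploiting that maximal cliques of $E_r^\closed$ are closed arcs of length exactly $1/r$ whose centers provide a parametrization respected by any cohomomorphism.
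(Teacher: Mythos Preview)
Your outline identifies the right ingredients (equidistant subsets, \autoref{lem: finite_induce}, the core property, \autoref{lem: self_co_imp_auto_finite}) but the step you yourself flag as ``the main obstacle'' is exactly where the proof lives, and your proposed resolution does not work.

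Concretely: taking $N$ with $\gcd(N,\lceil N/r\rceil)=1$ is not enough. The image $f(S)$ induces some $E_{a/b}$ (reduced) with $a\leq |f(S)|\leq N$ and $N/\lceil N/r\rceil \leq a/b < r$, but for a generic such $N$ there can be many fractions $a/b$ in this window with $a<N$, so nothing prevents $f(S)$ from collapsing to a strictly smaller fraction graph. The paper avoids this by taking $N=p_{2n}$, an \emph{even-order convergent} of $r$: then the best-approximation property (\autoref{lem: CFA}\eqref{it: CFA4}) says no fraction with numerator $\leq p_{2n}$ lies strictly between $p_{2n}/q_{2n}$ and $r$, which forces $a/b=p_{2n}/q_{2n}$ exactly. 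Only then do \autoref{rem: core} and \autoref{lem: self_co_imp_auto_finite} apply to give that $f|_S$ is a bijection onto a set inducing $E_{p_{2n}/q_{2n}}$, with the images cyclically ordered. Note that this does \emph{not} say $f(S)$ is equidistant---your phrase ``rotation or reflection of the equidistant set $S$'' overstates what is obtainable. A further argument is still needed to bound the gaps between consecutive image points; the paper does this by rounding to a set of $p_{2n-1}$ equidistant points and using \autoref{th:remove-point} together with \autoref{lem: CFA}\eqref{it: CFA2} to rule out any gap exceeding $1/p_{2n-1}$.

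There is also a setup issue: an equidistant set of size $N$ cannot be ``chosen to contain $x_0$ together with a suitable tail of the sequence $x_n$'' for an arbitrary convergent sequence $x_n$. The paper instead fixes an equidistant set $S$ through $x$, then replaces $x$ by a nearby $y$ to form $S_y$, and uses irrationality of $r$ to guarantee $S_y$ still induces $E_{p_{2n}/q_{2n}}$ for $y$ sufficiently close to $x$. This replaces your closed-graph-theorem contradiction setup with a direct modulus-of-continuity estimate, and handles $E_r^\open$ and $E_r^\closed$ uniformly.
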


\begin{proof}
    Let $(p_n/q_n)_{n \geq 0}$ be the sequence of convergents of the continued fraction representation of $r$. It follows from \autoref{cor: frac_approx} and the fact that $r$ is irrational, that for every $n \geq 0$ we have
    \[
        \lceil p_{2n}/r\rceil = \lfloor p_{2n}/r\rfloor+1 = q_{2n}
        \quad\text{ and }\quad
        \lfloor p_{2n+1}/r\rfloor = q_{2n+1}.
    \]

    We will show that $f$ is continuous at an arbitrary $x\in C$. Let $n \in \mathbb{Z}_{\geq 1}$ and let~$S$ be the set of $p_{2n}$ equidistant points on $C$ containing $x$. Label the elements of $S$ as $(x, s_1, \dots, s_{p_{2n}-1})$ ordered (say clockwise) around $C$. By \autoref{lem: induce_equidistant} $S$ induces $E_{p_{2n}/q_{2n}}$ in both $E_{r}^\open$ and $E_{r}^\closed$. In fact, because $r$ is irrational, there is some $\delta_n$ such that if $|x-y| < \delta_n$ then the set $S_y := \{y, s_1, \dots, s_{p_{2n}-1}\}$ still induces $E_{p_{2n}/q_{2n}}$ (this also implies that the sequence $(y, s_1, \dots, s_{p_{2n}-1})$ is still ordered around $C$). 

    It follows from \autoref{lem: finite_induce} that $f(S_y)$ induces a graph equivalent to $E_{a/b}$ with $a/b \leq r$, where we may assume that $a/b$ is reduced. Because of this assumption $\overline{E_{a/b}}$ is a core (see \autoref{rem: core}) which implies $a \leq |f(S_y)| \leq p_{2n}$. Because $f(S_y)$ is the image of a subgraph isomorphic to $E_{p_{2n}/q_{2n}}$ and $f$ is a cohomomorphism, it follows that $p_{2n}/q_{2n}\leq a/b$. It thus follows from \autoref{lem: CFA} \eqref{it: CFA4} that $p_{2n}/q_{2n} = a/b$. Therefore, by \autoref{rem: core}, $f(S_y)$ induces a graph isomorphic to $E_{p_{2n}/q_{2n}}$. \autoref{lem: self_co_imp_auto_finite} also states that any automorphism of $E_{p_{2n}/q_{2n}}$ must be a rotation possibly composed with a reflection and thus the points $(f(y),f(s_1),\dots, f(s_{p_{2n}-1}))$ are still cyclically ordered (possibly anti-clockwise) around $C$. %
    
    We claim that the distance between any subsequent (on $C$) points in $f(S_y)$ is at most $1/{p_{2n-1}}$. Indeed, suppose that there is a pair of $C$-adjacent points whose distance is strictly more. Then there is a set $T$ of $p_{2n-1}$ equidistant points such that there are two points in $T$ for which the closed arc between the two points does not contain an element of $f(S_y)$. By rounding to $T$ as described in  \autoref{lem: induce_equidistant} \eqref{item:rounding-clockwise} we obtain a non-surjective cohomomorphism from $E_{p_{2n}/q_{2n}}$ to $E_{p_{2n-1}/q_{2n-1}}$. 
    However, by \autoref{th:remove-point} and \autoref{lem: CFA} \eqref{it: CFA2}, we find that $E_{p_{2n-1}/q_{2n-1}}$ with a vertex removed is equivalent to $E_{p_{2(n-1)}/q_{2(n-1)}}$. This is a contradiction because $p_{2(n-1)}/q_{2(n-1)} < p_{2n}/q_{2n}$.

    We can thus conclude that for all $y$ with $|x-y| < \delta_n$ the point $f(y)$ lies on the arc between $f(s_1)$ and $f(s_{p_{2n}-1})$ and has distance at most $1/p_{2n-1}$ from both endpoints. It follows that the arc described by the points at most distance $\delta_n$ from $x$ is mapped into an arc of length at most $1/p_{2n-1}$ by $f$. This proves that $f$ is continuous.
\end{proof}

\begin{theorem}\label{lem:rotation-reflection}
    Let $r \in \mathbb{R}_{\geq 2}$ irrational and let $f$ be a cohomomorphism from either $E_{r}^{\open}$ or $E_{r}^{\closed}$ to itself. Then $f$ is a rotation possibly composed with a reflection.
\end{theorem}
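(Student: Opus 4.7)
My plan is to combine the continuity of $f$ from \autoref{lem:cohom-cont} with a rigidity argument for finite equidistant subsets and an averaging argument exploiting the irrationality of $r$. The main ingredients are continuity, the finite-subgraph rigidity lemmas \autoref{lem: finite_induce}, \autoref{rem: core}, \autoref{lem: self_co_imp_auto_finite}, and the density of $\ZZ + (1/r)\ZZ$ in $\RR$ for irrational $r$.

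First I would upgrade $f$ to a homeomorphism $C \to C$ that is globally cyclically monotone. The proof of \autoref{lem:cohom-cont} already shows that for each $n$, the restriction of $f$ to an equidistant set $S_n$ of $p_{2n}$ points is an isomorphism of $E_{p_{2n}/q_{2n}}$, which by \autoref{lem: self_co_imp_auto_finite} is a cyclic shift possibly composed with a reflection. Whether the reflection occurs is determined by local data and is locally constant in the reference point used to build $S_n$; since $C$ is connected, it is globally constant. Combined with continuity of $f$ and density of $\bigcup_n S_n$, this forces $f$ to be cyclically monotone globally, and hence a homeomorphism of $C$. After possibly precomposing with a reflection (which is itself a self-cohomomorphism of the circle graphs), I will assume $f$ is orientation-preserving.

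Next I would lift $f$ to a continuous strictly increasing $\tilde f : \RR \to \RR$ satisfying $\tilde f(x+1) = \tilde f(x) + 1$, and define $\phi(x) := \tilde f(x + 1/r) - \tilde f(x)$. Applying the cohomomorphism condition to the pair $(x, x+1/r)$---which has $d_C$-distance exactly $1/r$ and so is non-adjacent in $E_r^\open$; for $E_r^\closed$, use pairs $(x, x + 1/r + \eps)$ and take $\eps \to 0^+$ via continuity---we obtain $d_C(f(x), f(x+1/r)) \geq 1/r$, which translates into the pointwise bound $\phi(x) \in [1/r, 1 - 1/r]$. A direct computation using $\tilde f(x+1) = \tilde f(x) + 1$ gives $\int_0^1 \phi(x)\, dx = 1/r$. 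Combined with the lower bound $\phi \geq 1/r$ and continuity of $\phi$, this forces $\phi \equiv 1/r$ on all of $\RR$.

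Finally, $\phi \equiv 1/r$ means $\tilde f$ commutes with translation by $1/r$, so $g(x) := \tilde f(x) - x$ is a continuous function with both periods $1$ and $1/r$. Irrationality of $r$ makes $\ZZ + (1/r)\ZZ$ dense in $\RR$, forcing $g$ to be constant, so $\tilde f(x) = x + \theta$ for some $\theta$, and $f$ is the rotation by $\theta$. The main obstacle I expect lies in the second paragraph: rigorously establishing the ``local constancy'' of the rotation-versus-reflection combinatorial type across different reference points and different $n$, and propagating finite injectivity and cyclic monotonicity on the $S_n$'s to a coherent global statement on $C$. The subsequent averaging argument is clean and short by comparison.
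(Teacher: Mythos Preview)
Your overall plan---continuity from \autoref{lem:cohom-cont}, lift to $\RR$, an averaging identity, and then the two-period argument using irrationality of $1/r$---matches the paper's proof exactly in its final paragraph. The substantive difference is how you pin down the degree of $f$, and here the paper's route is cleaner than yours.

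You spend your second paragraph arguing that $f$ is a homeomorphism (hence degree $\pm 1$) by propagating cyclic-order rigidity from the finite equidistant sets $S_n$; you correctly flag this as the main obstacle, and indeed making it airtight takes work (global injectivity does not follow automatically from injectivity on each $S_n$, and the ``orientation bit'' needs to be tracked across incomparable sets $S_n$). The paper bypasses this completely. It lifts the merely continuous $f$ to $g:\RR\to\RR$ with $g(x+1)=g(x)+d$ for an unknown degree $d\geq 0$ (after reflecting if necessary), observes that for every $y\in(1/r,1-1/r)$ the cohomomorphism condition gives $g(x+y)-g(x)\in[m+1/r,\,m+1-1/r]$ for a single integer $m$ (constant by connectedness of $\RR\times(1/r,1-1/r)$), and then computes $\int_0^1\bigl(g(x+y)-g(x)\bigr)\,dx=d\cdot y$. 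The fact that $d\cdot y$ lies in the fixed interval $[m+1/r,\,m+1-1/r]$ for \emph{all} $y\in[1/r,1-1/r]$ forces $d=1$ and $m=0$ simultaneously. Only then does the paper specialise to $y=1/r$ and conclude $g(x+1/r)-g(x)\equiv 1/r$, exactly as you do. In short, letting $y$ vary over the whole interval $[1/r,1-1/r]$ in the integral replaces your entire homeomorphism argument. Your plan is correct, but this simplification is worth adopting: it removes precisely the step you identified as the obstacle.
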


\begin{proof}
    From \autoref{lem:cohom-cont} we know that $f$ is continuous.
    By identifying $C$ with $\mathbb{R}/\mathbb{Z}$ the map $f$ can be lifted to a continuous map $g: \mathbb{R} \to \mathbb{R}$ under the canonical projection $\pi: \mathbb{R} \to C$, that is, $\pi \circ g = f \circ \pi$. The lift $g$ is unique up to an additive integer. There is an integer $d$, called the degree of $f$, such that $g(x+1) = g(x) + d$ for all $x$. (See, e.g., \cite[Proposition 4.3.1]{MR1995704} or \cite{MR3728284}.) 
    By possibly composing with a reflection we may assume that $d$ is nonnegative. 

    Because $f$ is a cohomomorphism we find that for every $(x,y)\in \mathbb{R}\times(1/r,1-1/r)$ there is an integer $m_{x,y}$ such that $g(x+y) - g(x) \in [m_{x,y}+1/r,m_{x,y}+1-1/r]$. Because $\mathbb{R}\times(1/r,1-1/r)$ is connected and $g$ is continuous, $m_{x,y}$ does not depend on $x$ or $y$, so we can write $m_{x,y} = m$. Furthermore, the inclusion extends to the boundary, that is, $g(x+y) - g(x) \in [m+1/r,m+1-1/r]$ for $y \in [1/r,1-1/r]$. It follows that the value of the following integral must be contained in the interval $[m+1/r,m+1-1/r]$ for all $y \in [1/r,1-1/r]$,
    \[
        \int_{0}^1 g(x+y)-g(x) dx = \int_{y}^1 g(x) dx + \int_{0}^{y} g(x+1) dx - \int_0^1 g(x)dx = d\cdot y,
    \]
    using $g(x+1) = g(x) + d$.
    It follows that $m=0$ and $d=1$. Because $g(x+1/r)-g(x)\geq 1/r$ and integrates to exactly $1/r$ on the interval $[0,1]$ we have that $g(x+1/r)-g(x) = 1/r$ for all $x \in [0,1]$ and thus for all $x \in \mathbb{R}$. It follows that the function $h(x) = g(x)-x$ has period $1$ and period $1/r$. Because $1/r$ is irrational, $h$ is constant and thus $g(x) = g(0) + x$. We conclude that $f$ is a rotation.
\end{proof}

\subsection{Non-equivalence of open and closed circle graphs}
\label{sec:nonequiv}

We will now prove the non-equivalence of open and closed circle graphs. We will first give the proof for the rational case, which is simpler, and then prove the irrational case using the self-cohomorphism characterization of \autoref{lem:rotation-reflection}.

\begin{theorem}\label{th:rat-inequiv}
    Let $p/q \in \QQ_{\geq2}$ be rational. Then $E_{p/q}^\closed$ and $E_{p/q}^\open$ are not equivalent.
\end{theorem}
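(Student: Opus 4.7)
Because $E_{p/q}^\closed \leq E_{p/q}^\open$ already holds by \autoref{lem:cl-lt-op}, the plan is to rule out any cohomomorphism $f \colon E_{p/q}^\open \to E_{p/q}^\closed$. The strategy is to probe such an $f$ on the set $S \subseteq C$ of $p$ equidistant points: this forces $f$ to cram a copy of $E_{p/q}$ into a finite induced subgraph of $E_{p/q}^\closed$, which we will show is impossible using the rigidity results for finite subsets of closed circle graphs.

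Concretely, suppose such an $f$ exists. By \autoref{lem: induce_equidistant}\eqref{item: induce_equidistant} we have $E_{p/q}^\open[S] \cong E_{p/\lceil q\rceil} = E_{p/q}$, and the restriction $f|_S$ descends to a cohomomorphism $E_{p/q} \cong E_{p/q}^\open[S] \to E_{p/q}^\closed[f(S)]$, since distinct non-adjacent vertices in the domain must land on distinct non-adjacent vertices in the codomain. Because $f(S) \subseteq V(E_{p/q}^\closed) = C$ is finite, \autoref{lem:fin-cl-frac} supplies some $a/b < p/q$ with $E_{p/q}^\closed[f(S)] \leq E_{a/b}^\open$; inspecting that proof (where $a/b$ is chosen from an interval $(s, p/q)$ with $s \geq 2$) lets us take $a/b \geq 2$ whenever $p/q > 2$. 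Combining with $E_{a/b}^\open \leq E_{a/b}$ from \autoref{lem:open-circle-frac-equiv} yields $E_{p/q} \leq E_{a/b}$, and \autoref{th:ordering} then forces the contradiction $p/q \leq a/b$.

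The borderline case $p/q = 2$ needs a separate direct argument: the antipodes $x$ and $x + 1/2$ are distinct and non-adjacent in $E_2^\open$, so any cohomomorphism to $E_2^\closed$ would have to send them to a distinct non-adjacent pair in $E_2^\closed$ --- which is impossible because $E_2^\closed$ is the complete graph on $C$. The main obstacle to a uniform argument is precisely the failure of \autoref{lem:fin-cl-frac} to manufacture an admissible $a/b \in [2, p/q)$ when $p/q = 2$; this small-scale obstruction vanishes once the boundary case is dispatched directly, after which the general scheme gives non-equivalence for every rational $p/q \geq 2$.
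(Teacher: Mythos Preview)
Your proof is correct and follows essentially the same route as the paper: push a hypothetical cohomomorphism $E_{p/q}^\open \to E_{p/q}^\closed$ through the finite copy of $E_{p/q}$ inside $E_{p/q}^\open$, then use \autoref{lem:fin-cl-frac} on the finite image to manufacture an impossible inequality $E_{p/q} \leq E_{a/b}$ with $a/b < p/q$. The paper phrases the first step slightly more compactly by invoking the equivalence $E_{p/q} \sim E_{p/q}^\open$ from \autoref{lem:open-circle-frac-equiv} directly (rather than restricting to equidistant points), and it does not separate out the case $p/q = 2$; your explicit handling of that boundary case is a welcome clarification, since \autoref{lem:fin-cl-frac} as stated is vacuous there.
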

\begin{proof}
    Suppose $E_{p/q}^\open \leq E_{p/q}^\closed$. We know that $E_{p/q}$ is equivalent to $E_{p/q}^\open$. Thus there is a cohomomorphism $f :  E_{p/q} \to E_{p/q}^\closed$. The image of $f$ is a finite induced subgraph of~$E_{p/q}^\closed$. Then by \autoref{lem:fin-cl-frac}, there is an $a/b < p/q$ such that $E_{p/q} \leq E_{a/b}$. This is a contradiction.
\end{proof}

\begin{theorem}\label{th:open-closed-noneq}
    Let $r \in \mathbb{R}_{>2}$ be irrational. Then $E_{r}^\closed$ and $E_{r}^\open$ are not equivalent.
\end{theorem}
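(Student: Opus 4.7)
The plan is to use \autoref{lem:rotation-reflection} to force any cohomomorphism realizing the equivalence to be an isometry of the circle, and then to exhibit a pair of points at the boundary distance $1/r$ that witnesses a contradiction.

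Suppose for contradiction that $E_r^\open$ and $E_r^\closed$ are equivalent. Since $E_r^\closed \leq E_r^\open$ holds via the identity map (\autoref{lem:op-lt-cl}), the assumed equivalence provides a cohomomorphism $f\colon E_r^\open \to E_r^\closed$. The first step is to observe that the same underlying map $f\colon C \to C$ is also a self-cohomomorphism of $E_r^\open$. Indeed, if distinct $x, y \in C$ satisfy $d(x,y) \geq 1/r$, then the cohomomorphism property of $f$ gives $f(x) \neq f(y)$ together with $d(f(x), f(y)) > 1/r$; in particular $d(f(x), f(y)) \geq 1/r$, so $f(x), f(y)$ are non-adjacent in $E_r^\open$.

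Since $r$ is irrational, \autoref{lem:rotation-reflection} then applies to $f$ as a self-cohomomorphism of $E_r^\open$ and forces $f$ to be a rotation of $C$ possibly composed with a reflection. In particular, $f$ is an isometry of $C$.

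To finish, I would pick any $x, y \in C$ realizing $d(x, y) = 1/r$ exactly. Such a pair is non-adjacent in $E_r^\open$ (adjacency requires strict inequality), so by the cohomomorphism property of $f\colon E_r^\open \to E_r^\closed$ the images $f(x), f(y)$ are distinct and non-adjacent in $E_r^\closed$, which forces $d(f(x), f(y)) > 1/r$ strictly. Since $f$ is an isometry, one also has $d(f(x), f(y)) = d(x, y) = 1/r$, a contradiction.

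The substance of the argument sits in the already-proved \autoref{lem:rotation-reflection}; once that classification is in hand, the strict-versus-non-strict distinction between the two adjacency rules is witnessed by any pair at the unique critical distance $1/r$, and no isometry can accommodate it. The only thing that requires a brief verification is that \autoref{lem:rotation-reflection} actually applies to $f$ as constructed, and this is immediate from the inclusion of the non-edge set of $E_r^\closed$ in that of $E_r^\open$.
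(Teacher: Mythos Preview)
Your proof is correct and essentially identical to the paper's: you compose the supposed cohomomorphism $E_r^\open \to E_r^\closed$ with the identity $E_r^\closed \to E_r^\open$ to get a self-cohomomorphism of $E_r^\open$, invoke \autoref{lem:rotation-reflection} to force $f$ to be an isometry, and then observe that a pair at distance exactly $1/r$ gives a contradiction. The paper argues the same way, only more tersely.
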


\begin{proof} %
Any supposed cohomomorphism $f$ from $E_{r}^\open$ to~$E_{r}^\closed$ is also a cohomomorphism from~$E_{r}^\open$ to itself because the identity is a cohomomorphism from $E_{r}^\closed$ to $E_{r}^\open$. Therefore, by \autoref{lem:rotation-reflection}, $f$ is a rotation possibly composed with a reflection. However, such a map is distance preserving and in particular sends pairs of vertices with distance exactly~$1/r$ to pairs with distance exactly~$1/r$, which contradicts $f$ being a cohomomorphism from~$E_{r}^\open$ to~$E_{r}^\closed$.
\end{proof}

\begin{remark}
    \autoref{th:irr-closed-open-equiv} and \autoref{th:open-closed-noneq} together give a family of examples of pairs of infinite graphs that are asymptotically equivalent but not equivalent (namely, the circle graphs~$E_r^\open$ and $E_r^\closed$ for irrational $r$). Such examples also exist for finite graphs. For instance, let $G = (C_5\sqcup C_5) + E_5$, where $+$ is the join, so $G=\overline{\overline{C_5\sqcup C_5} \sqcup E_5}$. Then $\alpha(G) = \Theta(G) = \overline{\chi}_f(G) = 5$ and $\overline{\chi}(G) = 6$ \cite{MR0207590,MR0321810}. Thus $G$ is asymptotically equivalent to $E_5$, but not equivalent to $E_5$.\footnote{In the analogous asymptotic spectrum theory for tensors (see \autoref{sec:gen-asymp-spec}), no examples are known of objects (tensors) that are asymptotically equivalent but not equivalent. In fact, if the matrix multiplication exponent $\omega$ equals 2 (which is the central open problem in that setting), then the matrix multiplication tensors provide such examples.}
\end{remark}

\section{Independent sets in products of fraction graphs}\label{sec:computation}

In the previous sections we developed---ultimately motivated by the Shannon capacity problem---a theory of convergence of graphs in the asymptotic spectrum distance. 

In this section we return to the Shannon capacity problem, and use ideas from the previous sections (e.g., approximation by fraction graphs, and vertex removal, \autoref{th:remove-point}) to obtain explicit independent sets and new bounds.

First, we will give a brief overview of the best-known lower bounds on the Shannon capacity of small odd-cycle graphs, and observe that a simple, high-level framework encompasses all of these bounds.
Then, in the context of this framework, we introduce a  ``nondeterministic rounding'' technique, which we use to find a new lower bound on the Shannon capacity of the fifteen-cycle, $C_{15}$. %
Finally, we determine the independence number of products of fraction graphs, in several regimes.

\subsection{Shannon capacity of odd cycles via orbits and reductions}\label{subsec:odd-cycl-overview}

We observe (and explicitly describe in \autoref{tab:overview}) that the best-known lower bounds for the Shannon capacity of small odd cycle graphs can be obtained via a remarkably simple and uniform strategy (which may be thought of as a ``finite'' version of the graph limit approach).
Namely, they can be obtained by (1) relating the target graph $G = C_n = E_{n/2}$ to another fraction graph $H = E_{p/q}$ (which we may call an ``intermediate'' or ``auxiliary'' graph), and then (2) constructing a large independent set in a power of~$H$ using an ``orbit'' construction. We explain this approach using three examples, with increasing complexity in the reduction step to the intermediate graph~$H$.

\begin{enumerate}[(1)]
\item The simplest example is, famously, the five-cycle $G = C_5$. Here we simply take the intermediate graph to be $H = G$. The Shannon capacity $\Theta(C_5) = \sqrt{5}$ is attained by the independent set $\{t \cdot (1,2) : t \in \ZZ_5\} \subseteq G^{\boxtimes 2}$ \cite{MR0089131, Lovasz79}, which has a clear orbit structure (under the action of $\ZZ_5$). 

\item A more involved example is for $G = C_{11}$. Here the best-known lower bound on the Shannon capacity $\Theta(C_{11})$ can be obtained by taking $H = E_{148/27}$, which satisfies $H \leq G$ (since $148/27 \leq 11/2$), and verifying that $\{t \cdot (1,11,11^2) : t \in \ZZ_{148}\}$ is an independent set in $H^{\boxtimes 3}$ of size 148. Since $H \leq G$ implies $\alpha(H^{\boxtimes k}) \leq \alpha(G^{\boxtimes k})$, we directly find $\Theta(C_{11}) \geq 148^{1/3} \approx 5.29$. This bound was obtained in \cite{MR0337668}; the construction given there, however, is different.

\item Another type of reduction from $G$ to an intermediate $H$ is used for $G = C_7$ in~\cite{MR3906144} (and for a new bound that we obtain for $G = C_{15}$ in \autoref{subsec:fifteen}). Namely, here $H$ is taken to be $H = E_{382/108}$ which satisfies $G \leq H$ (since $7/2 \leq 382/108$). The set $\{t\cdot (1,7,7^2,7^3,7^4) : t \in \ZZ_{382}\}$ can be verified to be an independent set in $H^{\boxtimes 5}$ of size 382. Now $H$ sits above $G$, so $\alpha(G^{\boxtimes k}) \leq \alpha(H^{\boxtimes k})$, which is the wrong direction for getting a lower bound on $\Theta(G)$. However, intuitively, since~$H$ sits just above $G$, we may expect that the independent set in $H^{\boxtimes k}$ can be ``mapped down'' to $G^{\boxtimes k}$ with only a small loss in size. Indeed, using an ad hoc shifted rounding map and an integer programming optimization it is shown in  \cite{MR3906144} that this is the case, resulting in an independent set of size 367 in $G^{\boxtimes 5}$, and thus $\Theta(C_7) \geq 3.25$.
\end{enumerate}

We will develop reduction type (3) (the ``$G \leq H$'' type of reduction) further in \autoref{subsec:fifteen} using vertex removal and a new ``nondeterministic rounding'' technique that we introduce. With this we obtain a new lower bound on the Shannon capacity of $C_{15}$.

In \autoref{tab:overview} we list the best-known lower bounds on the Shannon capacity of small odd cycles, how they arise from an orbit construction in a power of an intermediate graph $H$, and the relation between $G$ and $H$ (as one of three types: ``$G = H$'', ``$H \leq G$'' or ``$G \leq H$''). We note that the constructions for $E_{11/2}$ and $E_{13/2}$ that we give in \autoref{tab:overview} do not appear in this form in the literature.

Independent sets of the form $\{t \cdot (g_1, g_2, \ldots, g_k) : t \in \ZZ_m\}$ (or a more general form with more generators) have played an important role in many constructions of independent sets in the literature \cite{MR0089131, MR0337668, MR3906144}. 
A useful property of these is that verifying that they are an independent set can be done very efficiently, by verifying that $t\cdot (g_1, g_2, \ldots, g_k)$ is non-adjacent to $(0, 0, \ldots, 0)$ for every $t \in \ZZ_m$.
Orbit constructions are remarkably effective for finding independent sets. However, we do not know whether orbit constructions (or natural generalisations) are strong enough to achieve the Shannon capacity of odd cycles or fraction graphs in general. Calderbank, Frankl, Graham, Li and Shepp \cite{MR1210400} and Guruswami and Riazanov \cite{LinearShannonCapacity} have shown that orbits in complements of fraction graphs do not achieve Shannon capacity. %
We will use orbits again in \autoref{subsec:disc} to obtain discontinuities of the independence number on products of fraction graphs.

The strategy of determining the Shannon capacity of a graph by reduction to an intermediate graph is reminiscent of the methods used for constructing fast matrix multiplication algorithms. Indeed, the (asymptotically) fastest matrix multiplication algorithms are obtained by a reduction of the ``matrix multiplication tensor'' to an ``intermediate tensor'' (e.g., the Coppersmith--Winograd tensors) for which the relevant properties (in this case the asymptotic tensor rank) are easier to analyse \cite{coppersmith1987matrix,blaser2013fast,DBLP:conf/stoc/AmbainisFG15,v017a002,alman2018limits,williams2023new, wigderson2022asymptotic}.

\begin{table}[H]
\begin{tabular}{lllll}
\toprule
$G$ & $H$ & orbit independent set in $H^{\boxtimes k}$ & reduction &  $\leq \Theta(G)$\\
\midrule
$E_{5/2}$ & $E_{5/2}$ & $\{t\cdot (1,2) : t \in \ZZ_5\}$ & $H = G$ & $2.23$ {\footnotesize\cite{MR0089131}}  \\
$E_{7/2}$ & $E_{382/108}$ & $\{t\cdot (1,7,7^2,7^3,7^4) : t \in \ZZ_{382}\}$ & $G \leq H$ & $3.25$ {\footnotesize\cite{MR3906144}} \\
$E_{9/2}$ & $E_{9/2}$ & $\{s\cdot (1,0,2) + t \cdot (0,1,4): s,t \in \ZZ_9\}$ & $H = G$ & $4.32$ {\footnotesize\cite{MR0337668}} \\
$E_{11/2}$ & $E_{148/27}$ & $\{t \cdot (1,11,11^2): t \in \ZZ_{148}\}$ & $H \leq G$ & 5.28 {\footnotesize\cite{MR0337668}}\\
$E_{13/2}$ & $E_{247/38}$ & $\{t \cdot (1, 19, 117) : t\in \ZZ_{247}\}$ & $H \leq G$ & 6.27 {\footnotesize\cite{MR0337668}\tablefootnote{Here the graph $H$ is in fact equivalent to $E_{13/2}$ (but not isomorphic).}} \\
$E_{15/2}$ & $E_{2873/381}$ & $\{t\cdot (1, 15, 1073, 1125) : t\in \ZZ_{2873}\}$ & $G \leq H$ & 7.30 {\footnotesize(\autoref{subsec:fifteen})}\\
\bottomrule
\end{tabular}
\caption{Overview of the best-known lower bounds on the Shannon capacity of odd cycle graphs $G = C_n = E_{n/2}$, and how they arise from orbits in the $k$th power of an intermediate graph $H = E_{p/q}$ for some $k$. The ``reduction'' column describes the cohomomorphism relation between the intermediate graph $H$ and the target graph $G$. The reduction type ``$G \leq H$'' requires special techniques, which we discuss in \autoref{subsec:fifteen}. The lower bound on $E_{15/2}$, which is new, is also discussed there. The constructions given here for $E_{11/2}$ and $E_{13/2}$ are different from previously published constructions, which were not obtained as orbits in powers of fraction graphs.}
\label{tab:overview}
\end{table}

\subsection{New lower bound on the Shannon capacity of the fifteen-cycle}\label{subsec:fifteen}
In this section we prove a new lower bound on the Shannon capacity of the cycle graph on fifteen vertices,~$C_{15}$.

\subsubsection*{Independent set in the third power via vertex removal}

We begin by reproving a known lower bound for the purpose of illustrating some of our methods.
Mathew and Östergård \cite{Mathew2017NewLB} proved that $\alpha(C_{15}^{\boxtimes 3}) \geq 381$. The following slightly better bound had previously been observed by Codenotti, Gerace and Resta~\cite{MR1961489} and later independently by Polak and Schrijver (personal communication): %

\begin{theorem}\label{th:C15-3}
    $\alpha(C_{15}^{\boxtimes 3}) \geq 382$.
\end{theorem}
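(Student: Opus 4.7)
The plan is to exhibit an explicit independent set of size $382$ in $C_{15}^{\boxtimes 3}$ using the orbit-and-reduction framework summarised in Section~\ref{subsec:odd-cycl-overview} and Table~\ref{tab:overview}. The strategy is of ``type $H\leq G$'': pick an auxiliary fraction graph $H=E_{p/q}$ with $p/q\leq 15/2$, so that $H\leq E_{15/2}=C_{15}$ by Theorem~\ref{th:ordering}; any independent set in $H^{\boxtimes 3}$ will then transfer to an independent set of the same size in $C_{15}^{\boxtimes 3}$ via the cohomomorphism $x\mapsto\lfloor 15x/p\rfloor$ on vertices of each factor. Inside $H^{\boxtimes 3}$, the independent set will be presented as a single orbit of a cyclic group action, which makes both the construction very concise and the verification of independence a finite computation.

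The natural candidate is $H=E_{382/51}$: the ratio $382/51=7.4902\ldots$ lies just below $15/2$, the numerator matches the desired orbit size, and $\gcd(382,51)=1$ so the fraction graph is in lowest form. In this graph two vertices $u,v\in\ZZ_{382}$ are non-adjacent iff their circular distance $|u-v|_{382}$ is at least $51$. I would then look for a generator $g=(g_1,g_2,g_3)\in\ZZ_{382}^3$ so that the orbit $\{t\cdot g : t\in\ZZ_{382}\}$ (under the diagonal additive action of $\ZZ_{382}$ on $\ZZ_{382}^3$) has all $382$ points distinct and is independent in $H^{\boxtimes 3}$. Because the action preserves circular distances coordinate-wise, this reduces to the single condition that for every nonzero $t\in\ZZ_{382}$ at least one of $|tg_1|_{382},|tg_2|_{382},|tg_3|_{382}$ is $\geq 51$. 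Generators inspired by the analogous constructions for smaller odd cycles in Table~\ref{tab:overview} (for instance of the form $(1,15,g_3)$, in analogy with $\{t\cdot(1,11,11^2)\}$ for $C_{11}$) provide a small search space in which such a $g$ can be found.

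Once a working generator is fixed, the conclusion is immediate: the orbit has $382$ distinct elements by the independence check itself, it is independent in $E_{382/51}^{\boxtimes 3}$ by the same check, and pulling back along the cohomomorphism $E_{382/51}\to E_{15/2}$ (applied in each of the three factors) gives an independent set of size $382$ in $C_{15}^{\boxtimes 3}$. The main obstacle is the combinatorial one of exhibiting a generator $g$ that actually works: the constraint ``for every nonzero $t$, some coordinate $tg_i$ lies in the middle arc of $\ZZ_{382}$ of length $382-2\cdot 50=282$'' is delicate, and it is a priori conceivable that no single orbit at $p/q=382/51$ suffices. In that case I would either enlarge the construction to a union of a small number of $\ZZ_{382}$-orbits whose sizes sum to $382$, or replace the auxiliary graph by a nearby $E_{p'/q'}$ with $p'/q'<15/2$ and $p'$ a small multiple of $382$, and carry out the same orbit search there; once a generator is produced, the rest of the argument is purely mechanical.
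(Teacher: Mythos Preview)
Your plan takes a different route from the paper's proof and leaves the decisive step unresolved. You propose working in $E_{382/51}$ (just \emph{below} $C_{15}$) and searching for a single $\ZZ_{382}$-orbit of size $382$, but you never exhibit a generator, and you correctly flag this as the main obstacle; as written, the argument is a search strategy rather than a proof.

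The paper instead works in $E_{383/51}$, which sits just \emph{above} $C_{15}$, and exploits that $383$ is prime. The orbit $S=\{t\cdot(1,75,75^2):t\in\ZZ_{383}\}$ is verified to be an independent set of size $383$ in $E_{383/51}^{\boxtimes 3}$. Primality of $383$ then guarantees that $(0,0,0)$ is the \emph{only} element of $S$ having a $0$ in any coordinate, so $S\setminus\{(0,0,0)\}$ is an independent set of size $382$ in $(E_{383/51}-\{0\})^{\boxtimes 3}$. Finally, the vertex-removal lemma (\autoref{th:remove-point}) together with $383\cdot 2-51\cdot 15=1$ shows that $E_{383/51}-\{0\}$ is equivalent to $E_{15/2}=C_{15}$, and the proof is complete. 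This ``go one step above and delete a vertex'' trick replaces the delicate search you anticipate over the composite modulus $382$ by an explicit one-line orbit over a prime modulus, at the cost of invoking \autoref{th:remove-point} rather than the direct cohomomorphism $E_{382/51}\to C_{15}$.
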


We give a short proof of \autoref{th:C15-3} using an orbit construction in a product of fraction graphs and vertex removal (\autoref{th:remove-point}).

\begin{proof}
    One can verify directly that the set
    \[
    S = \{ t \cdot (1, 75, 75^2) : t \in \ZZ_{383}\}
    \]
    is an independent set in $E_{383/51}^{\boxtimes 3}$ of size 383. The number $383$ is prime and thus $(0,0,0)$ is the only vertex in $S$ containing $0$ as an entry. Therefore $S \setminus \{(0,0,0)\}$ is an independent set of size $382$ in $(E_{383/51} -\{0\})^{\boxtimes 3}$. Since $383 \cdot 2 - 51 \cdot 15 = 1$, it follows from \autoref{th:remove-point} that this graph is equivalent to $E_{15/2}^{\boxtimes 3} = C_{15}^{\boxtimes 3}$.
\end{proof}

\subsubsection*{Independent set in the fourth power via nondeterministic rounding}

\autoref{th:C15-3} gives $\Theta(C_{15}) \geq \alpha(C_{15}^{\boxtimes 3})^{1/3} \geq 7.25584$. We will now improve this bound by constructing a large independent set in the fourth power. For this we will introduce a new nondeterministic rounding technique, which we will apply to an orbit construction in a product of fraction graphs.

\begin{theorem}\label{th:C15-4}
    $\alpha(C_{15}^{\boxtimes 4}) \geq 2842$.
\end{theorem}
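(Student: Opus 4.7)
The plan is to realize $C_{15}^{\boxtimes 4}$ almost as an induced subgraph of a power of a fraction graph in which an explicit orbit gives a large independent set, and to use nondeterministic rounding to control the loss in the transfer. This instantiates the ``$G \leq H$'' strategy from \autoref{subsec:odd-cycl-overview}, in the spirit of the Polak--Schrijver construction for $C_7$.

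First I would take $H = E_{2873/381}$, so that $2873/381 > 15/2$ gives $G = E_{15/2} \leq H$, and exhibit the orbit
\[
O = \{t \cdot (1, 15, 1073, 1125) : t \in \ZZ_{2873}\} \subseteq V(H)^4.
\]
By the vertex-transitivity of $H^{\boxtimes 4}$ and the linearity of $O$ in $t$, checking that $O$ is an independent set of $H^{\boxtimes 4}$ reduces to the finite verification that for every $t \in \ZZ_{2873}\setminus\{0\}$ at least one of $t, 15t, 1073t, 1125t$ has residue in $[381, 2873-381]$ modulo $2873$. Because $\gcd(1, 2873) = 1$, the orbit has the full size $|O| = 2873$.

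Second, I would set up the nondeterministic rounding. The natural cohomomorphism $\iota: G \to H$ is $x \mapsto \lfloor 2873 x /15\rfloor$, partitioning $V(H)$ into $15$ consecutive slabs of width $191$ or $192$. A deterministic right-inverse $\pi(y) = \lfloor 15 y/2873 \rfloor$ collapses $H$ to $G$ but is not a cohomomorphism near the slab boundaries. The nondeterministic variant attaches to each $y \in V(H)$ a small list $R(y) \subseteq V(G)$ of admissible rounded values---typically $\pi(y)$, together with $\pi(y)\pm 1$ when $y$ lies within the boundary strip of width $q-1 = 380$---chosen so that whenever $y, y' \in V(H)$ are nonadjacent in $H$ there exist choices $x \in R(y)$, $x' \in R(y')$ that are nonadjacent in $G$. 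Extending componentwise gives $R: V(H)^4 \to 2^{V(G)^4}$.

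The heart of the proof is then to produce a selector $\sigma: O' \to V(G)^4$ defined on a subset $O' \subseteq O$ with $|O'| \geq 2842$, such that $\sigma(v) \in R(v)$ for all $v \in O'$ and $\sigma(O')$ is an independent set of $G^{\boxtimes 4}$. I would encode this as a constraint satisfaction problem whose variables are the selections at each orbit element and whose constraints forbid rounded pairs $(\sigma(v), \sigma(w))$ that are adjacent in $G^{\boxtimes 4}$; the cyclic symmetry of $O$ and the small number of slab boundaries (only $15$ per coordinate) restrict conflicts to orbit elements with at least one coordinate inside a boundary strip, and a finite search should produce an explicit $\sigma$ and $O'$. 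The resulting list of $2842$ tuples in $V(G)^4$ is the desired independent set.

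The main obstacle is this last step: it is not a priori clear that the nondeterministic selector can be made globally consistent at a loss of only $2873 - 2842 = 31$. Suggestively, $31 = 2873 \cdot 2 - 381 \cdot 15$ is precisely the algebraic ``gap'' between $2873/381$ and $15/2$, reflecting the sharp interplay between how close $H$ is to $G$ in the cohomomorphism order and the total boundary mass; this indicates the bound is essentially forced by boundary residues of $O$ and not improvable by cleverer rounding. The argument will therefore be completed by an explicit finite certificate rather than a closed-form combinatorial bound---but the use of nondeterminism, rather than a single deterministic rounding, is what makes the loss this small.
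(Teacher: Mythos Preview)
Your plan is essentially the paper's: the same orbit $\{t\cdot(1,15,1073,1125):t\in\ZZ_{2873}\}$, the same nondeterministic-rounding idea, and a computer search to finish. A few implementation differences are worth noting. First, the paper records the orbit as independent in the \emph{mixed} product $E_{2873/383}\boxtimes E_{2873/382}\boxtimes E_{2873/381}\boxtimes E_{2873/381}$ (which implies your $E_{2873/381}^{\boxtimes 4}$ claim), and correspondingly uses coordinate-dependent thresholds $\vec\eps=(0.12,0.22,0.32,0.32)$ on $r(x)=15x/2873$; these strips are far narrower than your ``width $q-1=380$'' suggestion, which is wider than a slab and cannot be what you want. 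Second, the paper does \emph{not} build a selector $\sigma$ on a subset of the orbit or solve a CSP; it simply takes the union $T_{\vec\eps}\subseteq V(C_{15})^4$ of \emph{all} rounded images (about $13718$ vertices) and runs an ILP solver (Gurobi) on $C_{15}^{\boxtimes 4}[T_{\vec\eps}]$ to extract an independent set of size $2842$. This is easier to set up and at least as strong as your selector formulation, since a maximum independent set in $T_{\vec\eps}$ is not constrained to pick exactly one rounding per orbit element. In particular, the pairwise property you posit for $R$ (``nonadjacent in $H$ implies some nonadjacent choices in $G$'') is neither proved nor needed. Your observation that $31=2\cdot 2873-15\cdot 381$ is nice but does not appear in the paper, and there is no indication that $2842$ is sharp for this approach.
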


\autoref{th:C15-4} gives $\Theta(C_{15}) \geq \alpha(C_{15}^{\boxtimes 4})^{1/4} \geq 7.30139$. %
The best-known upper bound is $\Theta(C_{15}) \leq \vartheta(C_{15}) =  7.4171482$, where $\vartheta$ is the Lovász theta function. We provide an explicit independent set in \autoref{sec:explicit-ind-set}.

The method we use to obtain \autoref{th:C15-4} is inspired by the method used by Polak and Schrijver \cite{MR3906144} to give a lower bound for $\alpha(C_7^{\boxtimes 5})$. They considered the following independent set in the graph $E_{382/108}^{\boxtimes 5}$ 
\[
    S = \{t\cdot (1,7,7^2,7^3,7^4) : t \in \ZZ_{382}\}.
\]
Because $382/108 \approx 3.537 > 7/2$ this does not immediately give an independent set in~$C_7^{\boxtimes 5}$. However, roughly described, they use a rounded down image of $S$ into $C_7^{\boxtimes 5}$ as a starting point to optimize for an independent set of size $367$. 

Our construction also starts with a similarly shaped independent set $S$, that is, the orbit of a single generator in a product of fraction graphs slightly above $C_{15}^{\boxtimes 4}$. Subsequently we use a different method of rounding down $S$ that is based on the following heuristic. Whenever $p/q \leq 15/2$, the map $x \mapsto \lfloor 15x/p \rfloor$ is a cohomomorphism from $E_{p/q}$ to $E_{15/2}$. This is not the case when $p/q > 15/2$ but heuristically the map is still close to a cohomomorphism whenever $p/q$ is close to $15/2$. Therefore we still use this map, however if a vertex is close to being mapped to a different element, that is, when $15x/p$ is close to an integer, we add both choices to the image $S$. The following contains an exact description of our construction.
\begin{proof}[Proof of \autoref{th:C15-4}]
We describe the construction here and give the explicit independent set in \autoref{sec:explicit-ind-set}.
We start with the set
\[
S = \{t\cdot(1, 15, 1073, 1125) : t \in \ZZ_{2873}\}.
\]
This is an independent set in $E_{2873/383}\boxtimes E_{2873/382}\boxtimes E_{2873/381} \boxtimes E_{2873/381}$ of size $2873$. %
For any $x \in \{0,1,\ldots, 2872\}$ we let $r(x) = 15x/2873$. For any $\eps \in [0,1/2)$ we define the set $F_\eps(x) \subseteq \ZZ_{15}$ by %
\[
    F_\eps(x) = 
\begin{cases}
  \{\lfloor r(x) \rfloor, \lfloor r(x) \rfloor + 1 \} & \text{if } r(x) - \lfloor r(x) \rfloor  > 1 - \eps \\
  \{\lfloor r(x) \rfloor, \lfloor r(x) \rfloor - 1 \} & \text{if } r(x) - \lfloor r(x) \rfloor  < \eps \\
  \{\lfloor r(x) \rfloor\} & \text{otherwise.}
\end{cases}
\]
For any tuple $\vec{\eps} = (\eps_1,\eps_2,\eps_3,\eps_4)$, we define the set
\[
    T_{\vec{\eps}} = \hspace{-15 pt} \bigcup_{(v_1,\ldots,v_4) \in S} \hspace{-15 pt} F_{\eps_1}(v_1) \times \cdots \times F_{\eps_4}(v_4).
\]
Then $T_{\vec{\eps}}$ is a subset of $V(C_{15}^{\boxtimes 4})$.
For small $\vec{\eps}$ the set $T_{\vec \eps}$ is sufficiently small for optimization software (in our case Gurobi \cite{gurobi}) to quickly calculate close to optimal lower bounds for $\alpha(C_{15}^{\boxtimes 4}[T_{\vec\eps}])$.
For $\vec{\eps} = (0.12,0.22,0.32,0.32)$, we have $|T_{\vec{\eps}}| = 13718$, and we find $\alpha(C_{15}^{\boxtimes 4}[T_{\vec\eps}]) \geq 2842$.
\end{proof}

\begin{remark}
    The lower bound $\alpha(C_7^{\boxtimes 5}) \geq 367$ of \cite{MR3906144} can similarly be obtained using nondeterministic rounding (with $\eps = 0.125$) applied to the orbit independent set $S = \{t\cdot (1,7,7^2,7^3,7^4) : t \in \ZZ_{382}\}$.
\end{remark}

\subsection{Characterization of discontinuities of the independence number}\label{subsec:disc}

In this subsection we determine the independence number of products of fraction graphs in certain regimes. We will do this by determining where (and by how much) this value ``jumps'' (i.e., the discontinuity points) when we vary the fraction graphs.

\subsubsection*{Nested floor bound}

We start with a well-known upper bound on the independence number $\alpha$ of the product of any two graphs. %
Recall that $\overline{\chi}_f$ denotes the fractional clique covering number.
Hales~\cite{MR0321810}, using ideas of Rosenfeld \cite{MR0207590} (see also \cite[Equation~67.101]{MR1956924}), proved for any two graphs $G,H$ that
\begin{equation}\label{eq:alpha-prod}
\alpha(G \boxtimes H) \leq \lfloor \overline{\chi}_f(G) \alpha(H) \rfloor.    
\end{equation}
As a direct consequence we have the following ``nested floor bound'' on the independence number of any product of fraction graphs.
\begin{theorem}\label{th:nested-floor} Let $p_1/q_1, \ldots, p_k/q_k \in \QQ_{\geq2}$. Then
\begin{equation}\label{eq:nested-floor}
\alpha(E_{p_1/q_1} \boxtimes E_{p_2/q_2} \boxtimes \cdots \boxtimes E_{p_k/q_k}) \leq \Bigl\lfloor\cdots \Bigl\lfloor\Bigl\lfloor \frac{p_1}{q_1} \Bigr\rfloor\frac{p_2}{q_2}\Bigr\rfloor \cdots \frac{p_k}{q_k} \Bigr\rfloor.
\end{equation}
\end{theorem}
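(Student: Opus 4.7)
The plan is to prove \autoref{th:nested-floor} by induction on $k$, using \eqref{eq:alpha-prod} together with the evaluation $\overline{\chi}_f(E_{p/q}) = p/q$ from \autoref{th:fraction-graph-chi-bar}.

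For the base case $k = 1$, I would recall (as noted earlier in the paper, just before \autoref{th:intro:discont}) that $\alpha(E_{p/q}) = \lfloor p/q \rfloor$. The upper bound needed here is the simple observation that an independent set in $E_{p/q}$ is a set of vertices in $\mathbb{Z}_p$ in which any two distinct elements are at circular distance at least $q$; packing such points around $\mathbb{Z}_p$ allows at most $\lfloor p/q \rfloor$ of them.

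For the inductive step, assume the bound holds for $k - 1$ factors. Writing $H = E_{p_1/q_1} \boxtimes \cdots \boxtimes E_{p_{k-1}/q_{k-1}}$ and $G = E_{p_k/q_k}$, I apply Hales' inequality \eqref{eq:alpha-prod} (with the roles of $G$ and $H$ as here) to obtain
\[
\alpha(E_{p_1/q_1}\boxtimes \cdots \boxtimes E_{p_k/q_k}) = \alpha(H \boxtimes G) \leq \bigl\lfloor \overline{\chi}_f(G)\,\alpha(H) \bigr\rfloor = \bigl\lfloor (p_k/q_k)\,\alpha(H) \bigr\rfloor,
\]
using $\overline{\chi}_f(E_{p_k/q_k}) = p_k/q_k$ (\autoref{th:fraction-graph-chi-bar}). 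By the inductive hypothesis, $\alpha(H)$ is at most the nested floor expression $N_{k-1} \coloneqq \lfloor \cdots \lfloor \lfloor p_1/q_1\rfloor p_2/q_2 \rfloor \cdots p_{k-1}/q_{k-1}\rfloor$, which is a nonnegative integer. Since $p_k/q_k \geq 0$, monotonicity of multiplication and of the floor function yields
\[
\bigl\lfloor (p_k/q_k)\,\alpha(H) \bigr\rfloor \leq \bigl\lfloor (p_k/q_k)\,N_{k-1} \bigr\rfloor,
\]
which is exactly the right-hand side of \eqref{eq:nested-floor}. This closes the induction.

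There is essentially no hard step: the only content beyond bookkeeping is the Hales inequality \eqref{eq:alpha-prod} (stated and attributed just before the theorem) and the identification $\overline{\chi}_f(E_{p/q}) = p/q$. The nesting structure of the floors in the statement is produced automatically by iterating \eqref{eq:alpha-prod} one factor at a time. One small thing to mention is that Hales's bound \eqref{eq:alpha-prod} is applied with the fraction graph factor in the role of $G$, which is convenient because $\overline{\chi}_f$ of the other factor (a strong product) is harder to control than $\overline{\chi}_f$ of the fraction graph, for which \autoref{th:fraction-graph-chi-bar} gives an exact value.
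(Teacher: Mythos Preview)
Your proof is correct and is exactly the argument the paper gives: the paper's one-line proof reads ``This follows from $\overline{\chi}_f(E_{p/q}) = p/q$ and repeated application of \eqref{eq:alpha-prod},'' which is precisely the induction you wrote out in detail.
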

\begin{proof}
This follows from  $\overline{\chi}_f(E_{p/q}) = p/q$ and repeated application of \autoref{eq:alpha-prod}.
\end{proof}
We note that the left-hand side of \autoref{eq:nested-floor} is invariant under permuting the numbers $p_i/q_i$, while the right-hand side is not. Thus when applying \autoref{th:nested-floor} we may minimize over these permutations to get the best bound. Moreover, we note that the minimizing permutation is not obtained by simply sorting the $p_i/q_i$. For instance, note that  $\lfloor\lfloor\lfloor 5/2 \rfloor 2\rfloor 3\rfloor = \lfloor\lfloor\lfloor 5/2 \rfloor 3\rfloor 2\rfloor = 12$ while all other permutations of $5/2$, $2$ and $3$ lead to a larger value. 

\subsubsection*{Discontinuities for one or two fraction graphs}

For a single factor, the bound of \autoref{th:nested-floor} is easily seen to be optimal, that is,
\[
\alpha(E_{p/q}) = \Bigl\lfloor \frac{p}{q} \Bigr\rfloor.
\]
Hales \cite[Theorem 7.1]{MR0321810} proved that the bound of \autoref{th:nested-floor} is optimal for the product of any two odd cycles. Badalyan and Markosyan \cite{MR3016977} proved that the bound of \autoref{th:nested-floor} is optimal for the product of any two fraction graphs, that is:
\begin{theorem}\label{th:two-factors} Let $p_1 / q_1, p_2 / q_2 \in \QQ_{\geq2}$. Then
\[
\alpha(E_{p_1/q_1} \boxtimes E_{p_2/q_2}) = \min \Bigl\{ \Bigl\lfloor\Bigl\lfloor \frac{p_1}{q_1} \Bigr\rfloor\frac{p_2}{q_2}\Bigr\rfloor, \Bigl\lfloor\Bigl\lfloor \frac{p_2}{q_2} \Bigr\rfloor\frac{p_1}{q_1}\Bigr\rfloor \Bigr\}.
\]    
\end{theorem}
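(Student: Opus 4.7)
The upper bound is immediate from Theorem~\ref{th:nested-floor} applied in both orderings of the two factors, so the content of the theorem is the matching lower bound. The plan is to exhibit an explicit independent set of size $N := \min\{\lfloor a p_2/q_2\rfloor,\, \lfloor b p_1/q_1\rfloor\}$ in $\ZZ_{p_1} \times \ZZ_{p_2}$, where $a := \lfloor p_1/q_1 \rfloor$ and $b := \lfloor p_2/q_2 \rfloor$.

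Assume without loss of generality that $N = \lfloor a p_2/q_2\rfloor$. First I would take the natural ``base grid''
\[
    G_0 \;=\; \{(iq_1,\, jq_2) \;:\; 0 \leq i < a,\ 0 \leq j < b\} \;\subseteq\; \ZZ_{p_1} \times \ZZ_{p_2}
\]
of size $ab$, which is trivially independent since any two grid points differ by a nonzero multiple of $q_1$ in the first coordinate or by a nonzero multiple of $q_2$ in the second. If $N = ab$ we are done; otherwise the fractional part of $ap_2/q_2$ is at least $1/a$, and the two ``leftover strips'' of widths $p_1 - a q_1 \in [0, q_1)$ and $p_2 - b q_2 \in [0, q_2)$ in the torus $\ZZ_{p_1} \times \ZZ_{p_2}$ are nontrivial. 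The remaining $N - ab$ vertices would be packed as an orbit-type transversal $\{(g_1 t + c_1,\, g_2 t + c_2)\}_{t=0}^{N-ab-1}$ in the product group, generalising the Shannon--Lovász independent set $\{t \cdot (1,2) : t \in \ZZ_5\}$ in $C_5 \boxtimes C_5$; the step $(g_1, g_2)$ is chosen so that consecutive transversal points are displaced by at least $q_i$ in one of the two coordinates, and so that the transversal slots into the leftover strips relative to the grid.

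Verification that the augmented set is independent reduces to two elementary checks: pairwise independence among the $N - ab$ transversal points, and independence between each transversal point and each grid point. The main obstacle is the second check, which requires controlling how the transversal interacts with the grid across the two leftover strips. The hypothesis $N \leq \lfloor b p_1/q_1 \rfloor$ is exactly the numerical condition needed: it bounds the length of the transversal by the available ``room'' in the first-coordinate leftover strip, so that a compatible orbit placement exists. A short case split on the divisibility of $p_i$ by $q_i$ (and on whether the fractional part of $a p_2/q_2$ pushes the transversal past a single strip, in which case one iterates the construction) then dispatches the remaining boundary cases and completes the proof.
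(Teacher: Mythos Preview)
Your base-grid-plus-transversal approach has a genuine gap: the grid $G_0$ is in general a \emph{maximal} independent set, so no transversal can be added to it. Take the simplest non-trivial case $p_1/q_1 = p_2/q_2 = 5/2$, so $a=b=2$, $N=5$, and $G_0 = \{(0,0),(2,0),(0,2),(2,2)\}\subseteq\ZZ_5\times\ZZ_5$. A fifth vertex $(x,y)$ would have to be non-adjacent to each of these four points, which forces $x\in\{2,3\}$ (from $(0,0)$ and $(0,2)$) and simultaneously $x\in\{4,0\}$ (from $(2,0)$ and $(2,2)$), or the symmetric pair of conditions on $y$; either way no such $(x,y)$ exists. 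Thus the known size-$5$ independent set in $C_5\boxtimes C_5$ cannot contain your grid, and the strategy of augmenting $G_0$ by an orbit fragment is doomed from the start. The later paragraphs (``the transversal slots into the leftover strips'', ``a short case split dispatches the remaining boundary cases'') are not just vague but are describing a construction that does not exist.

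The paper's argument avoids this entirely by never working inside $E_{p_1/q_1}\boxtimes E_{p_2/q_2}$ directly. Instead, setting $n_i=\lfloor p_i/q_i\rfloor$ and $N = n_1 n_2 + m_1$ (the smaller of the two nested floors), it passes to the \emph{auxiliary} product $E_{N/n_2}\boxtimes E_{N/n_1}$, where a single orbit $\{t\cdot(1,n_1):t\in\ZZ_N\}$ of full size $N$ is independent (Lemma~\ref{lem:ind-set-two-factor}); the inequalities $N/n_2\le p_1/q_1$ and $N/n_1\le p_2/q_2$ then push this set forward via cohomomorphism monotonicity. The key idea you are missing is that the right independent set is a pure orbit living in a \emph{smaller} fraction-graph product, not a grid-plus-remainder in the original one.
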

We give a short proof of \autoref{th:two-factors}. For this we use the following construction of an independent set in certain products of fraction graphs, which extends the well-known construction of an independent set of size five in $C_5^{\boxtimes 2}$.

\begin{lemma}\label{lem:ind-set-two-factor} Let $p/q \in \QQ_{\geq2}$. Then
    $\alpha(E_{p/q} \boxtimes E_{p / \lfloor p / q \rfloor}) = p$.
\end{lemma}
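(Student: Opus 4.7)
The plan has two halves, matching the two inequalities $\alpha(E_{p/q} \boxtimes E_{p/\lfloor p/q \rfloor}) \leq p$ and $\geq p$. The upper bound is immediate from the nested floor bound (Theorem~\ref{th:nested-floor}): setting $k = \lfloor p/q \rfloor$, we simply compute
\[
\alpha(E_{p/q} \boxtimes E_{p/k}) \;\leq\; \Bigl\lfloor \Bigl\lfloor\tfrac{p}{q}\Bigr\rfloor \tfrac{p}{k} \Bigr\rfloor \;=\; \Bigl\lfloor k \cdot \tfrac{p}{k}\Bigr\rfloor \;=\; p.
\]
So the interesting work is exhibiting an independent set of size exactly $p$.

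For the lower bound, following the orbit-construction template described in Section~\ref{subsec:odd-cycl-overview}, I would propose the set
\[
S \;=\; \bigl\{(i, \; i k \bmod p) : i \in \mathbb{Z}_p\bigr\} \;\subseteq\; V(E_{p/q}) \times V(E_{p/k}),
\]
generalizing Shannon's $\{t \cdot (1,2) : t \in \mathbb{Z}_5\}$ construction for $C_5^{\boxtimes 2} = E_{5/2}\boxtimes E_{5/2}$ (where $k=2$). Since the first coordinates are distinct, $|S|=p$, so the only thing to verify is pairwise non-adjacency. Take two distinct elements with difference $d = i_1 - i_2 \in \{1,\dots,p-1\}$ in the first coordinate. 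If the cyclic distance of $d$ modulo $p$ is at least $q$, the first coordinates are already non-adjacent in $E_{p/q}$ and we are done. Otherwise, by symmetry we may assume $d \in \{1,\dots,q-1\}$ and we must check that the second-coordinate difference $dk \bmod p$ has cyclic distance at least $k$ in $E_{p/k}$.

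The main (quite mild) obstacle is this arithmetic verification of the second-coordinate condition, so I would do it explicitly. Writing $p = kq + r$ with $0 \le r < q$, the quantity $dk$ for $d \in \{1,\dots,q-1\}$ lies in $[k,\,(q-1)k] = [k,\,p-r-k] \subseteq (0,p)$, so $dk \bmod p = dk$. Its cyclic distance is $\min(dk,\;p - dk)$; we have $dk \ge k$ trivially, and $p - dk \ge p - (q-1)k = r + k \ge k$. Hence the second coordinates are non-adjacent in $E_{p/k}$, as required.

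A small point deserving care is the boundary case where $p/k < 2$, which by convention makes $E_{p/k} = K_p$ and removes it from the ``fraction graph'' framework used above. This only occurs when $q = 1$: then $E_{p/q}$ is the edgeless graph $E_p$, every two distinct vertices are non-adjacent in the first coordinate, so $S$ is trivially independent (in fact one may take any second coordinates). For $q \ge 2$ one verifies $p/k \ge 2$, so the main computation applies without modification. Together, the two halves give $\alpha(E_{p/q} \boxtimes E_{p/\lfloor p/q\rfloor}) = p$.
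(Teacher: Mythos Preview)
Your proof is correct and follows exactly the same approach as the paper: the upper bound via the nested floor bound (Theorem~\ref{th:nested-floor}), and the lower bound via the orbit $S = \{t \cdot (1, \lfloor p/q\rfloor) : t \in \ZZ_p\}$. The paper's proof simply says ``one verifies that $S$ is an independent set,'' whereas you carry out that verification explicitly (including the edge case $q=1$), which is a useful addition.
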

\begin{proof}
The upper bound follows from \autoref{th:nested-floor}. For the lower bound, define the set $S = \{t \cdot (1, \lfloor p/q \rfloor) : t \in \ZZ_p\}$.  One verifies that $S$ is an independent set in $E_{p/q} \boxtimes E_{p / \lfloor p / q \rfloor}$.
\end{proof}

\begin{proof}[Proof of \autoref{th:two-factors}]
    Let $n_i = \lfloor p_i /q_i \rfloor$. We can write $p_1 / q_1 = n_1 + (m_1 + \ell_1) / n_2$ and $p_2 / q_2 = n_2 + (m_2 + \ell_2) / n_1$ for some $m_i \in \NN$ and $\ell_i \in \RR$ such that $0 \leq m_1 < n_2$, $0 \leq m_2 < n_1$ and $0 \leq \ell_i < 1$. Then
    \[
    \Bigl\lfloor\Bigl\lfloor \frac{p_1}{q_1} \Bigr\rfloor\frac{p_2}{q_2}\Bigr\rfloor = n_1 n_2 + m_2
    \]
    and
    \[
    \Bigl\lfloor\Bigl\lfloor \frac{p_2}{q_2} \Bigr\rfloor\frac{p_1}{q_1}\Bigr\rfloor = n_1 n_2 + m_1.
    \]
    We may assume that $m_1 \leq m_2$. Apply \autoref{lem:ind-set-two-factor} with $p = n_1 n_2 + m_1$ and $q = n_2$ to get
    \[
    \alpha(E_{(n_1 n_2 + m_1) / n_2} \boxtimes E_{(n_1 n_2 + m_1) / n_1}) = n_1 n_2 + m_1.
    \]
    Because $(n_1 n_2 + m_1) / n_2 \leq p_1 / q_1$ and $(n_1 n_2 + m_1) / n_1 \leq p_2 / q_2$ we find that $\alpha(E_{p_1/q_1} \boxtimes E_{p_2 / q_2}) \geq \alpha(E_{(n_1 n_2 + m_1) / n_2} \boxtimes E_{(n_1 n_2 + m_1) / n_1}) = n_1 n_2 + m_1$.
\end{proof}

\subsubsection*{Discontinuities for three (or more) fraction graphs}

\autoref{th:two-factors} says that \autoref{th:nested-floor} is tight for two factors. For three factors, however, the bound in \autoref{th:nested-floor} is no longer always optimal and the independence number $\alpha(E_{p_1 / q_1} \boxtimes E_{p_2/q_2} \boxtimes E_{p_3 / q_3})$ shows more intricate behaviour. We fully determine this behaviour for all triples of fractions in the interval $[2,3]$. Since the function $\alpha(E_{p_1 / q_1} \boxtimes E_{p_2/q_2} \boxtimes E_{p_3 / q_3})$ is monotone in each $p_i/q_i$ and integral-valued, it has only finitely many discontinuities, and to determine the function it suffices to determine the value at every discontinuity.

To discuss the discontinuities of the independence number of products of fraction graphs, we define for any $k \in \NN$ the function
\[
    \alpha_k : \QQ_{\geq2}^k \to \NN : (p_1/q_1, \ldots, p_k/q_k) \mapsto \alpha(E_{p_1 / q_1} \boxtimes \cdots \boxtimes E_{p_k / q_k}).
\]
For $u,v \in \QQ_{\geq2}^k$ we write $u \leq v$ if there is a permutation of $u$ that is pointwise at most~$v$. Then $u \leq v$ implies $\alpha_k(u) \leq \alpha_k(v)$. We write $u < v$ if $u \leq v$ and $u \neq v$.
We call $v \in \QQ_{\geq2}^k$ an (extremal) discontinuity of $\alpha_k$ if for every $u \in \QQ_{\geq2}^k$ if $u < v$ then $\alpha_k(u) < \alpha_k(v)$.

\begin{lemma}\label{lem:disc-integer}
    For every $n,k \in \NN$, $(n,p_1/q_1, \ldots, p_k/q_k)$ is a discontinuity of $\alpha_{k+1}$ if and only if $(p_1/q_1, \ldots, p_k/q_k)$ is a discontinuity of $\alpha_k$.
\end{lemma}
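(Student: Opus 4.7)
The plan is to reduce the lemma to a simple multiplicative identity. Since $E_n = E_{n/1}$ is the edgeless graph on $n$ vertices, the strong product $E_n \boxtimes G$ is just the disjoint union of $n$ copies of $G$, which gives $\alpha(E_n \boxtimes G) = n \cdot \alpha(G)$. Applied to a product of fraction graphs, this yields the key identity
\[
\alpha_{k+1}(n, p_1/q_1, \ldots, p_k/q_k) = n \cdot \alpha_k(p_1/q_1, \ldots, p_k/q_k).
\]
Every subsequent computation will reduce to this identity together with the Hales--Rosenfeld bound (\autoref{eq:alpha-prod}). I read the preorder so that tuples are compared up to permutation, which is the natural (and forced) convention for a symmetric function such as $\alpha_k$.

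For the forward direction, assume $v = (n, p_1/q_1, \ldots, p_k/q_k)$ is a discontinuity of $\alpha_{k+1}$. Given $u < (p_1/q_1, \ldots, p_k/q_k)$, extend it to $(n, u) < v$; the discontinuity assumption forces $\alpha_{k+1}(n, u) < \alpha_{k+1}(v)$, and cancelling the common factor $n$ via the key identity gives $\alpha_k(u) < \alpha_k(p_1/q_1, \ldots, p_k/q_k)$, as required.

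For the reverse direction, assume $(p_1/q_1, \ldots, p_k/q_k)$ is a discontinuity of $\alpha_k$, and take $u < v$. After permuting, write $u = (u_0, u_1, \ldots, u_k)$ with $u_0 \leq n$ and $u_i \leq p_i/q_i$ for $i \geq 1$. In the case $u_0 = n$, the subtuple must satisfy $(u_1, \ldots, u_k) < (p_1/q_1, \ldots, p_k/q_k)$, since otherwise $u$ would be a permutation of $v$; the discontinuity assumption then yields $\alpha_k(u_1,\ldots,u_k) < \alpha_k(p_1/q_1,\ldots,p_k/q_k)$, and multiplying by $n$ through the key identity gives the required strict inequality. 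In the case $u_0 < n$, the Hales--Rosenfeld bound together with $\overline{\chi}_f(E_{u_0}) = u_0$ (\autoref{th:fraction-graph-chi-bar}) gives
\[
\alpha_{k+1}(u) \leq \lfloor u_0 \cdot \alpha_k(u_1,\ldots,u_k) \rfloor,
\]
and since $u_0 < n$ as rationals, $\alpha_k(u_1,\ldots,u_k) \leq \alpha_k(p_1/q_1,\ldots,p_k/q_k)$ by monotonicity, and $\alpha_k \geq 1$, the right-hand side is strictly less than the integer $n \cdot \alpha_k(p_1/q_1,\ldots,p_k/q_k) = \alpha_{k+1}(v)$.

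The only real thing to be careful about is the $u_0 < n$ subcase: one must keep track of the strict inequality through the floor, which works because $u_0 \cdot \alpha_k(u_1,\ldots,u_k) < n \cdot \alpha_k(p_1/q_1,\ldots,p_k/q_k)$ as reals and the right-hand side is an integer. Everything else is essentially bookkeeping around the key identity.
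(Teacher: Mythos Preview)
Your proof is correct and follows essentially the same approach as the paper: both rely on the identity $\alpha(E_n \boxtimes G) = n\,\alpha(G)$ for the integer coordinate and on the Hales--Rosenfeld bound (equivalently, \autoref{th:nested-floor}) for the case where the integer coordinate is strictly reduced. Your version is in fact slightly more explicit, handling a general $u < v$ directly via a chosen permutation, whereas the paper only checks single-coordinate reductions and leaves the reduction from arbitrary $u < v$ to that case implicit via monotonicity.
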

\begin{proof}
We use the fact that the independence number is additive under disjoint union, so that $\alpha(E_n \boxtimes G) = n \alpha(G)$ for any graph $G$ and any $n \in \NN$. 

Suppose $(n,p_1/q_1, \ldots, p_k/q_k)$ is a discontinuity. Let $r/s < p_1/q_1$, say. Then 
\begin{align*}
\alpha(E_{r/s} \boxtimes E_{p_2/q_2} \boxtimes \cdots ) &= \tfrac 1n \alpha(E_n \boxtimes E_{r/s} \boxtimes E_{p_2/q_2} \boxtimes \cdots )\\
& <\tfrac 1n \alpha(E_n \boxtimes E_{p_1/q_1} \boxtimes E_{p_2/q_2} \boxtimes \cdots )\\
&= \alpha(E_{p_1/q_1} \boxtimes E_{p_2/q_2} \boxtimes \cdots),
\end{align*}
so $(p_1/q_1, \ldots, p_k/q_k)$ is a discontinuity.

Suppose $(p_1/q_1, \ldots, p_k/q_k)$ is a discontinuity. Suppose $r/s < p_1/q_1$, say. Then 
\begin{align*}
    \alpha(E_n \boxtimes E_{r/s} \boxtimes E_{p_2/q_2} \boxtimes \cdots) &=  n \alpha(E_{r/s} \boxtimes E_{p_2/q_2} \boxtimes \cdots ) \\
    &< n \alpha (E_{p_1/q_1} \boxtimes E_{p_2/q_2} \boxtimes \cdots)\\
    &= \alpha(E_n \boxtimes E_{p_1/q_1} \boxtimes E_{p_2/q_2} \boxtimes \cdots).
\end{align*}
Suppose $r/s < n$. Then \autoref{th:nested-floor} gives $\alpha(E_{r/s} \boxtimes E_{p_1/q_1} \boxtimes \cdots) < \alpha(E_n \boxtimes E_{p_1/q_1} \boxtimes \cdots )$. We conclude that $(n,p_1/q_1, \ldots, p_k/q_k)$ is a discontinuity. %
\end{proof}

\begin{example}\label{ex:disc}
The discontinuities of $\alpha_1$ are at the natural numbers~$\NN_{\geq2}$, and we have $\alpha_1(n) = n$ for every $\NN_{\geq2}$.

The discontinuities of $\alpha_2$ can be computed using \autoref{th:two-factors}. 
Concretely, restricted to $(\QQ\cap [2,3])^2$, we see from \autoref{lem:disc-integer} that $(2,2)$, $(2,3)$ and $(3,3)$ are discontinuities. Taking $2 < p_1/q_1, p_2/q_2 < 3$ we see from \autoref{th:two-factors} that $\alpha_2(p_1/q_1, p_2/q_2)$ equals $\min \{\lfloor 2p_1/q_1 \rfloor, \lfloor 2 p_2/q_2 \rfloor\}$, which equals 5 if and only if both $p_1/q_1$ and $p_2/q_2$ are at least $5/2$. Thus, the discontinuities of $\alpha_2$ on $(\QQ\cap [2,3])^2$ are, up to permutation, at
\[
\alpha_2(2,2) = 4,\, \alpha_2(5/2,5/2) = 5, \, \alpha_2(2, 3) = 6,\,\alpha_2(3,3) = 9.
\]
\end{example}

\begin{theorem}\label{th:discont}
The discontinuities of $\alpha_3$ restricted to $(\QQ\cap[2,3])^3$ are, up to permutation,~at
    \begin{align*}
        \alpha_3(2,2,2) &= 8            & \alpha_3(5/2, 5/2, 8/3) &= 11\\
        \alpha_3(2,2,3) &= 12           & \alpha_3(8/3, 8/3, 8/3) &= 12\\
        \alpha_3(2,3,3) &= 18           & \alpha_3(11/5, 11/4, 11/4) &= 11\\
        \alpha_3(2, 5/2, 5/2) &= 10     & \alpha_3(11/4, 11/4, 11/4) &= 13\\
        \alpha_3(5/2, 5/2, 3) &= 15     & \alpha_3(14/5, 14/5, 14/5) &= 14\\        
        \alpha_3(9/4, 7/3, 5/2) &= 9    & \alpha_3(3,3,3) &= 27.
    \end{align*}
\end{theorem}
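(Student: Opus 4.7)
The function $\alpha_3$ is $\mathrm{Sym}_3$-invariant, monotone in each coordinate (via \autoref{th:ordering} and cohomomorphism-monotonicity of $\alpha$), integer-valued, and bounded above on $(\QQ\cap[2,3])^3$ by $27$ using \autoref{th:nested-floor}. Discontinuities are componentwise-minimal elements of the super-level sets $\{u:\alpha_3(u)\geq m\}$, and boundedness plus monotonicity ensure these form a finite set. The proof is therefore a finite verification: for each listed triple $v$ with claimed value $n_v$, establish $\alpha_3(v)=n_v$ and minimality ($u<v\Rightarrow\alpha_3(u)<n_v$), and finally rule out any additional discontinuities.

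For the lower bounds, each $n_v$ is realised by an explicit independent set. The purely integer cases $(2,2,2),(2,2,3),(2,3,3),(3,3,3)$ follow from $\alpha(E_n\boxtimes G)=n\,\alpha(G)$. Cases with a single non-integer factor follow by combining \autoref{lem:ind-set-two-factor} with a trivial third factor; for instance $\alpha_3(2,5/2,5/2)\geq 2\cdot 5=10$. The remaining genuinely three-dimensional cases admit short orbit constructions of the form $\{t(g_1,g_2,g_3):t\in\ZZ_m\}\subseteq\ZZ_{p_1}\times\ZZ_{p_2}\times\ZZ_{p_3}$, possibly extended by additional cosets, each verified by checking that for every nonzero $t$ at least one coordinate $tg_i$ lies outside the forbidden short-distance arc of the relevant fraction graph.

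For the upper bounds, minimising the nested floor bound of \autoref{th:nested-floor} over the six coordinate permutations already matches the claimed value at every listed triple except $(5/2,5/2,8/3)=11$ (nested floor yields $12$) and $(8/3,8/3,8/3)=12$ (nested floor yields $13$). At these two hard triples we need a sharper bound, obtained by combining the Hales-type inequality $\alpha(G\boxtimes H)\leq\lfloor\overline{\chi}_f(G)\alpha(H)\rfloor$ with a finer analysis of an appropriate subproduct, by invoking a multiplicative $F\in\X$ (such as the Lovász theta function $\vartheta$ or the fractional Haemers bound) tailored to $E_{8/3}$, or by a direct combinatorial (possibly computer-assisted) enumeration inside $E_{8/3}^{\boxtimes 3}$ and $E_{5/2}^{\boxtimes 2}\boxtimes E_{8/3}$. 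Minimality of each listed $v$ then follows by examining its immediate predecessors --- a finite list by right-continuity of $\alpha$ (\autoref{th:rational-right-cont}) --- and applying the same upper-bound techniques.

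Exhaustiveness uses right-continuity of every $F\in\X\cup\{\alpha\}$ in each coordinate, which forces the super-level sets of $\alpha_3$ to be right-closed; their minimal elements therefore have uniformly bounded denominators, reducing the search for discontinuities to a finite enumeration of candidate triples that can be checked with the same orbit-plus-upper-bound techniques. The main obstacle is the tight upper bound at the two hard triples $(5/2,5/2,8/3)$ and $(8/3,8/3,8/3)$: the gap from the nested floor bound is only a single unit, but closing it requires an auxiliary argument noticeably more delicate than the routine verifications elsewhere. A secondary obstacle is making the denominator bound from right-continuity quantitative enough to certify that no discontinuity outside the listed twelve has been missed.
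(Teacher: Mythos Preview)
Your overall strategy---verify each listed triple and then certify exhaustiveness---matches the paper's, but the exhaustiveness step has a genuine gap. You assert that ``boundedness plus monotonicity ensure [the discontinuities] form a finite set,'' and later try to extract ``uniformly bounded denominators'' from right-continuity. Neither claim is justified: a monotone, bounded, integer-valued function on $(\QQ\cap[2,3])^3$ can have infinitely many minimal elements in a super-level set (consider $\mathbf{1}_{\{x_1+x_2+x_3\geq c\}}$), and right-continuity of $\alpha_3$ in each coordinate says nothing about the denominators of the minimal points themselves---it only says the super-level sets are closed under limits from above. You flag this as a ``secondary obstacle'' to be made quantitative, but right-continuity gives no denominator bound to quantify.

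The missing ingredient is \autoref{lem:numerator-bound}: if $(p_1/q_1,p_2/q_2,p_3/q_3)$ (in lowest terms) is a discontinuity with value $m$, then $\max_i p_i\leq m$. The argument is pigeonhole plus vertex removal: if some $p_i>m$, then any maximum independent set in the product avoids some vertex of $E_{p_i/q_i}$; deleting that vertex and applying \autoref{th:remove-point} replaces $p_i/q_i$ by a strictly smaller fraction without decreasing $\alpha_3$, contradicting minimality. Since $m\leq 27$ on $[2,3]^3$, every numerator is at most $27$, reducing the candidate discontinuities to a finite explicit list. The paper then prunes this list further with \autoref{th:nested-floor} and the Lov\'asz theta bound before computing $\alpha_3$ at each survivor with optimisation software (Gurobi). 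In particular, the two ``hard'' upper bounds you flag at $(5/2,5/2,8/3)$ and $(8/3,8/3,8/3)$ are handled by that same computation, not by a separate $\vartheta$- or Haemers-based argument.
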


\newcommand\tikzto[1][1.4em]{\tikz[baseline=-0.5ex, 
                                 shorten <=2pt, shorten >=2pt] \draw[-latex] (0,0) -- (#1,0);}

In order to use \autoref{th:discont} to determine the value of $\alpha_3$ at an arbitrary point $v = (p_1/q_1,p_2/q_2,p_3/q_3) \in \QQ\cap [2,3]$ we simply search for the largest $u$ from the listed discontinuity points such that $u \leq v$. Then $\alpha_3(v) = \alpha_3(u)$. As an aid for such a search we provide in \autoref{fig:hasse} the Hasse diagram of the partial ordering on the discontinuity points, in which $u \tikzto v$ means $u \leq v$.

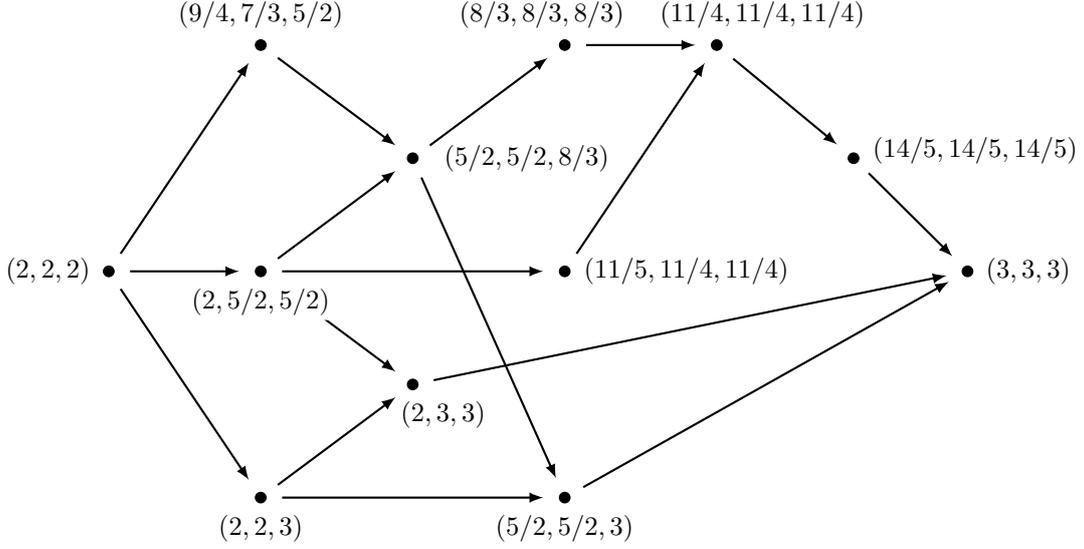
\begin{figure}[H]
\begin{tikzpicture}
\def\s{1} %
\def\t{1} %

\coordinate (A) at (0*\s,0);
\coordinate (B) at (2*\s,-3*\t);
\coordinate (C) at (2*\s,0*\t);
\coordinate (D) at (2*\s,3*\t);

\coordinate (E) at (4*\s,-1.5*\t);
\coordinate (F) at (4*\s,1.5*\t);

\coordinate (G) at (6*\s,-3*\t);
\coordinate (H) at (6*\s,0*\t);
\coordinate (I) at (6*\s,3*\t);

\coordinate (J) at (8*\s,3*\t);
\coordinate (K) at (9.8*\s,1.5*\t);

\coordinate (L) at (11.3*\s,0*\t);

\draw[-latex, shorten >=8pt,shorten <=8pt, thick] (A) -- (B);
\draw[-latex, shorten >=8pt,shorten <=8pt, thick] (A) -- (C);
\draw[-latex, shorten >=8pt,shorten <=8pt, thick] (A) -- (D);
\draw[-latex, shorten >=8pt,shorten <=8pt, thick] (B) -- (E);
\draw[-latex, shorten >=8pt,shorten <=8pt, thick] (B) -- (G);
\draw[-latex, shorten >=8pt,shorten <=8pt, thick] (C) -- (E);
\draw[-latex, shorten >=8pt,shorten <=8pt, thick] (C) -- (H);
\draw[-latex, shorten >=8pt,shorten <=8pt, thick] (C) -- (F);
\draw[-latex, shorten >=8pt,shorten <=8pt, thick] (D) -- (F);
\draw[-latex, shorten >=8pt,shorten <=8pt, thick] (E) -- (L);
\draw[-latex, shorten >=8pt,shorten <=8pt, thick] (F) -- (G);
\draw[-latex, shorten >=8pt,shorten <=8pt, thick] (F) -- (I);
\draw[-latex, shorten >=8pt,shorten <=8pt, thick] (G) -- (L);
\draw[-latex, shorten >=8pt,shorten <=8pt, thick] (H) -- (J);
\draw[-latex, shorten >=8pt,shorten <=8pt, thick] (I) -- (J);
\draw[-latex, shorten >=8pt,shorten <=8pt, thick] (J) -- (K);
\draw[-latex, shorten >=8pt,shorten <=8pt, thick] (K) -- (L);

\draw[black, fill=black] (A) circle (2pt);
\draw (A) ++(-0.8,0) node {\small$(2,2,2)$};

\draw[black, fill=black] (B) circle (2pt);
\draw (B) ++(0,-0.4) node {\small$(2,2,3)$};

\draw[black, fill=black] (C) circle (2pt);
\draw (C) ++(0,-0.4) node[fill=white,rounded corners=2pt,inner sep=2pt] {\small$(2,5/2,5/2)$};

\draw[black, fill=black] (D) circle (2pt);
\draw (D) ++(0,0.4) node {\small$(9/4,7/3,5/2)$};

\draw[black, fill=black] (E) circle (2pt);
\draw (E) ++(0.4,-0.4) node[fill=white,rounded corners=2pt,inner sep=2pt]  {\small$(2,3,3)$};

\draw[black, fill=black] (F) circle (2pt);
\draw (F) ++(1.5, 0) node[fill=white,rounded corners=2pt,inner sep=2pt]   {\small$(5/2,5/2,8/3)$};

\draw[black, fill=black] (G) circle (2pt);
\draw (G) ++(0,-0.4) node[fill=white,rounded corners=2pt,inner sep=2pt] {\small$(5/2,5/2,3)$};

\draw[black, fill=black] (H) circle (2pt);
\draw (H) ++(1.6, 0) node {\small$(11/5,11/4,11/4)$};

\draw[black, fill=black] (I) circle (2pt);
\draw (I) ++(-0.3,0.4) node {\small$(8/3,8/3,8/3)$};

\draw[black, fill=black] (J) circle (2pt);
\draw (J) ++(0.6,0.4) node {\small$(11/4,11/4,11/4)$};

\draw[black, fill=black] (K) circle (2pt);
\draw (K) ++(1.6,0.1) node {\small$(14/5,14/5,14/5)$};

\draw[black, fill=black] (L) circle (2pt);
\draw (L) ++(0.8,0) node {\small$(3,3,3)$};

\end{tikzpicture}
\caption{Hasse diagram of the poset of discontinuity points of \autoref{th:discont}. Here $u \tikzto v$ means $u \leq v$, and by definition we have $u \leq v$ if there is a permutation of $u$ that is pointwise at most $v$.}
\label{fig:hasse}
\end{figure}

\begin{remark}
    We note that for all discontinuity points in \autoref{th:discont} the nested floor bound \autoref{th:nested-floor} is tight, except for the points $(5/2,5/2,8/3)$ and $(8/3,8/3,8/3)$, where it gives 12 and 13, respectively.
\end{remark}

\begin{remark}
    In the recent work of Zhu \cite{zhu2024improved} on the Shannon capacity of the complement of odd cycles, the graphs $E_{9/4} \boxtimes E_{7/3} \boxtimes E_{5/2}$, $E_2 \boxtimes E_{5/2} \boxtimes E_{5/2}$ and $E_2 \boxtimes E_2 \boxtimes E_3$ appear (in a slightly different language). Our \autoref{th:discont} says that these graphs are discontinuity points (and notably they are the ``smallest'' discontinuity points above $(2,2,2)$).
\end{remark}

\begin{remark}
    We have seen in the proof of \autoref{th:two-factors} that all discontinuity points of  $\alpha_2$ can be obtained using independent sets that are orbits in a product of fraction graphs. This is not true for every discontinuity point of $\alpha_3$. Indeed, we claim that the discontinuity point $\alpha_3(8/3,8/3,8/3)=12$ cannot be realized by a subgroup in a very general sense.

    Let $H_1, H_2, H_3$ be finite groups with symmetric subsets $S_i$ and let $G_i = \text{Cayley}(H_i,S_i)$ be the corresponding Cayley graphs.
    Suppose that $G_i \leq E_{8/3}$ and suppose that there is a subgroup $I \subseteq H_1 \times H_2 \times H_3$ of size $12$ that is an independent set in $G_1 \boxtimes G_2 \boxtimes G_3$. We will show that this gives a contradiction.
    Consider the auxiliary graphs $L_i$ with vertex set~$I$ where $x \not\sim y$ if and only if $x_i \not\sim y_i$ in $G_i$. Now $L_i \leq G_i \leq E_{8/3}$ and thus $\overline{\chi}_f(L_i) \leq 8/3$. Moreover, $L_i$ is vertex-transitive so $\overline{\chi}_f(L_i) = |L_i|/\omega(L_i) = 12 /\omega(L_i)$, where $\omega$ denotes the clique number. Because $\omega(L_i)$ is an integer we can conclude that $\overline{\chi}_f(L_i) \leq 12/5$. Observe that $\{(x,x,x): x\in I\}$ is an independent set in $L_1 \boxtimes L_2 \boxtimes L_3$ implying $\alpha(L_1 \boxtimes L_2 \boxtimes L_3) \geq 12$. However, using the nested floor bound (\autoref{th:nested-floor}), we observe that 
    \[
        \alpha(L_1 \boxtimes L_2 \boxtimes L_3) \leq \left\lfloor\overline{\chi}_f(L_1) \left\lfloor \overline{\chi}_f(L_2)  \left\lfloor\overline{\chi}_f(L_3)  \right\rfloor \right\rfloor \right\rfloor  \leq \Bigl\lfloor\frac{12}{5} \Bigl\lfloor \frac{12}{5} \Bigl\lfloor\frac{12}{5} \Bigr\rfloor \Bigr\rfloor \Bigr\rfloor = 9,
    \]
    which gives a contradiction.
\end{remark}

The proof of \autoref{th:discont} consists of a computer-assisted component together with several reductions to the problem, for which we use \autoref{lem:disc-integer} and the following lemma. (We note that all the ideas we present here work also outside  the interval~$[2,3]$. However, the computer-assisted component becomes more costly.)

\begin{lemma}\label{lem:numerator-bound}
    If $(p_1/q_1, \ldots, p_k/q_k)$ is a discontinuity of $\alpha_k$, then \[
    \max \{p_1, \ldots, p_k\} \leq \alpha_k(p_1/q_1, \ldots, p_k/q_k).
    \]
\end{lemma}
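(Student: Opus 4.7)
The plan is to show that any maximum independent set $S$ in $E_{p_1/q_1} \boxtimes \cdots \boxtimes E_{p_k/q_k}$ of size $\alpha_k(v)$ has a non-empty \emph{fiber} above every vertex in the first coordinate; summing over the fibers then yields $|S| \geq |V(E_{p_1/q_1})| = p_1$, and after relabeling I may take $p_1 = \max_i p_i$. Throughout the proof I assume each $p_i/q_i$ is written in reduced form (the discontinuity hypothesis depends only on the underlying rationals, while the conclusion is sharpest—and only correct—in reduced form).

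Fix a vertex $v_0 \in V(E_{p_1/q_1})$ and let $S_{v_0} = \{s \in S : s_1 = v_0\}$. The crucial step is to produce a rational $p_1'/q_1' \in \QQ_{\geq 2}$ with $p_1'/q_1' < p_1/q_1$ such that $E_{p_1/q_1} \setminus \{v_0\}$ is cohomomorphically equivalent to $E_{p_1'/q_1'}$. If $q_1 \geq 2$, I apply \autoref{th:remove-point}, which produces coprime integers $p_1',q_1'$ with $p_1 q_1' - q_1 p_1' = 1$; then $p_1'/q_1' = p_1/q_1 - 1/(q_1 q_1')$, and the inequality $p_1'/q_1' \geq 2$ follows from $p_1/q_1 \geq 2 + 1/q_1$, which in turn is forced by coprimality together with $p_1/q_1 > 2$. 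If $q_1 = 1$ and $p_1 \geq 3$, I take $p_1'/q_1' = p_1 - 1$, since removing a vertex from the edgeless graph $E_{p_1}$ yields $E_{p_1 - 1}$. The edge case $p_1/q_1 = 2$ forces $p_1 = 2$, $q_1 = 1$, and by maximality $p_i/q_i = 2$ for all $i$, so $\alpha_k(v) = 2^k \geq 2 = p_1$ directly.

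Given such a $p_1'/q_1'$, the tuple $u := (p_1'/q_1', p_2/q_2, \ldots, p_k/q_k)$ lies strictly below $v$ in the partial order (the identity permutation witnesses $u \leq v$ with a strict inequality in the first coordinate), so the discontinuity hypothesis yields $\alpha_k(u) \leq \alpha_k(v) - 1$. But $S \setminus S_{v_0}$ is an independent set in $(E_{p_1/q_1} \setminus \{v_0\}) \boxtimes E_{p_2/q_2} \boxtimes \cdots \boxtimes E_{p_k/q_k}$, which is equivalent to $E_{p_1'/q_1'} \boxtimes E_{p_2/q_2} \boxtimes \cdots \boxtimes E_{p_k/q_k}$ and hence has independence number exactly $\alpha_k(u)$. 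Therefore $|S| - |S_{v_0}| \leq \alpha_k(v) - 1$, so $|S_{v_0}| \geq 1$. Summing over the $p_1$ choices of $v_0$ gives $\alpha_k(v) = |S| = \sum_{v_0} |S_{v_0}| \geq p_1 = \max_i p_i$, as required.

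The only delicate point is the arithmetic check that $p_1'/q_1' \geq 2$, so that $u$ lies in the domain of $\alpha_k$ and the discontinuity may be invoked; this is the short coprimality argument sketched above. The broader structural idea—using \autoref{th:remove-point} as a ``minimal perturbation'' of one coordinate together with a fiber-counting argument—is what makes the bound tight.
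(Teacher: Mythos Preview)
Your proof is correct and uses essentially the same idea as the paper: vertex removal via \autoref{th:remove-point} together with the discontinuity hypothesis. The paper phrases it as a contrapositive pigeonhole argument (if $\max_i p_i > m$, some vertex in that coordinate is unused by $S$, so removing it keeps $\alpha_k$ the same, contradicting discontinuity), whereas you argue directly that every fiber is nonempty and sum; these are logically equivalent. Your explicit handling of the reduced-form assumption and the edge cases $q_1=1$ and $p_1/q_1=2$ is a useful addition that the paper leaves implicit.
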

\begin{proof}
    Let $m = \alpha_k (p_1/q_1,\ldots, p_k/q_k)$. Suppose $\max_i p_i > m$. Let $S$ be any independent set of $E_{p_1/q_1} \boxtimes \cdots \boxtimes E_{p_k/q_k}$ of size $m$. Then there is an $i \in [k]$ and a vertex $v$ of $E_{p_i/q_i}$ such that for every $s \in S$, $s_i \neq v$. Then $S \subseteq E_{p_1/q_1} \boxtimes \cdots \boxtimes (E_{p_i/q_i} - \{v\}) \boxtimes \cdots \boxtimes E_{p_k/q_k}$. We know that $E_{p_i/q_i} - \{v\}$ is equivalent to a fraction graph $E_{a/b}$ for some $a/b\in \QQ_{\geq2}$ that is strictly smaller than $p_i/q_i$ (\autoref{th:remove-point}). Then $\alpha_k(p_1/q_1, \ldots, a/b, \ldots, p_k/q_k) = m$, so $(p_1/q_1,\ldots, p_k/q_k)$ is not a discontinuity.
\end{proof}

\begin{proof}[Proof of \autoref{th:discont}]
    We will make a set of candidate discontinuities that is small enough so that we can compute the independence number of them with optimization software (Gurobi \cite{gurobi}). 

    Suppose $(p_1/q_1, p_2/q_2, p_3/q_3) \in (\QQ\cap [2,3])^3$ is a discontinuity. We have 
    \[
    \alpha_3(p_1/q_1, p_2/q_2, p_3/q_3) \leq \alpha_3(3, 3, 3) = 27.
    \]
    Then from \autoref{lem:numerator-bound} we know that 
    \begin{equation}\label{eq:bound-27}
    \max_{i\in [k]} p_i \leq 27.
    \end{equation}
    Moreover, again from \autoref{lem:numerator-bound}, and using the theta number and the nested floor bound (\autoref{th:nested-floor}), we know that 
    \begin{equation}\label{eq:bound-theta}
    \max_{i\in [k]} p_i \leq \vartheta(E_{p_1/q_1} \boxtimes \cdots \boxtimes E_{p_k/q_k})
    \end{equation}
    and
    \begin{equation}\label{eq:bound-nested-floor}
    \max_{i\in [k]} p_i \leq \min_{\pi \in S_k} \Bigl\lfloor\cdots \Bigl\lfloor\Bigl\lfloor \frac{p_{\pi(1)}}{q_{\pi(1)}} \Bigr\rfloor\frac{p_{\pi(2)}}{q_{\pi(2)}}\Bigr\rfloor \cdots \frac{p_{\pi(k)}}{q_{\pi(k)}} \Bigr\rfloor.
    \end{equation}
    Our set of candidate discontinuities consists of all $(p_1/q_1, p_2/q_2, p_3/q_3) \in (\QQ\cap [2,3])^3$ such that \autoref{eq:bound-27}, \autoref{eq:bound-theta} and \autoref{eq:bound-nested-floor} hold. Note that \autoref{eq:bound-27} already implies that this set is finite. On these elements we run optimization software to compute the independence numbers, resulting in the claim.
\end{proof}

\autoref{th:discont} describes the discontinuities of $\alpha_3(p_1/q_1, p_2/q_2, p_3/q_3)$ as a function of three inputs. We now consider the symmetric restriction $\alpha_3(p/q, p/q, p/q) = \alpha(E_{p/q}^{\boxtimes3})$.  
Indeed, \autoref{th:discont} directly implies the complete description of the discontinuities of this function, and thus the complete description of the function, for $p/q \in \QQ \cap [2,3]$.
\begin{theorem}\label{th:symm-disc}
For $p/q \in \QQ \cap [2,3]$ we have
        \[
    \alpha(E_{p/q}^{\boxtimes 3}) = 
        \begin{cases}
            8 & p/q \in [2,5/2) \\
            10 & p/q \in [5/2,8/3) \\
            12 & p/q \in [8/3,11/4) \\
            13 & p/q \in [11/4,14/5) \\
            14 & p/q \in [14/5, 3) \\
            27 & p/q = 3.
        \end{cases}
    \]
\end{theorem}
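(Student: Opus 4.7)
The plan is to derive \autoref{th:symm-disc} as a direct corollary of \autoref{th:discont}, using the fact that $\alpha_3$ is monotone and integer-valued on $(\QQ\cap[2,3])^3$. First, I would observe that for any $v\in(\QQ\cap[2,3])^3$, the value $\alpha_3(v)$ equals $\alpha_3(u)$ for some discontinuity point $u\leq v$ (in the partial order from Section~\ref{subsec:disc}, where $u\leq v$ means some permutation of $u$ is pointwise at most $v$). Indeed, starting from $v$, if $v$ is not a discontinuity then by definition there is some $u<v$ with $\alpha_3(u)=\alpha_3(v)$; since $\alpha_3$ takes only finitely many values on $(\QQ\cap[2,3])^3$ (bounded by $\alpha_3(3,3,3)=27$), a standard finite-descent argument terminates at a discontinuity. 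By monotonicity, $\alpha_3(v)$ is then the \emph{maximum} of $\alpha_3(u)$ over all discontinuity points $u\leq v$.

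With this in hand, the proof reduces to a case check: for each interval in the piecewise definition, enumerate those discontinuity points from \autoref{th:discont} that lie below $(p/q,p/q,p/q)$ and take the maximum of their values. Concretely, I would argue as follows.

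\textbf{Case $p/q\in[2,5/2)$:} Every discontinuity other than $(2,2,2)$ has at least two coordinates $\geq 5/2$ (inspection of the list in \autoref{th:discont}), so only $(2,2,2)$ satisfies $u\leq(p/q,p/q,p/q)$, giving $\alpha_3=8$.

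\textbf{Case $p/q\in[5/2,8/3)$:} In addition to $(2,2,2)$, the points $(2,5/2,5/2)$ and $(9/4,7/3,5/2)$ now fit, contributing values $10$ and $9$; all other discontinuities have a coordinate $\geq 8/3$ (or a coordinate equal to $3$) and are excluded. Maximum is $10$.

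\textbf{Case $p/q\in[8/3,11/4)$:} Now $(8/3,8/3,8/3)$ and $(5/2,5/2,8/3)$ fit, giving values $12$ and $11$, while any discontinuity containing a $3$ or an $11/4$ is excluded. Maximum is $12$.

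\textbf{Case $p/q\in[11/4,14/5)$:} The points $(11/4,11/4,11/4)$ and $(11/5,11/4,11/4)$ become available with values $13$ and $11$, while $(14/5,14/5,14/5)$ and any point with a $3$-coordinate are excluded. Maximum is $13$.

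\textbf{Case $p/q\in[14/5,3)$:} The point $(14/5,14/5,14/5)$ now fits with value $14$; points containing a coordinate equal to $3$ still do not. Maximum is $14$.

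\textbf{Case $p/q=3$:} $(3,3,3)$ fits, giving $27$.

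This gives the piecewise description. There is no real obstacle here beyond careful bookkeeping: the whole argument is a finite table lookup against the twelve discontinuity points of \autoref{th:discont}, and the only thing to verify in each case is which of those twelve points $u$ satisfy $u\leq(p/q,p/q,p/q)$, which reduces to comparing the largest coordinate of $u$ to $p/q$.
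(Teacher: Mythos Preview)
Your proposal is correct and takes essentially the same approach as the paper, which simply says the result follows from \autoref{th:discont} by restricting to symmetric triples; your case analysis is the explicit version of that restriction. One small slip: the justification ``$\alpha_3$ takes only finitely many values, so a finite-descent argument terminates at a discontinuity'' does not work as stated (the value stays constant along your descent, so finiteness of the range does not bound the chain), but the conclusion you need---that $\alpha_3(v)$ equals the maximum of $\alpha_3(u)$ over discontinuities $u\le v$---is exactly what the paper asserts right after \autoref{th:discont}, and can be made rigorous via \autoref{lem:numerator-bound} (bounding numerators yields a finite set in which to descend).
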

\begin{proof}
    This follows from \autoref{th:discont} by restricting to symmetric triples.
\end{proof}

The values of $\alpha(E_{p/q}^{\boxtimes 3})$ at the discontinuity points listed in \autoref{th:symm-disc} match the previously known values given in \cite[Table 1]{jurkiewicz2014some} and \cite[Table 8.1]{jurkiewicz2014survey}.

We visualize the right-continuous step function described in \autoref{th:symm-disc} as a graph in \autoref{fig:step}. (In this graph we omit the point $(3,27)$ to save space.)

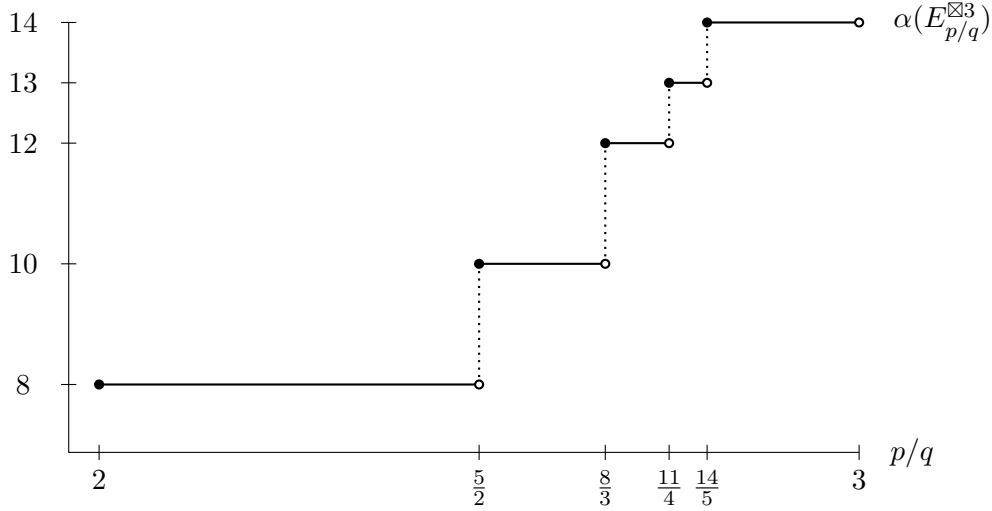
\begin{figure}[H]
\centering
\vspace{1em}
\begin{tikzpicture}
\def\s{10} %
\def\t{0.8} %
\def\a{0.4}
\draw (3*\s,8*\t - 0.9) -- (2*\s-\a,8*\t - 0.9) -- (2*\s-\a,14*\t);
\draw (3*\s,14*\t) ++(1.1,0) node[overlay] {$\alpha(E_{p/q}^{\boxtimes 3})$};
\draw (3*\s,8*\t - 0.9) ++(0.7,0) node[overlay] {$p/q$};

\draw[thick] (2*\s,8*\t) -- (2.5*\s,8*\t); %
\draw[thick] (2.5*\s,10*\t) -- (8/3*\s,10*\t);  %
\draw[thick] (8/3*\s,12*\t) -- (11/4*\s,12*\t); %
\draw[thick] (11/4*\s,13*\t) -- (14/5*\s,13*\t); %
\draw[thick] (14/5*\s,14*\t) -- (3*\s,14*\t); %

\draw[thick, dotted] (2.5*\s,8*\t) -- (2.5*\s,10*\t);
\draw[thick, dotted] (2.666*\s,10*\t) -- (2.666*\s,12*\t);
\draw[thick, dotted] (2.75*\s,12*\t) -- (2.75*\s,13*\t);
\draw[thick, dotted] (2.8*\s,13*\t) -- (2.8*\s,14*\t);

\foreach \x/\y in {2/8, 2.5/10, 2.666/12, 2.75/13, 2.8/14}
   \draw[thick, fill=black] (\x * \s,\y * \t) circle (1.5pt);

\foreach \x/\y in {2.5/8, 2.666/10, 2.75/12, 2.8/13, 3/14}
   \draw[thick, fill=white] (\x * \s,\y * \t) circle (1.5pt);

\foreach \x/\label in {2/{$2$}, 2.5/{$\frac{5}{2}$}, 2.666/{$\frac{8}{3}$}, 2.75/{$\frac{11}{4}$}, 2.8/{$\frac{14}{5}$}, 3/{$3$}}
   \draw (\x * \s,8*\t - 0.8) -- (\x * \s,8*\t - 1.0) node[below] {\label};

\foreach \y/\label in {8/{$8$}, 10/{$10$}, 12/{$12$}, 13/{$13$}, 14/{$14$}}
   \draw (2*\s-\a-0.1,\y*\t) -- (2*\s-\a+0.1,\y*\t) ++(-0.7,0) node {\label};
\end{tikzpicture}
\caption{Graph of the right-continuous step function $\alpha(E_{p/q}^{\boxtimes 3})$ for $p/q \in \QQ \cap [2,3)$ with discontinuity points as described in \autoref{th:symm-disc}.}
\label{fig:step}
\end{figure}

\begin{remark}
    Besides the discontinuity points described in \autoref{ex:disc} and \autoref{th:discont}, some more known discontinuity points are the following:

        (1) For any $n \in \NN$, the function $\alpha_{2n}$ has a discontinuity at $(5/2,\ldots, 5/2)$. This follows from \cite[Theorem 4.2]{MR3144803}, which states that $\vartheta(E_{p/q}) = \vartheta(E_{p'/q'})$ implies that $p/q = p'/q'$, where $\vartheta$ denotes the Lovász theta function. Indeed, let $p/q < 5/2$. Then $\vartheta(E_{p/q}) < \vartheta(E_{5/2}) = \sqrt{5}$ and thus 
        \[
            \alpha(E_{p/q} \boxtimes E_{5/2}^{\boxtimes (2n-1)}) \leq \vartheta(E_{p/q}) \vartheta(E_{5/2})^{2n-1}
            < 5^n = \alpha(E_{5/2}^{\boxtimes 2n}).
        \]

        (2) Let $r \in \mathbb{Z}_{\geq 3}$ and define the sequence $q_n$ by $q_0 = 1$ and $q_{n+1} = r \cdot q_n -1 $. In \cite[Theorem 9.3.3]{SvenThesis} it is shown that for all $n \geq 1$, $\alpha(E_{q_n/q_{n-1}}^{\boxtimes n}) \geq q_n$. It is not hard to verify inductively that for $1 \leq m \leq n$ the following equality holds
        \[
            \underbrace{\Bigl\lfloor\cdots \Bigl\lfloor\Bigl\lfloor \frac{q_n}{q_{n-1}} \Bigr\rfloor\frac{q_n}{q_{n-1}}\Bigr\rfloor \cdots \frac{q_{n}}{q_{n-1}} \Bigr\rfloor}_{\text{$m$ copies}} = q_{m}.
        \]
        It thus follows from \autoref{th:nested-floor} that for $a/b < q_n/q_{n-1}$,
        \[
            \alpha(E_{q_n/q_{n-1}}^{\boxtimes (n-1)} \boxtimes E_{a/b}) \leq \Bigl\lfloor\Bigl\lfloor\cdots \Bigl\lfloor\Bigl\lfloor \frac{q_n}{q_{n-1}} \Bigr\rfloor\frac{q_n}{q_{n-1}}\Bigr\rfloor \cdots \frac{q_{n}}{q_{n-1}} \Bigr\rfloor \frac{a}{b} \Bigr\rfloor = \Bigl\lfloor q_{n-1} \cdot \frac{a}{b} \Bigr\rfloor < q_n,
        \]
        which shows that $(q_n/q_{n-1}, \ldots, q_n/q_{n-1})$ is a discontinuity point for $\alpha_n$. The discontinuity point $(14/5,14/5,14/5)$ for $\alpha_3$ recorded in \autoref{th:discont} is of this form (obtained by setting $r = 3$ and $n = 3$).
        These independent sets were used in \cite{SvenThesis} to prove that the functions $p/q \mapsto F(E_{p/q})$ are left-continuous at integers $r \geq 3$, where $F$ may either represent the Shannon capacity $\Theta$ or any element of the spectrum $\mathcal{X}$. This follows from the facts that $q_n/q_{n-1}$ approaches $r$ from below and that $\lim_{n \to \infty }q_n^{1/n} = r$ and thus
        \[
            r \geq \lim_{n \to \infty} F(E_{q_n/q_{n-1}}) \geq \lim_{n \to \infty} \Theta(E_{q_n/q_{n-1}}) \geq \lim_{n \to \infty} \alpha(E_{q_n/q_{n-1}}^{\boxtimes n})^{\frac{1}{n}} \geq \lim_{n \to \infty} q_n^{\frac{1}{n}} = r. 
        \]
\end{remark}

\normalsize

\section{Discussion and open problems}\label{sec:discussion}

Motivated by the Shannon capacity problem, and with the approach in mind of determining the Shannon capacity of ``hard'' graphs (e.g.,~the prototypical odd cycles) by approximating them with ``easier'' graphs (e.g.,~very structured graphs), we have developed the theory of the asymptotic spectrum distance on graphs and the corresponding notion of convergence. In particular, in \autoref{sec:frac} we have developed several methods for understanding convergence of fraction graphs, including effective distance bounds and topological incompleteness results.

A central problem that we leave open regarding convergence is left-continuity on fraction graphs:
\begin{problem}\label{prob:left-cont}
Does $p_n/q_n \to a/b$ from below, imply that $E_{p_n/q_n} \to E_{a/b}$?    
\end{problem}
In \autoref{th:op-cl-left} we proved that \autoref{prob:left-cont} is equivalent to determining whether the (infinite) open and closed circle graphs $E^\open_{a/b}$ and $E^\closed_{a/b}$ are asymptotically equivalent. 

Generally, we feel that the study of natural infinite graphs like the circle graphs of \autoref{sec:completion} will shed new light on the theory of Shannon capacity. This angle and the use of asymptotic spectrum distance in general hint at the further potential of analytic methods for this otherwise combinatorial and algebraic problem. 
A central open problem regarding infinite graphs is about their completeness: 
\begin{problem}
    Are the infinite graphs $\mathcal{G}_{\infty}$ complete?\footnote{Recall (\autoref{sec:asymp-spec-infinite}) that we defined $\mathcal{G}_\infty$ as the set of infinite graphs with cardinality at most the cardinality of the reals and with finite clique cover number.} %
\end{problem}
Another open problem is about density of the finite graphs in the infinite graphs:
\begin{problem}
    Are the finite graphs $\mathcal{G}$ dense in the infinite graphs $\mathcal{G}_\infty$? 
\end{problem}
In fact, the following is also open:
\begin{problem}\label{prob:gen-conv}
    Is there, for every finite graph $G$, a non-trivial sequence of finite graphs~$H_i$ converging to it, $H_i \to G$?
\end{problem}
Specifically, \autoref{prob:gen-conv} is open for graphs that are not of the form $G = E_{p/q} \boxtimes G'$ with $p/q \in \QQ_{\geq2}$, since those are covered by \autoref{th:rational-right-cont}.

In \autoref{sec:computation} we took a computational approach and used some of the developed methods to determine precisely several relevant independence numbers. In particular, we obtained a new lower bound on the Shannon capacity of $C_{15}$, using what could be considered a ``finite'' version of the converging sequence approach (following Polak--Schrijver~\cite{MR3906144}), by using a fraction graph that sits 
``just above'' it. We expect the development and further integration of these ideas to lead to deeper, structural understanding of the Shannon capacity.

\paragraph{Acknowledgements.} We thank Lex Schrijver, Sven Polak and Siebe Verheijen for helpful discussions, and Dion Gijswijt for helpful comments on the draft. DdB was supported by the Dutch Research Council (NWO) grant 613.001.851. PB was supported by the Dutch Research Council (NWO) grant 639.032.614. JZ was supported by the Dutch Research Council (NWO) Veni grant VI.Veni.212.284. This work used the Dutch national e-infrastructure with the support of the SURF Cooperative using grant no.~EINF-4415 and EINF-8278.

\appendix
\section{General theory of asymptotic spectrum distance}\label{sec:gen-asymp-spec}

In this section we discuss the general theory of asymptotic spectrum distance on preordered semirings (with the appropriate conditions), which has the theory for graphs and for infinite graphs (as discussed in \autoref{sec:dist}) as special cases. The goal of this section is to put our ideas in a broader perspective and to clarify which parts of the theory discussed in the previous sections are general. We begin by reviewing the theory of asymptotic spectrum duality, as can be found in \cite{strassen1988asymptotic, Zui18_thesis, MR4039606, wigderson2022asymptotic, MR4609385} (this is closely related to the real representation theorem of Kadison and Dubois \cite{MR0707730, MR1829790, MR2383959, MR4388320}), and some examples. %
Then we discuss the general notion of asymptotic spectrum distance and basic properties.

\subsubsection*{Semiring and order}
Let $\sr$ be a commutative semiring with additive unit 0 and multiplicative unit 1. For every $n\in \NN$, let $n \in \sr$ denote the sum of $n$ copies of the element~1. Let $\leq$ be a partial order on $\sr$ that satisfies the following properties:
\begin{enumerate}[\upshape(i)]
    \item For every $a,b,c \in \sr$, if $a \leq b$, then $a + c \leq b + c$ and $ac \leq bc$.
    \item For every $n,m \in \NN$ we have $n \leq m$ in $\NN$ if and only if $n \leq m$ in $\sr$.
    \item For every nonzero element $a \in \sr$ there is an element $n \in \NN$ such that $1 \leq a \leq n$. %
\end{enumerate}

Let $\asympleq$ be the preorder defined on $\sr$ by $a \asympleq b$ if and only if $a^n \leq b^n 2^{o(n)}$. (In all the examples that we consider we will have that for every $a,b \in \sr$, $a^n \leq b^n 2^{o(n)}$ if and only if $a^n \leq b^{n + o(n)}$.)
We may without loss of generality assume that $\asympleq$ is a partial order, by identifying any two elements $a,b \in \sr$ for which $a \asympleq b$ and $b \asympleq a$. 
For every $a \in \sr$, let $\rank(a) = \min \{n \in \NN : a \leq n\}$ and $\subrank(a) = \max \{n \in \NN : n \leq a\}$. These functions are called rank and subrank, respectively.
Let $\asymprank(a) = \lim_{n\to\infty} \rank(a^n)^{1/n} = \inf_n \rank(a^n)^{1/n}$ and $\asympsubrank(a) = \lim_{n \to \infty} \subrank(a^n)^{1/n} = \sup_n \subrank(a^n)^{1/n}$. These functions are called asymptotic rank and asymptotic subrank, respectively.

\subsubsection*{Asymptotic spectrum duality}

Let $\X$ be the set of $\leq$-monotone semiring-homomorphisms $\sr \to \RR_{\geq0}$. We call $\X$ the asymptotic spectrum. For every $a \in \sr$, let $\widehat{a} : \X \to \RR_{\geq0} : F \mapsto F(a)$. We equip $\X$ with the coarsest topology such that for every $a \in \sr$ the map $\widehat{a}$ is continuous.

We call an element $a \in \sr$ \emph{gapped} if there is an element $k \in \NN$ such that $a^k \geq 2$ or there is an element $\phi \in \X$ such that $\phi(a) \leq 1$.

\begin{theorem}[{e.g.,~\cite[Section 3]{wigderson2022asymptotic}}]\label{th:abstract-duality}\hfill
\begin{enumerate}[\upshape(i)]
    \item $\X$ is a non-empty compact Hausdorff space.
    \item Let $a,b \in \sr$. Then $a \asympleq b$ if and only if for every $F \in \X$, $F(a) \leq F(b)$.
    \item Let $a \in \sr$. Then $\asymprank(a) = \max_{F \in \X} F(a)$.
    \item Let $a \in \sr$ be gapped. Then $\asympsubrank(a) = \min_{F \in \X} F(a)$.
\end{enumerate}
\end{theorem}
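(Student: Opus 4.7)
The plan is to prove this as an instance of the Kadison--Dubois real representation theorem for Archimedean preordered (semi)rings, in the form underlying the asymptotic spectrum literature. My outline has three stages: (a) establish the topological properties of $\X$ in (i) by a direct Tychonoff argument; (b) prove (ii), the heart of the duality, via a Kadison--Dubois argument on the Grothendieck ring of $\sr$; and (c) deduce (iii) and (iv) from (ii) by specializing the comparison element to natural numbers. The hard step is the reverse implication of (ii); once that is in hand, non-emptiness of $\X$ and the remaining parts are essentially bookkeeping.

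For (i), I embed $\X$ into the product space $K = \prod_{a \in \sr} [0, \rank(a)]$ via $F \mapsto (F(a))_{a \in \sr}$; this is well-defined because any $F \in \X$ satisfies $F(a) \leq F(\rank(a) \cdot 1) = \rank(a)$ by monotonicity and the normalization $F(n)=n$. The space $K$ is compact Hausdorff by Tychonoff's theorem, and the image of $\X$ is the intersection of the closed subsets defined by additivity, multiplicativity, normalization and monotonicity---each a zero-set or positivity set of a continuous polynomial in projection maps, exactly as in the proof of \autoref{lem:compact}. So $\X$ is compact; its subspace topology agrees with the initial topology generated by the evaluation maps $\widehat{a}$, and Hausdorff-ness is immediate since these maps separate points.

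For (ii), the forward direction is direct: if $a \asympleq b$ then $a^n \leq b^n 2^{o(n)}$, so applying any $F \in \X$ and using multiplicativity, monotonicity and normalization gives $F(a)^n \leq F(b)^n 2^{o(n)}$; taking $n$-th roots yields $F(a) \leq F(b)$. For the converse, I pass to the Grothendieck ring $R$ of $\sr$ (injectivity of $\sr \to R$ is ensured by hypothesis~(ii) of the setup, and the Archimedean condition~(iii) lifts, so every element of $R$ is bounded in absolute value by an integer). I define the ``asymptotic positivity'' cone $P \subseteq R$ using $\asympleq$; $P$ is a preordering in the sense of Kadison--Dubois (closed under sums and products, containing all squares, and Archimedean). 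The Kadison--Dubois representation theorem \cite{MR0707730} then characterizes $P$ as the intersection of the positivity cones of ring homomorphisms $\phi : R \to \RR$ with $\phi(P) \subseteq \RR_{\geq 0}$; each such $\phi$ restricts to a monotone, normalized semiring homomorphism $\sr \to \RR_{\geq 0}$, i.e.\ to an element of $\X$. Applied to a witness of $a \not\asympleq b$ this yields the reverse implication and simultaneously shows that $\X$ is non-empty, closing (i).

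For (iii), I specialize (ii) with $b = m \in \NN$: $a^n \asympleq m$ if and only if $F(a)^n \leq m$ for every $F \in \X$. Setting $M = \max_{F \in \X} F(a)$ (a maximum by compactness and continuity of $\widehat{a}$), this becomes $M^n \leq m$. Since $a^n \asympleq m$ implies $\rank(a^{nk}) \leq m^k 2^{o(k)}$, one obtains $\asymprank(a) \leq m^{1/n}$ for any $n, m$ with $M^n \leq m$; letting $m = \lceil (M+\eps)^n\rceil$ and then $n \to \infty$ gives $\asymprank(a) \leq M$. The reverse inequality $M \leq \asymprank(a)$ is immediate from $F(a)^n = F(a^n) \leq \rank(a^n)$. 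For (iv), I dualize with $n \leq a$: (ii) gives $n \asympleq a$ if and only if $n \leq F(a)$ for all $F$, leading symmetrically to $\asympsubrank(a) = \min_{F \in \X} F(a)$, the minimum again attained by compactness; the gappedness hypothesis is exactly what is needed to rule out the degenerate case where this minimum would collapse trivially.
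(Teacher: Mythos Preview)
The paper does not give its own proof of this theorem: it is stated with a reference to \cite[Section~3]{wigderson2022asymptotic} and treated as background. Your proposal follows precisely the route of that reference (and of Strassen's original papers and the Kadison--Dubois formulation the paper mentions in the introduction): Tychonoff for compactness, a real-representation/Kadison--Dubois argument on the Grothendieck construction for the core duality (ii), and then specialization to integers for (iii) and (iv). So in spirit your approach \emph{is} the paper's approach, just spelled out rather than cited.

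Two technical points in your sketch would need tightening in a full proof. First, your claim that ``injectivity of $\sr \to R$ is ensured by hypothesis~(ii)'' is not right: hypothesis~(ii) controls only the image of $\NN$, not additive cancellation in $\sr$. The standard route does not require injectivity anyway---one works with the image of $\sr$ in its Grothendieck ring (or equivalently passes to the quotient by asymptotic equivalence first). Second, asserting that the asymptotic positivity cone $P$ ``contains all squares'' is not the way the Strassen/Wigderson--Zuiddam argument proceeds; the relevant Archimedean condition comes directly from hypothesis~(iii) (every nonzero $a$ satisfies $1 \leq a \leq n$), and the representation theorem used is the order-unit version rather than the sums-of-squares version. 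Finally, in (iv) the gappedness hypothesis is doing real work: without it one cannot rule out $\asympsubrank(a)=1 < \min_F F(a)$, and your one-line dismissal of it would need to be expanded into the actual dichotomy argument.
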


\begin{theorem}[{e.g.,~\cite[Section 4.3]{wigderson2022asymptotic}}]\label{th:abstract-surj}
    Let $(\sr_1, \leq_1)$ and $(\sr_2, \leq_2)$ be a pair of semirings and preorders as above. Let $i : \sr_1 \to \sr_2$ be a semiring homomorphism such that for every $a,b \in \sr_1$ we have $a \leq_1 b$ if and only if $i(a) \leq_2 i(b)$. Let $i^* : \X_2 \to \X_1 : F \to F \circ i$. Then $i^*$ is surjective.
\end{theorem}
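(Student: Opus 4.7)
The plan is to construct $G \in \X_2$ satisfying $G \circ i = F$ by enriching the preorder on $\sr_2$ so that it absorbs the numerical information of $F$, and then invoking the duality theorem \autoref{th:abstract-duality} on the enriched semiring.

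First I would define a new preorder $\preceq$ on $\sr_2$ as the smallest preorder containing $\leq_2$, compatible with $+$ and $\cdot$ in the sense of axiom (i), and additionally containing the relations
\[
p \cdot i(a) \preceq q \quad \text{whenever } p, q \in \NN \text{ with } p F(a) \leq q,
\]
together with the symmetric relations $q \preceq p \cdot i(a)$ whenever $q \leq p F(a)$, ranging over all $a \in \sr_1$. The intuition is that these are precisely the comparisons that any extension $G$ of $F$ must respect, since $G(i(a)) = F(a)$ forces $p G(i(a)) \leq q$.

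Next I would verify that $(\sr_2, \preceq)$ satisfies the three axioms. Compatibility with $+$ and $\cdot$ holds by construction. Boundedness (iii) is inherited from $\leq_2$ since $\preceq$ extends $\leq_2$ and any nonzero $a \in \sr_2$ already satisfies $1 \leq_2 a \leq_2 n$ for some $n \in \NN$. The crucial step is axiom (ii): one must rule out spurious relations $m \preceq n$ with $m > n$ in $\NN$. Any such relation would arise from a finite $+$/$\cdot$-closure of added pairs, and the strategy is to evaluate such a derivation by sending each $i(a)$ appearing to $F(a) \in \RR$; by construction every added relation becomes a true real inequality, while each use of $\leq_2$ can be traced back through $i$ using that $i$ is an order embedding, so the hypothetical derivation would produce the inequality $m \leq n$ in $\NN$, a contradiction. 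This reduction is the main technical obstacle.

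Having established the axioms, \autoref{th:abstract-duality}(i) applied to $(\sr_2, \preceq)$ furnishes a nonempty asymptotic spectrum, and in particular some $\preceq$-monotone semiring homomorphism $G \colon \sr_2 \to \RR_{\geq 0}$. Since $\preceq \supseteq \leq_2$, the map $G$ is automatically $\leq_2$-monotone, so $G \in \X_2$. To conclude $G \circ i = F$, fix $a \in \sr_1$; for any rational $q/p > F(a)$, the relation $p \cdot i(a) \preceq q$ yields $p G(i(a)) \leq q$, hence $G(i(a)) \leq q/p$, and letting $q/p \downarrow F(a)$ gives $G(i(a)) \leq F(a)$. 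The reverse inequality follows symmetrically from $q \preceq p \cdot i(a)$ for $q/p < F(a)$, so $G(i(a)) = F(a)$, giving $i^*(G) = F$ and hence surjectivity. The heart of the argument is the non-degeneracy check for $\preceq$; everything else is a mechanical consequence of the duality theorem and the definition of $\preceq$.
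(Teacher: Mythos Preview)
The paper does not give its own proof of this theorem; it simply cites \cite[Section~4.3]{wigderson2022asymptotic}. So there is no in-paper argument to compare against directly, only the standard one from the literature.

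Your high-level strategy---enrich $\leq_2$ to a preorder $\preceq$ that already encodes the constraints $G(i(a))=F(a)$, then invoke \autoref{th:abstract-duality} on $(\sr_2,\preceq)$---is a legitimate route to results of this kind. The final two paragraphs of your proposal are fine: once axioms (i)--(iii) hold for $\preceq$, nonemptiness of the spectrum gives a $\preceq$-monotone $G$, which is then $\leq_2$-monotone and satisfies $G\circ i=F$ by a squeeze on rationals.

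The gap is exactly where you flag it: the verification of axiom~(ii). Your sketched argument (``evaluate by sending $i(a)\mapsto F(a)$, trace each use of $\leq_2$ back through $i$'') does not go through as stated. The preorder $\preceq$ is closed under $x\preceq y\Rightarrow x+z\preceq y+z$ and $xz\preceq yz$ for \emph{every} $z\in\sr_2$, so a derivation of $m\preceq n$ will in general pass through elements of $\sr_2$ that are not in the image of $i$, and $\leq_2$-steps between such elements cannot be ``traced back through $i$''. There is no evaluation map $\sr_2\to\RR$ available that respects both $\leq_2$ and the added relations---constructing one is precisely equivalent to producing the desired $G$, so the argument is circular. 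Restricting the closure to $z\in i(\sr_1)$ would make the evaluation work, but then $\preceq$ no longer satisfies axiom~(i) on $\sr_2$, so \autoref{th:abstract-duality} does not apply.

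For contrast, the standard proof in the cited reference is topological rather than order-theoretic: one shows that $Y\coloneqq i^*(\X_2)$ is a closed subset of $\X_1$ (continuous image of a compact set) which already determines $\asympleq_1$, in the sense that for all $a,b\in\sr_1$ one has $a\asympleq_1 b\Leftrightarrow i(a)\asympleq_2 i(b)\Leftrightarrow \forall H\in Y,\,H(a)\leq H(b)$. A Stone--Weierstrass separation argument then shows that any closed subset of $\X_1$ with this property must be all of $\X_1$. The order-embedding hypothesis is used in the first step; the hard analytic content sits in the second. Your approach relocates that hard content into the verification of axiom~(ii), but does not supply it.
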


\subsubsection*{Examples}
We discuss some examples that fit in the above framework. We refer to \cite{wigderson2022asymptotic} for a more detailed discussion of examples.

\emph{Finite graphs.} This is the setting of \autoref{sec:dist}. Let $\sr$ be the set of graphs with addition given by disjoint union, multiplication by strong product, the graph with no vertices as~0 and the graph with one vertex as $1$. Let $\leq$ be the cohomomorphism preorder on $\sr$ defined by $G \leq H$ if there is a cohomomorphism $G \to H$. We identify graphs that are equivalent under cohomomorphism. Then $\sr$ is a commutative semiring, and $\sr$ and $\leq$ satisfy the properties outlined above. %
In \autoref{sec:dist} we discussed known elements of $\X$.

\emph{Infinite graphs.} This is the setting of \autoref{sec:asymp-spec-infinite}. The above structure for finite graphs is readily extended to infinite graphs with finite clique covering number. Let $\sr$ be the set of infinite graphs with finite clique covering number. (Technically, for this to be a set we restrict ourselves to infinite graphs with bounded cardinality. For us, the cardinality of the reals will suffice, so we will tacitly assume our infinite graphs to have at most this cardinality.) We endow $\sr$ with the same operations and preorder as above, and identify graphs that are equivalent under cohomomorphism. Then~$\sr$ is a commutative semiring, and~$\sr$ and $\leq$ satisfy the properties outlined above. As discussed in \autoref{sec:asymp-spec-infinite}, \autoref{th:abstract-surj} can be used to prove that the restriction map from the asymptotic spectrum of finite graphs $\X$ to the asymptotic spectrum of infinite graphs~$\X_\infty$ is surjective. Thus every element of $\X$ can be extended to an element of~$\X_\infty$ (possibly in multiple ways).

\emph{Tensors.} The study of tensors (and in particular the matrix multiplication tensors) was the motivating application for Strassen to develop the theory of asymptotic spectra~\cite{strassen1988asymptotic}. While this setting is not the focus of this paper, we briefly discuss it here for the sake of comparison. Let $\sr$ be the set of tensors (over some fixed field, of order three, and with arbitrary dimensions) with addition given by direct sum, multiplication by tensor product, the $1\times 1\times 1$ tensor with coefficient 1 as 1 and the $1\times 1 \times 1$ tensor with coefficient 0 as 0. Let $\leq$ be the ``restriction'' preorder on $\sr$ defined by $S \leq T$ if there are linear maps $A_1, A_2, A_3$ such that $S = (A_1 \otimes A_2 \otimes A_3)T$. We identify tensors that are equivalent under restriction. Then $\sr$ is a commutative semiring, and $\sr$ and $\leq$ satisfy the properties outlined above. A known family of elements in the asymptotic spectrum are the quantum functionals \cite{MR4495838}. It is not known whether this is all of~$\X$, and this question is related to several hard questions in the field.

More examples can be found in \cite{MR4053385,MR4231964,chee2024asymptotic}.

\subsubsection*{Asymptotic spectrum distance}

Leading up to the introduction of the asymptotic spectrum distance, a first natural step is to rephrase \autoref{th:abstract-duality} in terms of the elements $\widehat{a}$ for $a \in \sr$. By construction of the topology on $\X$ each $\widehat{a}$ is a continuous map $\X \to \RR$.
Let $C(\X)$ denote the ring of all continuous functions $\X \to \RR$ under pointwise addition and multiplication, and for $\phi, \psi \in C(\X)$ write $\phi \leq \psi$ if $\phi$ is at most $\psi$ pointwise on~$\X$. Let $\max \phi$ denote the maximum of $\phi$ on $\X$, and let $\min \phi$ denote the minimum of $\phi$ on $\X$. The minimum and maximum are indeed attained since $\X$ is compact.
\autoref{th:abstract-duality} (ii), (iii) and (iv) can be rephrased as saying that 
    $a \asympleq b$ if and only if\, $\widehat{a} \leq \widehat{b}$; 
    $\asymprank(a) = \max\, \widehat{a}$; and
    $\asympsubrank(a) = \min\, \widehat{a}$.

We equip $C(\X)$ with the supremum-norm $||\phi|| = \sup_{x \in \X} |\phi(x)|$ for $\phi \in C(\X)$ and the corresponding distance function $d(\phi, \psi) = ||\phi - \psi||$ for $\phi, \psi \in C(\X)$. Then $C(\X)$ is complete \cite[Theorem~7.15]{MR0385023}.
The map $\Phi : \sr \to C(\X) : a \mapsto \widehat{a}$ induces a distance on $\sr$ as follows.

\begin{definition}
    For every $a,b \in \sr$, we define $d(a,b) \coloneqq d(\widehat{a}, \widehat{b})$. 
    We call $d$ the \emph{asymptotic spectrum distance}.
\end{definition}
It follows directly from the definitions that $d(a,b) = ||\widehat{a} - \widehat{b}|| = \sup_{x \in \X} |\widehat{a}(x) - \widehat{b}(x)| = \sup_{F \in \X} |F(a) - F(b)|.$ Identifying in~$\sr$ any two elements $a,b$ for which $a \asympleq b$ and $b \asympleq a$, the set $\sr$ with the asymptotic spectrum distance becomes a metric space.

From the asymptotic spectrum duality (\autoref{th:abstract-duality} (ii)), it follows directly that the asymptotic spectrum distance can be rephrased in terms of the cohomomorphism preorder as follows.

\begin{lemma}\label{lem:abstract-dist-char} Let $a,b \in \sr$ and $n,m \in \NN$, $m \geq1$. The following are equivalent.
\begin{enumerate}[\upshape(i)]
    \item $d(a,b) \leq n/m$
    \item For every $F \in \X$ we have $|F(a) - F(b)| \leq n/m$
    \item $m  a \asympleq m b + n$ and $m b \asympleq m a + n$
    \item $(ma)^{ k} \leq (mb + n)^{k} 2^{o(k)}$ and $(m b)^{k} \leq (ma + n)^{k} 2^{o(k)}$
\end{enumerate}
\end{lemma}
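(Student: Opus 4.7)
The plan is to establish the chain (i) $\Leftrightarrow$ (ii) $\Leftrightarrow$ (iii) $\Leftrightarrow$ (iv) by reducing each step either to a definition or to \autoref{th:abstract-duality}.

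First I would handle (i) $\Leftrightarrow$ (ii) directly from the definition of the asymptotic spectrum distance. We have $d(a,b) = \sup_{F \in \X} |F(a) - F(b)|$, and a supremum of nonnegative reals is at most $n/m$ if and only if every term is at most $n/m$. So the two conditions are literally the same statement.

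Next I would prove (ii) $\Leftrightarrow$ (iii). For a fixed $F \in \X$, the inequality $|F(a) - F(b)| \leq n/m$ is equivalent to the pair $m F(a) \leq m F(b) + n$ and $m F(b) \leq m F(a) + n$. Because $F$ is a semiring homomorphism with $F(k) = k$ for all $k \in \NN$, these can be rewritten as $F(ma) \leq F(mb + n)$ and $F(mb) \leq F(ma + n)$. Requiring this for every $F \in \X$ is, by \autoref{th:abstract-duality}(ii) (i.e., asymptotic spectrum duality), equivalent to $ma \asympleq mb + n$ and $mb \asympleq ma + n$, which is (iii).

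Finally, (iii) $\Leftrightarrow$ (iv) is immediate from the definition of the asymptotic preorder $\asympleq$: by construction, $x \asympleq y$ means exactly $x^k \leq y^k 2^{o(k)}$, so unfolding $\asympleq$ in (iii) yields (iv) and vice versa.

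There is no real obstacle in this argument; all the content is packaged into \autoref{th:abstract-duality}. The only thing worth noting is that in (ii) $\Rightarrow$ (iii) we use the monotonicity and homomorphism properties of $F \in \X$ to turn arithmetic inequalities on real numbers into inequalities between the semiring elements $ma$, $mb$, $mb + n$, $ma + n$ before invoking duality.
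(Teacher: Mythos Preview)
Your proposal is correct and follows essentially the same route as the paper's proof: (i)\,$\Leftrightarrow$\,(ii) by definition of $d$, (ii)\,$\Leftrightarrow$\,(iii) by rewriting $|F(a)-F(b)|\le n/m$ as $F(ma)\le F(mb+n)$ and $F(mb)\le F(ma+n)$ via the homomorphism property and then invoking \autoref{th:abstract-duality}(ii), and (iii)\,$\Leftrightarrow$\,(iv) by unfolding the definition of $\asympleq$. The only minor remark is that in your final comment the word ``monotonicity'' is not actually needed for the step (ii)\,$\Rightarrow$\,(iii); only the semiring-homomorphism property of $F$ is used there.
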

\begin{proof}
    (i) $\Leftrightarrow$ (ii) is by definition of asymptotic spectrum distance. 

    (ii) $\Rightarrow$ (iii). For every $F \in \X$ we have $mF(a) \leq mF(b) + n$ and $mF(b) \leq mF(a) + n$, and so $F(ma) \leq F(mb+ n)$ and $F(mb) \leq F(ma+n)$. Now apply \autoref{th:abstract-duality} (ii).

    (iii) $\Rightarrow$ (ii). Apply $F \in \X$ to both sides of the two inequalities.

    (iii) $\Leftrightarrow$ (iv) is by definition of the asymptotic preorder.
\end{proof}

\subsubsection*{Convergence}

We have the standard notions of Cauchy and convergence in $\sr$:
A sequence $a_1, a_2, \ldots$ in $\sr$ is Cauchy if for every $n/m>0$ $(n,m \in \NN)$ there is an $N \in \NN$ such that for every $i,j > N$, we have $d(a_i,a_j) \leq n/m$.
The sequence $a_1, a_2, \ldots$ in $\sr$ converges to $b \in \sr$ if for every $n/m>0$ $(n,m \in \NN)$ there is an $N \in \NN$ such that for every $i > N$, we have $d(a_i,b) \leq n/m$.

\begin{lemma}\label{lem:abstract-Theta-convergence}
    If the sequence $a_1, a_2, \ldots$ in $\sr$ converges to $b \in \sr$, then $\asympsubrank(a_i)$ converges to $\asympsubrank(b)$, and $\asymprank(a_i)$ converges to $\asymprank(b)$.
\end{lemma}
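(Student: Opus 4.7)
The plan is to lift the convergence statement to the level of continuous functions on the compact space $\X$, where both asymptotic rank and asymptotic subrank become simple continuous functionals (max and min, respectively) for which 1-Lipschitz bounds with respect to the sup-norm are immediate.

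First I would unwind the hypothesis: by definition of the asymptotic spectrum distance, $a_i \to b$ means $||\widehat{a_i} - \widehat{b}||_\infty \to 0$, where $\widehat{a}(F) = F(a)$ and the norm is the supremum over $F \in \X$. Then I would invoke the duality formulas recorded just after \autoref{th:abstract-duality}: $\asymprank(a) = \max \widehat{a}$, and (for gapped $a$, which I assume throughout, as both $b$ and all but finitely many $a_i$ must be gapped once the sequence is close to $b$) $\asympsubrank(a) = \min \widehat{a}$. Both $\max$ and $\min$ are attained on $\X$ by compactness.

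The core analytic observation is that for any continuous functions $\phi, \psi$ on a compact space one has
\[
|\min \phi - \min \psi| \leq ||\phi - \psi||_\infty \quad \text{and} \quad |\max \phi - \max \psi| \leq ||\phi - \psi||_\infty.
\]
This follows by picking $F \in \X$ attaining $\min \psi$ and writing $\min \phi \leq \phi(F) \leq \psi(F) + ||\phi - \psi||_\infty = \min \psi + ||\phi - \psi||_\infty$, and symmetrically for the other direction; the max case is identical. Applying this with $\phi = \widehat{a_i}$ and $\psi = \widehat{b}$ gives
\[
|\asymprank(a_i) - \asymprank(b)| \leq d(a_i, b) \quad \text{and} \quad |\asympsubrank(a_i) - \asympsubrank(b)| \leq d(a_i, b),
\]
which immediately yields the desired convergence once $d(a_i, b) \to 0$.

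I do not expect a real obstacle here; the proof is an abstract version of \autoref{lem:Theta-conv}. The only point worth being careful about is the gapped-ness condition in \autoref{th:abstract-duality}(iv) for the subrank: if $b$ is not gapped then $\asympsubrank(b) \leq 1$ and one argues separately using $\subrank$ monotonicity and the fact that $a_i$ being close to $b$ forces $\widehat{a_i}$ to be close to $\widehat{b}$ pointwise (in particular bounded near $1$ for every $F \in \X$), which again controls $\asympsubrank(a_i)$. Alternatively, one may simply state the lemma for gapped elements; in the graph setting, every nonzero graph is gapped, recovering \autoref{lem:Theta-conv} as a direct corollary.
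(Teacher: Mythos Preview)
Your proposal is correct and follows essentially the same approach as the paper: both reduce to the 1-Lipschitz property of $\min$ and $\max$ with respect to the sup-norm on $C(\X)$, which is exactly the ``general fact'' the paper cites (and which is spelled out in the proof of \autoref{lem:Theta-conv} via picking minimizers). Your discussion of the gapped-ness hypothesis is in fact more careful than the paper's own proof, which silently assumes $\asympsubrank(a) = \min \widehat{a}$ without comment.
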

\begin{proof}
    This follows from the general fact that if a sequence of functions $f_1, f_2,\ldots$ on a compact space converges uniformly to $f$, then the sequence $\min f_1, \min f_2, \ldots$ converges to $\min f$, and the same for the maximum. See also the proof of \autoref{lem:Theta-conv} which directly generalizes to this setting.
\end{proof}

\begin{lemma}\label{th:abstract-dini}
    Let $a_1, a_2, \ldots$ be a sequence in $\sr$ and let $b \in \sr$. If for every $F \in \X$ we have that $F(a_1), F(a_2), \ldots$ is monotone (i.e.,~non-increasing for every $F\in \X$ or non-decreasing for every $F\in \X$) and converges to $F(b)$, then $a_1, a_2,\ldots$ converges to $b$.
\end{lemma}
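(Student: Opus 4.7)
The plan is to reduce the claim to the standard form of Dini's theorem applied to the sequence of absolute differences $|\widehat{a_n}-\widehat{a}|$, exactly as in the proof of \autoref{th:dini} but working intrinsically in the space $C(\X)$.

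First I would set up the continuous-function viewpoint. For each $n$, the evaluation map $\widehat{a_n}\colon \X\to \RR$ is continuous by construction of the topology on $\X$, and similarly $\widehat{a}$ is continuous. Define
\[
g_n \coloneqq |\widehat{a_n}-\widehat{a}| \in C(\X),
\]
which is continuous and non-negative. Recall that $d(a_n,a)=\|\widehat{a_n}-\widehat{a}\|_\infty=\sup_{F\in\X} g_n(F)$, and this supremum is attained since $\X$ is compact. Hence it suffices to show that $\|g_n\|_\infty\to 0$.

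Next I would exploit the hypothesis pointwise. Fix $F\in\X$. By assumption, the sequence of real numbers $F(a_n)=\widehat{a_n}(F)$ is monotone in $n$ and converges to $F(a)=\widehat{a}(F)$. Regardless of whether the monotonicity is non-decreasing or non-increasing, the quantity $g_n(F)=|F(a_n)-F(a)|$ is a non-increasing sequence of non-negative reals converging to $0$. Therefore $(g_n)_{n\in\NN}$ is a pointwise non-increasing sequence of continuous non-negative functions on $\X$ converging pointwise to the continuous function $0$.

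At this stage the crucial input is Dini's theorem \cite[Theorem 7.13]{MR0385023}: a pointwise monotone sequence of continuous real-valued functions on a compact space that converges pointwise to a continuous function converges uniformly. Since $\X$ is compact (\autoref{th:abstract-duality}(i)) and the $g_n$ form such a monotone decreasing sequence, we conclude $g_n\to 0$ uniformly, i.e.\ $\|g_n\|_\infty\to 0$, which by the identification above is exactly $d(a_n,a)\to 0$. There is no real obstacle here; the only subtlety to be careful about is that in the hypothesis the direction of monotonicity of $F(a_n)$ may depend on $F$, so one must first pass to the absolute-difference sequence to restore a uniform direction of monotonicity before invoking Dini.
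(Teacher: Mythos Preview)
Your proof is correct and follows essentially the same route as the paper: apply Dini's theorem on the compact space $\X$ to the continuous evaluation functions. Your version is slightly more careful in one respect: by passing to $g_n=|\widehat{a_n}-\widehat{a}|$ you obtain a sequence that is pointwise non-increasing regardless of which direction $F(a_n)$ is monotone in, whereas the paper applies Dini directly to $\widehat{a_n}$ and does not explicitly address the possibility that the direction of monotonicity varies with $F$.
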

\begin{proof}
        This follows from Dini's theorem \cite[Theorem~7.13]{MR0385023}, which says that for any sequence of continuous functions $f_1, f_2, \ldots$ on a compact space~$\X$, if the sequence $f_1, f_2,\ldots$ converges pointwise to a continuous function, and for every~$x \in \X$ we have that $i \mapsto f_i(x)$ is monotone in $i$, then the sequence $f_1, f_2, \ldots$ converges uniformly to~$f$. 
        Apply this theorem to the sequence of continuous functions $\widehat{a}_1, \widehat{a}_2, \ldots$ to get the claim. %
\end{proof}

\section{Explicit independent set}\label{sec:explicit-ind-set}

We give an independent set in $C_{15}^{\boxtimes 4}$ of size 2842. This was obtained as described in the proof of \autoref{th:C15-4}. %
We have encoded the vertices as 4-tuples of elements in $\{0,1,\ldots, 14\}$ where we use $A = 10, B = 11$, \ldots, $E = 14$.

\lstset{ %
  basicstyle=\tiny\ttfamily,        %
  breakatwhitespace=true,          %
  breakindent=0em,
  breaklines=true,                 %
  keepspaces=true,                 %
}
\tiny
\begin{lstlisting}[breaklines]
0018 001A 003B 004D 005B 0061 006E 0082 00A2 00A4 00C5 00E7 0116 0137 0139 0159 017A 017C 0180 019D 01A0 01C1 01C3 01E3 01E5 0218 021A 023B 024D 025B 0261 026E 0282 02A2 02A4 02C5 02C7 02E7 030C 0320 032D 0340 0342 0363 0384 0386 03A6 03A8 03C9 03DB 03E9 0401 040E 0422 0424 0444 0465 0467 0488 049A 04BB 04BD 04DD 0503 0505 0526 0546 0548 0569 057B 059C 059E 05B0 05D0 05D2 0607 061A 0628 063A 065B 065D 0670 067D 0691 06B2 06B4 06D4 06D6 071C 073C 073E 0750 0752 0772 0793 0795 07A8 07B6 07C8 07E9 07EB 0811 081E 0831 0833 0854 0866 0874 0887 0889 08AA 08CA 08CC 08ED 0905 0913 0925 0946 0948 0968 098B 09AC 09AE 09CE 09E0 09E2 0A07 0A27 0A29 0A4A 0A6A 0A6C 0A80 0A8D 0AA1 0AB3 0AC1 0AD4 0AD6 0B09 0B0B 0B2B 0B4C 0B4E 0B51 0B6E 0B72 0B93 0B95 0BB5 0BB7 0BD8 0C0D 0C10 0C2D 0C31 0C33 0C53 0C74 0C76 0C97 0C99 0CB9 0CDA 0CDC 0D12 0D14 0D35 0D55 0D57 0D78 0D9B 0DBB 0DBD 0DDE 0DE1 0E16 0E37 0E39 0E59 0E7A 0E7C 0E80 0E9D 0EA0 0EC1 0EC3 0EE3 0EE5 1001 100E 1022 1024 1044 1065 1067 1088 109A 10BB 10BD 10C7 10DD 10E9 110C 1120 112D 1140 1142 1163 1184 1186 11A6 11A8 11C9 11DB 1201 120E 1222 1224 1244 1265 1267 1288 129A 12BB 12BD 12DD 1303 1305 1326 1346 1348 1369 137B 139C 139E 13B0 13D0 13D2 1407 141A 1428 143A 145B 145D 1470 147D 1491 14B2 14B4 14D4 14D6 151C 153C 153E 1550 1552 1572 1593 1595 15A8 15B6 15C8 15E9 15EB 1611 161E 1631 1633 1654 1666 1674 1687 1689 16AA 16CA 16CC 16ED 1705 1713 1725 1746 1748 1768 178B 17AC 17AE 17CE 17E0 17E2 1807 1827 1829 184A 186A 186C 1880 188D 18A1 18B3 18C1 18D4 18D6 1909 190B 192B 194C 194E 1951 196E 1972 1993 1995 19B5 19B7 19D8 1A0D 1A10 1A2D 1A31 1A33 1A53 1A74 1A76 1A97 1A99 1AB9 1ADA 1ADC 1B12 1B14 1B35 1B55 1B57 1B78 1B9B 1BAD 1BBB 1BCE 1BE0 1BE2 1C16 1C37 1C39 1C59 1C6C 1C7A 1C80 1C8D 1CA0 1CC1 1CC3 1CE4 1CE6 1D0A 1D18 1D2B 1D4B 1D4D 1D61 1D6E 1D82 1DA2 1DA4 1DC5 1DC7 1DE8 1E0C 1E20 1E2D 1E40 1E42 1E63 1E84 1E86 1EA6 1EA8 1EC9 1EDB 2007 201A 2028 203A 205B 205D 2070 207D 2091 20B2 20B4 20D4 2103 2105 2126 2146 2148 2169 217B 219C 219E 21B0 21D0 21D2 2207 221A 2228 223A 225B 225D 2270 227D 2291 22B2 22B4 22D4 22D6 231C 233C 233E 2350 2352 2372 2393 2395 23A8 23B6 23C8 23E9 23EB 2411 241E 2431 2433 2454 2466 2474 2487 2489 24AA 24CA 24CC 24ED 2505 2513 2525 2546 2548 2568 258B 25AC 25AE 25CE 25E0 25E2 2607 2627 2629 264A 266A 266C 2680 268D 26A1 26B3 26C1 26D4 26D6 2709 270B 272B 274C 274E 2751 276E 2772 2793 2795 27B5 27B7 27D8 280D 2810 282D 2831 2833 2853 2874 2876 2897 2899 28B9 28DA 28DC 2912 2914 2935 2955 2957 2978 299B 29AD 29BB 29CE 29E0 29E2 2A16 2A37 2A39 2A59 2A6C 2A7A 2A80 2A8D 2AA0 2AC1 2AC3 2AE4 2AE6 2B0A 2B18 2B2B 2B4B 2B4D 2B61 2B6E 2B82 2BA2 2BA4 2BC5 2BC7 2BE8 2C0C 2C20 2C2D 2C40 2C42 2C63 2C84 2C86 2CA6 2CA8 2CC9 2CDB 2D01 2D0E 2D22 2D24 2D44 2D65 2D67 2D88 2D9A 2DBB 2DBD 2DDD 2E03 2E05 2E26 2E46 2E48 2E69 2E7B 2E9C 2E9E 2EB0 2ED0 2ED2 3011 301E 3031 3033 3054 3066 3074 3087 30AA 30CA 30CC 30D6 30ED 311C 313C 313E 3150 3152 3172 3193 3195 31A8 31B6 31C8 31E9 31EB 3211 321E 3231 3233 3254 3266 3274 3287 3289 32AA 32CA 32CC 32ED 3305 3313 3325 3346 3348 3368 338B 33AC 33AE 33CE 33E0 33E2 3407 3427 3429 344A 346A 346C 3480 348D 34A1 34B3 34C1 34D4 34D6 3509 350B 352B 354C 354E 3551 356E 3572 3593 3595 35B5 35B7 35D8 360D 3610 362D 3631 3633 3653 3674 3676 3697 3699 36B9 36DA 36DC 3712 3714 3735 3755 3757 3778 379B 37AD 37BB 37CE 37E0 37E2 3816 3837 3839 3859 386C 387A 3880 388D 38A0 38C1 38C3 38E4 38E6 390A 3918 392B 394B 394D 3961 396E 3982 39A2 39A4 39C5 39C7 39E8 3A0C 3A20 3A2D 3A40 3A42 3A63 3A84 3A86 3AA6 3AA8 3AC9 3ADB 3B01 3B0E 3B22 3B24 3B44 3B65 3B67 3B88 3B9A 3BBB 3BBD 3BDD 3C03 3C05 3C26 3C46 3C48 3C69 3C7B 3C9C 3C9E 3CB0 3CD0 3CD2 3D07 3D1A 3D28 3D3A 3D5B 3D5D 3D70 3D7D 3D91 3DB2 3DB4 3DD4 3DD6 3E1C 3E3C 3E3E 3E50 3E52 3E72 3E93 3E95 3EA8 3EB6 3EC8 3EE9 3EEB 4007 4027 4029 404A 406A 406C 4080 4089 408D 40A1 40B3 40C1 40D4 4105 4113 4125 4146 4148 4168 418B 41AC 41AE 41CE 41E0 41E2 4207 4227 4229 424A 426A 426C 4280 428D 42A1 42B3 42C1 42D4 42D6 4309 430B 432B 434C 434E 4351 436E 4372 4393 4395 43B5 43B7 43D8 440D 4410 442D 4431 4433 4453 4474 4476 4497 4499 44B9 44DA 44DC 4512 4514 4535 4555 4557 4578 459B 45AD 45BB 45CE 45E0 45E2 4616 4637 4639 4659 466C 467A 4680 468D 46A0 46C1 46C3 46E4 46E6 470A 4718 472B 474B 474D 4761 476E 4782 47A2 47A4 47C5 47C7 47E8 480C 4820 482D 4840 4842 4863 4884 4886 48A6 48A8 48C9 48DB 4901 490E 4922 4924 4944 4965 4967 4988 499A 49BB 49BD 49DD 4A03 4A05 4A26 4A46 4A48 4A69 4A7B 4A9C 4A9E 4AB0 4AD0 4AD2 4B07 4B1A 4B28 4B3A 4B5B 4B5D 4B70 4B7D 4B91 4BB2 4BB4 4BD4 4BD6 4C1C 4C3C 4C3E 4C50 4C52 4C72 4C93 4C95 4CA8 4CB6 4CC8 4CE9 4CEB 4D11 4D1E 4D31 4D33 4D54 4D66 4D74 4D87 4D89 4DAA 4DCA 4DCC 4DED 4E05 4E13 4E25 4E46 4E48 4E68 4E8B 4EAC 4EAE 4ECE 4EE0 4EE2 500D 5010 502D 5031 5033 5053 5074 5076 5097 50B9 50D6 50DA 50DC 5109 510B 512B 514C 514E 5151 516E 5172 5193 5195 51B5 51B7 51D8 520D 5210 522D 5231 5233 5253 5274 5276 5297 5299 52B9 52DA 52DC 5312 5314 5335 5355 5357 5378 539B 53AD 53BB 53CE 53E0 53E2 5416 5437 5439 5459 546C 547A 5480 548D 54A0 54C1 54C3 54E4 54E6 550A 5518 552B 554B 554D 5561 556E 5582 55A2 55A4 55C5 55C7 55E8 560C 5620 562D 5640 5642 5663 5684 5686 56A6 56A8 56C9 56DB 5701 570E 5722 5724 5744 5765 5767 5788 579A 57BB 57BD 57DD 5803 5805 5826 5846 5848 5869 587B 589C 589E 58B0 58D0 58D2 5907 591A 5928 593A 595B 595D 5970 597D 5991 59B2 59B4 59D4 59D6 5A1C 5A3C 5A3E 5A50 5A52 5A72 5A93 5A95 5AA8 5AB6 5AC8 5AE9 5AEB 5B11 5B1E 5B31 5B33 5B54 5B66 5B74 5B87 5B89 5BAA 5BCA 5BCC 5BED 5C05 5C13 5C25 5C46 5C48 5C68 5C8B 5CAC 5CAE 5CCE 5CE0 5CE2 5D07 5D27 5D29 5D4A 5D6A 5D6C 5D80 5D8D 5DA1 5DB3 5DC1 5DD4 5DD6 5E09 5E0B 5E2B 5E4C 5E4E 5E51 5E6E 5E72 5E93 5E95 5EB5 5EB7 5ED8 6006 6027 6029 604A 606A 606C 6080 608D 6099 60A1 60C1 60C3 60E4 6112 6114 6135 6155 6157 6178 619B 61AD 61BB 61CE 61E0 61E2 6216 6237 6239 6259 626C 627A 6280 628D 62A0 62C1 62C3 62E4 62E6 630A 6318 632B 634B 634D 6361 636E 6382 63A2 63A4 63C5 63C7 63E8 640C 6420 642D 6440 6442 6463 6484 6486 6498 64A6 64B9 64DA 64DC 6501 650E 6522 6524 6544 6557 6565 6578 659A 65BB 65BD 65DE 6603 6616 6636 6638 6659 667A 667C 669C 669E 66B0 66D1 66D3 66E5 6718 671A 673A 675B 675D 6771 677E 6791 67A4 67B2 67C5 67C7 67E7 681C 683C 683E 6842 6850 6863 6883 6885 68A6 68A8 68C9 68E9 68EB 6901 691E 6921 6923 6944 6965 6967 6987 6989 69AA 69CB 69CD 69ED 6A03 6A05 6A25 6A46 6A48 6A69 6A8B 6A9E 6AAC 6AB0 6AD0 6AD2 6B07 6B27 6B29 6B4A 6B5D 6B6B 6B70 6B7D 6B91 6BB2 6BB4 6BD4 6BD6 6C09 6C1B 6C3C 6C3E 6C50 6C52 6C72 6C93 6C95 6CB6 6CB8 6CD8 6CEB 6D10 6D1D 6D31 6D33 6D54 6D74 6D76 6D89 6D97 6DAA 6DCA 6DCC 6DED 6E12 6E14 6E35 6E48 6E56 6E68 6E8B 6EAC 6EAE 6ECE 6EE0 6EE2 700C 7020 702D 7033 7041 7053 7074 7076 7097 70B9 70D6 70DA 70DC 7108 710A 712B 714C 714E 7161 716E 7182 7195 71A3 71B5 71B7 71D8 720C 7220 722D 7233 7241 7253 7274 7276 7297 7299 72B9 72DA 72DC 730E 7312 7314 7335 7355 7357 7378 739B 73BB 73BD 73DE 73E1 7416 7437 7439 7459 747A 747C 7480 749D 74A0 74C1 74C3 74E3 74E5 7518 751A 753B 754D 755B 7561 756E 7582 75A2 75A4 75C5 75C7 75E7 760C 7620 762D 7640 7642 7663 7684 7686 76A6 76A8 76C9 76DB 76E9 7701 770E 7722 7724 7744 7765 7767 7788 779A 77BB 77BD 77DD 7803 7805 7826 7846 7848 7869 787B 789C 789E 78B0 78D0 78D2 7907 7928 793A 795B 795D 7970 797D 7991 79B2 79B4 79D4 79D6 7A09 7A1B 7A3C 7A3E 7A50 7A52 7A72 7A93 7A95 7AB6 7AB8 7AD8 7AEB 7B10 7B1D 7B31 7B33 7B54 7B74 7B76 7B89 7B97 7BAA 7BCA 7BCC 7BED 7C12 7C14 7C35 7C48 7C56 7C68 7C8B 7CAC 7CAE 7CCE 7CE0 7CE2 7D06 7D27 7D29 7D4A 7D6A 7D6C 7D80 7D8D 7DA1 7DC1 7DC3 7DD6 7DE4 7E08 7E0A 7E2B 7E4C 7E4E 7E61 7E6E 7E82 7E95 7EA3 7EB5 7EB7 7ED8 8016 8037 8039 8059 807A 807C 8080 8099 809D 80A0 80C1 80C3 80E3 810E 8112 8114 8135 8155 8157 8178 819B 81BB 81BD 81DE 81E1 8216 8237 8239 8259 827A 827C 8280 829D 82A0 82C1 82C3 82E3 82E5 8318 831A 833B 834D 835B 8361 836E 8382 83A2 83A4 83C5 83C7 83E7 840C 8420 842D 8440 8442 8463 8484 8486 84A6 84A8 84C9 84DB 84E9 8501 850E 8522 8524 8544 8565 8567 8588 859A 85BB 85BD 85DD 8603 8605 8626 8646 8648 8669 867B 869C 869E 86B0 86D0 86D2 8707 8728 873A 875B 875D 8770 877D 8791 87B2 87B4 87D4 87D6 8809 881B 883C 883E 8850 8852 8872 8893 8895 88B6 88B8 88D8 88EB 8910 891D 8931 8933 8954 8974 8976 8989 8997 89AA 89CA 89CC 89ED 8A12 8A14 8A35 8A48 8A56 8A68 8A8B 8AAC 8AAE 8ACE 8AE0 8AE2 8B06 8B27 8B29 8B4A 8B6A 8B6C 8B80 8B8D 8BA1 8BC1 8BC3 8BD6 8BE4 8C08 8C0A 8C2B 8C4C 8C4E 8C61 8C6E 8C82 8C95 8CA3 8CB5 8CB7 8CD8 8D0C 8D20 8D2D 8D33 8D41 8D53 8D74 8D76 8D97 8D99 8DB9 8DDA 8DDC 8E0E 8E12 8E14 8E35 8E55 8E57 8E78 8E9B 8EBB 8EBD 8EDE 8EE1 900C 9020 902D 9040 9042 9063 9084 9086 90A6 90C9 90DB 90E5 90E9 9118 911A 913B 914D 915B 9161 916E 9182 91A2 91A4 91C5 91C7 91E7 920C 9220 922D 9240 9242 9263 9284 9286 92A6 92A8 92C9 92DB 92E9 9301 930E 9322 9324 9344 9365 9367 9388 939A 93BB 93BD 93DD 9403 9405 9426 9446 9448 9469 947B 949C 949E 94B0 94D0 94D2 9507 9528 953A 955B 955D 9570 957D 9591 95B2 95B4 95D4 95D6 9609 961B 963C 963E 9650 9652 9672 9693 9695 96B6 96B8 96D8 96EB 9710 971D 9731 9733 9754 9774 9776 9789 9797 97AA 97CA 97CC 97ED 9812 9814 9835 9848 9856 9868 988B 98AC 98AE 98CE 98E0 98E2 9906 9927 9929 994A 996A 996C 9980 998D 99A1 99C1 99C3 99D6 99E4 9A08 9A0A 9A2B 9A4C 9A4E 9A61 9A6E 9A82 9A95 9AA3 9AB5 9AB7 9AD8 9B0C 9B20 9B2D 9B33 9B41 9B53 9B74 9B76 9B97 9B99 9BB9 9BDA 9BDC 9C0E 9C12 9C14 9C35 9C55 9C57 9C78 9C9B 9CBB 9CBD 9CDE 9CE1 9D16 9D37 9D39 9D59 9D7A 9D7C 9D80 9D9D 9DA0 9DC1 9DC3 9DE3 9DE5 9E18 9E1A 9E3B 9E4D 9E5B 9E61 9E6E 9E82 9EA2 9EA4 9EC5 9EC7 9EE7 A003 A026 A046 A048 A069 A07B A09C A09E A0A8 A0B0 A0D0 A0D2 A101 A10E A122 A124 A144 A165 A167 A188 A19A A1BB A1BD A1DD A203 A205 A226 A246 A248 A269 A27B A29C A29E A2B0 A2D0 A2D2 A307 A328 A33A A35B A35D A370 A37D A391 A3B2 A3B4 A3D4 A3D6 A409 A41B A43C A43E A450 A452 A472 A493 A495 A4B6 A4B8 A4D8 A4EB A510 A51D A531 A533 A554 A574 A576 A589 A597 A5AA A5CA A5CC A5ED A612 A614 A635 A648 A656 A668 A68B A6AC A6AE A6CE A6E0 A6E2 A706 A727 A729 A74A A76A A76C A780 A78D A7A1 A7C1 A7C3 A7D6 A7E4 A808 A80A A82B A84C A84E A861 A86E A882 A895 A8A3 A8B5 A8B7 A8D8 A90C A920 A92D A933 A941 A953 A974 A976 A997 A999 A9B9 A9DA A9DC AA0E AA12 AA14 AA35 AA55 AA57 AA78 AA9B AABB AABD AADE AAE1 AB16 AB37 AB39 AB59 AB7A AB7C AB80 AB9D ABA0 ABC1 ABC3 ABE3 ABE5 AC18 AC1A AC3B AC4D AC5B AC61 AC6E AC82 ACA2 ACA4 ACC5 ACC7 ACE7 AD0C AD20 AD2D AD40 AD42 AD63 AD84 AD86 ADA6 ADA8 ADC9 ADDB ADE9 AE01 AE0E AE22 AE24 AE44 AE65 AE67 AE88 AE9A AEBB AEBD AEDD B005 B01C B03C B03E B050 B052 B072 B093 B095 B0B6 B0C8 B0E9 B0EB B107 B128 B13A B15B B15D B170 B17D B191 B1B2 B1B4 B1D4 B1D6 B209 B21B B23C B23E B250 B252 B272 B293 B295 B2B6 B2B8 B2D8 B2EB B310 B31D B331 B333 B354 B374 B376 B389 B397 B3AA B3CA B3CC B3ED B412 B414 B435 B448 B456 B468 B48B B4AC B4AE B4CE B4E0 B4E2 B506 B527 B529 B54A B56A B56C B580 B58D B5A1 B5C1 B5C3 B5D6 B5E4 B608 B60A B62B B64C B64E B661 B66E B682 B695 B6A3 B6B5 B6B7 B6D8 B70C B720 B72D B733 B741 B753 B774 B776 B797 B799 B7B9 B7DA B7DC B80E B812 B814 B835 B855 B857 B878 B89B B8BB B8BD B8DE B8E1 B916 B937 B939 B959 B97A B97C B980 B99D B9A0 B9C1 B9C3 B9E3 B9E5 BA18 BA1A BA3B BA4D BA5B BA61 BA6E BA82 BAA2 BAA4 BAC5 BAC7 BAE7 BB0C BB20 BB2D BB40 BB42 BB63 BB84 BB86 BBA6 BBA8 BBC9 BBDB BBE9 BC01 BC0E BC22 BC24 BC44 BC65 BC67 BC88 BC9A BCBB BCBD BCDD BD03 BD05 BD26 BD46 BD48 BD69 BD7B BD9C BD9E BDB0 BDD0 BDD2 BE07 BE1A BE28 BE3A BE5B BE5D BE70 BE7D BE91 BEB2 BEB4 BED4 BED6 C013 C025 C046 C048 C068 C08B C0A8 C0AC C0AE C0CE C0E0 C0E2 C111 C11E C131 C133 C154 C166 C174 C187 C189 C1AA C1CA C1CC C1ED C213 C225 C246 C248 C268 C28B C2AC C2AE C2CE C2E0 C2E2 C306 C327 C329 C34A C36A C36C C380 C38D C3A1 C3C1 C3C3 C3D6 C3E4 C408 C40A C42B C44C C44E C461 C46E C482 C495 C4A3 C4B5 C4B7 C4D8 C50C C520 C52D C533 C541 C553 C574 C576 C597 C599 C5B9 C5DA C5DC C60E C612 C614 C635 C655 C657 C678 C69B C6BB C6BD C6DE C6E1 C716 C737 C739 C759 C77A C77C C780 C79D C7A0 C7C1 C7C3 C7E3 C7E5 C818 C81A C83B C84D C85B C861 C86E C882 C8A2 C8A4 C8C5 C8C7 C8E7 C90C C920 C92D C940 C942 C963 C984 C986 C9A6 C9A8 C9C9 C9DB C9E9 CA01 CA0E CA22 CA24 CA44 CA65 CA67 CA88 CA9A CABB CABD CADD CB03 CB05 CB26 CB46 CB48 CB69 CB7B CB9C CB9E CBB0 CBD0 CBD2 CC07 CC1A CC28 CC3A CC5B CC5D CC70 CC7D CC91 CCB2 CCB4 CCD4 CCD6 CD1C CD3C CD3E CD50 CD52 CD72 CD93 CD95 CDA8 CDB6 CDC8 CDE9 CDEB CE11 CE1E CE31 CE33 CE54 CE66 CE74 CE87 CE89 CEAA CECA CECC CEED D009 D00B D02B D04C D04E D051 D06E D072 D093 D095 D0B5 D0D8 D106 D127 D129 D14A D16A D16C D180 D18D D1A1 D1C1 D1C3 D1D6 D1E4 D208 D20A D22B D24C D24E D261 D26E D282 D295 D2A3 D2B5 D2B7 D2D8 D30C D320 D32D D333 D341 D353 D374 D376 D397 D399 D3B9 D3DA D3DC D40E D412 D414 D435 D455 D457 D478 D49B D4BB D4BD D4DE D4E1 D516 D537 D539 D559 D57A D57C D580 D59D D5A0 D5C1 D5C3 D5E3 D5E5 D618 D61A D63B D64D D65B D661 D66E D682 D6A2 D6A4 D6C5 D6C7 D6E7 D70C D720 D72D D740 D742 D763 D784 D786 D7A6 D7A8 D7C9 D7DB D7E9 D801 D80E D822 D824 D844 D865 D867 D888 D89A D8BB D8BD D8DD D903 D905 D926 D946 D948 D969 D97B D99C D99E D9B0 D9D0 D9D2 DA07 DA1A DA28 DA3A DA5B DA5D DA70 DA7D DA91 DAB2 DAB4 DAD4 DAD6 DB1C DB3C DB3E DB50 DB52 DB72 DB93 DB95 DBA8 DBB6 DBC8 DBE9 DBEB DC11 DC1E DC31 DC33 DC54 DC66 DC74 DC87 DC89 DCAA DCCA DCCC DCED DD05 DD13 DD25 DD46 DD48 DD68 DD8B DDAC DDAE DDCE DDE0 DDE2 DE07 DE27 DE29 DE4A DE6A DE6C DE80 DE8D DEA1 DEB3 DEC1 DED4 DED6 E012 E014 E035 E055 E057 E078 E09B E0B7 E0BB E0BD E0DE E0E1 E10D E110 E12D E131 E133 E153 E174 E176 E197 E199 E1B9 E1DA E1DC E212 E214 E235 E255 E257 E278 E29B E2BB E2BD E2DE E2E1 E316 E337 E339 E359 E37A E37C E380 E39D E3A0 E3C1 E3C3 E3E3 E3E5 E418 E41A E43B E44D E45B E461 E46E E482 E4A2 E4A4 E4C5 E4C7 E4E7 E50C E520 E52D E540 E542 E563 E584 E586 E5A6 E5A8 E5C9 E5DB E5E9 E601 E60E E622 E624 E644 E665 E667 E688 E69A E6BB E6BD E6DD E703 E705 E726 E746 E748 E769 E77B E79C E79E E7B0 E7D0 E7D2 E807 E81A E828 E83A E85B E85D E870 E87D E891 E8B2 E8B4 E8D4 E8D6 E91C E93C E93E E950 E952 E972 E993 E995 E9A8 E9B6 E9C8 E9E9 E9EB EA11 EA1E EA31 EA33 EA54 EA66 EA74 EA87 EA89 EAAA EACA EACC EAED EB05 EB13 EB25 EB46 EB48 EB68 EB8B EBAC EBAE EBCE EBE0 EBE2 EC07 EC27 EC29 EC4A EC6A EC6C EC80 EC8D ECA1 ECB3 ECC1 ECD4 ECD6 ED09 ED0B ED2B ED4C ED4E ED51 ED6E ED72 ED93 ED95 EDB5 EDB7 EDD8 EE0D EE10 EE2D EE31 EE33 EE53 EE74 EE76 EE97 EE99 EEB9 EEDA EEDC
\end{lstlisting}

\newpage
\normalsize
\bibliographystyle{alphaurl}
\bibliography{main.bib}
\addcontentsline{toc}{section}{References}

\vspace{2em}
\noindent\raggedright
\textbf{David de Boer,\, Pjotr Buys}\\[0.5em]
Korteweg-de Vries Institute for Mathematics, University of Amsterdam\\
Science Park 105-107, 1098 XG Amsterdam, Netherlands\\[0.5em]

\textbf{Jeroen Zuiddam}\\[0.5em]
Korteweg-de Vries Institute for Mathematics, University of Amsterdam\\
Science Park 105-107, 1098 XG Amsterdam, Netherlands\\[0.5em]
Centrum Wiskunde \& Informatica\\
Science Park 123, 1098 XG Amsterdam\\[1em]
Email: \href{daviddeboer2795@gmail.com}{daviddeboer2795@gmail.com}, \href{pjotr.buys@gmail.com}{pjotr.buys@gmail.com}, \href{j.zuiddam@uva.nl}{j.zuiddam@uva.nl}

\end{document}